\documentclass[11pt]{amsart}

\usepackage[colorlinks=true,citecolor=black!60!green,linkcolor=black!60!red,filecolor=black!60!cyan,urlcolor=black!60!magenta]{hyperref}
\usepackage[text={6.1in,8.5in},centering]{geometry}
\usepackage{amssymb,amsmath,amsthm,combelow,mathtools,extpfeil,wrapfig,graphicx}
\usepackage{caption}
\usepackage{subcaption}
\input{insbox}
\usepackage[all,cmtip]{xy}
\usepackage{tikz}
\usepackage[shortlabels]{enumitem}


\newtheorem{thm}{Theorem}
\newtheorem*{thm*}{Theorem}
\newtheorem*{lem*}{Lemma}
\newtheorem{thmA}{Theorem}

\newtheorem{thmCprime}{Theorem}

\newtheorem{lem}[thm]{Lemma}
\newtheorem{prop}[thm]{Proposition}
\newtheorem*{prop*}{Proposition}
\newtheorem{cor}[thm]{Corollary}

\theoremstyle{definition}
\newtheorem*{defn}{Definition}
\newtheorem{rmk}[thm]{Remark}
\newtheorem{question}[thm]{Question}
\newtheorem{ex}[thm]{Example}

\numberwithin{thm}{section}

\DeclareMathOperator{\id}{id}

\DeclareMathOperator{\vol}{vol}

\DeclareMathOperator{\Dil}{Dil}

\DeclareMathOperator{\Lip}{Lip}
\DeclareMathOperator{\Hom}{Hom}
\DeclareMathOperator{\Aut}{Aut}
\DeclareMathOperator{\supp}{supp}
\newcommand{\ph}{\varphi}
\newcommand{\epsi}{\varepsilon}
\newcommand{\arr}{r}

\newcommand{\FF}{\mathbb{F}}
\newcommand{\CC}{\mathbb{C}}
\newcommand{\eps}{\epsilon}

\newcommand{\QQ}{\mathbb{Q}}
\newcommand{\ZZ}{\mathbb{Z}}
\newcommand{\RR}{\mathbb{R}}
\DeclareMathOperator{\Ann}{Ann}
\DeclareMathOperator{\topp}{top}
\DeclareMathOperator{\Prim}{Prim}



\begin{document}
\title{Degrees of maps and multiscale geometry}
\author[A.~Berdnikov]{Aleksandr Berdnikov}
\address[A.~Berdnikov]{Institute for Advanced Study, Princeton, NJ, United States}
\email{beerdoss@mail.ru}
\author[L.~Guth]{Larry Guth}
\address[L.~Guth]{Department of Mathematics, MIT, Cambridge, MA, United States}
\email{lguth@math.mit.edu}
\author[F.~Manin]{Fedor Manin}
\address[F.~Manin]{Department of Mathematics, UCSB, Santa Barbara, CA, United States}
\email{manin@math.ucsb.edu}

\begin{abstract}
We study the degree of an $L$-Lipschitz map between Riemannian manifolds, proving new upper bounds and constructing new examples.  For instance, if $X_k$ is the connected sum of $k$ copies of $\mathbb CP^2$ for $k \ge 4$, then we prove that the maximum degree of an $L$-Lipschitz self-map of $X_k$ is between $C_1 L^4 (\log L)^{-4}$ and $C_2 L^4 (\log L)^{-1/2}$.  More generally, we divide simply connected manifolds into three topological types with three different behaviors.  Each type is defined by purely topological criteria.  For scalable simply connected $n$-manifolds, the maximal degree is $\sim L^n$.  For formal but non-scalable simply connected $n$-manifolds, the maximal degree grows roughly like $L^n (\log L)^{-\theta(1)}$.  And for non-formal simply connected $n$-manifolds, the maximal degree is bounded by $L^\alpha$ for some $\alpha < n$.
\end{abstract}
\maketitle

\section{Introduction}

\subsection{Background}
Given an oriented Riemannian manifold $M$, how does the Lipschitz constant of a map $M \to M$ control its degree?  In all cases, if $M$ is an $n$-manifold, an $L$-Lipschitz map $M \to M$ multiplies $n$-dimensional volumes by at most $L^n$, and so its degree is at most $L^n$.  In \cite[Ch.~2]{GrMS}, Gromov studied the extent to which this estimate is sharp.  For example, he showed that if $M$ admits a sequence of self-maps $f_k$ with \[\deg(f_k) \ge (1 - o(1)) \Lip(f_k)^n,\] then $M$ must be flat \cite[2.32]{GrMS}.  He also asked: for what $M$ are there $f_k$ with unbounded degree such that the ratio $\Lip(f_k)^n/\deg(f_k)$ is bounded \cite[2.40(c)]{GrMS}?  The answer to this modified question only depends on the topology of $M$.  Gromov constructed such maps when $M$ is a sphere or a product of spheres.  He singled out $(S^2 \times S^2) \# (S^2 \times S^2)$ as a case in which he did not know whether such maps exist.

We now know that the answer for connected sums of copies of $S^2 \times S^2$ or of $\CC P^2$ is rather subtle.  (The behavior is similar for both families.)  Consider the family of manifolds $X_k = \#_k \mathbb CP^2$.  Volume considerations show that an $L$-Lipschitz self-map of any $4$-manifold has degree at most $L^4$.  It's not difficult to construct an $L$-Lipschitz self-map of $\mathbb CP^2$ with degree $\sim L^4$.  When $k=2$ or $3$, then \cite{scal} shows that there are also $L$-Lipschitz self-maps of $X_k$ with degree $\sim L^4$.  But when $k \ge 4$, \cite{scal} shows that an $L$-Lipschitz self-map of $X_k$ has degree $o(L^4)$.  Before this paper, the most efficient known maps had degree $\sim L^3$.  

One of our goals in this paper is to give sharper quantitative estimates for the case $k \ge 4$.  We will show that the maximal degree $p$ lies in the range
\[L^4(\log L)^{-4} \lesssim p \lesssim L^4(\log L)^{-1/2}.\]

This phase transition between $k=3$ and $k=4$ is an example of a broader phenomenon.  Our second goal in the paper is to develop the general theory of this phenomenon.  


For a given $M$, the maximally efficient relationship $\Lip f \sim (\deg f)^{1/n}$ may not be achievable for several reasons.  For example, $M$ may be \emph{inflexible}, meaning that it does not have self-maps of degree $>1$.  (Examples of inflexible simply connected manifolds are given in \cite{ArLu,CLoh,CV,Am}.)  Or it may be the case that any self-map of $M$ of degree $D$ multiplies some $k$-dimensional homology class by a factor greater than $D^{k/n}$, giving a stronger bound on the Lipschitz constant.

A compact manifold $M$ is \emph{formal} if it has a self-map $M \to M$ which, for some $p$, induces multiplication by $p^k$ on $H_k(M;\mathbb R)$, for every $k \geq 1$.  This notion, first defined by Sullivan and coauthors in terms of rational homotopy theory, has played a role in many other geometric applications, starting with \cite{DGMS}.  If $M$ is a formal $n$-manifold, then obstructions to obtaining an $L$-Lipschitz map $M \to M$ of degree $L^n$ cannot come from measuring volumes of cycles.  However, in \cite{scal} it was shown that more subtle obstructions may exist.  This motivates the definition of a \emph{scalable} manifold to be one which has $O(L)$-Lipschitz self-maps of degree $L^n$.  The paper \cite{scal} shows that scalability is equivalent to several other conditions; most importantly, a manifold $M$ (perhaps with boundary) is scalable if and only if there is a ring homomorphism $H^*(M;\RR) \to \Omega^*(M)$ which realizes cohomology classes as differential forms representing them.

\subsection{Main results}
For non-scalable formal spaces, \cite{scal} proves that any $L$-Lipschitz self-map has degree $o(L^n)$.  Before this paper, the examples that had been constructed had degree $O(L^{n-1})$.   In this paper, we gain a sharper quantitative understanding:

\begin{thmA} \label{main}
  Let $M$ be a formal, simply connected closed $n$-manifold which is not scalable.  Then the maximal degree $p$ of an $L$-Lipschitz map $M \to M$ satisfies
  \[L^n(\log L)^{-\beta(M)} \lesssim p \lesssim L^n(\log L)^{-\alpha(M)},\]
  where $\beta(M) \geq \alpha(M)>0$ are constants depending only on the real cohomology ring of $M$.
\end{thmA}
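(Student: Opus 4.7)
The approach is a multiscale construction (for the lower bound) paired with a multiscale obstruction argument (for the upper bound), both organized through the Sullivan minimal model. Since $M$ is formal, its real homotopy type is determined by $H^*(M;\RR)$, so both constants can be defined purely cohomologically: $\alpha(M)$ as the minimal additive cost, per dyadic scale, for a bounded-norm family of closed forms representing $H^*(M;\RR)$ to have wedge products matching the cup products, and $\beta(M)$ as the analogous cost in the explicit construction produced by the minimal model. Both are strictly positive by the characterization of scalability in \cite{scal}, since if either vanished one could patch scale-local families into a genuine ring homomorphism $H^*(M;\RR) \to \Omega^*(M)$.

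\textbf{Lower bound.} Writing $L = 2^N$, the plan is to telescope over dyadic scales $r_k = 2^{-k}$ for $0 \le k \le N$. On each scale, formality supplies an approximate ring homomorphism $H^*(M;\RR) \to \Omega^*(M)$ by forms of $L^\infty$-norm scaling like $r_k^{-|\cdot|}$, with wedge-product errors that are exact, paying a bounded multiplicative cost per scale because scalability fails. Consecutive scales are glued via an explicit primitive for the error read off from the Sullivan model. Accumulating the bounded per-scale loss over $\log L$ scales yields a self-map $f$ with $\Lip f \lesssim L$ and $\deg f \gtrsim L^n (\log L)^{-\beta(M)}$, refining the $O(L^{n-1})$-degree constructions of \cite{scal} by distributing the loss over many scales rather than concentrating it into a single degree drop.

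\textbf{Upper bound.} Given $f : M \to M$ with $\Lip f = L$ and degree $D$, the pullbacks $f^*\omega$ of a basis of $H^*(M;\RR)$ form an honest ring homomorphism at the form level, with $\|f^*\omega\|_{L^\infty} \lesssim L^{|\omega|}$. Decomposing $M$ into dyadic annular pieces at scales $1, 2^{-1}, \ldots, L^{-1}$ and rescaling each to unit scale, the pullbacks restrict on each piece to a candidate approximate ring homomorphism at scale one. The key technical step is a Gaffney/Poincar\'e-type inequality: any such bounded-norm family whose wedge products agree with cup-product representatives modulo bounded exact forms must have a specific wedge-combination (encoding the algebraic obstruction to scalability) with $L^2$-mass bounded below by a universal constant. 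Summing this per-scale deficit across $N \sim \log L$ scales produces a global lower bound that must be charged against $f^*\mathrm{vol}_M$, forcing $D \lesssim L^n (\log L)^{-\alpha(M)}$.

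\textbf{Main obstacle.} The heart of the argument is the per-scale quantitative obstruction. Two features make it delicate. First, the inequality must be \emph{rescaling-invariant}, so that a fixed algebraic defect at unit scale really accumulates additively across $\log L$ scales rather than telescoping away; this is subtle because the candidate forms at different scales are not independent but are all restrictions of one global family. Second, the obstruction must be achieved, up to constants, by the lower-bound construction, in order to ensure $\alpha(M) \le \beta(M)$ and that both quantities are intrinsic to $H^*(M;\RR)$. Both points reduce to a finite-dimensional linear-algebra problem on the cohomology ring (closedness plus the failing cup-product relations), which is tractable because the Sullivan model of a formal space is determined by its cohomology ring up to real homotopy equivalence.
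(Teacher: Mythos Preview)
Your upper bound sketch has a genuine gap. You propose a \emph{spatial} multiscale decomposition (``dyadic annular pieces at scales $1,2^{-1},\ldots,L^{-1}$'') and a per-scale Gaffney/Poincar\'e-type deficit that you then sum over $\sim\log L$ scales. The difficulty is the summation step: the restrictions of the pullback forms to nested spatial scales are highly correlated, not approximately orthogonal, so there is no a priori reason the per-scale deficits accumulate rather than telescope. You flag this yourself as the ``main obstacle,'' but the resolution you offer (``reduces to a finite-dimensional linear-algebra problem on the cohomology ring'') does not address it: the linear-algebra problem is pointwise, while what you need is a mechanism forcing the defect to be \emph{spread across scales}. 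Nothing in your outline supplies that mechanism.

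The paper's route is different in kind. The decomposition is in \emph{frequency}, via Littlewood--Paley projections $P_k$ on charts. Two ingredients replace your Gaffney step: (a) the real Nullstellensatz, which turns the failure of $H^*(M;\RR)\hookrightarrow\Lambda^*\RR^n$ into a pointwise inequality $|\beta_{\mathrm{top}}|\le C\,\epsilon^{1/2m}$ where $\epsilon$ measures the cup-product relations on the low-frequency truncations $P_{\le k}a_j$; and (b) the estimate $\|P_{\le k}R_r(a)\|_{L^\infty}\lesssim 2^k L^{-1}$, coming from the $L^\infty$ bound on primitives of the exact forms $R_r(\alpha)$. The actual $\log L$ loss then comes from averaging the resulting bound over all frequency cutoffs $k$ and invoking the $L^2$ orthogonality $\sum_k\|P_k a\|_{L^2}^2\sim\|a\|_{L^2}^2$. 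This orthogonality is exactly the missing mechanism: it is what forces the worst case to be when the pullbacks have comparable mass at \emph{every} frequency, and it has no analogue for your spatial pieces.

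Your lower bound is closer in spirit but still differs from the paper. The paper does not work with forms or approximate ring homomorphisms at all; it gives an explicit cellular construction. One passes to a rationally equivalent CW complex whose cells biject with a homology basis, then builds $r_{p^\ell}$ inductively: on each top cell, arrange $p^n$ rescaled copies of $r_{p^{\ell-1}}$ in a grid and fill the interstitial region using a controlled homotopy coming from the $(n-1)$-skeleton. The polylog exponent $\beta(M)$ is $n(d-1)$ where $d$ is the number of positive degrees in which $H_*(M;\QQ)\ne 0$; it arises because each skeletal dimension contributes one factor of $\ell$ to the Lipschitz constant. Your Sullivan-model gluing of ``approximate ring homomorphisms'' across scales may be workable, but it is a different construction and you have not exhibited the map.
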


For example, in the case of $M=\#_k \CC P^2$, $\beta(M)=4$ and $\alpha(M)=1/2$.

The lower bound of Theorem \ref{main} generalizes to compact manifolds with boundary with a slightly more complicated statement (see Theorem \ref{self-maps}).

We obtain a similar result for sizes of nullhomotopies of $L$-Lipschitz maps to a non-scalable formal space:
\begin{thmA} \label{main:homotopies}
  Let $Y$ be a formal, simply connected compact Riemannian $n$-manifold (perhaps with boundary).  Then for any finite simplicial complex $X$, any nullhomotopic $L$-Lipschitz map $f:X \to Y$ is $O(L(\log L)^{n-2})$-Lipschitz nullhomotopic.
\end{thmA}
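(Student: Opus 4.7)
The plan is to construct the nullhomotopy by induction over the stages of the minimal Sullivan model of $Y$ (equivalently, a rational Postnikov tower), exploiting formality at each stage to keep sup-norms of the geometric obstructions under quantitative control, with each stage costing at most one factor of $\log L$.

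I would first replace $f$ by a simplicial approximation at scale $\sim 1/L$, losing only a constant in the Lipschitz bound. Since $Y$ is simply connected and formal, it admits a minimal Sullivan model $(\Lambda V, d) \xrightarrow{\sim} \Omega^*(Y)$ with $V$ concentrated in degrees $\geq 2$ and differentials whose structure constants are encoded by the cup-product ring $H^*(Y;\RR)$. This model is built stage-by-stage, yielding rational Postnikov stages $Y_{(2)}, Y_{(3)}, \ldots$; since homotopy classes $X \to Y$ factor through $Y_{(n)}$ once $\dim X \le n$ and $Y$ is $n$-dimensional, only the stages $k = 2, 3, \ldots, n$ need be treated.

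Inductively, suppose one has built a nullhomotopy $F_k : X \times I \to Y_{(k)}$ of the composition $p_k \circ f$ with $\Lip(F_k) \leq L_k$. Lifting $F_k$ past the fibration $Y_{(k+1)} \to Y_{(k)}$ amounts to finding a primitive on $X \times I$ of the pulled-back $(k+1)$st $k$-invariant. Because $f$ is genuinely nullhomotopic in $Y$ (not merely in each Postnikov stage), this pulled-back class vanishes, so a primitive exists; formality provides a specific Sullivan representative built polynomially out of a fixed finite collection of harmonic forms on $Y$, with sup-norm controlled by $L_k^{k+1}$. A Whitney/dyadic decomposition of $X \times I$ then produces an explicit primitive of sup-norm $\lesssim L_k^k \log L_k$, which in turn yields a lifted nullhomotopy $F_{k+1}$ with $\Lip(F_{k+1}) \lesssim L_k \log L$.

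Starting from $L_2 \lesssim L$ (the $k=2$ stage reduces to lifting a map to a product of $K(\pi_2,2)$'s, which is essentially linear and contributes no logarithm) and iterating through $n-2$ further stages yields the desired bound $\Lip(F_n) \lesssim L(\log L)^{n-2}$. The main obstacle is the primitive-finding step: one must convert the primitive form into an actual geometric nullhomotopy paying only a single $\log L$ per stage rather than a higher power of $L$, and moreover ensure that formality is genuinely used, not just abstractly invoked, so that the obstruction form at each stage is truly exact with a controllably small primitive. This is precisely what the paper's multiscale techniques are designed to handle: $X$ is partitioned at many scales and a local primitive problem is solved on each piece with scale-independent constants, then the pieces are glued together with only logarithmic loss.
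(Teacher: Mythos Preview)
Your approach is structurally different from the paper's and has a gap at the key quantitative step.

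The paper does not climb the Postnikov tower. It uses as a black box the efficient self-maps $\arr_{p^\ell}:Y\to Y$ built in Section~\ref{S:lower} (Theorem~\ref{self-maps+}), which realize the grading automorphism $\rho_{p^\ell}$ with Lipschitz constant $O(\ell^{d-1}p^\ell)$, where $d$ counts the degrees in which $H_*(Y;\QQ)\ne 0$. Given a nullhomotopic $L$-Lipschitz $f$, the shadowing principle produces $O(L/p^\ell)$-Lipschitz maps $f_\ell$ (by formally postcomposing $f^*m_Y$ with $\rho_{p^{-\ell}}$) and $O((\log L)^{d-1}p^\ell)$-Lipschitz self-maps $g_\ell$; the nullhomotopy is then assembled as a telescope
\[
f\;\sim\;g_1\circ f_1\;\sim\;g_2\circ f_2\;\sim\;\cdots\;\sim\;\arr_{p^s}\circ f_s\;\sim\;\mathrm{const}
\]
with $s\approx\log_p L$, each segment being produced by the relative shadowing principle (Theorem~\ref{relshadow}) from a short formal homotopy. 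The polylog factor enters only through $\Lip(\arr_{p^\ell})$; formality is used to guarantee that the automorphisms $\rho_t$ exist, not to control primitives of obstruction forms.

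Your gap is the assertion that one lift $Y_{(k)}\to Y_{(k+1)}$ costs only a single $\log L$. The obstruction at that stage is a closed $(k+2)$-form $F_k^*\omega$ on $X\times I$ of sup-norm $\sim L_k^{k+2}$; the $k$-invariant lives on $Y_{(k)}$, not on $Y$, so there is no bounded $(k+1)$-form on the target whose pullback would serve as a cheap primitive. On a domain of bounded diameter the Poincar\'e lemma gives a primitive of the same order, forcing $L_{k+1}\sim L_k^{(k+2)/(k+1)}$---a polynomial loss, which is exactly the mechanism behind the polynomial bounds of \cite{PCDF} for general simply connected targets. To do better you need the primitive smaller by a factor $\sim 1/L_k$ up to logs. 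Formality alone does not supply this: it says the obstruction \emph{class} is a polynomial in genuine cohomology classes, but that does not make the pulled-back \emph{form} oscillatory or small at low frequencies. A dyadic decomposition of $X\times I$ helps only when the form has small low-frequency content, which you have not established; and the multiscale tools you invoke from the paper serve other purposes (Fourier upper bounds on degree in Section~\ref{S:Fourier}, explicit self-map constructions in Section~\ref{S:lower}) and do not address primitive-finding on $X\times I$.
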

For scalable spaces, a linear bound was proved in \cite{scal}; thus this result is interesting mainly for non-scalable formal spaces.  In contrast, in non-formal spaces it is often impossible to do better than a bound of the form $L^\alpha$ for some $\alpha>1$.

One of the main theorems of \cite{scal} says that a manifold $Y$ is scalable if and only if there is a ring homomorphism from $H^*(Y; \RR)$ to $\Omega^*(Y)$ which takes each cohomology class to a differential form in that class.  Because $\Omega^*(Y)$ is infinite-dimensional, this condition is not so easy to check.  We verify the conjecture given in \cite{scal} which states that scalability is equivalent to a simple homological criterion:
\begin{thmA} \label{optconj}
  Let $Y$ be a formal, simply connected compact Riemannian $n$-manifold (perhaps with boundary).  Then $Y$ is scalable if and only if there is an injective ring homomorphism
  \[h:H^*(Y; \RR) \to \bigoplus_{i=1}^N \Lambda^*\RR^{n_i}\]
  for some integers $n_1,\ldots,n_N$.  In particular, if $Y$ is a closed manifold, then it is scalable if and only if there is an injective ring homomorphism $H^*(Y;\mathbb R) \to \Lambda^*\mathbb R^n$.
\end{thmA}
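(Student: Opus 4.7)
The plan is to use the $\Omega^*$-characterization of scalability cited above, which provides an injective ring homomorphism $h:H^*(Y;\RR)\to\Omega^*(Y)$ sending each class to a representing form. For each $y\in Y$, composition with pointwise evaluation gives a ring homomorphism $\ev_y\circ h:H^*(Y;\RR)\to\Lambda^*T_y^*Y\cong\Lambda^*\RR^n$. Since $\bigcap_{y\in Y}\ker(\ev_y\circ h)=\ker h=0$ and $H^*(Y;\RR)$ is finite dimensional, a noetherian argument will produce finitely many points $y_1,\dots,y_N$ whose combined evaluation into $\bigoplus_{j}\Lambda^*\RR^{n}$ is already injective. In the closed-manifold case a single generic point should suffice: $h$ of the fundamental class is a nowhere-vanishing volume form, so $\ker(\ev_y\circ h)$ would be an ideal of a Poincar\'e duality algebra missing the top class, which must be zero.

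\textbf{Reverse direction.} Given an injective ring homomorphism $h:H^*(Y;\RR)\hookrightarrow A:=\bigoplus_i\Lambda^*\RR^{n_i}$, the plan is to construct a ring homomorphism $h_Y:H^*(Y;\RR)\to\Omega^*(Y)$ realizing classes, whence scalability follows from the $\Omega^*$-characterization. I would build $h_Y$ by induction on the degree of a chosen set of algebra generators of $H^*(Y;\RR)$: assign a representative form $\omega_\alpha\in\Omega^{|\alpha|}(Y)$ to each new generator $\alpha$, and then adjust the $\omega_\alpha$ by exact forms so that every algebraic relation in $H^*(Y;\RR)$ among the lifted generators holds exactly in $\Omega^*(Y)$, and not merely up to a coboundary. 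Candidate adjustments always exist cohomologically; the question is whether they can be arranged coherently across all relations simultaneously.

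\textbf{Role of the hypothesis.} The injective map $h$ enters as a rigidity device. The target $A$ has zero differential and supports no non-trivial higher operations, so any obstruction to strict realization, once composed with $h$, vanishes identically in $A$; injectivity of $h$ then pulls this vanishing back to $H^*(Y;\RR)$ itself. Formality of $Y$ plays the complementary role, guaranteeing that there are no Massey-type obstructions to promoting cohomological vanishings into strict vanishings along the quasi-isomorphism $\Omega^*(Y)\simeq(H^*(Y;\RR),0)$. Combining the two inputs, every obstruction encountered during the induction should be forced to vanish.

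\textbf{Main obstacle.} The hard part will be establishing global coherence: an adjustment that kills one relation may disturb another already arranged. I expect this to be handled by a finite-dimensional argument over the affine space of admissible choices for the $\omega_\alpha$ (the ambiguity in each step being the space of exact forms in the appropriate degree, modulo the fixed relations). The constraints cut out an algebraic subvariety of this space, and the goal is to show it is non-empty. The proof of non-emptiness should be exactly where the injectivity of $h$ into the rigid target $A$ is essential --- without this injectivity the relevant obstruction classes in $H^*(Y;\RR)$ could be non-zero, and no coherent choice would be possible. Making this last linear-algebraic step precise, and showing that the inductive steps can all be carried out with mutually compatible choices, is the technical heart of the argument.
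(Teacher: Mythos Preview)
Your forward direction is correct and matches the paper's argument: evaluate the realizing homomorphism $H^*(Y;\RR)\to\Omega_\flat^*(Y)$ at finitely many well-chosen points. (A technicality you gloss over: the paper works with flat forms, so pointwise evaluation is only defined almost everywhere, and in the closed case the image of the top class is nonzero on a set of positive measure rather than literally nowhere-vanishing.)

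Your reverse direction has a genuine gap. You aim to build a ring map $h_Y:H^*(Y;\RR)\to\Omega^*(Y)$ by choosing representatives and adjusting by exact forms until every relation holds on the nose, claiming that the injection $h$ into $A=\bigoplus_i\Lambda^*\RR^{n_i}$ kills the obstructions. But these obstructions are not cohomology classes in $H^*(Y;\RR)$: they encode whether a nonlinear system of equations in the infinite-dimensional space $\Omega^*(Y)$ is solvable, and condition (v) says nothing about $\Omega^*(Y)$---it is a statement purely about $H^*(Y;\RR)$ as an abstract ring. There is no map relating $\Omega^*(Y)$ to $A$, so ``composing the obstruction with $h$'' has no meaning. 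Formality alone certainly does not suffice (e.g.\ $\#_4\CC P^2$ is formal but not scalable), and your sketch never identifies a concrete mechanism by which the embedding into $A$ constrains choices of forms on $Y$.

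The paper's route is entirely different: it does \emph{not} attempt to prove (v)$\Rightarrow$(i) directly. Instead it proves (v)$\Rightarrow$(ii) by constructing efficient self-maps, via induction on the skeleta of a rationally equivalent CW complex $Z$ whose cells form a homology basis. The injection $h$ is used \emph{geometrically}: an element of $\Lambda^*\RR^{n}$ is a constant form on $\RR^n$, so $h$ composed with a formal minimal model $\mathcal M_Z^*\to H^*(Z;\RR)$ yields a DGA map $\mathcal M_Z^*\to\Omega^*([0,1]^n)$ landing in constant forms. The shadowing principle then produces an actual $O(p^\ell)$-Lipschitz map $[0,1]^n\to Z$ with the correct degree over each $n$-cell. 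These maps are patched together across cells using scalability of $Z^{(n-1)}$ (the inductive hypothesis) to supply the homotopies needed at the boundaries. The role of $h$ is thus to provide local Euclidean templates for maps \emph{into} $Z$, not to constrain forms already living on $Z$; this geometric reinterpretation is the idea missing from your proposal.
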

In particular, scalability is an invariant not only of rational, but of real homotopy type.
\begin{ex}
  If $M$ is an $(n-1)$-connected $2n$-manifold, then its real cohomology ring is completely described by the signature $(k,\ell)$ of the bilinear form
  \[\smile:H^n(M;\mathbb R) \times H^n(M;\mathbb R) \to H^{2n}(M;\mathbb R).\]
  Then $M$ is scalable if and only if $k$ and $\ell$ are both at most ${2n \choose n}/2$.
\end{ex}

Theorem \ref{optconj} is closely related to another idea studied by Gromov in \cite[2.41]{GrMS}.  For a closed $n$-manifold $M$, say a map $f:\mathbb R^n \to M$ has \emph{positive asymptotic degree} if
\[\limsup_{R \to \infty} \frac{\int_{B_R(0)} f^*d\vol_M}{R^n}=\delta>0.\]
Given an efficient self-map $M \to M$ of high degree, you can zoom in and find a map of positive asymptotic degree on a large ball.  If $M$ is formal, then the converse also holds:
\begin{thmCprime} \label{elliptic}
  Let $M$ be a formal, simply connected closed $n$-manifold.  Then there is a $1$-Lipschitz map $f:\mathbb R^n \to M$ of positive asymptotic degree if and only if $M$ is scalable.
\end{thmCprime}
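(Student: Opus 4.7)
The plan is to prove the two implications separately. The forward direction (scalability implies existence of $f$) is a direct zoom-in argument on the scaling self-maps followed by an Arzelà--Ascoli extraction. The reverse direction (existence of $f$ implies scalability) I would prove by contrapositive, using the upper-bound half of Theorem \ref{main} applied to the restrictions $f|_{B_R}$.

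For the forward direction, scalability supplies self-maps $g_L:M\to M$ with $\Lip(g_L)\le CL$ and $\deg(g_L)\ge cL^n$. Since $\int_M |Jg_L|\,d\vol\gtrsim L^n\vol(M)$ while pointwise $|Jg_L|\le(CL)^n$, a pigeonhole/Lebesgue-point argument produces a base point $p_L\in M$ at which $\int_{B_{r_0}(p_L)}|Jg_L|\,d\vol\gtrsim L^n$ for some definite $r_0>0$ independent of $L$. Define $\phi_L:B_{r_0L}(0)\subset\RR^n\to M$ by $\phi_L(x):=g_L(\exp_{p_L}(x/L))$. These maps are $1$-Lipschitz up to a factor $1+o(1)$ from the exponential chart, and satisfy $\int_{B_R}\phi_L^* d\vol_M\gtrsim R^n$ uniformly for $R\le r_0L$. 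Arzelà--Ascoli together with a diagonal extraction over $R$ yields a $1$-Lipschitz limit $\phi:\RR^n\to M$. The asymptotic-degree lower bound survives the limit because $\int_{B_R}\phi^* d\vol_M$ can be expressed as a signed count of preimages of a regular value, which is lower semicontinuous under uniform convergence of Lipschitz maps once a generic regular value is chosen that avoids $\phi(\partial B_R)$ and, cofinally, the critical sets of the $\phi_L$.

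For the reverse direction, suppose $M$ is formal but not scalable. Theorem \ref{main} gives $\alpha(M)>0$ such that every $L$-Lipschitz self-map of $M$ has degree at most $CL^n(\log L)^{-\alpha(M)}$. I expect the same multiscale argument to yield the parallel estimate $\int_{B_R}g^*d\vol_M\le CR^n(\log R)^{-\alpha(M)}$ for every $1$-Lipschitz map $g:B_R\subset\RR^n\to M$, since the obstruction driving Theorem \ref{main} is algebraic (nonscalability of $H^*(M;\RR)$) and the scale-by-scale construction of closed-form representatives being obstructed is local in the domain. Applying this analogue to $g=f|_{B_R}$ gives $(\delta/2)R^n\le CR^n(\log R)^{-\alpha(M)}$ for all sufficiently large $R$, which is a contradiction as $R\to\infty$. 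Thus $M$ must be scalable.

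The principal obstacle is precisely this transfer of the upper bound from self-maps of $M$ to maps out of a Euclidean ball. The argument of Theorem \ref{main} constructs closed-form representatives of cohomology classes on the \emph{domain}, scale by scale, in a way obstructed by the nonscalability of $H^*(M;\RR)$, so conceptually only the target's cohomology ring should enter; verifying that the bookkeeping indeed goes through when the domain is $B_R\subset\RR^n$ (rather than $M$ itself) is where essentially all the remaining work lies. The forward direction is routine modulo the semicontinuity step, where one must be careful to select a regular value of $\phi$ that is also a regular value for a cofinal subsequence of the $\phi_L$, and then invoke homotopy invariance of the local degree on the interior.
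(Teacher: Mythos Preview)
Your reverse direction is exactly one of the two arguments the paper gives: the ball version of the upper bound is stated and proved as Theorem~\ref{balldegbound}, and applying it to $f|_{B_R}$ immediately contradicts positive asymptotic degree when $M$ is not scalable. Your instinct that only the target's cohomology enters is correct; the Littlewood--Paley argument of \S\ref{S:Fourier} is already carried out chart by chart on the domain, and Theorem~\ref{balldegbound} records precisely the transfer you anticipate.

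The forward direction has a genuine gap. The pigeonhole gives a base point $p_L$ at which the signed Jacobian integral over $B_{r_0}(p_L)$ is $\gtrsim L^n$ at the \emph{single} scale $r_0$, but you then assert $\int_{B_R}\phi_L^*d\vol_M\gtrsim R^n$ for \emph{all} $R\le r_0L$. That multi-scale lower bound does not follow: nothing forces the degree to be evenly spread inside $B_{r_0}(p_L)$ at every intermediate radius. After Arzel\`a--Ascoli, for each fixed $R$ one has $\int_{B_R}\phi^*d\vol_M=\lim_j\int_{B_R}\phi_{L_j}^*d\vol_M$ (the signed local degree is continuous, not merely lower semicontinuous, under uniform convergence), but you only control the right-hand side when $R=r_0L_j$, and that scale runs off to infinity with $j$. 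A union bound over the $\sim\log L$ dyadic scales between $1/L$ and $r_0$ does not rescue this, since the bad set at each scale can have measure bounded away from zero.

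The paper sidesteps the zoom-in entirely. Scalability gives (via Theorem~\ref{optconj}) an injective ring map $h:H^*(M;\RR)\to\Lambda^*\RR^n$; composing with the formality quasi-isomorphism $\mathcal M_M^*\to H^*(M;\RR)$ yields a DGA map $\eta:\mathcal M_M^*\to\Omega^*\RR^n$ whose image consists of \emph{constant} forms, with $\eta(\omega_{[M]})$ the Euclidean volume form. The shadowing principle (Theorem~\ref{shadow}) then produces a Lipschitz $f:\RR^n\to M$ together with a bounded-dilatation formal homotopy $\Phi$ between $f^*m_M$ and $\eta$, and Stokes' theorem on $\Phi(\omega_{[M]})$ over $B_R\times[0,1]$ gives
\[\int_{B_R}f^*d\vol_M=\int_{B_R}\eta(\omega_{[M]})-\int_{\partial B_R\times[0,1]}\Phi(\omega_{[M]})=\vol(B_R)+O(R^{n-1}).\]
This delivers positive asymptotic degree in the strong $\liminf$ sense, with the uniform control at every scale built in from the outset rather than recovered in a limit.
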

\begin{rmk}
  Gromov refers to manifolds with this property as \emph{elliptic}, suggesting a connection with the notion of elliptic spaces from rational homotopy theory.  However, this notion is not closely connected to scalability.
\end{rmk}
\begin{question}
  Can a non-formal simply connected manifold be Gromov-elliptic?
\end{question}

Finally, we explore the behavior of non-formal manifolds:
\begin{thmA} \label{main:NF}
  Let $M$ be a closed simply connected $n$-manifold which is not formal.  Then either $M$ is inflexible (has no self-maps of degree $>1$) or the maximal degree of an $L$-Lipschitz map $M \to M$ is bounded by $L^\alpha$ for some real number $\alpha<n$.
\end{thmA}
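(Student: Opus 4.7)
The plan is to argue the contrapositive: if $M$ is flexible and for every $\epsilon > 0$ admits $L$-Lipschitz self-maps of degree $\geq L^{n-\epsilon}$ for arbitrarily large $L$, then $M$ is formal. The argument proceeds in Sullivan's minimal model via harmonic form estimates and an algebraic-group compactness argument.

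Let $\mathcal{M}$ denote the real minimal Sullivan model of $M$, with indecomposable generators $x_i$ of degrees $d_i$. Each self-map $f : M \to M$ induces a dga endomorphism $f^\#$ of $\mathcal{M}$, well-defined up to homotopy. The action of $f^\#$ on a suitable finite-dimensional truncation (large enough to recover $H^{\le n}$) lies in a real algebraic monoid $\mathcal{E}$ on which $\deg$ is a regular function. The Lipschitz hypothesis translates, via harmonic representatives and the pointwise bound $\|f^*\omega\|_{L^\infty} \le L^{\deg\omega}\|\omega\|_{L^\infty}$, into operator-norm bounds on $f^\#$ along a word-length filtration of $\mathcal{M}$: in particular, each ``eigenvalue'' of $f^\#$ on $x_i$ is bounded by $O(L^{d_i})$.

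Given $f_k$ with $\Lip f_k = L_k \to \infty$ and $\deg f_k \ge L_k^{n-\epsilon_k}$, $\epsilon_k \to 0$, decompose $f_k^\#$ into its Jordan semisimple and unipotent parts. Rescale the semisimple part by dividing the weight on each $x_i$ by $L_k^{d_i}$; by the harmonic bounds this produces a bounded family in $\mathcal{E}$, and we extract a subsequential limit $\Phi$. Near-saturation of the volume bound $\deg f_k \le L_k^n$ forces $\Phi$ to act on each indecomposable $x_i$ by exactly $t^{d_i}$ for some $t > 1$, i.e., $\Phi$ must be the ideal grading automorphism of $\mathcal{M}$. By Sullivan's characterization (equivalently, Halperin--Stasheff's), the existence of such a grading automorphism of the minimal model is equivalent to formality of $M$, contradicting the hypothesis.

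The principal obstacle is controlling the unipotent part of the Jordan decomposition uniformly in $k$: a general $f_k^\#$ is not semisimple, and the unipotent correction can introduce polylogarithmic (in $L_k$) distortions which need not vanish after naive rescaling. These must be organized so that they affect only subleading terms, which suggests working stage by stage up the word-length filtration of $\mathcal{M}$ and using the harmonic form bound at each stage to control the nilpotent contribution to the next. A secondary subtlety is the homotopy ambiguity of $f^\#$: one must choose chain-level representatives carefully so that the operator-norm estimates from harmonic forms apply at the level of $\mathcal{M}$ rather than only on $H^*$. Made quantitative, the same scheme extracts an explicit $\alpha < n$ from the failure of $\mathcal{M}$ to admit a grading automorphism, namely the supremum over the linear space of admissible weight vectors $(e_i)$ of the exponent $e_{\mathrm{top}} \cdot \min_i d_i / e_i$.
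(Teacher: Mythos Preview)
Your high-level strategy matches the paper's: argue the contrapositive, use the Lipschitz hypothesis to control eigenvalues of the induced map on an algebraic model, extract a limiting grading automorphism, and invoke the characterization of formality via such automorphisms. But there is a genuine gap in working with the Sullivan model. The indecomposables $V_k$ of $\mathcal{M}_M$ are dual to $\pi_k(M)\otimes\QQ$, not to $H^k(M)$, and the claimed bound that each eigenvalue of $f^\#$ on a degree-$d_i$ indecomposable is $O(L^{d_i})$ is false. Already for $S^2$: a degree-$d$ self-map has $L\sim d^{1/2}$, but on the degree-$3$ indecomposable $b$ (with $db=a^2$) it must act by $d^2\sim L^4$, not $O(L^3)$. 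No choice within the homotopy class of $f^\#$ repairs this, since the eigenvalue on $V_3$ is the intrinsic action on $\pi_3(S^2)\otimes\QQ$. The harmonic-form bound only controls eigenvalues on $H^k$, and the same phenomenon recurs for the non-formal $7$-manifold in the paper: its degree-$3$ indecomposables scale by $t^2$ while $L^3$ scales like $t^{9/5}$. For the same reason, your rescaling by $L_k^{-d_i}$ on $x_i$ need not carry DGA endomorphisms to DGA endomorphisms, so the ``bounded family in $\mathcal{E}$'' is not actually in $\mathcal{E}$.

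The paper works instead in the Quillen (Lie) minimal model $\mathcal{L}_M(\CC)$, whose indecomposables in each degree are canonically $H_k(M;\CC)$, so that the action on indecomposables \emph{is} the action on homology and the Lipschitz bound applies directly via the lemma $\Lip f\ge|\lambda|^{1/k}$ for any eigenvalue $\lambda$ on $H^k$. Poincar\'e duality turns this one-sided bound into a two-sided one: if some eigenvalue on $H^k$ falls below $d^{k/n}$, its dual partner on $H^{n-k}$ exceeds $d^{(n-k)/n}$. For the non-semisimple issue you flag, the paper does not attempt to control a unipotent correction; instead it conjugates each $\phi_i$ into a fixed Borel subgroup of the algebraic group $\Aut(\mathcal{L}_M(\CC))$ via Lie--Kolchin, passes to the diagonal part $\phi_i''$ (which remains in the group by the structure theory of connected solvable groups), and raises $\phi_i''$ to the real power $\log_{\deg f_i}2^n$. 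These normalized diagonal automorphisms lie in a compact set; a subsequential limit, after replacing eigenvalues by their absolute values, is the grading automorphism sending $a\in L_k$ to $2^k a$, and this descends to $\QQ$.
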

To see how the latter situation arises, consider the simplest example of a non-formal simply connected manifold, given in \cite[p.~94]{FOT}.  This is the total space $M$ of a fiber bundle $S^3 \to M \to S^2 \times S^2$ obtained by pulling back the Hopf fibration $S^3 \to S^7 \to S^4$ along the degree $1$ map $S^2 \times S^2 \to S^4$.

A self-map of $M$ is determined by its action on $H^2(M) \cong \mathbb Z^2$.  This is because the generators of $H^5(M)$ can be obtained from the generators of $H^2(M)$ by taking Massey products (a higher cohomology operation) of order~3.  An $L$-Lipschitz self-map takes the generators of $H^5(M)$ to vectors of length $O(L^5)$, and therefore it takes the generators of $H^2(M)$ to vectors of length $O(L^{5/3})$.  This means the degree of such a map is $O(L^{20/3}) \prec L^7$.

Something similar happens for any non-formal space: an alternate definition of formality is that a formal space has no nontrivial higher-order rational cohomology operations.

\subsection{Proof ideas}
\label{discus}
The key idea behind Theorem \ref{main} is that efficient self-maps of a formal but non-scalable space must behave nontrivially on many scales.  We explain the intuition here.

In \cite{scal}, the $o(L^n)$ upper bound for the degree of an $L$-Lipschitz map $M \to M$ is obtained by looking at the induced pullbacks of differential forms representing cohomology classes of $M$ and taking flat limits.  To get the sharper upper bound of Theorem \ref{main}, we analyze the same pullback forms using Fourier analysis, namely Littlewood--Paley theory.  
These pullback forms can be decomposed into summands concentrated in different frequency ranges.

To start to get an idea how the proof works, first imagine that all the pullback forms are concentrated in a single frequency range.  If the frequency range is high, then we got a lot of cancellation when we integrate the forms, leading to a non-trivial bound for the degree.  If the frequency range is low, then we use the fact that $M$ is not scalable to get a non-trivial bound for the degree---roughly speaking, if all the relevant forms were large and low frequency, we could use them to build a ring homomorphism from $H^*(M; \RR)$ to $\Omega^*(M)$.  

In general, the pullback forms have contributions from many frequency ranges.  We carefully break up the integral for the degree into pieces involving different frequency ranges, and we use the two ideas above to bound the pieces.  It turns out that the interaction of different frequency ranges is important in this estimate.  In the worst case, the forms have roughly equal contributions in every frequency range.  Indeed, a self-map of $M$ which comes close to the upper bound must have pieces in a wide range of frequencies (see Proposition \ref{allfreq} for a precise statement).

Let's see what such a self-map might look like in the case of $M=\#_k \CC P^2$.  We think of $M$ as a CW complex with one $0$-cell, $k$ $2$-cells, and one $4$-cell.  We construct self-maps $\arr_\ell:M \to M$ which have degree $2^{4\ell}$ on the top cell. We would like to arrange that $\arr_\ell$ has Lipschitz constant at most $C \ell \cdot 2^{\ell}$.  A naive way to build a map $\arr_\ell$ of the right degree is to start with some $\arr_1$ and iterate it $\ell$ times to get $\arr_\ell$.  In this case, $\Lip(\arr_\ell) \le \Lip(\arr_1)^\ell$.  However, $\Lip(\arr_1)$ is strictly bigger than 2 (by \cite[2.32]{GrMS}, the Lipschitz constant could only be 2 if $M = \#_k \CC P^2$ had a flat metric).  Therefore, the bound $\Lip(\arr_1)^\ell$ is too big.  By performing some optimization each time we iterate, we can bring $\Lip(\arr_\ell)$ down to the target value.

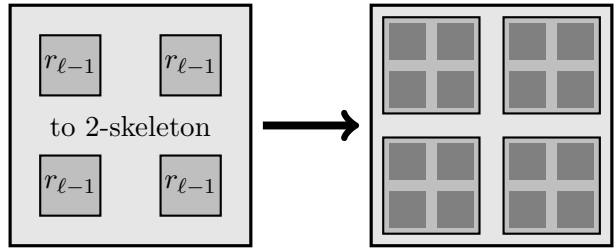
\begin{figure}
  \centering
  \begin{tikzpicture}[scale=0.8]
        \filldraw[very thick,fill=gray!20] (-2,-2) rectangle (2,2);
        \foreach \x/\y in {-1.5/-1.5, -1.5/0.5, 0.5/-1.5, 0.5/0.5} {
          \filldraw[thick,fill=gray!50] (\x,\y) rectangle (\x+1,\y+1);
          \draw (\x+0.5,\y+0.5) node {$\arr_{\ell-1}$};
        }
        \draw (0,0) node {to $2$-skeleton};
        \draw[line width=3pt,->] (2.2,0) -- (3.8,0);
        \filldraw[very thick,fill=gray!20] (4,-2) rectangle (8,2);
        \foreach \x/\y in {4.2/-1.8, 4.2/0.2, 6.2/-1.8, 6.2/0.2} {
          \filldraw[thick,fill=gray!50] (\x,\y) rectangle (\x+1.6,\y+1.6);
        }
        \foreach \x in {4.3,5.1,6.3,7.1} {
          \foreach \y in {-1.7,-0.9,0.3,1.1} {
            \fill[gray] (\x,\y) rectangle (\x+0.6,\y+0.6);
          }
        }
      \end{tikzpicture}
      \caption{Rescaling the ``layers'' of the iterated map.}
      \label{blocks}
  \end{figure}

We may build $\arr_1$, which has degree $16$, as follows: the top cell $e_4$ contains 16 cubical regions that each map homeomorphically, even homothetically, to the whole cell, whereas the area outside those cubical regions maps to the $2$-skeleton.  To try to make this map efficient, we can arrange the cubical regions in a $2 \times 2 \times 2 \times 2$ grid.  But when we iterate this map many times, the regions that map homothetically to the $4$-cell become tiny, and most of the $4$-cell maps to the $2$-skeleton.

The main idea of the construction is that we can actually expand the homothetic regions so that they take up a much larger part of the cell, while compressing the parts that map to the $2$-skeleton to a thin layer.  This has to do with the fact that self-maps of $S^2$ of high degree are easy to produce and modify.  In the end, each of the $\ell$ iterations contributes a layer of roughly the same thickness, leading to an estimate of $O(\ell \cdot 2^\ell)$ for the Lipschitz constant, or $O(d^{1/4}\log d)$ in terms of the degree $d=2^{4\ell}$.  See Figure \ref{blocks} for a rough illustration.

The proof of the lower bound of Theorem \ref{main} is a straightforward generalization of this idea.

To end this introduction, we consider the Littlewood--Paley pieces of the differential forms from this map and from other maps we have discussed.  For simplicity, let us first discuss a self-map $S^2 \to S^2$ with degree $2^{2p}$ and Lipschitz constant $2^p$.  The pullback of the volume form is very repetitive, so that after averaging on scale $2^{-p}$ it becomes essentially constant.  Therefore, the Littlewood--Paley pieces of the pullback are large at the highest frequency scale $2^p$ and at frequency 1, but they can be very small at all the in-between frequencies.

The maps between scalable spaces constructed in \cite{scal} have a similar Littlewood--Paley profile.  These maps are highly regular ``rescalings''.  In fact, we prove Theorem \ref{optconj} by building maps which are modeled on constant forms---the lowest possible frequency.  Such maps are built on each cell and patched together using previous results from quantitative homotopy theory.  The patching introduces high frequency pieces, but there don't need to be any contributions from the intermediate frequencies.

The Littlewood--Paley decomposition for the self-map of $\#_k \CC P^2$ sketched above is very different.  The outermost layer is dominated by very low-frequency terms (at scale around the diameter of the space) and very high-frequency terms (at scale $\sim 2^{-\ell}$).  Similarly, the $k$th layer, which looks like the outermost layer but on a different scale, is dominated by terms at scale $2^{-k}$ and $2^{-\ell}$.  Overall, the map has pieces at every frequency range, as suggested by its fractal-like self-similarity.

\subsection{Structure of the paper}
Section \ref{S:Fourier} contains the Fourier-analytic proof of the upper bound of Theorem \ref{main}; it is independent of the remainder of the paper.  Section \ref{S:lower} discusses the corresponding lower bound, and is likewise largely self-contained.  Section \ref{S:RHT} introduces some necessary results from rational and quantitative homotopy theory.  In Section \ref{S:optconj}, we use this machinery to prove Theorems \ref{optconj} and \ref{elliptic}, and in Section \ref{S:homotopies}, we use it to prove Theorem \ref{main:homotopies}.  Finally, in Section \ref{S:NF}, we discuss what our techniques can say about non-formal spaces, proving Theorem \ref{main:NF} as well as some complementary bounds.

\subsection{Acknowledgements} The second author is supported by a Simons Investigator Award.  The third author was partially supported by NSF individual grant DMS-2001042.  We are grateful to an anonymous referee for a number of comments, including one catching a significant error.

\section{Upper bounds on degree using Fourier analysis} \label{S:Fourier}



In this section, we show the upper bound of Theorem \ref{main}.  To introduce the method, we first handle the case of a connected sum of $\CC P^2$s:

\begin{thm} \label{cp2k}
  Let $X_k = \#_k \CC P^2$.  Fix a metric $g$ on $X_k$.  Suppose that $f: X_k \rightarrow X_k$ is $L$-Lipschitz.  If $k \ge 4$, then
  \[\deg (f) \le C(k, g) L^4 (\log L)^{-1/2}.\]
\end{thm}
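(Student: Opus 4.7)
The plan is to Fourier-analyze the pullback 2-forms and use the algebraic failure of $X_k$ to be scalable (for $k \ge 4$) as a quantitative obstruction at each frequency scale.

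First I would choose a basis $\alpha_1,\dots,\alpha_k$ of $H^2(X_k;\RR)$ diagonalizing the (positive-definite) intersection form to the identity, with smooth representatives $\omega_i$, so that $\int_{X_k}\omega_i\wedge\omega_j = V\delta_{ij}$ for a constant $V=V(g)$. Setting $\eta_i := f^*\omega_i$, each $\eta_i$ is a closed 2-form representing $f^*\alpha_i$ with $\|\eta_i\|_\infty \lesssim L^2$. Since the top cohomology class is, up to a constant, $\sum_i \alpha_i\smile\alpha_i$, we have
\[\deg(f) = \frac{1}{kV}\sum_i\int_{X_k}\eta_i\wedge\eta_i.\]
The trivial pointwise estimate $|\eta_i\wedge\eta_i|\le |\eta_i|^2\,\vol$ recovers only the $L^4$ bound, and must be beaten by a factor of $\sqrt{\log L}$.

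Next I would decompose $\eta_i = \sum_{a=0}^{A}\eta_i^{(a)}$ into Littlewood--Paley pieces localized at spatial frequency $\sim 2^a$, with $A := \lfloor \log_2 L\rfloor$, using a heat-kernel-adapted smoother on forms. These pieces satisfy $\|\eta_i^{(a)}\|_\infty \lesssim L^2$, smoothness bounds $\|\nabla^m\eta_i^{(a)}\|_\infty \lesssim 2^{am}L^2$, and a Plancherel-type identity $\sum_a\|\eta_i^{(a)}\|_{L^2}^2 \approx \|\eta_i\|_{L^2}^2 \lesssim L^4$. Because $\eta_i$ is closed, integration by parts against the smoother forces the mixed-frequency integrals $\int\eta_i^{(a)}\wedge\eta_i^{(b)}$ to decay exponentially in $|a-b|$, so up to a negligible error
\[\deg(f) \approx \frac{1}{kV}\sum_{a=0}^{A}\sum_i\int_{X_k}\eta_i^{(a)}\wedge\eta_i^{(a)}.\]

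At a single frequency scale, the non-scalability of $X_k$ for $k\ge 4$ enters algebraically. The wedge pairing on $\Lambda^2\RR^4$ has signature $(3,3)$, with a $3$-dimensional self-dual subspace $\Lambda_+^2\RR^4$; consequently, for any $k$-tuple of forms $\bar\eta_1,\dots,\bar\eta_k\in\Lambda^2\RR^4$ satisfying a bounded-condition-number constraint appropriate to their role as a would-be ring-homomorphism image of a basis of $H^2(X_k)$, there is a strict algebraic gap of the form
\[\sum_i \bar\eta_i\wedge\bar\eta_i \le (1-\delta(k))\sum_i|\bar\eta_i|^2\cdot\vol_{\RR^4}\]
for some $\delta(k)>0$. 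Since $\eta_i^{(a)}$ is nearly constant on balls of radius $2^{-a}$, this pointwise gap transfers (modulo controlled errors from the constant-coefficient approximation) into a scale-by-scale $L^2$ estimate $\sum_i\int\eta_i^{(a)}\wedge\eta_i^{(a)}\le (1-\delta)\sigma_a^2$, where $\sigma_a^2:=\max_i\|\eta_i^{(a)}\|_{L^2}^2$.

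A uniform multiplicative gap $(1-\delta)$ at each scale only improves the trivial bound by a constant, so the $\sqrt{\log L}$ saving must come from trading off two regimes: a stronger $L^\infty$-based bound effective when most of the mass $\sigma_a^2$ is concentrated in a few scales, against a Cauchy--Schwarz estimate against $\sum_a\sigma_a^2\lesssim L^4$ which is effective when the mass is spread across all $\sim \log L$ scales. The hardest step will be the algebraic non-scalability lemma in Step 3 together with its coupling to Step 4: making the gap quantitative at a single scale, transferring it faithfully from constant-coefficient forms on $\RR^4$ to frequency-localized forms on $X_k$ (where only closedness, not exact constancy, controls the cross-terms) without eroding the gap, and then optimizing the resulting scale-by-scale estimates against the total-mass bound to extract exactly the factor $(\log L)^{-1/2}$.
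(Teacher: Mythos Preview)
Your outline has the right themes but a genuine gap in the mechanism. The diagonal decomposition $\deg(f)\approx\sum_a\int_{X_k}\eta_i^{(a)}\wedge\eta_i^{(a)}$ is vacuous on a closed manifold: with a heat-kernel Littlewood--Paley on forms, each $\eta_i^{(a)}$ for $a>0$ is exact (the harmonic part of a closed form sits entirely at frequency~$0$), so every term $\int_{X_k}\eta_i^{(a)}\wedge\eta_i^{(b)}$ with $\max(a,b)>0$ vanishes and the whole degree collapses into the single $a=0$ piece. The paper avoids this by working in charts on $\RR^4$ against a cutoff $\psi_U$, and the resulting $d\psi_U$ boundary terms are what carry all the information.

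More seriously, your algebraic lemma $\sum_i\bar\eta_i\wedge\bar\eta_i\le(1-\delta)\sum_i|\bar\eta_i|^2\,\vol$ is false as stated (take all $\bar\eta_i$ equal to a single self-dual form), and you never invoke the ingredient that makes a correct version bite: the cohomological relations. The paper's pointwise lemma for $k\ge 4$ says instead that $|b_1\wedge b_1|\lesssim\sum_{i\neq j}|b_i\wedge b_j|+|b_i\wedge b_i-b_j\wedge b_j|$; the right-hand side is small \emph{because} the forms $\alpha_i\wedge\alpha_j$ and $\alpha_i\wedge\alpha_i-\alpha_j\wedge\alpha_j$ are exact on $X_k$, so their pullbacks have primitives of size $L^3$ rather than $L^4$, yielding $\lVert P_{\le\bar\ell}(a_i\wedge a_j)\rVert_\infty\lesssim 2^{\bar\ell}L^3$. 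One then compares $P_{\le\bar\ell}a_i\wedge P_{\le\bar\ell}a_j$ with $P_{\le\bar\ell+3}(a_i\wedge a_j)$ via a paraproduct expansion; the high--high--to--low corrections $P_{\le\bar\ell+3}(P_{k_1}a_i\wedge P_{\le k_1}a_j)$ with $k_1>\bar\ell$ dominate and are bounded by $2^{\bar\ell-k_1}L^2\lVert P_{k_1}a\rVert_{L^1}$. Averaging over $\bar\ell$ and Cauchy--Schwarz against $\sum_{k_1}\lVert P_{k_1}a\rVert_{L^2}^2\lesssim L^4$ produces the $(\log L)^{-1/2}$. Neither the primitives-of-relations input nor the paraproduct comparison appears in your sketch, and without them there is no way to turn the $k\ge 4$ obstruction into a quantitative gain.
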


We then use the same method to prove the general result:

\begin{thm} \label{gendegbound}
  Suppose that $M$ is a closed connected oriented $n$-manifold such that $H^*(M; \RR)$ does not embed into $\Lambda^* \RR^n$, and $N$ is any closed oriented $n$-manifold.  Then there exists $\alpha(M) > 0$ so that for any metric $g$ on $M$ and $g'$ on $N$ and any $L$-Lipschitz map $f: N \rightarrow M$,
  \[\deg(f) \le C(M,g,N,g') L^n (\log L)^{- \alpha(M)}.\]
\end{thm}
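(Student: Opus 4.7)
The plan is to generalize the Fourier-analytic method of Theorem~\ref{cp2k}. Choose closed smooth forms $\omega_1,\dots,\omega_r$ representing ring generators $\alpha_1,\dots,\alpha_r \in H^{\ge 1}(M;\RR)$ together with a polynomial expression $P(\alpha_1,\dots,\alpha_r) = \sum_{\mathbf i} c_{\mathbf i}\,\alpha_{i_1}\smile\cdots\smile\alpha_{i_m}$ representing the Poincar\'e-dual fundamental class $[M]^\vee \in H^n(M;\RR)$. Then the degree can be read off from $\int_N f^*\bigl(\sum_{\mathbf i} c_{\mathbf i}\,\omega_{i_1}\wedge\cdots\wedge\omega_{i_m}\bigr)$, so it suffices to bound the right-hand side. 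In a finite atlas on $N$, apply a smooth Littlewood--Paley decomposition $1 = \sum_{j=0}^J \chi_j(D)$ with $\chi_j$ a Fourier multiplier localized at frequencies $\sim 2^j$; the $L$-Lipschitz hypothesis makes contributions past $J\sim\log L$ negligible. Setting $\eta_i^{(j)} := \chi_j(D)f^*\omega_i$, expand
\[\int_N f^*\!\!\sum_{\mathbf i} c_{\mathbf i}\,\omega_{i_1}\wedge\cdots\wedge\omega_{i_m} = \sum_{\mathbf j}\sum_{\mathbf i} c_{\mathbf i}\int_N \eta_{i_1}^{(j_1)}\wedge\cdots\wedge\eta_{i_m}^{(j_m)}.\]

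Split the sum over $\mathbf j$ into unbalanced and balanced regimes. On unbalanced $\mathbf j$, where the largest entry $j^*$ exceeds the next-largest $j^{**}$ by more than a fixed constant, the Fourier supports of the factors cannot combine to zero frequency; integration by parts on the high-frequency factor, iterated many times, yields cancellation of order $2^{-N(j^*-j^{**})}$ for any $N$, so the unbalanced contribution forms a geometric series dominated by the balanced one. On balanced $\mathbf j$, where all entries lie within a bounded window of a common scale $k$, the trivial bound $\|\eta_i^{(k)}\|_\infty \le C L^{d_i}$ gives $\lesssim L^n$ per scale, with $\sim\log L$ scales in total; the $(\log L)^{-\alpha(M)}$ saving must come from a uniform multiplicative improvement in this balanced regime.

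That improvement is where the non-embedding hypothesis enters. At $x\in N$, set $v_i(x) := 2^{-k d_i}\eta_i^{(k)}(x) \in \Lambda^{d_i} T_x^*N \cong \Lambda^{d_i}\RR^n$. Because the $\omega_i$ are closed and the Littlewood--Paley pieces at scale $k$ multiply pointwise under $\wedge$ at that scale up to off-diagonal leakage already absorbed in the previous step, the tuple $(v_i(x))_i$ must approximately satisfy the defining relations of the ring $H^*(M;\RR)$. If the relations held exactly, the assignment $\alpha_i \mapsto v_i(x)$ would extend to a graded ring homomorphism $h_x : H^*(M;\RR)\to \Lambda^*\RR^n$; by Poincar\'e duality on the source such an $h_x$ is either injective or sends $[M]^\vee$ to $0$, and the non-embedding hypothesis rules out injectivity, forcing $P(v_1(x),\dots,v_r(x)) = h_x([M]^\vee) = 0$. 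A compactness argument on the real algebraic variety of tuples in $\bigoplus_i \Lambda^{d_i}\RR^n$ satisfying the relations of $H^*(M;\RR)$ then yields a uniform $\delta = \delta(M) > 0$ with $|P(v_1(x),\dots,v_r(x))| \le (1-\delta)$ times the trivial pointwise bound. Multiplying these $(1-\delta)$-factors over the $\sim\log L$ balanced scales and adding the geometric off-diagonal tail produces $\deg(f) \lesssim L^n(\log L)^{-\alpha(M)}$ with $\alpha(M)$ an explicit function of $\delta(M)$.

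The main obstacle will be making the ``relations up to off-diagonal error'' step precise: one must show that Littlewood--Paley projection essentially commutes with wedge product on its own scale, with all deviations living in frequency bands already tamed by the off-diagonal estimate, so that the non-embedding hypothesis can be invoked as a genuine pointwise constraint without circular bookkeeping. This requires careful tracking of the tails of the cutoffs $\chi_j$ and of how closedness of $f^*\omega_i$ propagates through wedge products; the resulting $\alpha(M)$ will depend quantitatively on $\delta(M)$, on the Fourier-analytic constants, and on the combinatorics of how $[M]^\vee$ is written as a polynomial in the generators.
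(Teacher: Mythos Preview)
The central step---``multiplying these $(1-\delta)$-factors over the $\sim\log L$ balanced scales''---does not work. You have decomposed the degree integral as a \emph{sum} over frequency windows; a uniform $(1-\delta)$ improvement on each summand yields only a $(1-\delta)$ improvement on the total, not $(1-\delta)^{\log L}$. Indeed, your own accounting (``$\lesssim L^n$ per scale, with $\sim\log L$ scales'') already sums to $L^n\log L$, larger than the trivial degree bound $L^n$, so the naive decomposition is losing a logarithm before you start trying to save one. There is no mechanism in your outline by which per-scale savings compound, and no appearance of $L^2$-orthogonality of Littlewood--Paley pieces, which is where the actual saving comes from.

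The paper's route is structurally different. It does not sum contributions over scales; it fixes one cutoff $k$, writes the degree as $\int P_{\le k} a_{\topp}$ plus a high-frequency tail $O(2^{-k}L^n)$, and bounds the low-frequency integrand pointwise by applying the non-embedding hypothesis to the \emph{cumulative} pieces $P_{\le k} a_j$ (not single-scale $P_k a_j$). This requires a quantitative version of your compactness step, supplied by the real Nullstellensatz (Lemma~\ref{topvsrel}): if $|\beta_j|\le 1$ and $|R_r(\vec\beta)|\le\eps$ then $|\beta_{\topp}|\le C_M\,\eps^{1/(2m)}$ for some integer $m$ depending only on $H^*(M;\RR)$. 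A bare $(1-\delta)$ is not enough, because the relation defects are not zero: $R_r(P_{\le k}a)$ equals $P_{\le k+c}R_r(a)$, of size $O(2^kL^{-1})$ since $R_r(\omega)$ is exact on $M$, plus high--high correction terms of size $\sum_{k'>k}2^{k-k'}\|P_{k'}a\|_{L^1}$. The resulting inequality $\bigl(\int P_{\le k}a_{\topp}\bigr)^{2m}\lesssim 2^kL^{-1}+\sum_{k'>k}2^{k-k'}\|P_{k'}a\|_{L^1}$ holds for \emph{every} $k$ in a range of length $\sim\log L$; summing over $k$ and invoking $\sum_{k'}\|P_{k'}a\|_{L^2}^2\lesssim\|a\|_{L^2}^2$ with Cauchy--Schwarz forces some $k$ to give a bound of order $(\log L)^{-1/2}$, and the $2m$-th root yields $\alpha(M)=\tfrac{1}{4m}$.
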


Note that by Theorem \ref{optconj}, proved later in the paper, if $M$ is simply connected and formal, then this condition holds if and only if $M$ is not scalable.  However, the theorem also holds for non-formal manifolds as well as those with nontrivial fundamental group.

A similar result also holds for many non-closed domain manifolds.  We give the proof for a unit ball, although it extends easily to any compact manifold with boundary:
\begin{thm} \label{balldegbound}
  Suppose that $M$ is a closed connected oriented $n$-manifold such that $H^*(M; \RR)$ does not embed into $\Lambda^* \RR^n$, and let $\alpha(M)>0$ be as in the statement of Theorem \ref{gendegbound}.  Let $B^n \subseteq \RR^n$ be the unit ball.  Then for any metric $g$ on $M$ and any $L$-Lipschitz map $f:B^n \to M$,
  \[\int_{B^n} f^*d\vol_M \leq C(M,g)L^n(\log L)^{-\alpha(M)}.\]
\end{thm}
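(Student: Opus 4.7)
The plan is to mimic the Fourier-analytic proof of Theorem \ref{gendegbound}, localized to the ball $B^n$. That proof expresses the volume form $d\vol_M$ in terms of lower-degree cohomology generators $\omega_1, \ldots, \omega_m$ of $M$, considers the pullback forms $\eta_i = f^*\omega_i$, and uses a Littlewood--Paley decomposition together with the non-embedding of $H^*(M;\RR)$ into $\Lambda^*\RR^n$ to obtain a strictly sub-$L^n$ bound for the integral of the top pullback form. Two adjustments allow the same scheme to work when the domain is a ball rather than a closed manifold.

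First, since $[d\vol_M]$ is (a multiple of) the top class of $M$, we may write $d\vol_M = P(\omega_1, \ldots, \omega_m) + d\beta$ as differential forms on $M$, for some polynomial $P$ and some $(n-1)$-form $\beta$. By Stokes' theorem,
\[\int_{B^n} f^*d\vol_M = \int_{B^n} P(f^*\omega_1, \ldots, f^*\omega_m) + \int_{\partial B^n} f^*\beta,\]
and the boundary term is bounded by $\vol(\partial B^n)\cdot\|f^*\beta\|_\infty = O(L^{n-1})$, which is negligible compared to the target $L^n(\log L)^{-\alpha(M)}$.

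Second, to apply the standard Littlewood--Paley decomposition on $\RR^n$ to these pullback forms, I would first extend $f$ to an $O(L)$-Lipschitz map $\tilde f\colon 2B^n \to M$ (embed $M$ bi-Lipschitz into some $\RR^N$, extend each coordinate of $f$ by McShane's extension, and postcompose with a Lipschitz retraction from a tubular neighborhood of $M$ back to $M$), then multiply each extended pullback form $\tilde f^*\omega_i$ by a fixed smooth bump $\chi$ which equals $1$ on $B^n$ and vanishes outside $2B^n$. The resulting compactly supported forms on $\RR^n$ inherit the expected $L^\infty$ bounds $\|\chi\,\tilde f^*\omega_i\|_\infty \lesssim L^{\deg \omega_i}$. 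From here the argument of Theorem \ref{gendegbound} runs almost verbatim: high-frequency Littlewood--Paley pieces are controlled by Bernstein's inequality together with the Lipschitz bound, while the low-frequency pieces are controlled by applying the non-embedding hypothesis to produce cancellation in the wedge polynomial $P(\eta_1, \ldots, \eta_m)$. Balancing high- and low-frequency contributions over scales $1 \lesssim k \lesssim \log L$ yields the factor $(\log L)^{-\alpha(M)}$.

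The main obstacle I foresee is that the cutoff $\chi$ does not commute with wedge products, so the low-frequency cancellation that relies on algebraic relations in $\Lambda^*\RR^n$ has to be checked in the presence of the truncation. The natural remedy is to let $\chi$ vary at a fixed macroscopic scale (comparable to the radius of $B^n$), so that $\chi$ contaminates only the lowest Littlewood--Paley frequency, at which the non-embedding argument still produces cancellation; alternatively, one can apply the pointwise polynomial identity to the $\tilde f^*\omega_i$ themselves before introducing any cutoff, and restrict the resulting $L^1$ bound to the interior $B^n$. Either way, the truncation contributes only an $O(1)$ error to the final integral.
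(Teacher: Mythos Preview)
Your overall strategy—reduce to the argument of Theorem~\ref{gendegbound} by extending $f$ off the ball—is right, and is what the paper does. The gap is in how you achieve compact support. Multiplying $\tilde f^*\omega_i$ by a bump $\chi$ destroys the two structural facts that the argument rests on: the forms $a_j=\chi\,\tilde f^*\omega_j$ are no longer closed, so Lemma~\ref{prim} no longer supplies primitives of $P_k a_j$; and the relations $dg_r=R_r(a)$ fail, so the low-frequency bound of Lemma~\ref{lowfreqrel} is lost. Your proposed remedies do not repair this. The errors $d\chi\wedge\tilde f^*\omega_j$ are of size $O(1)$ after normalization and sit at frequency $\sim 1$, so they pollute exactly the low-frequency range where the non-embedding cancellation has to occur; and ``applying the pointwise identity before cutting off'' does not help, since Lemma~\ref{topvsrel} is applied to $P_{\le k}a_j$, and defining $P_{\le k}$ already requires the forms on all of $\RR^n$. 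There is a secondary issue too: for large $L$ the coordinatewise McShane extension can land at distance $\sim L$ from $M$, outside any fixed tubular neighborhood, so the retraction step is unavailable as written.

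The paper avoids all of this by extending the \emph{map} rather than truncating the forms, and by choosing the extension carefully. Set $\tilde f(x)=f(x/\lVert x\rVert)$ for $1\le\lVert x\rVert\le 2$; this is $2L$-Lipschitz. Since every $\tilde f^*\alpha_j$ and $\tilde f^*\gamma_r$ is now an honest pullback, closedness and the identities $d(\tilde f^*\gamma_r)=R_r(\tilde f^*\alpha)$ hold exactly---no cutoff error to manage. And since $\tilde f$ factors through $S^{n-1}$ on the annulus, it has rank $\le n-1$ there, so $\tilde f^*d\vol_M$ vanishes identically on $B_2(0)\setminus B^n$. Hence for any smooth $\psi$ supported in $B_2(0)$ with $\psi\equiv 1$ on $B^n$,
\[\int_{B_2(0)}\psi\,\tilde f^*d\vol_M=\int_{B^n}f^*d\vol_M,\]
and the left-hand side is precisely the quantity the proof of Theorem~\ref{gendegbound} already bounds. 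Your Stokes decomposition and boundary term become unnecessary.
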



As discussed in the introduction, we prove these results by using Littlewood--Paley theory to divide the forms into pieces at different frequency ranges.
In the first subsection, we review the tools from Littlewood--Paley theory that we need.  In the second part, we prove Theorem \ref{cp2k}.  In the third part, we introduce the modifications needed to prove the more general estimate in Theorem \ref{gendegbound}.

\subsection{Littlewood--Paley theory}

If $a$ denotes a differential form on $\RR^d$, then we can define its Fourier transform term by term.  In other words, if $I$ is a multi-index and  $ a = \sum_I a_I(x) dx^I$, then
\[\hat a := \sum_I \hat a_I dx^I.\]

To set up Littlewood--Paley theory, pick a partition of unity on Fourier space:
\[\sum_{k \in \ZZ} \eta_k (\xi) := 1,\]
where $\eta_k$ is supported in the annulus $\Ann_k := \{ \xi: 2^{k-1} \le |\xi| \le 2^{k+1} \}$.  We can also arrange that $0 \le \eta_k \le 1$, and that $\eta_k$ are smooth with appropriate bounds on their derivatives.

Then define
\[P_k a := ( \eta_k  \hat a)^{\vee},\]
where $\vee$ denotes the inverse Fourier transform.  We have $a = \sum_{k \in \ZZ} P_k a$, and we know that $\widehat{P_k a} = \eta_k \hat a$ is supported in $\Ann_k$.

We also write $P_{\le k} a = \sum_{k' \le k} P_{k'} a$, and $\eta_{\le k} = \sum_{k' \le k} \eta_k$, so $P_{\le k} a = (\eta_{\le k} \hat a)^\vee$.  

We say that a form $a = \sum_I a_I(x) dx^I$ is Schwartz if each function $a_I(x)$ is Schwartz.  A form $a$ is Schwartz if and only if $\hat a$ is Schwartz.  Therefore, if $a$ is Schwartz, then $P_k a$ and $P_{\le k} a$ are also Schwartz.

In this section, we review some estimates related to the $P_k a$.  These results are proven using some inequalities about the inverse Fourier transform of smooth bump functions.

\begin{lem} \label{etaveebound1} Suppose that $\eta(\omega)$ is a smooth function supported on a ball $B\subset \RR^d$ of radius~1 such that
\begin{itemize}
\item $| \eta(\omega) | \le A$ for all $\omega$.
\item $| \partial_J \eta(\omega) | \le A_N$ for all multi-indices $J$ with $|J| \le N$.
\end{itemize}
Then
\begin{align*}
  \lvert\eta^\vee(x)\rvert &\lesssim_d A \qquad\text{for every } x \in \RR^d. \\
  \lvert \eta^\vee(x) \rvert &\lesssim_d A_N \lvert x \rvert^{-N}  \qquad \text{for every } x \in \RR^d.
\end{align*}
Therefore, if $N > d$,
\[ \lVert \eta^\vee \rVert_{L^1} \lesssim_d A + A_N.\]

\end{lem}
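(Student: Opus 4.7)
The plan is to treat the two pointwise bounds separately and then derive the $L^1$ estimate by splitting into near and far regions, using throughout the integral representation $\eta^\vee(x) = \int \eta(\omega)\, e^{2\pi i x \cdot \omega}\, d\omega$, so that each estimate reduces to bounding an oscillatory integral over a unit ball. The first bound is immediate: putting absolute values inside the integral and using $|\eta|\le A$ together with $\vol(B)\lesssim_d 1$ gives $|\eta^\vee(x)| \lesssim_d A$ uniformly in $x$.

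For the decay bound, I would use integration by parts to trade the factor $|x|^N$ for $N$ derivatives on $\eta$. Given $x\neq 0$, pick a coordinate $j$ with $|x_j|\ge |x|/\sqrt d$, apply the identity $e^{2\pi i x\cdot\omega} = (2\pi i x_j)^{-N}\partial_{\omega_j}^N e^{2\pi i x\cdot\omega}$, and integrate by parts $N$ times in $\omega_j$; the boundary terms vanish because $\eta$ is compactly supported inside $B$. Estimating the resulting integral as in the first step, and using $|\partial_{\omega_j}^N \eta|\le A_N$, yields $|\eta^\vee(x)| \lesssim_d A_N |x|^{-N}$.

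For the $L^1$ statement I would split $\RR^d$ into $\{|x|\le 1\}$ and $\{|x|>1\}$, applying the first bound on the former (contribution $\lesssim_d A$) and the second on the latter (contribution $\lesssim_d A_N \int_1^\infty r^{d-1-N}\, dr$, finite exactly when $N>d$). There isn't really a serious obstacle here --- everything is standard Schwartz-class bookkeeping; the only point that requires care is tracking that all implicit constants depend on $d$ alone, which reduces to the observation that the coordinate choice $|x_j|\ge |x|/\sqrt d$ and the volume estimate $\vol(B)\lesssim_d 1$ involve only the ambient dimension.
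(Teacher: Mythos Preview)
Your proof is correct and matches the paper's argument essentially line for line: both use the trivial $L^\infty$ bound via $\vol(B)\lesssim_d 1$, integration by parts $N$ times in a direction where $|x_j|\gtrsim_d |x|$ (the paper phrases this as choosing a multi-index $J$ with $|J|=N$ and $|x|^N\sim x^J$, which amounts to the same thing), and the near/far split at $|x|=1$ for the $L^1$ bound.
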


\begin{proof}
For the first bound, we write
\[|\eta^\vee(x)| = | {\textstyle\int \eta(\omega) e^{2 \pi i \omega x} d \omega} | \le {\textstyle\int} | \eta| \le |B| A.\]

For the second bound, we integrate by parts $N$ times.  For a given $x \in \RR^d$, we choose a multi-index $J$ with $|J| =N$ and $|x|^N \sim x^J$.  Then
\[|\eta^\vee(x)| = \bigl\lvert {\textstyle\int \eta(\omega) e^{2 \pi i \omega x} d \omega} \bigr\rvert = \bigl\lvert{\textstyle\int \partial_J \eta (2 \pi i)^{-N} x^{-J} e^{2 \pi i \omega x} d \omega} \bigr\rvert \lesssim |x|^{-N} {\textstyle\int |\partial_J \eta|} \le \lvert x \rvert^{-N} \lvert B \rvert A_N.\]
To bound $\int |\eta^\vee(x)| dx$, we use the first bound when $|x| \le 1$ and the second bound when $|x| \ge 1$.
\end{proof}

\begin{lem} \label{etaveeboundR} Suppose that $\eta(\omega)$ is a smooth function supported on a ball $B\subset \RR^d$ of radius R such that
\begin{itemize}
\item $| \eta(\omega) | \le A$ for all $\omega$.
\item $| \partial_J \eta(\omega) | \le A_N R^{-|J|}$ for all multi-indices $J$ with $|J| \le N$.
\end{itemize}
Then
\begin{align*}
  | \eta^\vee(x) | &\lesssim_d A R^d \qquad\textrm{ for every } x \in \RR^d. \\
  | \eta^\vee(x) | &\lesssim_d A_N R^d \lvert Rx \rvert^{-N} \qquad\textrm{ for every } x \in \RR^d.
\end{align*}
Therefore, if $N > d$,
\[\|  \eta^\vee \|_{L^1} \lesssim_d A + A_N.\]

\end{lem}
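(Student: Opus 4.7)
The plan is to reduce the statement to the already-proved radius-$1$ case (Lemma \ref{etaveebound1}) by a rescaling change of variables. Define $\tilde\eta(\xi) := \eta(R\xi)$, so that $\tilde\eta$ is supported on a ball of radius $1$. The chain rule gives $\partial_J \tilde\eta(\xi) = R^{|J|}(\partial_J \eta)(R\xi)$, so the hypotheses $|\eta|\le A$ and $|\partial_J \eta|\le A_N R^{-|J|}$ translate exactly to $|\tilde\eta|\le A$ and $|\partial_J \tilde\eta|\le A_N$. Thus $\tilde\eta$ satisfies the hypotheses of Lemma \ref{etaveebound1} with the same constants $A$ and $A_N$.

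Next I would transfer the bounds on $\tilde\eta^\vee$ to bounds on $\eta^\vee$ using the scaling rule for the (inverse) Fourier transform. A direct computation shows
\[\eta^\vee(x) = \int \tilde\eta(\omega/R)\,e^{2\pi i \omega\cdot x}\,d\omega = R^d\, \tilde\eta^\vee(Rx).\]
Combining this identity with the two pointwise estimates from Lemma \ref{etaveebound1} for $\tilde\eta^\vee$ immediately yields
\[|\eta^\vee(x)| \lesssim_d A\, R^d \quad\text{and}\quad |\eta^\vee(x)| \lesssim_d A_N\, R^d |Rx|^{-N},\]
as claimed.

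For the $L^1$ bound, the same change of variables $y = Rx$ gives
\[\|\eta^\vee\|_{L^1(\RR^d)} = \int R^d |\tilde\eta^\vee(Rx)|\,dx = \int |\tilde\eta^\vee(y)|\,dy = \|\tilde\eta^\vee\|_{L^1(\RR^d)},\]
so the $L^1$-norm is scale-invariant, and the bound $\|\tilde\eta^\vee\|_{L^1}\lesssim_d A + A_N$ from Lemma \ref{etaveebound1} transfers directly. There is no real obstacle here: the only thing to be careful about is matching the powers of $R$ in the derivative hypothesis with the Jacobian factors produced by the rescaling, which is precisely what makes the stated bounds scale-invariant in the right way.
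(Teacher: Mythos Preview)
Your proof is correct and follows essentially the same approach as the paper: the paper also obtains the pointwise bounds by reducing to Lemma~\ref{etaveebound1} via a change of variables. For the $L^1$ bound, the paper re-splits the integral at the scale $|x|=1/R$, whereas you observe directly that the $L^1$ norm is scale-invariant under the substitution $y=Rx$ and hence inherit the $L^1$ bound from Lemma~\ref{etaveebound1}; your route is slightly cleaner but amounts to the same computation.
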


\begin{proof}
The first two bounds follow from Lemma \ref{etaveebound1} by a change of variables.  Alternatively, one can use the same method as in Lemma \ref{etaveebound1}.

To bound $\int |\eta^\vee(x)| dx$, we use the first bound when $|x| \le 1/R$ and the second bound when $|x| \ge 1/R$.
\end{proof}

\begin{lem} \label{multiplierbound}
  Suppose that $\eta(\omega)$ is a smooth function supported on a ball $B\subset \RR^d$ of radius R such that
\begin{itemize}
\item $| \eta(\omega) | \le A$ for all $\omega$.
\item $| \partial_J \eta(\omega) | \le A_N R^{-|J|}$ for all multi-indices $J$ with $|J| \le N$.
\end{itemize}
Write $M f = \bigl( \eta \hat f \bigr)^\vee$.  Then if $N>d$,
\[\| M f \|_{L^p} \lesssim_d (A + A_N) \| f \|_{L^p}\text{ for every }1 \le p \le \infty.\]
\end{lem}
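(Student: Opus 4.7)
The plan is to realize $M$ as a convolution operator on the physical side and then chain Young's inequality with the $L^1$ bound on $\eta^\vee$ that Lemma \ref{etaveeboundR} already supplies. Since the Fourier transform intertwines multiplication with convolution, for $f$ in a dense class (say Schwartz) one has
\[Mf = (\eta\,\hat f)^\vee = \eta^\vee * f.\]
Young's convolution inequality then yields, for every $1 \le p \le \infty$,
\[\|Mf\|_{L^p} = \|\eta^\vee * f\|_{L^p} \le \|\eta^\vee\|_{L^1}\,\|f\|_{L^p}.\]

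The hypotheses on $\eta$ are exactly those of Lemma \ref{etaveeboundR}, which, under the assumption $N > d$, gives $\|\eta^\vee\|_{L^1} \lesssim_d A + A_N$. Substituting this bound into the above inequality delivers the conclusion. To pass from Schwartz $f$ to arbitrary $f \in L^p$, one argues by density for $1 \le p < \infty$, while for $p = \infty$ the fact that $\eta^\vee \in L^1$ ensures that $\eta^\vee * f$ is well defined pointwise with the same bound.

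I do not expect any genuine obstacle: the whole content of the lemma is a routine packaging of Young's inequality with Lemma \ref{etaveeboundR}, and the only thing to be mildly careful about is the endpoint extension, which is standard.
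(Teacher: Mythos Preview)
Your proof is correct and matches the paper's approach exactly: write $Mf = \eta^\vee * f$, apply Young's inequality, and invoke the $L^1$ bound on $\eta^\vee$ from Lemma~\ref{etaveeboundR}. The density remarks you add are extra care the paper omits, but the core argument is identical.
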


\begin{proof} We have $Mf = f * \eta^\vee$.  So $\| M f \|_{L^p} \le \| f \|_{L^p} \| \eta^\vee \|_{L^1}$.  Now apply the bound for $\| \eta^\vee \|_{L^1}$ from Lemma \ref{etaveeboundR}.
\end{proof}

We apply these bounds to study the Littlewood--Paley projections $P_k$.  

\begin{lem} \label{etaveebound}
  $\| \eta_k^\vee \|_{L^1} \lesssim 1$ uniformly in $k$.   $\| d \eta_k^\vee \|_{L^1} \lesssim 2^k$ uniformly in $k$.
\end{lem}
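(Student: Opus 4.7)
The plan is to derive both bounds as direct applications of Lemma \ref{etaveeboundR}, using the scaling built into the Littlewood--Paley partition. Recall that $\eta_k$ is supported in the annulus $\{2^{k-1} \le |\xi| \le 2^{k+1}\}$, hence in a ball of radius $R \sim 2^k$, and that the construction (a dyadic rescaling of a fixed bump $\eta_0$) gives the uniform derivative bounds $|\eta_k| \le 1$ and $|\partial_J \eta_k| \lesssim_{N} 2^{-k|J|}$ for all multi-indices $J$ with $|J| \le N$.

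For the first estimate, apply Lemma \ref{etaveeboundR} directly to $\eta_k$ with $R \sim 2^k$, $A \le 1$, and $A_N \lesssim 1$. Choosing any $N>d$, the lemma yields $\|\eta_k^\vee\|_{L^1} \lesssim 1$, uniformly in $k$. (This also matches the naive scaling: $\eta_k^\vee(x) = 2^{kd}\eta_0^\vee(2^k x)$, whose $L^1$ norm is $\|\eta_0^\vee\|_{L^1}$ by a change of variables.)

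For the second estimate, use that on the Fourier side differentiation becomes multiplication: $\partial_i \eta_k^\vee = (2\pi i \xi_i \eta_k)^\vee$, so it suffices to control $\|(\xi_i \eta_k)^\vee\|_{L^1}$ for each $i$. The symbol $\xi_i \eta_k$ is still supported in the same ball of radius $R \sim 2^k$, and since $|\xi| \le 2^{k+1}$ there, we have $|\xi_i \eta_k| \lesssim 2^k$. By the Leibniz rule, derivatives of $\xi_i$ of order $\geq 2$ vanish, so
\[\partial_J(\xi_i \eta_k) = \xi_i\, \partial_J \eta_k + \sum_{j \in J} (\partial_j \xi_i)\, \partial_{J\setminus j}\eta_k,\]
and both terms are bounded by $2^k \cdot 2^{-k|J|} = 2^{k(1-|J|)}$. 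Thus $\xi_i \eta_k$ satisfies the hypotheses of Lemma \ref{etaveeboundR} with $A \lesssim 2^k$ and $A_N \lesssim 2^k$, giving $\|(\xi_i\eta_k)^\vee\|_{L^1} \lesssim 2^k$. Summing over the $d$ components of $d\eta_k^\vee$ yields $\|d\eta_k^\vee\|_{L^1} \lesssim 2^k$.

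There is no real obstacle here; the only mild bookkeeping step is the Leibniz computation verifying that $\xi_i \eta_k$ satisfies the scaled derivative hypothesis of Lemma \ref{etaveeboundR} with constants $\lesssim 2^k$ rather than worse. Everything else is a mechanical invocation of the already-established lemma.
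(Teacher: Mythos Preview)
Your proof is correct and essentially identical to the paper's. The only cosmetic difference is that the paper factors out $2^k$ first, writing $2\pi i \omega_j \eta_k = 2^k \psi$ with $\psi$ satisfying the hypotheses of Lemma \ref{etaveeboundR} with $A, A_N \lesssim 1$, whereas you apply the lemma directly with $A, A_N \lesssim 2^k$; these are trivially equivalent.
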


\begin{proof}
  We can first arrange that $\eta_k(\omega) = \eta_0(2^{-k} \omega)$.  Then the function $\eta_k$ obeys the hypotheses of Lemma \ref{etaveeboundR} with $R = 2^k$, with bounds that are uniform in $k$.  Then Lemma \ref{etaveeboundR} gives the estimate $\| \eta_k^\vee \|_{L^1} \lesssim_d 1$.  

Next, we will show that $\| \partial_j \eta_k^\vee \|_{L^1} \lesssim_d 2^k$.  This will imply $\| d \eta_k^\vee \|_{L^1} \lesssim_d 2^k$ as desired.

The Fourier transform of $\partial_j \eta_k^\vee$ is $2 \pi i \omega_j \eta_k(\omega)$.  Notice that $|\omega_j| \lesssim 2^k$ on $\Ann_k$.  We write
\[2 \pi i \omega_j \eta_k = 2^k \cdot \underbrace{ 2 \pi i \frac{\omega_j}{2^k} \eta_k }_{\psi}.\]

The function $\psi$ obeys the hypotheses of Lemma \ref{etaveeboundR}.  Therefore, $\| \psi^\vee \|_{L^1} \lesssim_d 1.$  And so
\[\| \partial_j \eta_k^\vee \|_{L^1} = 2^k \| \psi^\vee \|_{L^1} \lesssim 2^k. \qedhere\]
\end{proof}

\begin{lem} \label{Pkbound}
  $\| P_k a \|_{L^p} \le C \| a \|_{L^p}$, for all $k$ and all $1 \le p \le \infty$ with a uniform constant $C$.
\end{lem}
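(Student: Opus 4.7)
The plan is to reduce to the scalar case and apply Young's convolution inequality. Since $P_k$ is defined term-by-term on differential forms, writing $a = \sum_I a_I(x)\,dx^I$ gives $P_k a = \sum_I (P_k a_I)\,dx^I$, so it suffices to bound $\|P_k a_I\|_{L^p} \lesssim \|a_I\|_{L^p}$ for each scalar component; the lemma then follows by summing over the finite set of multi-indices $I$ and absorbing the loss into $C$.

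Next, I would observe that on scalar functions $P_k$ is a convolution: $P_k f = (\eta_k \hat f)^\vee = f * \eta_k^\vee$. Young's inequality therefore yields
\[
\|P_k f\|_{L^p} \le \|\eta_k^\vee\|_{L^1}\,\|f\|_{L^p}\quad\text{for all }1 \le p \le \infty.
\]
The key input is that $\|\eta_k^\vee\|_{L^1}$ is bounded independently of $k$, which is exactly the content of Lemma~\ref{etaveebound}. Combining gives the desired uniform bound. (Equivalently, this is just Lemma~\ref{multiplierbound} specialized to the multiplier $\eta = \eta_k$, using the fact that $\eta_k(\omega) = \eta_0(2^{-k}\omega)$ satisfies the scaling hypotheses with $R = 2^k$ and constants $A$, $A_N$ independent of $k$.)

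There is no real obstacle here: the whole point of setting up Lemmas~\ref{etaveeboundR}--\ref{etaveebound} in the previous pages was to make this boundedness statement essentially automatic. The only small care needed is the justification that $P_k$ can be realized as convolution with the $L^1$ kernel $\eta_k^\vee$ even at the endpoints $p = 1$ and $p = \infty$ (where one cannot argue via Plancherel); Young's inequality handles all $p \in [1,\infty]$ simultaneously, so this is not actually an issue. The implicit constant depends only on the dimension $d$ and on the fixed choice of partition of unity $\{\eta_k\}$, not on $k$ or $p$.
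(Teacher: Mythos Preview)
Your proof is correct and essentially identical to the paper's own argument: write $P_k a = \eta_k^\vee * a$, apply Young's inequality, and invoke Lemma~\ref{etaveebound} for the uniform bound on $\|\eta_k^\vee\|_{L^1}$. The paper just applies this directly to the form without separating out the scalar reduction, but the content is the same.
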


\begin{proof}
  $\|P_k a \|_{L^p} = \| \eta_k^\vee * a \|_{L^p} \le \| \eta_k^\vee \|_{L^1} \| a \|_{L^p}$.  Now $\| \eta_k^\vee \|_{L^1}$ is bounded uniformly in $k$ by Lemma \ref{etaveebound}.
\end{proof}

\begin{lem} \label{projcomm}
  The projection operator $P_k$ commutes with the exterior derivative $d$:
  \[d (P_k a) = P_k (da).\]
\end{lem}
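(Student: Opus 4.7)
The plan is to unwind both sides via the term-by-term definition of the Fourier transform on forms and the fact that exterior differentiation is a Fourier multiplier operator.

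Write $a = \sum_I a_I(x)\, dx^I$, where each $a_I$ is Schwartz (so $P_k a$ is defined and itself Schwartz, and all manipulations below are justified). By the definition given in the section, $P_k a = \sum_I (\eta_k \hat{a}_I)^\vee\, dx^I$. Also, $da = \sum_I \sum_j (\partial_j a_I)\, dx^j \wedge dx^I$, and the standard identity $\widehat{\partial_j f}(\omega) = 2\pi i\, \omega_j \hat{f}(\omega)$ gives
\[
\widehat{da} \;=\; \sum_I \sum_j 2\pi i\, \omega_j\, \hat{a}_I(\omega)\, dx^j \wedge dx^I.
\]

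First I would compute $d(P_k a)$. Differentiating under the inverse Fourier transform (justified by Schwartz decay), $\partial_j (\eta_k \hat{a}_I)^\vee = (2\pi i\, \omega_j\, \eta_k(\omega)\, \hat{a}_I(\omega))^\vee$, so
\[
d(P_k a) \;=\; \sum_I \sum_j \bigl(2\pi i\, \omega_j\, \eta_k(\omega)\, \hat{a}_I(\omega)\bigr)^\vee\, dx^j \wedge dx^I.
\]
Then I would compute $P_k(da)$ directly from its definition applied to the expression for $da$ above:
\[
P_k(da) \;=\; \sum_I \sum_j \bigl(\eta_k(\omega) \cdot 2\pi i\, \omega_j\, \hat{a}_I(\omega)\bigr)^\vee\, dx^j \wedge dx^I.
\]
Since $\eta_k(\omega)$ and $2\pi i\, \omega_j$ are scalar-valued functions of $\omega$, they commute as pointwise multipliers, so the two expressions agree term by term, proving $d(P_k a) = P_k(da)$.

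There is no real obstacle here; the lemma is essentially a restatement of the fact that any Fourier multiplier commutes with any constant-coefficient differential operator, and exterior differentiation is exactly such an operator once one works component by component in the standard basis $\{dx^I\}$. The only thing to be careful about is that the forms lie in a class where the Fourier manipulations are legitimate, which is given by the standing Schwartz hypothesis noted earlier in the subsection.
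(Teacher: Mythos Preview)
Your proof is correct and follows essentially the same approach as the paper: the paper observes that on the Fourier side $d$ becomes pointwise multiplication by a matrix and $P_k$ becomes pointwise multiplication by the scalar $\eta_k$, and these commute. You have simply written this out explicitly component by component.
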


\begin{proof}
  We can see this by taking the Fourier transform on both sides.  The exterior derivative $d$ becomes pointwise multiplication by a matrix on the Fourier side.  The projection operator $P_k$ becomes pointwise multiplication by the scalar $\eta_k$.  These commute.
\end{proof}

\begin{lem} \label{prim}
  Suppose that $a$ is a Schwartz form on $\RR^d$ with $da = 0$ and with $\hat a$ is supported in $\Ann_k:= \{ \xi: 2^{k-1} \le |\xi| \le 2^{k+1} \}$.  Then $a$ has a primitive, which we denote $\Prim(a)$, so that
\begin{itemize}
\item $ d \Prim(a) = a$.  (This is what the word `primitive' means.)
\item $\Prim(a)$ is a Schwartz form.
\item $ \lVert\Prim(a)\rVert_{L^p} \le C 2^{-k} \lVert a \rVert_{L^p}$ for all $1 \le p \le \infty$, with a uniform constant $C$.
\end{itemize}
\end{lem}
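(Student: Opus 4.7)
The plan is to write the primitive explicitly as $\Prim(a) := d^{*}\Delta^{-1} a$, realized as a Fourier multiplier.  Because $\hat a$ is supported in $\Ann_k$, pick a smooth bump $\tilde\eta_k$ equal to $1$ on $\Ann_k$ and supported in a slightly enlarged annulus contained in the ball $B(0, 2^{k+2})$, and define
\[\widehat{\Prim(a)}(\xi) := \tilde\eta_k(\xi)\, m(\xi)\, \hat a(\xi),\]
where $m(\xi)$ is the symbol of $d^{*}\Delta^{-1}$; concretely, $m$ is a matrix whose scalar entries have the form $c\,\xi_j/|\xi|^2$ composed with interior products $\iota_{\partial_j}$.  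Since $\hat a$ is Schwartz and $\tilde\eta_k(\xi)m(\xi)$ is smooth and compactly supported, the product is Schwartz, and hence $\Prim(a)$ is a Schwartz form.

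To verify $d\Prim(a) = a$ I would use the Hodge Laplacian identity $\Delta = dd^{*} + d^{*}d$ on $\RR^d$, which on the Fourier side reduces to the algebraic identity $(\xi\wedge)\iota_\xi + \iota_\xi(\xi\wedge) = |\xi|^2 I$.  Applying this to $\Delta^{-1}a$ gives $a = dd^{*}\Delta^{-1} a + d^{*}d\Delta^{-1} a$.  Because $\Delta^{-1}$ is a scalar Fourier multiplier, it commutes with $d$ on any form whose Fourier transform is supported away from the origin; using $da = 0$ gives $d\Delta^{-1}a = \Delta^{-1}(da) = 0$, so the identity collapses to $a = d(d^{*}\Delta^{-1}a) = d\Prim(a)$.

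For the $L^p$ bound I would apply Lemma \ref{multiplierbound} to the multiplier $\tilde\eta_k\cdot m$ on the ball of radius $R = 2^{k+2}$.  On the support of $\tilde\eta_k$ we have $|\xi|\sim 2^k$, so each scalar entry $\xi_j/|\xi|^2$ has magnitude $O(2^{-k})$, and is homogeneous of degree $-1$, so its derivatives of order $|J|$ are bounded by $O(|\xi|^{-1-|J|}) = O(2^{-k}\,R^{-|J|})$; the cutoff $\tilde\eta_k$ contributes derivative losses also of size $O(R^{-|J|})$.  Leibniz gives $|\partial^J(\tilde\eta_k m)|\lesssim 2^{-k}R^{-|J|}$, so the hypotheses of Lemma \ref{multiplierbound} are met with $A+A_N\lesssim 2^{-k}$, yielding $\|\Prim(a)\|_{L^p}\lesssim 2^{-k}\|a\|_{L^p}$ uniformly in $1\le p\le\infty$.

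The only real hazard is the matrix-valued bookkeeping hidden in $m$: because $d^{*}\Delta^{-1}$ mixes form components via $\iota_{\partial_j}$, one must verify the derivative estimates entrywise.  But every scalar entry has the same shape $c\,\xi_j/|\xi|^2$ and gives the same bound, so no serious obstacle arises; the argument is really a direct application of Lemma \ref{multiplierbound} once the symbol of $d^{*}\Delta^{-1}$ is written out.
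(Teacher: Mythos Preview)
Your proof is correct and takes a genuinely different route from the paper. The paper covers $\Ann_k$ by finitely many balls $B$ on each of which some coordinate satisfies $\omega_1\sim 2^k$, splits $a=\sum_B a_B$ via a partition of unity, and on each piece uses the classical Poincar\'e-lemma primitive $\Prim(a_B)=\sum_{I=1\cup J}\bigl(\int a_I\,dx_1\bigr)\,dx_J$, realized on the Fourier side as multiplication by $(2\pi i\omega_1)^{-1}$; the $L^p$ bound then comes from Lemma~\ref{multiplierbound} applied to this scalar multiplier on each ball. Your construction $\Prim(a)=d^{*}\Delta^{-1}a$ is global and avoids the partition of unity: the Hodge identity handles $d\Prim(a)=a$ in one stroke, and the entries $\xi_j/|\xi|^2$ of the symbol are already smooth on the whole annulus, so a single entrywise application of Lemma~\ref{multiplierbound} suffices. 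The paper's approach is slightly more elementary in that it never invokes $d^{*}$ or the Hodge Laplacian, and it makes the verification of $d\Prim=a$ a concrete coordinate computation; yours is shorter and more conceptual. One small point worth making explicit: your enlarged cutoff $\tilde\eta_k$ must be supported away from the origin (say in $\{2^{k-2}\le|\xi|\le 2^{k+2}\}$), not merely inside the ball $B(0,2^{k+2})$, so that $m(\xi)$ is smooth on its support---this is implicit in ``slightly enlarged annulus'' but should be stated.
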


This is really the key property of frequency localized forms.  The intuition is that $\Prim(a)$ is defined by integrating $a$, and the integral cancels at length scales larger than $2^{-k}$. 

Before starting the proof, we make a quick remark about top-dimensional forms.  If $a$ is a $d$-form on $\RR^d$, then the condition $da=0$ is automatic.  In order for $a$ to have a Schwartz primitive, we need to know that $\int_{\RR^d} a = 0$.  This fact is implied by our assumption that $\hat a$ is supported in $\Ann_k$, because $\int_{\RR^d} a = \hat a(0) = 0$.

\begin{proof}
  First cover $\Ann_k$ with $\sim 1$ balls $B$ so that the radius of each ball is $\sim 2^k$ and the distance from each ball to the origin is also $\sim 2^k$.  Let $\psi_B$ be a partition of unity: $\sum_B \psi_B = 1$ on $\Ann_k$ and $\psi_B$ is supported in $B$.  Decompose $a = \sum_B a_B$ where
\[\hat a_B = \psi_B \hat a.\]
The form $\hat a_B$ is smooth and supported in $\Ann_k \cup \Ann_{k-1} \cup \Ann_{k+1}$.  Just as in the proof of Lemma \ref{projcomm}, it follows that $d a_B = 0$.  Using Lemma \ref{multiplierbound}, $ \| a_B \|_{L^p} \le C \| a \|_{L^p}$ for all $1 \le p \le \infty$.

We will construct a primitive $\Prim(a_B)$ for each form $a_B$ such that
\begin{itemize}
\item $ d \Prim(a_B) = a_B$. 
\item $\Prim(a_B)$ is a Schwartz form.
\item $ \lVert \Prim(a_B) \rVert_{L^p} \le C 2^{-k} \lVert a_B \rVert_{L^p}$ for all $1 \le p \le \infty$, with a uniform constant $C$.
\end{itemize}
Finally, we define $\Prim(a) = \sum_B \Prim(a_B)$.  Since $\Prim(a_B)$ has the desired properties, it follows that $\Prim(a)$ does also.

Now we have to construct $\Prim(a_B)$.  For ease of notation, we will abbreviate $a_B$ by $a$.  We know that $\hat a$ is supported on $B$.  We can choose coordinates so that $\omega_1 \sim 2^k$ on $B$.

We write the form $a$ as
\[\sum_I a_I(x) dx_I = \sum_{I = 1 \cup J} a_I(x) dx_1 \wedge dx_J + \sum_{1 \notin I} a_I dx_I.\]
We define the antiderivative $\int a_I dx_1$ via the Fourier transform by the formula:
\begin{equation} \label{defantider}
  \widehat {\textstyle\int a_I dx_1} (\omega) = \frac{1}{2 \pi i \omega_1} \hat a_I(\omega).
\end{equation}

Since $\omega_1 > 0$ on $B$, and $\hat a_I(\omega)$ is supported in $B$, the right-hand side is a smooth compactly supported function on Fourier space.  Therefore, $\int a_I dx_1$ is a Schwartz function on $\RR^d$.  From \eqref{defantider}, we can also check that 
\[\frac{\partial}{\partial x_1} \left( {\textstyle\int a_I dx_1} \right) = a_I.\]
(We can also define $\int a_I dx_1$ using definite integrals:
\[\int a_I dx_1 (x_1, x_2, ..., x_d) = \int_{- \infty}^{x_1} a_I(\tilde x_1, x_2, ..., x_d) d \tilde x_1.\]
This definite integral formula is equivalent to \eqref{defantider}.  From the definite integral formula, it takes a little work to check that $\int a_I dx_1$ is in fact a Schwartz function on $\RR^d$, although it's not that difficult.  In our proof we will only need \eqref{defantider}.)

We now define
\[\Prim(a) = \sum_{I = 1 \cup J} ({\textstyle\int} a_I dx_1) dx_J.\]
This is a standard construction for primitives of forms which appears in the proof of the Poincar\'e lemma, cf. \cite[p.~38]{BottTu}.   We will check that $d \Prim(a) = a$, following the same general method as in \cite{BottTu}.

We first compute $d( \int a_I dx_1)$:
\[d({\textstyle\int} a_I dx_1) = \partial_1 ({\textstyle\int} a_I dx_1) dx_1 + \sum_{j=2}^d \partial_j ({\textstyle\int} a_I dx_1) dx_j = a_I dx_1 + \sum_{j=2}^d {\textstyle\int} \partial_j a_I dx_1.\]
Now
\[d \Prim(a) =  \sum_{I = 1 \cup J} d ({\textstyle\int a_I dx_1}) dx_J = \sum_{I = 1 \cup J} a_I dx_1 \wedge dx_J + \sum_{I = 1 \cup J} \sum_{j=2}^d ({\textstyle\int \partial_j a_I dx_1}) dx_j \wedge dx_J.\]
The first term is $\sum_{I = 1 \cup J} a_I dx_I$.  So we have to check that the second term is the rest of $a$.  In other words, we want to show that
\begin{equation} \label{havetocheck}
  \sum_{I = 1 \cup J} \sum_{j=2}^d ({\textstyle\int} \partial_j a_I dx_1) dx_j \wedge dx_J = \sum_{1 \notin I'} a_{I'} dx_{I'}.
\end{equation}
Since both forms are Schwartz, it suffices to check that $\partial_1$ of both sides are equal:
\begin{equation} 
  \sum_{I = 1 \cup J} \sum_{j=2}^d \partial_j a_I  dx_j \wedge dx_J = \sum_{1 \notin I'} \partial_1 a_{I'} dx_{I'}.
\end{equation}
Since there is no 1 in $J$ or $j$ or $I'$, it suffices to check that $dx_1$ wedged with both sides are equal:
\begin{equation} 
  \sum_{I = 1 \cup J} \sum_{j=2}^d \partial_j a_I  dx_1 \wedge dx_j \wedge dx_J = \sum_{1 \notin I'} \partial_1 a_{I'} dx_1 \wedge dx_{I'}.
\end{equation}

This in turn follows from $da = 0$.

To bound $\Prim(a)$, the main point is that $| \frac{1}{2 \pi i \omega_1}| \sim 2^{-k}$ on the ball $B$.  Define $\eta_B = 1 $ on $B$, and $0 \le \eta_B \le 1$ and with $\eta_B$ supported in a slightly larger ball $\tilde B = 1.01 B$.  We can assume that $\omega_1 \sim 2^k$ on $\tilde B$.  Then
\[\frac{1}{2 \pi i \omega_1} \hat a_I(\omega) = 2^{-k} \underbrace{\frac{1}{2 \pi i} \frac{2^k}{\omega_1} \eta_B}_{\tilde \eta_B} \hat a_I (\omega).\]
The function $\tilde \eta_B$ is supported on $\tilde B$, and it obeys the bounds from Lemma \ref{multiplierbound}.  The lemma tells us that 
\[\| {\textstyle\int} a_I dx_1 \|_{L^p} = 2^{-k} \lVert\left( \tilde \eta_B \hat a_I \right)^\vee \rVert_{L^p} \le C 2^{-k} \lVert a_I \rVert_{L^p}.\]

Therefore $\lVert\Prim(a)\rVert_{L^p} \le C 2^{-k} \lVert a \rVert_{L^p}$ as desired.
\end{proof}

\begin{lem} \label{orth} For any function $f$,
\[\sum_{k \in \ZZ} \| P_k f \|_{L^2}^2 \sim \| f \|_{L^2}^2.\]
Similarly, for any form $a$,
\[\sum_{k \in \ZZ} \| P_k a \|_{L^2}^2 \sim \| a \|_{L^2}^2.\]
\end{lem}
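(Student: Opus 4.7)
The plan is to deduce this by Plancherel's theorem, reducing everything to a pointwise estimate on the multipliers $\eta_k$.

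First I would handle the scalar case. Since $\widehat{P_k f} = \eta_k \hat f$, Plancherel gives $\|P_k f\|_{L^2}^2 = \int |\eta_k(\xi)|^2 |\hat f(\xi)|^2\,d\xi$, and hence
\[\sum_{k \in \ZZ} \|P_k f\|_{L^2}^2 = \int_{\RR^d} \Bigl(\sum_{k \in \ZZ} |\eta_k(\xi)|^2\Bigr) |\hat f(\xi)|^2\,d\xi.\]
So it suffices to show $\sum_k \eta_k(\xi)^2 \sim 1$ uniformly for $\xi \neq 0$. The upper bound follows from $\eta_k \le 1$ and $\sum_k \eta_k = 1$: indeed $\sum_k \eta_k^2 \le \max_k \eta_k \cdot \sum_k \eta_k \le 1$. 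For the lower bound, the key observation is that because $\eta_k$ is supported in the annulus $\Ann_k$, any given $\xi$ lies in the support of at most three of the $\eta_k$'s. Hence Cauchy--Schwarz gives
\[1 = \Bigl(\sum_{k} \eta_k(\xi)\Bigr)^2 \le 3 \sum_{k} \eta_k(\xi)^2,\]
so $\sum_k \eta_k(\xi)^2 \ge 1/3$ for every $\xi$ with $\sum_k \eta_k(\xi) = 1$. Combined with another application of Plancherel $\|f\|_{L^2}^2 = \|\hat f\|_{L^2}^2$, this yields the desired comparison.

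For forms, I would simply decompose $a = \sum_I a_I\,dx^I$ componentwise. By definition $P_k a = \sum_I (P_k a_I)\,dx^I$, and if we equip $\Lambda^* \RR^d$ with the standard Euclidean inner product in the basis $\{dx^I\}$, then $\|a\|_{L^2}^2 = \sum_I \|a_I\|_{L^2}^2$ and similarly $\|P_k a\|_{L^2}^2 = \sum_I \|P_k a_I\|_{L^2}^2$. Summing the scalar estimate over multi-indices $I$ gives
\[\sum_{k \in \ZZ} \|P_k a\|_{L^2}^2 = \sum_I \sum_{k \in \ZZ} \|P_k a_I\|_{L^2}^2 \sim \sum_I \|a_I\|_{L^2}^2 = \|a\|_{L^2}^2.\]

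There is no real obstacle here; the only thing one has to be mildly careful about is the bounded overlap of the supports of the $\eta_k$, which is exactly what makes the $L^2$ theory of Littlewood--Paley so much cleaner than the corresponding $L^p$ statements.
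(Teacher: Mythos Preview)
Your proof is correct and follows essentially the same route as the paper: apply Plancherel, reduce to the pointwise bound $\sum_k \eta_k(\xi)^2 \sim 1$, and use bounded overlap of the supports for the lower bound. The paper states the overlap as at most $5$ (giving a lower bound of $1/10$), while your sharper count of $3$ is in fact the correct one for annuli $\{2^{k-1}\le |\xi|\le 2^{k+1}\}$; otherwise the arguments are identical, including the componentwise reduction for forms.
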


\begin{proof}
  By the Plancherel theorem,
  \[\sum_{k \in \ZZ} \| P_k f \|_{L^2}^2 = \sum_{k \in \ZZ} \int_{\RR^d} \bigl\lvert \widehat{P_k f} \bigl\rvert^2 = \sum_{k \in \ZZ} \int_{\RR^d}  \lvert\eta_k(\omega)\rvert^2 \lvert\hat f(\omega)\rvert^2 d \omega.\]
  Now for every $\omega$, $(1/10) \le \sum_{k \in \ZZ} \eta_k(\omega)^2 \le 1$.  This holds because $\sum_{k \in \ZZ} \eta_k(\omega) = 1$ and each $\eta_k (\omega) \ge 0$, and each $\omega$ lies in the support of $\eta_k$ for at most 5 values of $k$.  Therefore,
  \[\sum_{k \in \ZZ} \| P_k f \|_{L^2}^2 = \int_{\RR^d} \biggl( \sum_{k \in \ZZ}   \eta_k(\omega)^2 \biggr) |\hat f(\omega)|^2 d \omega \sim \int_{\RR^d} |\hat f (\omega)|^2 d \omega = \int_{\RR^d} |f(x)|^2 dx.\]
  For a form $a = \sum_{I} a_I(x) dx_I$, $P_k(a) = \sum_I P_k a_I(x) dx_I$ and
  $\| a \|_{L^2}^2 := \sum_I \int |a_I(x)|^2 dx$.  So the case of forms follows
  from the case of functions.
\end{proof}

\begin{lem} \label{foursupp}
  The Fourier support of $P_{\le k} a_1 \wedge P_{\le k} a_2$ is contained in the ball of radius $2^{k+2}$ around 0.  Therefore,
  \[P_{\le k+3} \left( P_{\le k} a_1 \wedge P_{\le k} a_2 \right) = P_{\le k} a_1 \wedge P_{\le k} a_2.\]
\end{lem}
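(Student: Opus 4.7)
The plan is to pass everything to the Fourier side and use two elementary facts: $\widehat{fg} = \hat f * \hat g$ for scalar Schwartz functions, so $\widehat{b_1 \wedge b_2}$ is built componentwise out of convolutions $\hat b_{1,I} * \hat b_{2,J}$ of the scalar coefficients; and the support of a convolution is contained in the Minkowski sum of the individual supports. Under the Fourier transform, the Littlewood--Paley projection $P_{\le m}$ becomes pointwise multiplication by the smooth cutoff $\eta_{\le m}$.

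First, I would observe that $\widehat{P_{\le k} a_j} = \eta_{\le k}\, \hat{a}_j$ and that $\eta_{\le k} = \sum_{k' \le k} \eta_{k'}$ is supported in $\bigcup_{k' \le k} \Ann_{k'} \subseteq \{|\xi| \le 2^{k+1}\}$, so the Fourier transform of each $P_{\le k} a_j$ lives in the closed ball of radius $2^{k+1}$. Expanding the wedge product componentwise and applying the product-to-convolution rule to each scalar product $b_{1,I}(x)\, b_{2,J}(x)$, each component of $\widehat{P_{\le k} a_1 \wedge P_{\le k} a_2}$ is supported in the Minkowski-summed ball of radius $2^{k+1} + 2^{k+1} = 2^{k+2}$; that is the first assertion.

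For the second assertion, I would verify that $\eta_{\le k+3}(\xi) = 1$ for every $\xi$ with $|\xi| \le 2^{k+2}$. Since $\sum_{k' \in \ZZ} \eta_{k'} \equiv 1$ and each $\eta_{k'}$ vanishes on $\{|\xi| < 2^{k'-1}\}$, any $k' \ge k+4$ satisfies $2^{k'-1} \ge 2^{k+3} > 2^{k+2}$, so the tail $\sum_{k' \ge k+4} \eta_{k'}$ vanishes identically on the ball of radius $2^{k+2}$; hence $\eta_{\le k+3}$ is $1$ there. Multiplying the Fourier transform by $\eta_{\le k+3}$ therefore acts as the identity on any form whose Fourier support lies in this ball, and applied to $P_{\le k} a_1 \wedge P_{\le k} a_2$ this yields the asserted equality. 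There is no real obstacle here: the proof is a direct Fourier-side computation, and the only bookkeeping is to confirm that $k+3$ is the smallest cumulative cutoff index for which the Minkowski-summed radius $2^{k+2}$ lies inside the flat region of $\eta_{\le k+3}$, since the first excluded bump $\eta_{k+4}$ has support starting at $|\xi| = 2^{k+3}$.
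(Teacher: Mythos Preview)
Your proof is correct and follows essentially the same route as the paper: identify the Fourier support of $P_{\le k}a_j$ as the ball of radius $2^{k+1}$, use that the Fourier transform of a product (hence, componentwise, of a wedge product) is a convolution whose support lies in the Minkowski sum, and then observe that $\eta_{\le k+3}\equiv 1$ on the resulting ball of radius $2^{k+2}$. Your justification of this last point is slightly more explicit than the paper's, but the argument is the same.
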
 

\begin{proof}
The Fourier support of $P_{\le k} a$ is contained in the ball $B(2^{k+1}, 0)$.  For any functions $f$ and $g$, the Fourier transform of $fg$ is given by 
\[\widehat{fg}(\omega)  = \hat f * \hat g(\omega) = \int \hat f(\tilde \omega) \hat g( \omega - \tilde \omega) d \tilde \omega.\]
If $\hat f$ and $\hat g$ are supported in $B(2^{k+1}, 0)$, then $\widehat{fg}$ is supported in $B( 2\cdot 2^{k+1}, 0)$.

This argument also applies to wedge products of forms instead of products of functions, just by writing out the components of the forms.  This shows that the Fourier transform of $P_{\le k} a_1 \wedge P_{\le k} a_2$ is supported in $B(2^{k+2}, 0)$.  Now $\eta_{\le k+3}(\omega)$ is identically 1 on this ball, and so
\[P_{\le k+3} \left( P_{\le k} a_1 \wedge P_{\le k} a_2 \right) = P_{\le k} a_1 \wedge P_{\le k} a_2. \qedhere\]
\end{proof}

\subsection{Bounds for connected sums of $\mathbb CP^2$s}

\subsubsection{Setup}
In this section, we will prove Theorem \ref{cp2k}.  We recall the statement.

\begin{thm*}
  Let $X_k = (\CC P^2)^{\# k}$.  Fix a metric $g$ on $X_k$.  Suppose that
  $f: X_k \rightarrow X_k$ is $L$-Lipschitz.  If $k \ge 4$, then
  \[\deg (f) \le C(k, g) L^4 (\log L)^{-1/2}.\]
\end{thm*}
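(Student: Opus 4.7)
The plan is to follow the Littlewood--Paley strategy sketched in the introduction. Let $\alpha_1,\ldots,\alpha_k$ be a basis of $H^2(X_k;\RR)$ diagonalizing the intersection form, so $\int \alpha_i\smile\alpha_j = \delta_{ij}$. Fix smooth representatives $\tilde\alpha_i$ and set $\omega_i := f^*\tilde\alpha_i$. These closed 2-forms satisfy $\|\omega_i\|_{L^\infty}\lesssim L^2$ from the Lipschitz bound, and from the cohomological ring structure,
\[
\int_{X_k}\omega_i\wedge\omega_j = \delta_{ij}\deg(f)\cdot c_0
\]
for a constant $c_0$ depending only on $g$. The goal is to upper bound $\deg(f)=\int\omega_i\wedge\omega_i$. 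I would first reduce to the Euclidean LP setting by covering $X_k$ by charts and pulling back via a smooth partition of unity, so that the forms $\omega_i$ may be regarded as compactly supported forms on $\RR^4$ (up to harmless perturbations controlled by derivatives of the cutoffs).

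With the LP tools in hand, I would decompose $\omega_i = \sum_m P_m \omega_i$ and isolate the frequency scales $0 \leq m \leq K$ with $K = \lceil \log_2 L\rceil$. Frequencies $m > K$ contribute negligibly: each such $P_m\omega_i$ is closed and admits a primitive via Lemma \ref{prim} with $\|\Prim(P_m\omega_i)\|_{L^\infty}\lesssim 2^{-m}L^2$, so integration by parts against $\omega_i$ absorbs those terms. More generally, Lemma \ref{prim} lets one handle cross-scale terms $\int P_{m_1}\omega_i\wedge P_{m_2}\omega_j$ with $|m_1-m_2|$ large by putting a primitive on the lower-frequency factor.

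The heart of the argument is the pointwise non-scalability obstruction. Since $k\geq 4$, there is no ring embedding $H^*(\#_k\CC P^2;\RR)\hookrightarrow\Lambda^*\RR^4$: the wedge pairing on $\Lambda^2\RR^4$ has signature $(3,3)$, so one cannot find $v_1,\ldots,v_k\in\Lambda^2\RR^4$ that are pairwise wedge-orthogonal and have a common positive self-wedge. Quantitatively, any $v_1,\ldots,v_k\in\Lambda^2\RR^4$ with small cross wedges and bounded Euclidean norms satisfy $\sum_i v_i\wedge v_i\leq c(k)\sum_i|v_i|^2\,d\vol$ with $c(k)<1$, beating the trivial bound by a fixed multiplicative factor. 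Applying this at each scale $m\leq K$ to the tuple $(P_m\omega_1,\ldots,P_m\omega_k)$, which is approximately constant on blobs of size $2^{-m}$, yields a per-scale gain for the diagonal contribution $D_m:=\sum_i\int P_m\omega_i\wedge P_m\omega_i$.

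Finally, I would combine the per-scale gains. Lemma \ref{orth} gives $\sum_m\|P_m\omega_i\|_{L^2}^2\sim\|\omega_i\|_{L^2}^2\lesssim L^4\vol(X_k)$, and since there are $\sim\log L$ scales, a Cauchy--Schwarz across scales converts a uniform multiplicative saving at each scale into an overall $(\log L)^{-1/2}$ saving: the LP orthogonality prevents the mass from concentrating at a single scale, and the pointwise obstruction prevents any single scale from saturating the trivial bound. The main obstacle I anticipate is precisely the step where the integral (i.e.\ cohomological) orthogonality $\int \omega_i\wedge\omega_j=0$ must be promoted to approximate pointwise orthogonality of the low-frequency pieces at each scale, while simultaneously controlling the off-diagonal cross-scale terms through the Lemma \ref{prim} primitive estimates; getting the bookkeeping to yield exactly $(\log L)^{-1/2}$ (rather than $(\log L)^{-1}$ or no saving at all) is where the Cauchy--Schwarz balancing must be executed carefully.
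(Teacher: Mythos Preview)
Your framework is right, but the step you flag as ``the main obstacle'' is indeed the whole proof, and the mechanism you are missing is not visible in your outline. The integral identity $\int \omega_i\wedge\omega_j=0$ is promoted to a pointwise low-frequency bound not by any per-scale algebra, but by using that the relations $\alpha_i\wedge\alpha_j$ (for $i\neq j$) and $\alpha_i\wedge\alpha_i-\alpha_j\wedge\alpha_j$ are \emph{exact on the target}, hence admit bounded primitives $\gamma_r\in\Omega^3(X_k)$. Pulling back gives $\|f^*\gamma_r\|_{L^\infty}\lesssim L^3$, and one integration by parts against the smoothing kernel yields $\|P_{\le\bar\ell}(a_i\wedge a_j)\|_{L^\infty}\lesssim 2^{\bar\ell}L^3$. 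This is the only input that uses cohomology; the Littlewood--Paley primitive lemma alone cannot see it.

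Your pointwise lemma also needs to be restated: the correct form (for $k\ge4$) is $|b_1\wedge b_1|\le C\sum_{i\neq j}\bigl(|b_i\wedge b_j|+|b_i\wedge b_i-b_j\wedge b_j|\bigr)$, not a strict constant $c(k)<1$ against $\sum|b_i|^2$ (that version fails, e.g.\ for three orthonormal self-dual $b_i$ and $b_4=0$). One then applies this lemma to the \emph{low-pass} forms $P_{\le\bar\ell}a_i$ at a single cutoff $\bar\ell$, not to individual $P_m$-pieces. The paraproduct identity $P_{\le\bar\ell}a_i\wedge P_{\le\bar\ell}a_j=P_{\le\bar\ell+3}(a_i\wedge a_j)-\sum_{k_1>\bar\ell}P_{\le\bar\ell+3}(P_{k_1}a_i\wedge P_{\le k_1}a_j)-\cdots$ splits the right side into the cohomologically small term just described and high-high interaction terms bounded by $\sum_{k_1>\bar\ell}2^{\bar\ell-k_1}L^2\|P_{k_1}a_i\|_{L^1}$. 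Summing this bound over the $\sim\log L$ choices of $\bar\ell$ in $[L^{1/10},L^{9/10}]$, then Cauchy--Schwarz and $L^2$ orthogonality give $\sum_{k_1}\|P_{k_1}a_i\|_{L^1}\lesssim(\log L)^{1/2}L^2$, and dividing by the $\log L$ choices produces the $(\log L)^{-1/2}$. A uniform multiplicative constant per scale would not yield this; the saving genuinely comes from the tension between the $L^3$-bounded primitive and the $L^2$ orthogonality across scales.
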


\begin{proof}
Let $u_i \in H^2(X_k; \RR)$ be a cohomology class dual to the $i$th copy of $\CC P^1$ in $X_k$, for $i = 1, \ldots, k$.  Let $\alpha_i$ be a 2-form in the cohomology class $u_i$.   We can assume that the $\alpha_i$ have disjoint supports.  For any $i$, we can write
\begin{equation} \label{degform}
  \deg (f) = \int_{X_k} f^* \alpha_i \wedge f^* \alpha_i.
\end{equation}

We will use Littlewood--Paley theory to estimate the right-hand side.  Because Littlewood--Paley theory is by far nicest on $\RR^d$, we first switch to charts.  Fix an atlas of charts for $X_k$: suppose that $X_k = \cup U'$, and $\phi_U: U \rightarrow U'$ are parametrizations.  Suppose that $\sum_{U'} \psi_{U'} = 1$ is a partition of unity on $X_k$ subordinate to these charts.  Define $\psi_U:\RR^4 \to \RR$ by
\[\psi_U(x) = \begin{cases}
  \phi^{*} \psi_{U'}(x) & x \in U \\
  0 & x \notin U.
\end{cases}\]
Now we can extend $\phi_U|_{\supp(U)}$ to a smooth map $\tilde\phi_U:\mathbb R^4 \to X_k$, and we can do it so that $\tilde\phi_U$ sends the complement of a compact set to a single point.  Then define differential forms $a_i$ on $\RR^4$ by 
\begin{equation} \label{defai}
  a_i = \tilde\phi_U^* f^*\alpha_i.
\end{equation}
(The forms $a_i$ also implicitly depend on $U$.)  Plugging this definition into \eqref{degform}, we get
\begin{equation} \label{degform2}
  \deg (f) = \sum_U \int_{\RR^4} \psi_U a_i \wedge a_i.
\end{equation}
We will bound each of these integrals.

Before going on, we discuss properties of the $a_i$.  We made sure these forms are defined on all of $\RR^4$ so that we can apply Littlewood--Paley theory.  We have $\| a_i \|_{L^\infty} \lesssim L^2$.  We also know that $d a_i = 0$.  
The form $a_i$ is supported on a fixed ball, and so for every $1 \le p \le \infty$, we also have $\| a_i \|_{L^p} \lesssim \| a_i \|_{L^\infty} \lesssim L^2$.

\subsubsection{Using that $k$ is large} \label{seckge4}

In this section, we prove a lemma that takes advantage of the fact that $k \ge 4$.   This lemma is similar to a lemma in \cite{scal}.

\begin{lem} \label{kge4} Suppose that $k \ge 4$ and that $b_1, \ldots, b_k$ are 2-forms on $\RR^4$.  Then at each point $x$, we have
\[| b_1 \wedge b_1 (x) | \le C \sum_{i \not= j} | b_i \wedge b_i - b_j \wedge b_j| + | b_i \wedge b_j|.\]
\end{lem}

\begin{proof}
  Suppose not.  By scaling, we can assume that $b_1 \wedge b_1(x) = dx_1 \wedge \cdots \wedge dx_4$.  Then we must have $b_j \wedge b_j(x)$ is almost $dx_1 \wedge \cdots \wedge dx_4$ for every $j$ and $b_i \wedge b_j(x)$ is almost zero for every $i \neq j$.  Next we will get a contradiction by considering the wedge product.

  Let $W: \Lambda^2 \RR^4 \times \Lambda^2 \RR^4 \rightarrow \Lambda^4 \RR^4$ be the quadratic form given by the wedge product.  It has signature (3,3).  Now let $B \subset \Lambda^2 \RR^4$ be the subspace spanned by $b_1, \ldots, b_k$.  When we restrict $W$ to the subspace $B$, we will check that it has signature $(k,0)$.  Since $k \ge 4$, this gives the desired contradiction.

  It remains to compute the signature of the quadratic form $W$ restricted to $B$.  This is isomorphic to  the quadratic form $(c_1, \ldots, c_k) \mapsto (\sum c_i b_i(x)) \wedge (\sum c_i b_i(x))$.  Expanding out the right-hand side, we get
  \[\sum_{i,j} c_i c_j b_i \wedge b_j.\]
  Since $b_i \wedge b_j$ is almost 0 for every $i \neq j$ and $b_i \wedge b_i$ is almost $dx_1 \wedge \cdots \wedge dx_4$ for every $i$, we see that this form is almost 
  \[(c_1, \ldots, c_k) \mapsto (c_1^2 + \cdots + c_k^2) dx_1 \wedge \cdots \wedge dx_4.\]
  In particular, the form has signature $(k,0)$.
\end{proof}

\subsubsection{Relations in cohomology and low-frequency bounds}

Let $u_i \in H^2(X_k; \RR)$ be a cohomology class dual to the $i$th copy of $\CC P^1$ in $X_k$, for $i = 1, \ldots, k$.  Let $\alpha_i$ be a 2-form in the cohomology class $u_i$.  

We know that $u_i \smile u_i - u_j \smile u_j = 0$ in $H^4(X_k; \RR)$.  Therefore, the corresponding differential forms $\alpha_i \wedge \alpha_i - \alpha_j \wedge \alpha_j $ are exact.  Similarly, for $i \neq j$, $u_i \smile u_j = 0$, and so the forms $\alpha_i \wedge \alpha_j$ are exact.  Let $\gamma_r$ be primitives for these forms.  We have $2{k \choose 2}$ exact forms total, and so $r$ goes from $1$ to $2{k \choose 2}$.

Define $g_r = \phi^* f^* \gamma_r$.  Since $\gamma_r$ is a 3-form, 
\begin{equation} \label{grbound}
  \|g_r \|_{L^\infty} \lesssim L^3.
\end{equation}
Depending on $r$, we have $dg_r = a_i \wedge a_i - a_j \wedge a_j$ or $dg_r = a_i \wedge a_j$ with $i \neq j$.  

The bound $\| g_r \|_{L^\infty} \lesssim L^3$ gives extra information about $a_i \wedge a_j$.  In particular, we get bounds on the low frequency parts of $a_i \wedge a_j$.

\begin{lem} \label{lowfreqaiaj}
  If $i \neq j$, then
  \begin{align*}
    \lVert P_{ k} ( a_i \wedge a_j) \rVert_{L^\infty} &\lesssim 2^k L^3 \\
    \lVert P_{k} ( a_i \wedge a_i - a_j \wedge a_j) \rVert_{L^\infty} &\lesssim 2^k L^3.
  \end{align*}
  The same bounds hold with $P_{\le k}$ in place of $P_k$.
\end{lem}

Notice that $\| a_i \|_{L^\infty} \lesssim L^2$, and so we have $\| a_i \wedge a_j \|_{L^\infty} \lesssim L^4$.  But the low frequency part of $a_i \wedge a_j$ obeys a much stronger bound.

\begin{proof}
  We write
  \[\left\lvert P_{k} (a_i \wedge a_j) (x) \right\rvert = \left\lvert \int \eta_{k}^\vee (y) a_i \wedge a_j (x-y) dy \right\rvert.\]
  We now substitute in $a_i \wedge a_j = d g_r$ and then integrate by parts:
  \[\left\lvert  \int \eta_{k}^\vee (y) d g_r (x-y) dy \right\rvert = \left\lvert \int d \eta_{k}^\vee (y) g_r(x-y) dy \right\rvert.\]
  Since $\| g_r \|_{L^\infty} \lesssim L^3$, and $\int |d \eta_k^\vee| \lesssim 2^k$ by Lemma \ref{etaveebound}, our expression is bounded by
  \[\lesssim L^3 \int | d \eta_k^\vee| \lesssim 2^k L^3.\]

The same proof applies to $ \| P_{k} ( a_i \wedge a_i - a_j \wedge a_j) \|_{L^\infty}$ and with $P_{\le k}$ in place of $P_k$.
\end{proof}

\subsubsection{Toy case: all forms are low frequency}

To illustrate how the tools we have developed work together, we now do a toy case of our main theorem: the case where all forms have low frequency.

Suppose that the forms $a_i$ are all low-frequency: $P_{\le 1} a_i = a_i$ for every $i$.  It follows that the wedge products are also fairly low frequency: $P_{\le 2} (a_i \wedge a_j) = a_i \wedge a_j$ for every $i, j$.

We can now bound $\int \psi_U a_1 \wedge a_1$ using the tools we have developed.  First, Lemma \ref{kge4} tells us that
\[\int \psi_U a_1 \wedge a_1 \le \int \psi_U |a_1 \wedge a_1| \le \sum_{i \neq j} \int \psi_U | a_i \wedge a_j| + \int \psi_U |a_i \wedge a_i - a_j \wedge a_j|.\]

We are discussing the low frequency special case, where $|a_i \wedge a_j| = | P_{\le 2} (a_i \wedge a_j) |$.  By Lemma \ref{lowfreqaiaj}, we have
\[|a_i \wedge a_j| = | P_{\le 2} (a_i \wedge a_j) | \lesssim L^3.\]
Similarly,
\[|a_i \wedge a_i - a_j \wedge a_j| = | P_{\le 2} (a_i \wedge a_i - a_j \wedge a_j) | \lesssim L^3.\]
Therefore, $\int \psi_U a_1 \wedge a_1 \lesssim L^3$, and so finally we have $\deg f \lesssim L^3$.

If we have a weaker low frequency assumption that $P_{\le \bar \ell} a_i = a_i$ for every $i$, then the same argument shows that $\deg f \lesssim 2^{\bar \ell} L^3$.  As long as the frequency range $2^{\bar \ell}$ is significantly less than $L$, then we get a strong estimate.  For instance, if $2^{\bar \ell} = L^{.9}$, then $\deg f \le L^{3.9}$.

\subsubsection{Bounding high-frequency contributions}

We use the Littlewood--Paley decomposition to write
\[\int_{\RR^d} \psi_U a_i \wedge a_i = \int_{\RR^d} \psi_U \sum_{k \in \ZZ} P_k a_i \wedge \sum_{\ell \in \ZZ} P_\ell a_i.\]
We can bound each term on the right-hand side by using our primitive estimate, Lemma \ref{prim}, and integration by parts:
\begin{align*}
  \left\lvert\int_{\RR^d} \psi_U P_k a_i \wedge P_\ell a_i \right\rvert
  &= \left\lvert\int_{\RR^d} \psi_U P_k a_i \wedge d( \Prim(P_\ell a_i) )\right\rvert \\
  &= \left\lvert\int d \psi_U \wedge P_k a_i \wedge \Prim( P_\ell a_i)\right\rvert \\
  &\le \int \lvert d\psi_U \rvert \lvert P_k a_i \rvert \lvert\Prim (P_\ell a_i)\rvert.
\end{align*}
Now $d \psi_U$ is a fixed $C^\infty_{comp}$ form, and we have $|P_k a_i| \lesssim L^2$ and $| \Prim P_\ell(a_i)| \lesssim 2^{-\ell} L^2$.  All together, we get the bound
\begin{equation} \label{highfreqsmall} 
 \left\lvert\int_{\RR^d} \psi_U P_k a_i \wedge P_\ell a_i\right\rvert \lesssim 2^{-\ell} L^4.
\end{equation}

This shows that the high-frequency parts of $a_i$ contribute little to the integral for the degree.  By summing this geometric series of error terms, we see that

\begin{lem}  \label{highfreqsmall2}
  For any frequency cutoff $\bar \ell$,
  \[\left\lvert\int_{\RR^d} \psi_U a_i \wedge a_i\right\rvert \lesssim \left\lvert\int \psi_U P_{\le \bar \ell} a_i \wedge P_{\le \bar \ell} a_i \right\rvert + O(2^{-\bar \ell} L^4).\]
\end{lem}

In particular, Lemma \ref{highfreqsmall2} allows us to resolve another toy case of our problem.  If every form $a_i$ is purely high-frequency, in the sense that $P_{\le \bar \ell} a_i = 0$, then Lemma \ref{highfreqsmall2} gives the bound $\deg f \lesssim 2^{- \bar \ell} L^4$.  For instance, if $2^{\bar \ell}$ is at least $L^{1/10}$, then we get a strong estimate: $\deg f \lesssim L^{3.9}$.

We now have strong bounds in two toy cases: the pure low frequency case and the pure high frequency case.  We will prove bounds in the general case by combining these tools.  

However, combining the tools is not completely straightforward.  Based on the discussion above, it initially sounds like we might get a bound of the form $\deg f \lesssim L^{4 - \beta}$ for some $\beta > 0$.  But there are maps $f$ with Lipschitz constant $L$ and degree at least $L^4 (\log L)^{-C}$ for some constant $C$.  The forms coming from these maps crucially have signifinant contributions at all frequency levels.

\subsubsection{Bounds in the general case}

We begin by applying Lemma \ref{highfreqsmall2}.  For any frequency cutoff $\bar \ell$, the lemma tells us that
\begin{equation} \label{breakoffhigh}
    \left\lvert\int_{\RR^d} \psi_U a_1 \wedge a_1\right\rvert \lesssim  \int \psi_U \left\lvert P_{\le \bar \ell} a_1 \wedge P_{\le \bar \ell} a_1\right\rvert + 2^{-\bar \ell} L^4.
\end{equation}
We will choose $\bar \ell$ later, in the range $2^{\bar \ell} \ge L^{1/10}$.  This guarantees that the last term is $\lesssim L^{3.9}$, which is much smaller than our goal.

To control the first term, we apply Lemma \ref{kge4} with $b_i = P_{\le \bar \ell} a_i(x)$ at each point $x$.  Lemma \ref{kge4} tells us that at each point
\[\left\lvert P_{\le \bar \ell} a_1 \wedge P_{\le \bar \ell} a_1 \right\rvert \lesssim \sum_{i \neq j} \left\lvert P_{\le \bar \ell} a_i \wedge P_{\le \bar \ell} a_j \right\rvert + \left\lvert P_{\le \bar \ell} a_i \wedge P_{\le \bar \ell} a_i - P_{\le \bar \ell} a_j \wedge P_{\le \bar \ell} a_j\right\rvert.\]
Plugging into the integral, we get
\[\int \psi_U | P_{\le \bar \ell} a_1 \wedge P_{\le \bar \ell} a_1 | \lesssim \sum_{i \neq j} \underbrace{\int \psi_U |P_{\le \bar \ell} a_i \wedge P_{\le \bar \ell} a_j|}_{I} + \underbrace{\int \psi_U |P_{\le \bar \ell} a_i \wedge P_{\le \bar \ell} a_i - P_{\le \bar \ell} a_j \wedge P_{\le \bar \ell} a_j|}_{II}.\]
The two terms are similar to each other.  We focus on the terms of type I first.  The same arguments apply to type II.

The form $P_{\le \bar \ell} a_i \wedge P_{\le \bar \ell} a_j$ looks a little bit like $P_{\le \bar \ell} (a_i \wedge a_j)$, which has strong bounds coming from Lemma \ref{lowfreqaiaj}.  However, these forms are not equal to each other.  We will examine the situation more carefully and find that
\begin{equation} \label{lowfreqcompare}
  P_{\le \bar \ell} a_i \wedge P_{\le \bar \ell} a_j = P_{\le \bar \ell + 3} (a_i \wedge a_j) + \text{ additional terms}.
\end{equation}
The additional terms are crucial to our story---they actually make the largest contribution in our bound for the degree of $f$.

To work out the details of \eqref{lowfreqcompare}, we begin by doing the Littlewood--Paley expansion of $a_i$ and $a_j$:
\[a_i \wedge a_j = \sum_{k_1, k_2 \in \ZZ} P_{k_1} a_i \wedge P_{k_2} a_j.\]
Grouping the terms according to whether $k_1$ or $k_2$ is bigger, we get
\begin{equation} \label{expandaiaj}
  a_i \wedge a_j =  P_{\le \bar \ell} a_i \wedge P_{\le \bar \ell} a_j + \sum_{k_1 = \bar \ell+1}^\infty P_{k_1} a_i \wedge P_{\le k_1} a_j + \sum_{k_2 = \bar \ell+1}^\infty P_{< k_2} a_i \wedge P_{k_2} a_j.
\end{equation}

Note that the Fourier transform of $P_{\le \bar \ell} a_i \wedge P_{\le \bar \ell} a_j$ is supported in $|\omega| \le 4 \cdot 2^{\bar \ell}$ (cf. Lemma \ref{foursupp}).  Therefore
\[P_{\le \bar \ell + 3} (  P_{\le \bar \ell} a_i \wedge P_{\le \bar \ell} a_j ) =  P_{\le \bar \ell} a_i \wedge P_{\le \bar \ell} a_j .\]
We apply $P_{\le \bar \ell + 3}$ to both sides of \eqref{expandaiaj} to get
\begin{equation} \label{expandaiaj2}
  \begin{aligned}
  P_{\le \bar \ell +3} ( a_i \wedge a_j ) &=  P_{\le \bar \ell} a_i \wedge P_{\le \bar \ell} a_j \\
  & \qquad {}+ \sum_{k_1 = \bar \ell+1}^\infty P_{\le \bar \ell +3} (P_{k_1} a_i \wedge P_{\le k_1} a_j) + \sum_{k_2 = \bar \ell+1}^\infty P_{\le \bar \ell + 3} (P_{< k_2} a_i \wedge P_{k_2} a_j).
  \end{aligned}
\end{equation}
This gives us our fleshed out version of \eqref{lowfreqcompare}:
\begin{equation} \label{lowfreqcompare2}
  \begin{aligned}
  P_{\le \bar \ell} a_i \wedge P_{\le \bar \ell} a_j &= \underbrace{P_{\le \bar \ell + 3} (a_i \wedge a_j)}_{\textrm{Term 1}} \\
  & \qquad {} - \underbrace{ \sum_{k_1 = \bar \ell+1}^\infty P_{\le \bar \ell +3} (P_{k_1} a_i \wedge P_{\le k_1} a_j) }_{\textrm{Term 2.1}}- \underbrace{\sum_{k_2 = \bar \ell+1}^\infty P_{\le \bar \ell + 3} (P_{< k_2} a_i \wedge P_{k_2} a_j)}_{\textrm{Term 2.2}}.
  \end{aligned}
\end{equation}

We want to bound $\int \psi_U | P_{\le \bar \ell} a_i \wedge P_{\le \bar \ell} a_j |$.  We plug in \eqref{lowfreqcompare2}, and then we have to bound the contributions of term 1, term 2.1 and term 2.2.  The contribution of Term 1 is bounded using Lemma \ref{lowfreqaiaj}:
\begin{equation} \label{term1bound}
\int \psi_U | P_{\le \bar \ell + 3} (a_i \wedge a_j)| \lesssim 2^{\bar \ell} L^3.
\end{equation}
We will choose $\bar \ell$ in the range $2^{\bar \ell} \le L^{9/10}$, and so the right-hand side is $\lesssim L^{3.9}$, much smaller than our goal.

Terms 2.1 and 2.2 are similar, so we just explain Term 2.1.  The contribution of Term 2.1 is at most
\begin{equation} \label{term21}
  \sum_{k_1 = \bar \ell+1}^\infty \int \psi_U \lvert P_{\le \bar \ell +3} (P_{k_1} a_i \wedge P_{\le k_1} a_j) \rvert \le \sum_{k_1 = \bar \ell + 1}^\infty \lVert  P_{\le \bar \ell +3} (P_{k_1} a_i \wedge P_{\le k_1} a_j) \rVert_{L^1}.
\end{equation}
We start with a direct bound for this $L^1$ norm.  Lemma \ref{Pkbound} gives
\[\|  P_{\le \bar \ell +3} (P_{k_1} a_i \wedge P_{\le k_1} a_j) \|_{L^1} \lesssim \|  P_{k_1} a_i \wedge P_{\le k_1} a_j \|_{L^1} \le \| P_{k_1} a_i \|_{L^1} \| P_{\le k_1} a_j \|_{L^\infty}.\]
Now Lemma \ref{Pkbound} again gives $\| P_{\le k_1} a_j \|_{L^\infty} \lesssim \| a_j \|_{L^\infty} \lesssim L^2$.  All together this gives
\begin{equation} \label{direct}
  \|  P_{\le \bar \ell +3} (P_{k_1} a_i \wedge P_{\le k_1} a_j) \|_{L^1} \lesssim L^2 \| P_{k_1} a_i \|_{L^1}.
\end{equation}

If $k_1 = \bar \ell$, this is the best bound we know.  But if $k_1$ is much larger than $\bar \ell$, then we can get a better estimate by using the primitive of $P_{k_1} a_i$ and integrating by parts.
\[ P_{\le \bar \ell +3} (P_{k_1} a_i \wedge P_{\le k_1} a_j)  = \eta_{\le \bar \ell + 3}^\vee * \left[ d \Prim ( P_{k_1} a_i) P_{\le k_1} a_j \right].\]
Writing out what this means and integrating by parts, we get:
\begin{align*}
  \left\lvert P_{\le \bar \ell +3} (P_{k_1} a_i \wedge P_{\le k_1} a_j) (x)\right\rvert
  &= \left\lvert \int \eta_{\le \bar \ell + 3}^\vee(y)  ( d \Prim ( P_{k_1} a_i))(x-y) \wedge P_{\le k_1} a_j (x-y) dy \right\rvert \\
  &= \left\lvert \int d \eta_{\le \bar \ell + 3}^\vee(y)  ( \Prim ( P_{k_1} a_i))(x-y) \wedge P_{\le k_1} a_j (x-y) dy \right\rvert.
\end{align*}
Therefore, we have a pointwise bound
\[\left\lvert P_{\le \bar \ell +3} (P_{k_1} a_i \wedge P_{\le k_1} a_j) \right\rvert \le \left\lvert d \eta_{\le \bar \ell + 3} * \left[ \Prim (P_{k_1} a_i) \cdot P_{k_1} a_j \right]\right\rvert.\]
Taking $L^1$ norms, we get
\[\lVert P_{\le \bar \ell +3} (P_{k_1} a_i P_{\le k_1} a_j) \rVert_{L^1} \le \lVert d \eta_{\le \bar \ell + 3} \rVert_{L^1} \lVert \Prim P_{k_1} a_i \rVert_{L^1} \lVert P_{k_1} a_j \rVert_{L^\infty}.\]
Now Lemma \ref{etaveebound} gives $ \| d \eta_{\le \bar \ell + 3} \|_{L^1} \lesssim 2^{\bar \ell}$ and Lemma \ref{prim} gives $ \lVert\Prim P_{k_1} a_i \rVert_{L^1} \lesssim 2^{-k_1} \lVert P_{k_1} a_i \rVert_{L^1}$.  We also know by Lemma \ref{Pkbound} that $\lVert P_{k_1} a_j \rVert_{L^\infty} \lesssim \lVert a_j \rVert_{L^\infty} \lesssim L^2$.  Putting these bounds together, we see that
\begin{equation} \label{intpartsbound}
  \lVert P_{\le \bar \ell +3} (P_{k_1} a_i \wedge P_{\le k_1} a_j) \rVert_{L^1} \lesssim 2^{\bar \ell - k_1} L^2 \lVert P_{k_1} a_i \rVert_{L^1}.
\end{equation}
Returning to the contribution of Term 2.1 in \eqref{term21}, we have the bound
\begin{equation} \label{term21bound}
  \sum_{k_1 = \bar \ell+1}^\infty \int \psi_U \lvert P_{\le \bar \ell +3} (P_{k_1} a_i \wedge P_{\le k_1} a_j) \rvert \le \sum_{k_1 = \bar \ell + 1}^\infty 2^{\bar \ell - k_1} L^2 \lVert P_{k_1} a_i \rVert_{L^1}.
\end{equation}

Putting together our bounds for all the different terms, we get the following estimate for any choice of scale $\bar \ell$:
\begin{equation} \label{finalbound}
 \left\lvert\int_{\RR^d} \psi_U a_1 \wedge a_1\right\rvert \lesssim 2^{- \bar \ell} L^4 + 2^{\bar \ell} L^3 + \sum_{k_1 = \bar \ell + 1}^\infty 2^{\bar \ell - k_1} L^2 \| P_{k_1} a_i \|_{L^1}.
\end{equation}
(On the right-hand side, the first term comes from high frequency pieces, the next term comes from Term 1 and is bounded using the low frequency method, and the final term comes from Terms 2.1 and 2.2.  The fact that $k \ge 4$ is used in the bound for Term 1.)

Let us pause to digest this bound.  To begin, note that the first two terms, $2^{- \bar \ell} L^4 + 2^{\bar \ell} L^3$, can be made much smaller than $L^4$.  For instance, we can choose $\bar \ell$ so that $2^{\bar \ell} = L^{1/2}$, and then these first two terms give $L^{3.5}$.  The final term is often the most important.

Now let us try to get some intuition about the last term.  Because of the exponentially decaying factor $2^{\bar \ell - k_1}$, the final term comes mainly from $k_1$ close to $\bar \ell$.  If $\| P_{k_1} a_i \|_{L^1}$ is very small for a range of $k_1$, then it is strategic for us to choose $\bar \ell$ at the start of this range.  This scenario could lead to a bound which is much stronger than $L^4 (\log L)^{-1/2}$ -- see Proposition \ref{allfreq} below.  On the other hand, it may happen that $\| P_{k_1} a_i \|_{L^1}$ are all roughly equal.  This is actually the worst scenario from the point of view of Theorem \ref{cp2k}.  In this case we can improve on the bound $\| P_{k_1} a_i \|_{L^1} \lesssim \| a_i \|_{L^1} = L^2$ by using the orthogonality of the $P_{k_1} a_i$. By Cauchy--Schwarz, $\| P_{k_1} a_i \|_{L^1} \lesssim \|P_{k_1} a_i \|_{L^2}$, and $\sum_{k_1} \| P_{k_1} a_i \|_{L^2}^2 \lesssim \| a_i \|_{L^2}^2 \lesssim L^4$.  If $\|P_{k_1} a_i \|_{L^1}$ are all equal, then we can compute $\| P_{k_1} a_i \|_{L^1} \lesssim L^2 (\log L)^{-1/2}$.  Plugging this into the last term, and summing the geometric series, the last term contributes $L^4 (\log L)^{-1/2}$.

We now finish the formal proof of Theorem \ref{cp2k}.  We will choose $\bar \ell$ in the range $L^{1/10} \le 2^{\bar \ell} \le L^{9/10}$.  The number of different $\bar \ell$ in this range is $\sim \log L$.  For each $\bar \ell$ in this range, (\ref{finalbound}) gives:

\[ \left\lvert\int_{\RR^d} \psi_U a_1 \wedge a_1\right\rvert \lesssim L^{3.9} + \sum_{k_1 = \bar \ell + 1}^\infty 2^{\bar \ell - k_1} L^2 \| P_{k_1} a_i \|_{L^1}.\]
Adding together all the $\bar \ell$ in this range, we get
\begin{equation} \label{addscales}
  \log L \left\lvert\int_{\RR^d} \psi_U a_1 \wedge a_1\right\rvert \lesssim L^{3.91} + \sum_{L^{1/10} \le 2^{\bar \ell} \le L^{9/10}} \sum_{k_1 = \bar \ell + 1}^\infty 2^{\bar \ell - k_1} L^2 \| P_{k_1} a_i \|_{L^1}.
\end{equation}
In this sum, the terms with $2^{k_1} > L$ can be bounded by $L^{3.9}$ and absorbed into the first term.  The remaining terms are
\[ \sum_{L^{1/10} \le 2^{k_1} \le L} \sum_{L^{1/10} \le 2^{\bar \ell} \le 2^{k_1}-1} 2^{\bar \ell - k_1} L^2 \| P_{k_1} a_i \|_{L^1} \lesssim \sum_{L^{1/10} \le 2^{k_1} \le L} L^2 \|P_{k_1} a_i \|_{L^1}.\]
Next we want to use orthogonality from Lemma \ref{orth}: $\| a_i \|_{L^2}^2 \sim \sum_k \| P_k a_i \|_{L^2}^2$.  To get these $L^2$ norms into play we apply Cauchy--Schwarz.  Since the $a_i$ are supported in a fixed ball, and since the $P_{k_1} a_i$ are rapidly decaying away from that ball, we have $\| P_{k_1} a_i \|_{L^1} \lesssim \| P_{k_1} a_i \|_{L^2}$.  Since there are $\sim \log L$ values of $k_1$ in the range $L^{1/10} \le 2^{k_1} \le L$, we have
\begin{align*}
  \sum_{L^{1/10} \le 2^{k_1} \le L} L^2 \|P_{k_1} a_i \|_{L^1} &\lesssim (\log L)^{1/2} L^2 \left( \sum_{L^{1/10} \le 2^{k_1} \le L} \|P_{k_1} a_i \|_{L^2}^2 \right)^{1/2} \\
  &\lesssim (\log L)^{1/2} L^2 \| a_i \|_{L^2} \lesssim (\log L)^{1/2} L^4.
\end{align*}
Plugging this back into \eqref{addscales}, we see that
\[\log L \left\lvert\int_{\RR^d} \psi_U a_1 \wedge a_1\right\rvert \lesssim L^{3.91} +  (\log L)^{1/2} L^4 \]
and so
\[\left\lvert\int_{\RR^d} \psi_U a_1 \wedge a_1\right\rvert \lesssim (\log L)^{-1/2} L^4.\]
But the degree of $f$ is given by \eqref{degform2}:
\[\deg (f) = \sum_U \int \psi_U a_1 \wedge a_1 \lesssim  (\log L)^{-1/2} L^4.\]
This finishes the proof of Theorem \ref{cp2k}. \end{proof}

The bound (\ref{finalbound}) contains somewhat more information than Theorem \ref{cp2k}.  It also tells us that if the degree of $f$ is close to $L^4 (\log L)^{-1/2}$, then the forms $a_i$ must have contributions from essentially all frequency ranges.  We make this precise in the following proposition.

\begin{prop} \label{allfreq}
Suppose that $k \ge 4$.  Suppose $f: X_k \rightarrow X_k$ is $L$-Lipschitz.  Let the forms $a_i$ be as in (\ref{defai}), and fix $0 < \beta_1 < \beta_2 < 1$.

Suppose that for every chart and every $i$, and every $k_1$ in the range $L^{\beta_1} < 2^{k_1} < L^{\beta_2}$,
\begin{equation} \label{freqrangebound}
    \lVert P_{k_1} a_i \rVert_{L^1} \le L^{2 - \gamma}.
\end{equation}
Then the degree of $f$ is bounded by $C(g) L^{4 - \eta}$, where
\[\eta = \min( \beta_1, \beta_2 - \beta_1, \gamma).\]
\end{prop}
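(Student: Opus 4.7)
The plan is to apply the master estimate \eqref{finalbound},
\[
\left|\int_{\RR^4} \psi_U a_1 \wedge a_1\right| \lesssim 2^{-\bar\ell} L^4 + 2^{\bar\ell} L^3 + \sum_{k_1 = \bar\ell+1}^\infty 2^{\bar\ell - k_1} L^2 \|P_{k_1} a_i\|_{L^1},
\]
which was already derived in the proof of Theorem \ref{cp2k} and is valid for \emph{any} choice of scale $\bar\ell$. In that proof, with no a priori control on $\|P_{k_1} a_i\|_{L^1}$, the bound had to be averaged over a logarithmic window of scales. Here the hypothesis \eqref{freqrangebound} lets us exploit the vanishing of mid-frequency energy at a single well-chosen cutoff. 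The natural choice is the integer $\bar\ell$ with $2^{\bar\ell} \sim L^{\beta_1}$, placing the cutoff at the low end of the controlled frequency window.

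The first term contributes $2^{-\bar\ell} L^4 \sim L^{4-\beta_1}$. The second term contributes $2^{\bar\ell} L^3 \sim L^{4-(1-\beta_1)}$; since $\beta_2 < 1$ gives $1-\beta_1 > \beta_2 - \beta_1$, this is automatically dominated by $L^{4-(\beta_2-\beta_1)}$. For the third term I split the sum at $k_1 \approx \beta_2 \log_2 L$. In the range $\bar\ell < k_1 \le \beta_2 \log_2 L$ the hypothesis gives $\|P_{k_1} a_i\|_{L^1} \le L^{2-\gamma}$, and the geometric series in $2^{\bar\ell - k_1}$ produces a total of $O(L^{4-\gamma})$. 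In the range $k_1 > \beta_2 \log_2 L$ I fall back on the trivial bound $\|P_{k_1} a_i\|_{L^1} \lesssim \|a_i\|_{L^1} \lesssim L^2$ (from Lemma \ref{Pkbound} together with the compact support and $L^\infty$ control of $a_i$), producing $L^4 \cdot 2^{\bar\ell} \sum_{k_1 > \beta_2 \log_2 L} 2^{-k_1} \lesssim L^{4+\beta_1-\beta_2} = L^{4-(\beta_2-\beta_1)}$.

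Combining gives $\bigl|\int \psi_U a_1 \wedge a_1\bigr| \lesssim L^{4-\beta_1} + L^{4-(\beta_2-\beta_1)} + L^{4-\gamma} \lesssim L^{4-\eta}$ on each chart, and summing over the finitely many charts $U$ via $\deg(f) = \sum_U \int \psi_U a_1 \wedge a_1$ yields $\deg(f) \le C(g) L^{4-\eta}$.

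The main subtlety is that the elementary ``low-frequency cost'' term $2^{\bar\ell} L^3$ in \eqref{finalbound}, which in the proof of Theorem \ref{cp2k} was tamed by averaging over scales, must here be dominated by a single-scale estimate. This is precisely where the assumption $\beta_2 < 1$ is used: it forces $\eta \le \beta_2 - \beta_1 < 1 - \beta_1$, so that $L^{3+\beta_1} = L^{4-(1-\beta_1)} \le L^{4-\eta}$ without additional work. Were we to allow $\beta_2 = 1$, the choice of $\bar\ell$ would have to balance the first and second terms and the argument would only give the weaker $\eta = \min(\beta_1, 1-\beta_1, \gamma)$.
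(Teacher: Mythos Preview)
Your proof is correct and follows exactly the same approach as the paper: choose $2^{\bar\ell}\sim L^{\beta_1}$ in the master estimate \eqref{finalbound}, split the tail sum at $2^{k_1}\sim L^{\beta_2}$, and sum the resulting geometric series. Your write-up is in fact more detailed than the paper's, which simply records the four resulting terms and says ``carrying out the geometric series and grouping terms finishes the proof''; your explicit observation that $\beta_2<1$ is what absorbs the $L^{3+\beta_1}$ term into $L^{4-(\beta_2-\beta_1)}$ is a point the paper leaves implicit.
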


\begin{proof}
Recall that $\| P_{k_1} a_i \|_{L^1} \lesssim \| a_i \|_{L^1} \lesssim L^2$.  The hypothesis (\ref{freqrangebound}) says that we have a stronger bound on $\| P_{k_1} a_i \|_{L^1}$ when $2^{k_1}$ lies in the range $[L^{\beta_1}, L^{\beta_2}]$.

To prove the bound, we plug all our hypotheses into the bound (\ref{finalbound}).  That shows that the degree is bounded by
\[L^{4 - \beta_1} + L^{3 + \beta_1} + \sum_{L^{\beta_1} \le 2^{k_1} \le L^{\beta_2}} L^{\beta_1} 2^{-k_1} L^2 L^{2 - \gamma} + \sum_{2^{k_1} \ge L^{\beta_2}} L^{\beta_1} 2^{-k_1} L^4.\]
Carrying out the geometric series and grouping terms finishes the proof.
\end{proof}

\subsection{General estimate}

In this section, we prove theorem \ref{gendegbound}.  We recall the statement.

\begin{thm*}
  Suppose that $M$ is a closed connected oriented $n$-manifold such that $H^*(M; \RR)$ does not embed into $\Lambda^* \RR^n$, and $N$ is any closed oriented $n$-manifold.  Then there exists $\alpha(M) > 0$ so that for any metric $g$ on $M$ and $g'$ on $N$ and any map $f: N \rightarrow M$ with $\Lip(f) = L$,
  \[\deg(f) \le C(M,g,N,g') L^n (\log L)^{- \alpha(M)}.\]
\end{thm*}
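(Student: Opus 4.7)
The plan is to run the argument of Theorem~\ref{cp2k} in sufficient generality, with only the pointwise algebraic inequality Lemma~\ref{kge4} needing replacement; all of the Fourier-analytic inputs---the low-frequency bound via primitives (Lemma~\ref{lowfreqaiaj}), the commutator identity~\eqref{lowfreqcompare2}, the high-frequency bound (Lemma~\ref{highfreqsmall2}), and the orthogonality from Plancherel (Lemma~\ref{orth})---will carry over with only modest bookkeeping.  The new algebraic input is the following observation, which I claim is extracted from the hypothesis on $H^*(M;\RR)$: every unital ring homomorphism $\phi: H^*(M;\RR) \to \Lambda^*\RR^n$ kills $[\vol_M]$.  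To see this, suppose $\phi([\vol_M]) \ne 0$.  Since $\dim H^n(M;\RR) = 1$, the restriction $\phi|_{H^n}$ is an isomorphism onto $\Lambda^n\RR^n \cong \RR$; for any nonzero $w \in H^j(M;\RR)$, Poincar\'e duality supplies $w' \in H^{n-j}(M;\RR)$ with $w \smile w' \ne 0$, and then $\phi(w)\wedge\phi(w') = \phi(w\smile w') \ne 0$ forces $\phi(w) \ne 0$.  Hence $\phi$ is injective, contradicting the hypothesis.

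Once this observation is in hand, I would fix a graded basis $\{v_i\}$ of $H^*(M;\RR)$ with representative forms $\alpha_i$, a homogeneous polynomial $P$ of total cohomological degree $n$ with $P(v_i) = [\vol_M]$, and the ideal $I$ of cohomological relations among the $v_i$.  The observation above is exactly that the scalar polynomial $(e_i) \mapsto P(e_i)$ vanishes on the real variety $\mathcal{E} \subset \prod_i \Lambda^{|v_i|}\RR^n$ of ring homomorphisms, which is cut out by the coordinates of $Q(e_i)$ for $Q \in I$ of total cohomological degree $\le n$ (higher-degree relations vanish automatically on dimensional grounds).  The Real Nullstellensatz applied to $P$ and this defining ideal, together with weighted-homogeneous scaling, gives an integer $m \ge 1$, homogeneous relations $Q_1, \ldots, Q_r \in I$ of total degrees $d_l \le n$, and a constant $C$ such that the pointwise inequality
\[|P(e_i)|^{2m} \le C\sum_{l=1}^r \lVert Q_l(e_i)\rVert \cdot \lVert e\rVert_w^{2mn - d_l}\]
holds throughout $\prod_i \Lambda^{|v_i|}\RR^n$, where $\lVert e\rVert_w$ is a weighted norm homogeneous of degree $1$ under $e_i \mapsto \lambda^{|v_i|}e_i$.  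After pulling back representatives through charts to get forms $a_i$ with $\lVert a_i\rVert_\infty \lesssim L^{|v_i|}$, each $Q_l(\alpha_i)$ is an exact $d_l$-form on $M$ with primitive of bounded sup norm, so the proof of Lemma~\ref{lowfreqaiaj} gives $\lVert P_{\le k}Q_l(a_i)\rVert_\infty \lesssim 2^k L^{d_l - 1}$.  The endgame is then to apply the algebraic inequality at $b_i = P_{\le\bar\ell}a_i(x)$, integrate, use H\"older's inequality to absorb the $2m$-th power, compare $Q_l(P_{\le\bar\ell}a_i)$ with $P_{\le\bar\ell+O(1)}Q_l(a_i)$ via a multi-factor generalization of~\eqref{lowfreqcompare2} whose errors are bounded using primitives, control the high-frequency contribution via Lemma~\ref{highfreqsmall2}, and sum across the $\sim \log L$ frequency scales between the natural low- and high-frequency cutoffs using Cauchy--Schwarz and the orthogonality of Lemma~\ref{orth}.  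This will produce a bound of the desired form $L^n(\log L)^{-\alpha(M)}$ with $\alpha(M)>0$ determined by $m$, $r$, and the $d_l$.

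The hardest step is extracting a quantitatively useful Nullstellensatz exponent.  In the case of Lemma~\ref{kge4} one has the favorable case $m=1$ thanks to the explicit signature of the wedge quadratic form on $\Lambda^2\RR^4$, giving a linear pointwise inequality and the clean $\alpha(M) = 1/2$ exponent of Theorem~\ref{cp2k}; in general Real Nullstellensatz provides only the existence of $m$, and one must track how this exponent propagates through the H\"older step and the geometric summation over frequency scales in order to extract any positive power of $\log L$.  A secondary difficulty is the combinatorics of the commutator expansion when the relations $Q_l$ have polynomial degree greater than $2$: the multi-term analogue of~\eqref{lowfreqcompare2} has more pieces, but each can still be controlled by integrating by parts against the primitive of the highest-frequency factor, and the scale-by-scale estimate retains the same shape.
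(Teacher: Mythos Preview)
Your proposal is correct and follows essentially the same route as the paper: the paper's Lemma~\ref{topvsrel} is precisely your Real Nullstellensatz step (the paper normalizes to $|\beta_j|\le 1$ rather than carrying weighted homogeneity, but this is cosmetic), and the subsequent argument---low-frequency bounds on relations via their primitives, the multi-factor commutator expansion comparing $R_r(P_{\le k}a)$ with $P_{\le k+c}R_r(a)$, H\"older to absorb the $2m$-th power, and the sum-over-scales endgame with Cauchy--Schwarz and Plancherel orthogonality---matches your outline point for point. The paper arrives at $\alpha(M)=\tfrac{1}{4m}$, with $m$ the Nullstellensatz exponent, exactly as you anticipate.
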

\begin{rmk}
  The constant $\alpha(M)$ depends only on the real cohomology algebra of $M$, $H^*(M; \RR)$.
\end{rmk}
\begin{rmk}
  Because the constant $C(M,g)$ depends on $g$, it suffices to prove the estimate for any one metric $g$.
\end{rmk}

The main difference between the general situation in Theorem \ref{gendegbound} and the special case $X_k = (\CC P^2)^{\# k}$ in Theorem \ref{cp2k} is to find the right analogue of Lemma \ref{kge4}.  Lemma \ref{kge4} takes advantage of the hypothesis that $k \ge 4$ for $X_k$.  Similarly, the following lemma takes advantage of the hypothesis that $H^*(M; \RR)$ does not embed into $\Lambda^* \RR^n$.  

\begin{lem} \label{topvsrel}
  Suppose that $M$ is a closed connected oriented $n$-manifold such that $H^*(M; \RR)$ does not embed into $\Lambda^* \RR^n$.  Then there exists an integer $m(M)$ so that the following holds.

  Let $u_j \in H^{d_j} (M; \RR)$ be a set of generators for the cohomology algebra of $M$, including a generator $u_{\topp} \in H^n(M; \RR)$.  Suppose that the relations of the cohomology algebra are given by $R_r(u_1, \ldots, u_J) = 0$.

Fix $\beta_j \in \Lambda^{d_j} \RR^n$ for each $j =1, \ldots, J$ such that $|\beta_j| \le 1$ for each $j$ and $| R_r(\vec \beta)| \le \eps$ for each $r$.  Then $| \beta_{\topp} | \le C_M \eps^{\frac{1}{2m}}$.

\end{lem}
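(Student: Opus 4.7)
The plan is to combine a Poincar\'e duality argument with the \L{}ojasiewicz inequality. The starting observation is that in the exact case, $R_r(\vec\beta) = 0$ for all $r$, the assignment $u_j \mapsto \beta_j$ extends to a graded ring homomorphism $\phi : H^*(M;\RR) \to \Lambda^*\RR^n$, and I would show that such a $\phi$ necessarily kills $u_{\topp}$. If $\phi(u_{\topp})$ were nonzero, then for any nonzero $\alpha \in H^k(M;\RR)$, Poincar\'e duality for the closed oriented $n$-manifold $M$ produces $\gamma \in H^{n-k}(M;\RR)$ with $\alpha \smile \gamma = c\, u_{\topp}$ for some $c \ne 0$; applying $\phi$ gives $\phi(\alpha) \wedge \phi(\gamma) = c\phi(u_{\topp}) \ne 0$ and hence $\phi(\alpha) \ne 0$. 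This would make $\phi$ injective, contradicting the hypothesis that $H^*(M;\RR)$ does not embed into $\Lambda^*\RR^n$. So $\beta_{\topp} = \phi(u_{\topp}) = 0$ whenever every relation vanishes exactly.

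To upgrade this to the claimed quantitative bound, I would appeal to the \L{}ojasiewicz inequality on the compact polydisk $K = \{\vec\beta \in \prod_j \Lambda^{d_j}\RR^n : |\beta_j| \le 1\}$. Consider the two polynomial functions
\[ p(\vec\beta) = \sum_r |R_r(\vec\beta)|^2, \qquad q(\vec\beta) = |\beta_{\topp}|^2. \]
The previous paragraph gives $\{p = 0\} \cap K \subset \{q = 0\}$, so \L{}ojasiewicz supplies a positive integer $m = m(M)$ and a constant $C = C_M$, depending only on the polynomials $R_r$ and therefore only on the real cohomology algebra of $M$, with $q^m \le C p$ on $K$. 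Substituting $|R_r(\vec\beta)| \le \eps$ yields $|\beta_{\topp}|^{2m} \le C N \eps^2$, where $N$ is the number of relations, hence $|\beta_{\topp}| \le C_M \eps^{1/m} \le C_M \eps^{1/(2m)}$ whenever $\eps \le 1$; the case $\eps \ge 1$ is automatic from $|\beta_{\topp}| \le 1$ after enlarging $C_M$.

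The one nontrivial input is the Poincar\'e duality observation that any ring map from $H^*(M;\RR)$ into an exterior algebra which is nonzero on the fundamental class is automatically injective; this is what lets the topological hypothesis on $M$ be converted into an algebraic obstruction on $\phi$. After that, the \L{}ojasiewicz step is a black-box application. A minor wrinkle is that \L{}ojasiewicz a priori produces some real exponent $\alpha > 0$, not an integer; but for polynomial functions one may take $\alpha$ rational of the form $1/m$, or else pass to any smaller $1/m$ at the cost of worsening the constant on the compact set $K$. The integer $m(M)$ produced this way depends only on $H^*(M;\RR)$, which is the desired content of the lemma for use in the proof of Theorem \ref{gendegbound}.
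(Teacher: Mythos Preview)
Your proof is correct and shares the same overall architecture as the paper's: first use Poincar\'e duality to show that any graded ring homomorphism $H^*(M;\RR)\to\Lambda^*\RR^n$ that is nonzero on $u_{\topp}$ must be injective, hence $\beta_{\topp}=0$ on the exact vanishing locus of the relations; then invoke a ``zero-locus implies inequality'' principle to get the quantitative bound. The Poincar\'e duality step is identical in both arguments.

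The genuine difference is in the second step. The paper appeals to the \emph{real Nullstellensatz}: since $\beta_{\topp}$ vanishes on $V(R_1,\ldots,R_k)$, there exist an integer $m$ and polynomials $g_i,h_r$ with
\[
\beta_{\topp}^{2m}+g_1^2+\cdots+g_s^2=\sum_r h_r R_r,
\]
from which $|\beta_{\topp}|^{2m}\le C_M\eps$ follows immediately on the unit polydisk. You instead use the \L{}ojasiewicz inequality applied to the polynomial functions $p=\sum_r|R_r|^2$ and $q=|\beta_{\topp}|^2$ on the compact set $K$. Both are legitimate black boxes here. The real Nullstellensatz is purely algebraic and delivers the integer exponent $2m$ directly, with the constant depending only on the polynomial identity (hence only on $H^*(M;\RR)$). \L{}ojasiewicz is more analytic and a priori gives a real exponent, but as you note one can round up to an integer since $q\le 1$ on $K$; your version even yields the nominally sharper exponent $1/m$ before you weaken it to match the stated $1/(2m)$. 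Either route suffices for the application in Theorem~\ref{gendegbound}, where only \emph{some} positive power of $\eps$ is needed.
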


\begin{proof}
  The tuple $(\beta_1, \ldots, \beta_J)$ belongs to the space $\prod_{j=1}^J \Lambda^{d_j} \RR^n$, which is isomorphic to $\RR^N$.  We can think of (each component of) $\beta_j$ as a coordinate on this space, and we can think of $R_r$ as a polynomial on this space.  We let $V(R_1, \ldots, R_k)$ be the set of $\vec \beta$ where all the polynomials $R_r$ vanish.

  Each $(\beta_1, \ldots, \beta_J) \in V(R_1, \ldots, R_k)$ corresponds to a homomorphism $\phi: H^*(M; \RR) \rightarrow \Lambda^* \RR^n$ with $\beta_j = \phi(u_j)$.  By hypothesis, each such homomorphism is non-injective.  By Poincar\'e duality, we have that each such homomorphism sends $u_{\topp}$ to 0.  Therefore, $\beta_{\topp} = 0$ on $V(R_1, \ldots, R_k)$.

  For any set $X \subset \RR^N$, we let $I(X)$ denote the ideal of polynomials $f \in \RR[\beta]$ that vanish on $X$.  So we see that $\beta_{\topp} \in I (V (R_1, \ldots, R_k))$.  The structure of $I( V( R_1, \ldots, R_k))$ is described by the real Nullstellensatz---cf. \cite[\S2.3]{Marshall}:

  \begin{thm} [Real Nullstellensatz]
    A polynomial $f \in \RR[\beta]$ lies in $I (V (R_1, \ldots, R_k))$ if and only if there is an integer $m \ge 1$ and polynomials $g_i, h_r \in \RR[\beta]$ so that
    \[f^{2m} + g_1^2 + \ldots + g_s^2 = \sum_{r=1}^k h_r R_r.\]
  \end{thm}

  By the real Nullstellensatz, we see that there is some integer $m$ such that
  \[\beta_{\topp}^{2m} + g_1(\beta)^2 + \ldots + g_s(\beta)^2 = \sum_r h_r(\beta) R_r(\beta).\]
  If we also know that $|\beta_j| \le 1$ for every $j$ and $|R_r(\beta)| \le \eps$ for every $r$, then we see that
  \[\beta_{\topp}^{2m} \le C_M \eps.\]
  Therefore, $| \beta_{\topp} | \le C_M \eps^{\frac{1}{2m}}$.
\end{proof}

With this lemma, we can start the proof of the theorem.  The ideas are the same.  We just have to carry them out in a more general situation, with a little more notation.  

Recall that $u_j \in H^{d_j} (M; \RR)$ is a set of generators for the cohomology of $M$, with $u_{\topp}$ the generator of $H^n(M; \RR)$.  Suppose that the relations of the cohomology algebra are given by $R_r(u_1, \ldots, u_J) = 0$.

Choose $\alpha_j$ to be a closed form on $M$ in the cohomology class $u_j$.  The cohomology class of $R_r(\vec \alpha)$ is zero, so $R_r(\vec \alpha)$ is exact.  Choose a primitive:
\[d \gamma_r = R_r(\vec\alpha).\]

Next suppose that $f: N \rightarrow M$ is an $L$-Lipschitz map.  Cover $N$ with charts $U'$, and let $1 = \sum_{U'} \psi_{U'}$ be a partition of unity subordinate to the cover.  Let $\phi: U \rightarrow U'$ be a parametrization of $U'$, where $U \subset \RR^n$, which extends to a smooth map $\phi: \RR^n \rightarrow M$ sending the complement of a large ball in $\RR^n$ to a single point in $M$.  Define a smooth compactly supported function
\[\psi_U(x) = \begin{cases}
  \phi^{*} \psi_{U'}(x) & x \in U \\
  0 & x \notin U.
\end{cases}\]

Define forms on $\RR^n$ which correspond to the $\alpha_j$ as follows:
\[a_j := \frac{1}{L^{d_j}} \phi^* f^* \alpha_j.\]
With this normalization, $\| a_j \|_{L^\infty} \lesssim 1$ and the $a_j$ are smooth compactly supported differential forms.  Then
\begin{equation} \label{degintegral}
  \deg(f) = L^n \sum_{U} \int_{\RR^n} \psi_U a_{\topp}.
\end{equation}

Define forms on $\RR^n$ which correspond to the $\gamma_r$ as follows.  If $\gamma_r \in H^{d_r}(M; \RR)$, then
\[g_r := \frac{1}{L^{d(\gamma_r)+1}} \phi^* f^* \gamma_r.\]
The forms $g_r$ are also smooth compactly supported differential forms.  The power of $L$ is chosen so that
\[d g_r = R_r(a_j).\]
The power of $L$ works out to make the forms $g_r$ very small:
\[\| g_r \|_{L^\infty} \lesssim L^{-1}.\]
This allows us to show that the low-frequency parts of the forms $R_r(a)$ are small.

\begin{lem} \label{lowfreqrel}
  $\| P_{\le k} R_r(a) \|_{L^\infty} \lesssim 2^k L^{-1}$.
\end{lem}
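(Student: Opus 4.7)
The statement is the exact analogue of Lemma \ref{lowfreqaiaj} in the general setting, and I would prove it by the same integration-by-parts trick applied to the primitive $g_r$. The key relation is $dg_r = R_r(a)$ together with the bound $\|g_r\|_{L^\infty} \lesssim L^{-1}$ that was built into the definition of $g_r$ via the power of $L$ normalization.

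\textbf{Execution.} Writing $P_{\le k}$ as convolution with $\eta_{\le k}^\vee$, I would start from
\[P_{\le k} R_r(a)(x) = \int \eta_{\le k}^\vee(y)\, R_r(a)(x-y)\, dy = \int \eta_{\le k}^\vee(y)\, dg_r(x-y)\, dy,\]
and then move the exterior derivative onto $\eta_{\le k}^\vee$ (integration by parts; there are no boundary terms because $\eta_{\le k}^\vee$ is Schwartz and $g_r$ is compactly supported). This gives
\[P_{\le k} R_r(a)(x) = \pm \int d\eta_{\le k}^\vee(y)\wedge g_r(x-y)\, dy,\]
so pointwise
\[\lvert P_{\le k} R_r(a)(x)\rvert \le \lVert d\eta_{\le k}^\vee\rVert_{L^1}\,\lVert g_r\rVert_{L^\infty}.\]

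\textbf{Estimating the two factors.} The factor $\lVert g_r\rVert_{L^\infty} \lesssim L^{-1}$ is the a priori bound already noted just above the lemma. For $\lVert d\eta_{\le k}^\vee\rVert_{L^1}$, I would sum the bound from Lemma \ref{etaveebound}: since $\eta_{\le k} = \sum_{k' \le k} \eta_{k'}$,
\[\lVert d\eta_{\le k}^\vee\rVert_{L^1} \le \sum_{k' \le k} \lVert d\eta_{k'}^\vee\rVert_{L^1} \lesssim \sum_{k' \le k} 2^{k'} \lesssim 2^k.\]
Multiplying the two factors yields $\lVert P_{\le k} R_r(a)\rVert_{L^\infty} \lesssim 2^k L^{-1}$, as required.

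\textbf{Remarks on difficulty.} There is essentially no obstacle: the only substantive inputs are the identity $dg_r = R_r(a)$, the bound $\|g_r\|_{L^\infty}\lesssim L^{-1}$ (a consequence of $f$ being $L$-Lipschitz and of the $L$-scaling in the definition of $g_r$), and the Fourier-analytic estimate on $d\eta_{\le k}^\vee$ from Lemma \ref{etaveebound}. The only thing one needs to be mildly careful about is that $R_r(a)$ is a polynomial combination of wedge products of the $a_j$ rather than a single wedge product as in Lemma \ref{lowfreqaiaj}, but since $R_r(a) = dg_r$ exactly (not up to a correction), the argument proceeds term-free.
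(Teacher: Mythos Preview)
Your proof is correct and follows the same approach as the paper: write $P_{\le k}R_r(a)$ as convolution with $\eta_{\le k}^\vee$, substitute $R_r(a)=dg_r$, integrate by parts, and bound by $\lVert d\eta_{\le k}^\vee\rVert_{L^1}\lVert g_r\rVert_{L^\infty}\lesssim 2^k L^{-1}$. Your explicit justification of $\lVert d\eta_{\le k}^\vee\rVert_{L^1}\lesssim 2^k$ via the geometric sum $\sum_{k'\le k}2^{k'}$ is a small elaboration the paper leaves implicit.
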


\begin{proof}
We start by computing
\[P_{\le k} R_r(a) (x) = \int_{\RR^n} \eta_k^\vee(y) R_r(a) (x-y) dy =  \int_{\RR^n} \eta_k^\vee(y) dg_r (x-y) dy.\]
Now we can integrate by parts to get
\[\int_{\RR^n} \eta_k^\vee(y) dg_r (x-y) dy =   \int_{\RR^n} d\eta_k^\vee(y) g_r (x-y) dy.\]
Taking norms and using $\| g_r \|_{L^\infty} \lesssim L^{-1}$, we see that
\[| P_{\le k} R_r(a) (x) | \lesssim L^{-1} \int | d \eta_k^\vee| \lesssim 2^k L^{-1}. \qedhere\]
\end{proof}

We want to bound $\int \psi_U a_{\topp}$.  We break this up into a low frequency and high frequency part at a frequency cutoff $k$ which we will choose later.  (Eventually we will average over many $k$.)
\begin{equation} \label{lowhigh}
  \int \psi_U a_{\topp} = \underbrace{\int \psi_U P_{\le k} a_{\topp}}_{\textrm{low}} + \underbrace{\sum_{\ell > k} \int \psi_U P_\ell a_{\topp}}_{\textrm{high}}.
\end{equation}

For the high frequency pieces in \eqref{lowhigh}, we will find a small primitive and then integrate by parts.  Lemma \ref{prim} tells us that $P_{\ell} a_{\topp}$ has a primitive with
\[\lVert \Prim ( P_\ell a_{\topp}) \rVert_{L^\infty} \lesssim 2^{- \ell} \lVert P_{\ell} a_{\topp} \rVert_{L^\infty} \lesssim 2^{- \ell} \lVert a_{\topp} \rVert_{L^\infty} \lesssim 2^{- \ell}.\]
Then we can bound $\int \psi_U P_{\ell} a_{\topp}$ by 
\[\int \psi_U P_{\ell} a_{\topp} = \int d \psi_U \Prim (P_\ell a_{\topp}) \lesssim 2^{-\ell}.\]
We will choose $k$ with $2^k \ge L^{1/10}$, and so the contribution of all the high frequency parts is bounded by $L^{-1/10}$, which is much smaller than the bound we are aiming for.

For the low-frequency piece in \eqref{lowhigh}), we apply Lemma \ref{topvsrel} to the forms $P_{\le k} a_j$.  Since all these forms have norm $\lesssim 1$ pointwise, the lemma gives us a pointwise bound
\[|P_{\le k} a_{\topp}(x)| \lesssim \sum_r | R_r( P_{\le k} a)|^{\frac{1}{2m}}.\]
Integrating and using the H\"older inequality, we get the bound
\begin{equation} \label{notepoint}
  \int \psi_U P_{\le k} a_{\topp} \le  \sum_r \int \psi_U | R_r(P_{\le k} a)|^{\frac{1}{2m}} \lesssim \sum_r \left( \int \psi_U |R_r(P_{\le k} a)| \right)^{\frac{1}{2m}}.
\end{equation}
In the H\"older step, in detail we wrote 
\begin{align*}
  \int \psi_U | R_r(P_{\le k} a)|^{\frac{1}{2m}} &= \int \psi_U^{\frac{2m-1}{2m}} \cdot \psi_U^{\frac{1}{2m}} |R_r(P_{\le k} a)|^{\frac{1}{2m}} \\
  &\le \underbrace{\left( \int \psi_U \right)^{\frac{2m-1}{2m}}}_{\lesssim 1} \left( \int \psi_U |R_r(P_{\le k} a)| \right)^{\frac{1}{2m}}.
\end{align*}

Now we have to bound each integral $\int \psi_U |R_r( P_{\le k} a)|$.  Since $\| a \|_{L^\infty} \lesssim 1$, we get a bound $\int \psi_U |R_r( P_{\le k} a)|\lesssim 1$, and to prove our theorem we need to beat this bound by a power of $\log L$, at least for some choice of $k$.  The key input is the bound on the low freq part of $R_r(a)$: Lemma \ref{lowfreqrel} tells us that $\| P_{\le k} R_r(a) \|_{L^\infty} \le 2^{k} L^{-1}$.  Next we have to relate $R_r (P_{\le k} a)$ with $P_{\le k} R_r(a)$.

Remember that each $R_r$ is a polynomial in the $a_j$.  Each $R_r(a_j)$ is a sum of terms of the form $c a_{j_1} \wedge \cdots \wedge a_{j_P}$.  If we do a Littlewood--Paley decomposition of each $a_j$, we see that
\begin{equation} \label{lpexpand}
  a_{j_1} \wedge \cdots \wedge a_{j_P} = \sum_{k_1, \ldots, k_P} P_{k_1} a_{j_1} \wedge \cdots \wedge P_{k_P} a_{j_P}.
\end{equation}
For each choice of $k_1, \ldots, k_P$, we write $k_{\max} = \max_p k_p$.  We let $p_{\max}$ be the value of $p$ that maximizes $k_p$.  If there is a tie, we let $p_{\max}$ be the smallest $p$ so that $k_p = k_{\max}$.  We can now organize the sum on the right-hand side of \eqref{lpexpand} according to the value of $k_{\max}$ and $p_{\max}$:
\begin{multline*}
  \sum_{k_1, \ldots, k_P} P_{k_1} a_{j_1} \wedge \cdots \wedge P_{k_P} a_{j_P}
  = \sum_{k_{\max}} \sum_{p_{\max} = 1}^P P_{< k_{\max}} a_{j_1} \wedge \cdots \wedge \\
  {} \wedge P_{< k_{\max} } a_{j_{p_{\max} - 1}} \wedge P_{k_{\max}} a_{j_{p_{\max}}} \wedge P_{\le k_{\max}} a_{j_{p_{\max}}+1} \wedge \cdots \wedge P_{\le k_{\max}} a_{j_P}.
\end{multline*}
Similarly,
\[P_{\le k} a_{j_1} \wedge \cdots \wedge P_{\le k} a_{j_P}= \sum_{k_{\max} \le k } \sum_{p_{\max} = 1}^P P_{< k_{\max}} a_{j_1} \wedge \cdots \wedge P_{k_{\max}} a_{j_{p_{\max}}} \wedge \cdots \wedge P_{\le k_{\max}} a_{j_P}.\]
Therefore,
\begin{multline*}
  P_{\le k} a_{j_1} \wedge \cdots \wedge P_{\le k} a_{j_P} \\
  =  a_{j_1} \wedge \cdots \wedge a_{j_P} - \sum_{k_{\max} > k } \sum_{p_{\max} = 1}^P P_{< k_{\max}} a_{j_1} \wedge \cdots \wedge P_{k_{\max}} a_{j_{p_{\max}}} \wedge \cdots \wedge P_{\le k_{\max}} a_{j_P}.
\end{multline*}

This discussion applies to each monomial of $R_r$.  Therefore, $R_r(a)$ is equal to $R_r( P_{\le k} a)$ plus a finite linear combination of terms of the form 
\begin{equation} \label{termform}
  \sum_{k_{\max} > k } \sum_{p_{\max} = 1}^P P_{< k_{\max}} a_{j_1} \wedge \cdots \wedge P_{k_{\max}} a_{j_{p_{\max}}} \wedge \cdots \wedge P_{\le k_{\max}} a_{j_P}. 
\end{equation}

Now for a large constant $c$, we have $P_{\le k+c}  R_r( P_{\le k} a_j) =  R_r( P_{\le k} a_j)$.  Therefore, $ R_r( P_{\le k} a)$ is equal to $P_{\le k+c} R_r(a) $ plus a finite linear combination of terms of the form 
\begin{equation} \label{termform2}
  \sum_{k_{\max} > k } \sum_{p_{\max} = 1}^P P_{\le k + c} \left( P_{< k_{\max}} a_{j_1} \wedge \cdots \wedge P_{k_{\max}} a_{j_{p_{\max}}} \wedge \cdots \wedge P_{\le k_{\max}} a_{j_P}\right) . 
\end{equation}
In summary,
\begin{equation} \label{strucsummary}
  R_r(P_{\le k} a) = P_{\le k+ c} R_r(a) + \text{ terms of the form } \eqref{termform2}.
\end{equation}

The first term in \eqref{strucsummary} is controlled by Lemma \ref{lowfreqrel}: $\| P_{\le k+ c} R_r(a) \|_{L^\infty} \lesssim 2^{k+c} L^{-1} \lesssim 2^k L^{-1}$.  
We will choose $k$ so that $2^k \le L^{9/10}$, so this term is bounded by $L^{-1/10}$, which is much smaller than our goal.

For each remaining term of type \eqref{termform2}, we will again take a primitive and integrate by parts.  We apply Lemma \ref{prim} to get a good primitive: $P_{k_{\max}} a_{j_{p_{\max}}} = d \Prim ( P_{k_{\max}} a_{j_{p_{\max}}} )$, where $\lVert \Prim ( P_{k_{\max}} a_{j_{p_{\max}}} ) \rVert_{L^p} \lesssim 2^{-k_{\max}} \lVert P_{k_{\max}} a_{j_{p_{\max}}} \rVert_{L^p}$ for every $1 \le p \le \infty$.  For each fixed choice of $k_{\max}$ and $p_{\max}$, we write
\begin{multline*}
  P_{\le k + c} \left( P_{< k_{\max}} a_{j_1} \wedge \cdots \wedge P_{k_{\max}} a_{j_{p_{\max}}} \wedge \cdots \wedge P_{\le k_{\max}} a_{j_P}\right) \\
  = \eta_{\le k + c}^\vee *  \left( P_{< k_{\max}} a_{j_1} \wedge \cdots \wedge d \Prim( P_{k_{\max}} a_{j_{p_{\max}}}) \wedge \cdots \wedge P_{\le k_{\max}} a_{j_P}\right) \\
  = d \eta_{\le k + c}^\vee *  \left( P_{< k_{\max}} a_{j_1} \wedge \cdots \wedge \Prim( P_{k_{\max}} a_{j_{p_{\max}}}) \wedge \cdots \wedge P_{\le k_{\max}} a_{j_P}\right).
\end{multline*}
We now take the $L^1$ norm of our term.  Since $\| a_j\|_{L^\infty}$ and $\| P_{< k_{\max} a_j} \|_{L^\infty}$ are all $\lesssim 1$, we see that
\begin{multline*}
  \bigl\lVert d \eta_{\le k + c}^\vee *  \left( P_{< k_{\max}} a_{j_1} \wedge \cdots \wedge \Prim( P_{k_{\max}} a_{j_{p_{\max}}}) \wedge \cdots \wedge P_{\le k_{\max}} a_{j_P}\right)\bigr\rVert_{L^1} \\
  \lesssim \bigl\lVert d \eta_{\le k+ c}^\vee \bigr\rVert_{L^1} \bigl\lVert \Prim ( P_{k_{\max}} a_{j_{p_{\max}}}) \bigr\rVert_{L^1} \lesssim 2^{k+c} 2^{-k_{\max}} \bigl\lVert P_{k_{\max}} a \bigr\rVert_{L^1}.
\end{multline*}

To summarize, we have proved the following bound on each summand of \eqref{termform2}:
\begin{equation} \label{termbound}
  \| P_{\le k + c} \left( P_{< k_{\max}} a_{j_1} \wedge \cdots \wedge P_{k_{\max}} a_{j_{p_{\max}}} \wedge \cdots \wedge P_{\le k_{\max}} a_{j_P}\right) \|_{L^1} \lesssim 2^{k+c} 2^{-k_{\max}} \| P_{k_{\max}} a \|_{L^1}.
\end{equation}
Now the $L^1$ norm of each term of form \eqref{termform2} is bounded as follows:
\begin{multline*}
  \biggl\lVert \sum_{k_{\max} > k } \sum_{p_{\max} = 1}^P P_{\le k + c} \left( P_{< k_{\max}} a_{j_1} \wedge \cdots \wedge P_{k_{\max}} a_{j_{p_{\max}}} \wedge \cdots \wedge P_{\le k_{\max}} a_{j_P}\right) \biggr\rVert_{L^1} \\
  \lesssim  \sum_{k_{\max} > k} 2^{k - k_{\max}} \| P_{k_{\max}} a \|_{L^1}.
\end{multline*}

We now have our bounds on all the terms and we just have to put them together.  Recall \eqref{notepoint} tells us that 
\begin{equation}
  \Bigl(\int \psi_U P_{\le k} a_{\topp}\Bigr)^{2m} \lesssim \sum_r  \int \psi_U |R_r(P_{\le k} a)|.
\end{equation}
By \eqref{strucsummary}, we can break up $R_r(P_{\le k} a)$ into pieces:
\begin{equation*}
  R_r(P_{\le k} a) = P_{\le k+ c} R_r(a) + \text{terms of the form }\eqref{termform2}.
\end{equation*}
We have now bounded each term on the right-hand side.  Combining our bounds, we see that
\[\Bigl(\int \psi_U P_{\le k} a_{\topp}\Bigr)^{2m} \lesssim \sum_r \int \psi_U |R_r(P_{\le k} a)| \lesssim 2^k L^{-1} + \sum_{k_{\max} > k} 2^{k-k_{\max}} \| P_{k_{\max}} a \|_{L^1}.\]

Let us pause to digest this bound.  The first term $2^k L^{-1}$ is very small as long as $2^k \le L^{9/10}$.  In the second term, there is exponential decay for $k_{\max} > k$.  Therefore, the main contribution on the right hand side is when $k_{\max} = k$, which gives $\| P_k a \|_{L^1}$.  For comparison, it would be straightforward to get an upper bound of $\| a \|_{L^1} \lesssim 1$.  The upper bound $\| P_k a \|_{L^1}$ is an improvement because it includes only one Littlewood--Paley piece of $a$.  We can then take advantage of this improvement by averaging over $k$ and using orthogonality: $\sum_k \| P_k a \|_{L^2}^2 \sim \| a \|_{L^2}^2$.   Now we turn to the details of this estimate.  

We will sum over $k$ in the range $L^{1/10} \le 2^k \le L^{9/10}$.  There are $\sim \log L$ different $k$ in this range.
\[\sum_{L^{1/10} \le 2^k \le L^{9/10}} \Bigl(\int \psi_U P_{\le k} a_{\topp}\Bigr)^{2m} \lesssim L^{-1/10} + \sum_{L^{1/10} \le 2^k \le L^{9/10}} \sum_{2^k < 2^{k_{\max}} < L} 2^{k-k_{\max}} \| P_{k_{\max}} a \|_{L^1}.\]
(Here, terms with $2^{k_{\max}} > L$ are bounded by the $L^{-1/10}$ term).  Now the last term is bounded by
\[\sum_{L^{1/10} \le 2^k \le L^{9/10}} \sum_{2^k < 2^{k_{\max}} < L} 2^{k-k_{\max}} \| P_{k_{\max}} a \|_{L^1} \lesssim  \sum_{L^{1/10} \le k_{\max} \le L} \| P_{k_{\max}} a \|_{L^1}.\]
The number of terms in this last sum is $\sim \log L$.  Therefore, we can use the Cauchy--Schwarz inequality to get
\[\sum_{L^{1/10} \le k_{\max} \le L} \| P_{k_{\max}} a \|_{L^1} \le (\log L)^{1/2} \biggl( \sum_{L^{1/10} \le 2^{k_{\max}} \le L} \| P_{k_{\max}} a \|_{L^1}^2 \biggr)^{1/2}.\]
Since $a$ is supported on a fixed compact set, and $P_{k_{\max}} a$ is essentially supported on that set, Cauchy--Schwarz gives $\| P_{k_{\max}} a \|_{L^1} \lesssim \| P_{k_{\max}} a \|_{L^2}$.  Plugging this into the last term above gives
\[(\log L)^{1/2} \biggl( \sum_{L^{1/10} \le 2^{k_{\max}} \le L} \| P_{k_{\max}} a \|_{L^2}^2 \biggr)^{1/2} \lesssim (\log L)^{1/2} \| a \|_{L^2}.\]
All together, we now have
\[\sum_{L^{1/10} \le 2^k \le L^{9/10}} \Bigl(\int \psi_U P_{\le k} a_{\topp} \Bigr)^{2m} \lesssim (\log L)^{1/2} \| a \|_{L^2}.\]

Since there are $\sim \log L$ terms on the left-hand side, we can choose $k$ in the range $L^{1/10} \le 2^k \le L^{9/10}$ so that
\[\Bigl(\int \psi_U P_{\le k} a_{\topp}\Bigr)^{2m} \lesssim (\log L)^{-1/2} \| a \|_{L^2} \lesssim (\log L)^{-1/2}.\]
Taking roots, we get $\int \psi_U P_{\le k} a_{\topp} \lesssim (\log L)^{-\frac{1}{4m}}$.

Recall that we broke up $\int \psi_U a_{\topp}$ into low frequency and high frequency pieces in \eqref{lowhigh}:
\[\int \psi_U a_{\topp} = \underbrace{\int \psi_U P_{\le k} a_{\topp}}_{\textrm{low}} + \underbrace{\sum_{\ell > k} \int \psi_U P_\ell a_{\topp}}_{\textrm{high}}.\]
We showed that the high frequency pieces are bounded by $\lesssim 2^{-k}$.  We just found $k$ with $L^{1/10} \le 2^k \le L^{9/10}$ where the low frequency piece has the bound $\lesssim (\log L)^{- \frac{1}{4m}}$.  Therefore, the total is bounded:
\[\int \psi_U a_{\topp} \lesssim (\log L)^{- \frac{1}{4m}}.\]

Recall from \eqref{degintegral} that $\deg f = L^n \sum_U \int \psi_U a_{\topp}$, and so
\[\deg f \lesssim L^n (\log L)^{- \frac{1}{4m}}.\]
This proves the theorem, with $\alpha(m) = \frac{1}{4m}$.  The integer $m$ came from the real Nullstellensatz, and it depended only on the cohomology ring $H^*(M; \RR)$.

\subsubsection{Proof of Theorem \ref{balldegbound}}
Finally, we describe the modifications needed to prove the result on the ball, which we restate here:
\begin{thm*}
  Suppose that $M$ is a closed connected oriented $n$-manifold such that $H^*(M; \RR)$ does not embed into $\Lambda^* \RR^n$, and let $\alpha(M)>0$ be as in the statement of Theorem \ref{gendegbound}.  Let $B^n \subseteq \RR^n$ be the unit ball.  Then for any metric $g$ on $M$ and any $L$-Lipschitz map $f:B^n \to M$,
  \[\int_{B^n} f^*d\vol_M \leq C(M,g)L^n(\log L)^{-\alpha(M)}.\]
\end{thm*}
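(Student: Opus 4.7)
My plan is to reduce Theorem \ref{balldegbound} to the proof of Theorem \ref{gendegbound} via a simple Lipschitz extension of $f$ together with Stokes' theorem on $\partial B^n$. First, extend $f$ to a smooth map $\tilde f:\RR^n\to M$ with $\Lip(\tilde f)=O(L)$ and $\tilde f$ constant outside a compact set. This is elementary: take any $O(1)$-Lipschitz smooth map $h:\RR^n\to B^n$ with $h=\id$ on $B^n$ and $h$ constant outside a larger ball---for instance, $h(x)=x/|x|$ on $1\le|x|\le 2$, $h(x)=(3-|x|)x/|x|$ on $2\le|x|\le 3$, and $h(x)=0$ for $|x|\ge 3$, mollified as needed---and set $\tilde f=f\circ h$. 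Using $\tilde f$, define the compactly supported forms $a_j=L^{-d_j}\tilde f^*\alpha_j$ and primitives $g_r=L^{-(d(\gamma_r)+1)}\tilde f^*\gamma_r$ on $\RR^n$ exactly as in the proof of Theorem \ref{gendegbound}; they satisfy $\|a_j\|_\infty\lesssim 1$, $\|g_r\|_\infty\lesssim L^{-1}$, and $dg_r=R_r(a)$.

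Next, to reduce $\int_{B^n}f^*d\vol_M$ to a bound on $\int_{B^n}a_{\topp}$, write $d\vol_M=c\,\alpha_{\topp}+d\beta$ for the appropriate constant $c$ and some $\beta\in\Omega^{n-1}(M)$ (valid since $d\vol_M$ and $\alpha_{\topp}$ represent proportional classes in $H^n(M;\RR)$). By Stokes,
\[
\int_{B^n}f^*d\vol_M = cL^n\int_{B^n}a_{\topp} + \int_{\partial B^n}f^*\beta,
\]
and the boundary contribution is $O(L^{n-1})$, negligible compared to $L^n(\log L)^{-\alpha(M)}$. Hence it suffices to show $\bigl\lvert\int_{B^n}a_{\topp}\bigr\rvert\lesssim(\log L)^{-\alpha(M)}$.

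For this, I would run the Fourier-analytic argument of Theorem \ref{gendegbound} with $\int_{B^n}$ in place of $\int\psi_U$. Decomposing $a_{\topp}=P_{\le k}a_{\topp}+\sum_{\ell>k}P_\ell a_{\topp}$, each high-frequency piece is handled by Stokes together with Lemma \ref{prim}: $\int_{B^n}P_\ell a_{\topp}=\int_{\partial B^n}\Prim(P_\ell a_{\topp})$, bounded by $O(2^{-\ell})$, giving a tail contribution of $O(2^{-k})$. For the low-frequency piece, the pointwise real Nullstellensatz bound of Lemma \ref{topvsrel} together with H\"older over the finite-volume set $B^n$ gives
\[
\int_{B^n}\lvert P_{\le k}a_{\topp}\rvert \lesssim \sum_r\Bigl(\int_{\RR^n}\lvert R_r(P_{\le k}a)\rvert\Bigr)^{1/(2m)},
\]
after which the structural decomposition of $R_r(P_{\le k}a)$ into $P_{\le k+c}R_r(a)$ plus terms of the form \eqref{termform2}, combined with the $L^1$ estimates derived in the proof of Theorem \ref{gendegbound}, closes the estimate. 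Averaging over $k$ in the range $L^{1/10}\le 2^k\le L^{9/10}$ and applying Cauchy--Schwarz together with Lemma \ref{orth} yields the required $(\log L)^{-1/(4m)}=(\log L)^{-\alpha(M)}$.

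The main subtlety is simply verifying, at each step of the argument for Theorem \ref{gendegbound}, that replacing the cutoff $\psi_U$ by the indicator of $B^n$ remains compatible with the pointwise estimates and integration-by-parts manipulations; the fact that $a_j$ and $g_r$ are still compactly supported by construction makes this transfer routine, and the only genuinely new input is the harmless Stokes boundary term.
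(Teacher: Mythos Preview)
Your argument is correct, but the paper's proof is shorter and avoids reopening the Fourier-analytic argument. The paper extends $f$ radially to $\tilde f:B_2(0)\to M$ by setting $\tilde f(x)=f(x/\lvert x\rvert)$ for $1<\lvert x\rvert\le 2$; the point of this specific extension is that $\tilde f$ has rank $\le n-1$ on the annulus, so $\tilde f^*\omega=0$ there for \emph{every} $n$-form $\omega$. Hence for any smooth cutoff $\psi$ supported in $B_2(0)$ with $\psi\equiv 1$ on $B^n$, one has $\int\psi\,\tilde f^*d\vol_M=\int_{B^n}f^*d\vol_M$ exactly, and the bound on $\int\psi\,a_{\topp}$ from the proof of Theorem~\ref{gendegbound} applies verbatim---no new integration by parts, no Stokes boundary terms, and no need to pass between $d\vol_M$ and $\alpha_{\topp}$ (indeed one may simply take $\alpha_{\topp}=d\vol_M$). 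Your route---extending $f$ arbitrarily to get compact support and then rerunning the estimate with the hard cutoff $\chi_{B^n}$, replacing each $\int d\psi_U\wedge(\cdot)$ by a Stokes integral over $\partial B^n$---works, but it forces you to revisit every step of the proof of Theorem~\ref{gendegbound} and check compatibility, whereas the rank-drop trick gives a black-box reduction.
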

\begin{proof}
  Our argument above already studies forms defined on a ball.  The only difference is that above we study $\int_{B^n} \psi f^*d\vol_M$, where $\psi:B^n \to M$ is some function which decays to $0$ at the boundary, whereas we now want to understand $\int_{B^n} f^*d\vol_M$.  To bridge the gap, we expand the domain.  Define a function $\tilde f:B_2(0) \to M$ on the ball of radius $2$ by
  \[\tilde f(x)=\begin{cases}
    f(x) & \lVert x \rVert \leq 1 \\
    f(x/\lVert x \rVert) & \lVert x \rVert>1.
  \end{cases}\]
  If $f$ is $L$-Lipschitz, this function is $2L$-Lipschitz.  Moreover, since $\tilde f$ has rank $n-1$ outside the ball of radius $1$, $\tilde f^*d\vol_M=0$ outside that ball.  Therefore, for any $\psi:\RR^n \to \RR$ such that $\psi|_{B^n} \equiv 1$, we have
  \[\int_{B_2(0)} \psi \tilde f^*d\vol_M=\int_{B^n} f^*d\vol_M.\]
  The argument in the proof of Theorem \ref{gendegbound} bounds the left side as desired.
\end{proof}

\section{Explicit construction of efficient self-maps} \label{S:lower}

In this section, we discuss the lower bound of Theorem \ref{main}, which follows from the following result:
\begin{thm} \label{self-maps}
  Let $Y$ be a formal compact Riemannian manifold such that $H_n(Y;\mathbb{Q})$ is nonzero for $d$ different values of $n>0$.  Then there are integers $a>0,p>1$ such that for every $\ell \in \mathbb{N}$ and $q=ap^\ell$, there is an $O(\ell^{d-1}p^\ell)$-Lipschitz map $\arr_q:Y \to Y$ which induces multiplication by $q^n$ on $H_n(Y;\mathbb{Q})$.
\end{thm}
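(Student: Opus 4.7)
The plan is to construct the maps $\arr_q$ inductively, where $q = ap^\ell$. Since $Y$ is formal, by definition there exists an integer $p > 1$ and a self-map $\rho : Y \to Y$ inducing multiplication by $p^k$ on each $H_k(Y;\QQ)$. Naively composing $\rho$ with itself $\ell$ times gives a map of degree $p^{\ell n}$, but with Lipschitz constant $\Lip(\rho)^\ell$, which is exponentially larger than the target $O(\ell^{d-1}p^\ell)$ since $\Lip(\rho) > p$ whenever $Y$ is not flat. The construction must replace this multiplicative blowup with an additive contribution: each of the $\ell$ composition steps should add only a single \emph{layer} of Lipschitz complexity, absorbed into the polynomial factor $\ell^{d-1}$.

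To carry this out I would fix a handle decomposition $\emptyset = Y_0 \subset Y_1 \subset \cdots \subset Y_d = Y$ in which $Y_i$ is obtained from $Y_{i-1}$ by attaching handles of index $n_i$, with $n_1 < \cdots < n_d$ being the dimensions where $H_*(Y;\QQ)$ is nonzero. On each top-dimensional handle, the self-map is built so that most of the interior is tiled by a $p \times \cdots \times p$ grid of homothetically scaled sub-copies of $Y$, and the thin complementary shell is sent into $Y_{d-1}$. Iterating this construction $\ell$ times within each scaled sub-copy produces degree $p^{\ell n}$ at the top. The crucial design choice is that at every iteration and every intermediate dimension the shell thicknesses are allotted uniformly across the $\ell$ layers introduced at that level, so each layer contributes an additive $O(p^\ell)$ to the Lipschitz constant rather than a multiplicative $\Lip(\rho)$ factor, in direct analogy with the $\#_k\CC P^2$ sketch from the introduction.

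I would then analyze the Lipschitz bound by induction on $d$: the shell map at dimension $n_i$ is implemented using an already-constructed self-map of $Y_i$ (or a thickening of it) with Lipschitz bound $\ell^{i-1}p^\ell$, and passing from $d-1$ to $d$ inserts one new level of shells and hence one more multiplicative factor of $\ell$, giving the overall $O(\ell^{d-1}p^\ell)$ bound. The integer $a$ absorbs the finitely many integer denominators that arise when converting rational rescalings of homology classes into genuine integer-degree maps of the handles; this is why the result is stated for $q = ap^\ell$ rather than $p^\ell$.

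The main obstacle is the quantitative extension step at each shell: after defining the map on the boundary of a shell (which lies inside the rescaled sub-copies), one must extend over the shell to a map into $Y_{i-1}$ whose Lipschitz constant is comparable to the reciprocal of the shell thickness. This is where formality is essential; it is what makes the quantitative null-homotopy machinery of Section \ref{S:RHT} applicable, ensuring that the attaching-map obstructions can be filled with linear Lipschitz control. Coordinating these extensions coherently across all $\ell$ iterations and all $d-1$ intermediate levels, without accumulating more than the claimed $\ell^{d-1}p^\ell$ Lipschitz constant, is the combinatorial crux of the argument.
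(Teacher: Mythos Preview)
Your overall strategy---iterated grids of rescaled copies, with the interstitial shells mapped to a lower-dimensional piece, and induction on the number $d$ of homology-bearing dimensions producing the $\ell^{d-1}$ factor---is the same as the paper's. But two points are off.

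First, you work directly on $Y$ with a handle decomposition. The paper does not: it passes via Proposition~\ref{cells} to a rationally equivalent CW complex $Z$ whose cellular chain complex has \emph{zero differential}, so that cells biject with a homology basis. This matters because an arbitrary handle decomposition of $Y$ has handles that may cancel in homology, and your ``shell map into $Y_{i-1}$'' has no reason to behave like a self-map of a space with the right rational homotopy type. The integer $a$ in the statement does not come from rational denominators in handle degrees; it arises from Proposition~\ref{back-and-forth}, which supplies a map $g:Y\to Z$ such that the composite $Y\to Z\to Y$ acts by $a^n$ on $H_n$. The construction is carried out entirely on $Z$ and then transferred back.

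Second, and more important, you misidentify the mechanism for the shell extension. You say it requires the quantitative null-homotopy machinery of \S\ref{S:RHT}; in fact the paper's proof in \S\ref{S:lower} is explicitly elementary and does not invoke the shadowing principle at all. The idea you are missing is that, alongside the maps $\arr_{p^\ell}$, one simultaneously constructs Lipschitz-controlled \emph{homotopies} $H_\ell:Z\times[0,1]\to Z$ from $\arr_{p^{\ell-1}}\circ\arr_p$ to $\arr_{p^\ell}$, by the same double induction on $\ell$ and on skeleta (Lemma~\ref{detailed}). On an $n$-cell, the outer shell is filled concretely by the already-built homotopy $H_\ell^{(n-1)}$ restricted to the attaching sphere; its Lipschitz constant $O(\ell^{d-2}p^\ell)$ is what feeds up from the $(n-1)$-skeleton, and the ``equalizing'' reparametrization of the three regions (outer shell, interstitial, inner grid) is what converts the recursion into the additive bound you want. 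Without these homotopies you have no concrete way to fill the shells with the claimed Lipschitz control, and no obstruction theory or formality argument substitutes for them here.
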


For the purpose of this section, a simply connected finite CW complex $Y$ is \emph{formal} if and only if for some $q>1$, there is a map $\arr_q:Y \to Y$ which induces multiplication by $q^n$ on $H^n(Y;\QQ)$ for every $n$.  Clearly, if such a map exists for some $q$, then it exists for $q^\ell$ for every $\ell$.  
This is not the original definition of formality due to Sullivan, which is based on the rationalization of $Y$ \cite{DGMS,SulLong}; the equivalence of our definition in the case of finite complexes was first stated by \cite{Shiga}.

To see that Theorem \ref{self-maps} indeed implies the lower bound of Theorem \ref{main}, suppose that $Y$ is an $n$-manifold.  Let $K(\ell)$ be the Lipschitz constant of $\arr_{ap^\ell}:Y \to Y$, and notice that for $\ell \geq 2$,
\[K(\ell)/K(\ell-1)=p \cdot \frac{\ell^{d-1}}{(\ell-1)^{d-1}} \leq 2p.\]
Then for $L>>0$, somewhere between $L/2p$ and $L$ is a value of $K(\ell)$ for some $\ell$.  This means that for $q=ap^\ell$,
\[L/2p=O(q(\log q)^{d-1})\]
and therefore there is an $O(L)$-Lipschitz map $f:Y \to Y$ such that
\[\deg f=q^n=\Omega(L^n(\log L)^{-n(d-1)}).\]



\subsection{Warmup example}
We start by proving Theorem \ref{self-maps} in the simple case of connected sums of $\mathbb CP^2$, before moving on to the general case.
\begin{thm} \label{warmup}
  Let $M=\#_k \CC P^2$.  Then there is a constant $C$ such that for each $\ell>0$, there is a self-map $\arr_{2^\ell}:M \to M$ of degree $2^{4\ell}$ and Lipschitz constant bounded by $C\ell \cdot 2^\ell$.
\end{thm}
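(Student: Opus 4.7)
The plan is to construct $\arr_{2^\ell}$ by a recursive \emph{nested grid} on the top cell $e^4 \subset M$, in which sub-cubes of side $(1-\delta)/2$ (with $\delta := c/\ell$ for a suitable constant $c$) are separated at every scale by thin ``shells'' that map to the $2$-skeleton. First I fix a CW decomposition of $M = \#_k\CC P^2$ with one $0$-cell, $k$ $2$-cells forming $M^{(2)} = \bigvee_k S^2$, and one $4$-cell $e^4$ attached via a map $\phi: S^3 \to M^{(2)}$ encoding the cup-product structure. I identify $\overline{e^4} = [0,1]^4$ with corners smoothed, and I pick $c > 0$ small enough that $(1-c/\ell)^{4\ell}$ stays bounded below by a positive constant independently of $\ell$.

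Next I define $\arr_{2^\ell}$ inductively. For $\ell = 0$, take $\arr_1 = \id_M$; for $\ell \geq 1$, subdivide $e^4$ into $16$ axis-aligned sub-cubes of side $(1-\delta)/2$ arranged in a $2 \times 2 \times 2 \times 2$ grid, separated by a cross-shaped shell of total relative thickness $\delta$. On each sub-cube apply the unique homothety onto $e^4$ followed by $\arr_{2^{\ell-1}}$, and on the shell define the map into $M^{(2)}$ by interpolating in the normal direction between the restrictions of $\phi$ coming from the boundaries of adjacent sub-cubes. By construction, the leaves of the recursion are the $2^{4\ell}$ cubes of side $(1-\delta)^\ell/2^\ell$, each mapped homothetically and orientation-preservingly onto $e^4$, while every shell lands in the $2$-skeleton.

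Now for the Lipschitz estimate. On a leaf cube, the homothety to $e^4$ has Lipschitz constant $2^\ell/(1-\delta)^\ell = O(2^\ell)$ by our choice of $c$. On a shell at recursion depth $k$, the absolute thickness is $\delta \cdot (1-\delta)^{k-1}/2^{k-1}$; the two boundary values are rescaled copies of $\phi$ with tangential Lipschitz constant $O(2^{k-1})$, while interpolating across the shell in the normal direction costs a further factor of $1/\delta = \ell/c$. So the shell at depth $k$ has Lipschitz constant $O(\ell \cdot 2^{k-1})$, and the maximum over $k \in \{1,\dots,\ell\}$ gives $O(\ell \cdot 2^\ell)$, dominating the leaf-cube bound. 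The degree then follows: shells map into a $2$-dimensional subcomplex and so contribute zero to $\int f^\ast d\vol_M$, while each of the $2^{4\ell}$ leaf cubes contributes $+1$, for a total of $2^{4\ell}$.

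The hard part will be producing the shell maps to $M^{(2)}$ coherently across all $\ell$ recursion levels with the advertised Lipschitz bound. Each shell has two faces carrying restrictions of $\phi$ to $3$-disks of $S^3$; each such restriction is null-homotopic in $M^{(2)}$ on its own, but the null-homotopies must be chosen to match along the seams where shells of different depths meet, and they must avoid picking up extra complexity beyond what the $1/\delta$ factor already accounts for. The fact that $\bigvee_k S^2$ is a wedge of scalable spheres is what makes this possible: one can produce Lipschitz extensions of prescribed boundary data into $M^{(2)}$ with Lipschitz constant proportional to the inverse of the slab thickness. Finally, the scaling $\delta = c/\ell$ is structurally forced: a smaller $\delta$ blows up the shell's Lipschitz constant past $\ell \cdot 2^\ell$, while a larger $\delta$ squeezes the leaf cubes until $(1-\delta)^{4\ell}$ vanishes and the degree drops below $2^{4\ell}$.
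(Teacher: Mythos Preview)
Your overall picture matches the paper's: build $\arr_{2^\ell}$ as a nested $2^4$-grid on the top cell, with shells of relative thickness $\sim 1/\ell$ mapping to $M^{(2)}$. But there is a genuine gap in your treatment of the shells, and it breaks the Lipschitz estimate.

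You assert that the shell boundaries at depth $k$ are ``rescaled copies of $\phi$'' with tangential Lipschitz constant $O(2^{k-1})$. This cannot hold. If every shell boundary were a rescaled $\phi$, then in particular $\arr_{2^\ell}|_{\partial e^4}=\phi$, forcing $\arr_{2^\ell}|_{M^{(2)}}=\id$. But the cup product on $\#_k\CC P^2$ satisfies $u_i^2=[M]$, so $f^*|_{H^2}=\id$ implies $f^*|_{H^4}=\id$ and hence $\deg f=1$, not $2^{4\ell}$. Equivalently, the obstruction to filling the union of shells is $(1-16^\ell)[\phi]\in\pi_3(M^{(2)})$, which is nonzero since $\phi$ has Hopf invariant $1$ on each wedge summand. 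So the shells simply cannot be filled with those boundary data.

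Once you correct the boundary data, your Lipschitz count collapses: the outer boundary of the depth-$1$ shell must carry $\arr_{2^\ell}|_{M^{(2)}}\circ\phi$, where $\arr_{2^\ell}|_{M^{(2)}}$ has degree $2^{2\ell}$ on each $S^2$, hence tangential Lipschitz constant $\sim 2^\ell$ already at $k=1$, not $O(1)$. The paper's key idea, which your proposal is missing, is to arrange that the shell map at the outermost level factors as $f_{2^{\ell-1}}\circ g\circ s_\ell$, where $g:A\to M^{(2)}$ is a single \emph{fixed} map (connecting $f_2\circ\phi$ on the outer boundary to $16$ copies of $\phi$ on the inner ones) and $s_\ell$ is the $O(\ell)$-Lipschitz squeeze onto $A$. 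All the $2^{\ell}$-scale complexity is pushed into the postcomposition with $f_{2^{\ell-1}}$; this is what produces $\Lip\lesssim \ell\cdot 2^\ell$. A companion ingredient you also need is a controlled homotopy $f_{2^\ell}\simeq f_{2^{\ell-1}}\circ f_2$ on $\bigvee_k S^2$ with Lipschitz constant $O(2^\ell)$, to connect the $2$-skeleton map to the outer shell boundary; without it the two pieces do not glue.
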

As discussed in the introduction, the strategy is to build $\arr_{2^\ell}$ inductively by gluing together several copies of $\arr_{2^{\ell-1}}$ without adding too much stuff in between.  Before giving the detailed proof, we start with a lemma about self-maps of spheres which will also be useful for the general case of Theorem \ref{self-maps}.
\begin{lem} \label{SntoSn}
  For every $d$, there is a map $f_d:S^n \to S^n$ of degree $d^n$ whose Lipschitz constant is $C_1(n)d$.  Moreover, for each $p>1$ there is a $C_2(n)pd$-Lipschitz homotopy $H_p:S^n \times [0,1] \to S^n$ between $f_{pd}$ and $f_d \circ f_p$.
\end{lem}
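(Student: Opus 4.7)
The plan is to build $f_d$ and $H_p$ explicitly using a cubical construction chosen so that $f_{pd}$ and $f_d \circ f_p$ have the same underlying ``template'' and differ only in the positions of a collection of small active regions.

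\emph{Construction of $f_d$.} Fix a basepoint $\star \in S^n$ and a Lipschitz chart $U \subset S^n$ disjoint from $\star$ which is bi-Lipschitz equivalent to the unit cube, together with a reference degree-$1$ ``pinch'' map $\sigma : U \to S^n$ sending $\partial U$ to $\star$ and having Lipschitz constant $O(1)$; such a $\sigma$ is produced by a standard radial construction in geodesic coordinates on a hemisphere. Subdivide $U$ into a grid of $d^n$ sub-cubes of side $1/d$, and on each sub-cube define $f_d$ as the affine rescaling of $\sigma$ to that sub-cube. Set $f_d \equiv \star$ on $S^n \setminus U$. This yields $\Lip(f_d) \le C_1(n)\, d$ and $\deg(f_d) = d^n$.

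\emph{Construction of $H_p$.} The key observation is that $f_d \circ f_p$ has the same cubical template as $f_{pd}$. Inside each of the $p^n$ active $f_p$-sub-cubes of $U$, the map $f_d \circ f_p$ is a $(1/p)$-rescaled copy of $f_d$, so $f_d \circ f_p$ is degree~$1$ on $(pd)^n$ small sub-cubes of side $\sim 1/(pd)$ sitting inside $U$ in a nested ``$p$-then-$d$'' pattern, with everything else mapped to $\star$. Meanwhile $f_{pd}$ is degree~$1$ on $(pd)^n$ sub-cubes of exactly the same size, but arranged in a uniform $(pd)$-grid inside $U$. I would build $H_p$ by moving each of the $(pd)^n$ active sub-cubes along a path from its $f_d \circ f_p$ position to its $f_{pd}$ position over $t \in [0,1]$, keeping a rescaled copy of $\sigma$ on each moving sub-cube and mapping the complement to $\star$.

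\emph{Lipschitz bound and main obstacle.} At each fixed $t$, the time slice $H_t$ is $O(pd)$-Lipschitz by the same reasoning as for $f_{pd}$, since it is degree~$1$ on disjoint sub-cubes of side $\sim 1/(pd)$. In the time direction, the sub-cube centers $c_i(t)$ each move total distance $O(1)$ over unit time, so $|\dot c_i| = O(1)$, and hence the time derivative of $H_t$ at a point inside a moving sub-cube is bounded by $\Lip_{\text{spatial}}(H_t) \cdot |\dot c_i| = O(pd)$. Combining, $\Lip(H_p) \le C_2(n)\, pd$. The main obstacle is ensuring that the active sub-cubes remain pairwise disjoint throughout the motion so that $H_t$ stays well-defined; this is handled by first shrinking each sub-cube by a fixed constant factor (paid for by a harmless increase in $C_1, C_2$) and then choosing a non-intersecting family of paths for the cube centers, which is always possible since the two configurations of $(pd)^n$ points in the $n$-dimensional chart $U$ are isotopic through a family of well-separated configurations.
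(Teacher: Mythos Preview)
There is a genuine gap in your analysis of $f_d \circ f_p$. On a $p$-sub-cube $Q$ you have, by definition, $f_p|_Q = \sigma \circ A_Q$ where $A_Q:Q \to U$ is the affine rescaling, so $f_d \circ f_p|_Q = (f_d \circ \sigma) \circ A_Q$, not $f_d \circ A_Q$. For a generic pinch map $\sigma$ (in particular your ``radial construction in geodesic coordinates''), the set $\sigma^{-1}(U)$ is not a sub-cube and $\sigma$ does not restrict to a homothety there; hence the active regions of $f_d \circ f_p$ are not cubes, and the map on each is not a rescaled copy of $\sigma$ but a rescaled $\sigma$ precomposed with the nontrivial diffeomorphism $\sigma|_{\sigma^{-1}(U)}$. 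Your homotopy, which slides cubes carrying rescaled $\sigma$, therefore cannot start at $f_d \circ f_p$. Even if one adds the missing self-similarity hypothesis on $\sigma$, the active cubes of $f_d \circ f_p$ are strictly smaller than those of $f_{pd}$ (by the fixed ratio $<1$ coming from that homothety), so your claim that they have ``exactly the same size'' also fails and the homotopy must resize as well as translate.

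The paper handles this by choosing its basic degree-$1$ map $g:([0,1]^n,\partial[0,1]^n) \to (\partial[0,1]^{n+1},\text{pt})$ with the extra self-similarity property that $g$ restricted to $g^{-1}([0,1]^n \times \{0\})$ is a homothety onto that face. This guarantees that $f_d \circ f_p$ genuinely consists of $(pd)^n$ sub-cubes each carrying a rescaled $g$, arranged in $p^n$ slightly undersized clusters. The homotopy is then simply the product of $n$ one-dimensional piecewise-linear interpolations expanding each cluster to fill its $1/p$-interval; this simultaneously resizes the cubes, keeps them disjoint automatically, and sidesteps the configuration-space argument you invoke.
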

\begin{proof}
  Give $S^n$ the metric of $\partial [0,1]^{n+1}$, which is $C_0$-bilipschitz to the round metric, and divide the face $[0,1]^n \times \{0\}$ into $d^n$ identical sub-cubes, $d$ to a side.  We map all other faces to a base point, and the sub-cubes to the sphere by a rescaling of a degree 1 map
  \[g:([0,1]^n,\partial[0,1]^n) \to (\partial [0,1]^{n+1},\text{pt})\]
  whose restriction to $g^{-1}([0,1]^n \times \{0\})$ is homothetic to the identity map.  The resulting map has degree $d^n$ and its Lipschitz constant in the round metric is bounded by $C_0^2(\Lip g)d$.
  
  Now consider the map $f_d \circ f_p$.  Like $f_{pd}$, it consists of $(pd)^n$ cubical preimages of $S^n$, with the rest of the sphere mapped to the basepoint.  However, instead of one cluster of preimages filling a whole face of $\partial [0,1]^{n+1}$, there are $p^n$ clusters of slightly smaller preimages.  We homotope $f_d \circ f_p$ to $f_{pd}$ by linearly expanding these preimages to fill the whole face.  The Lipschitz constant of this homotopy is bounded by $\Lip f_d \cdot \Lip f_p=C_0^4(\Lip g)^2 pd$.
\end{proof}

\begin{proof}[Proof of Theorem \ref{warmup}]
  We fix a cell structure for $M=\#_k \CC P^2$ consisting of one $0$-cell, $k$ $2$-cells, and a $4$-cell.  Let $\iota:[0,1]^4 \to M$ be the inclusion map of the $4$-cell, and let
  \[\partial=\iota|_{\partial[0,1]^4}:S^3 \to M^{(2)}=\bigvee_{i=1}^k S^2\]
  be its attaching map.  The projection of $\partial$ to each $S^2$ summand has Hopf invariant one.  Notice that a map $\bigvee_{i=1}^k S^2 \to \bigvee_{i=1}^k S^2$ which sends each $S^2$ to itself with degree $d$ extends to a map $M \to M$ of degree $d^2$.
  
  We prove the theorem by induction on $\ell$.  For the base of the induction we take $\arr_1:M \to M$ to be any map whose restriction to each $2$-cell is the map $f_2:S^2 \to S^2$ from Lemma \ref{SntoSn}.
  
  For the inductive step, assume that we have constructed a $C(\ell-1) \cdot 2^{\ell-1}$-Lipschitz map $\arr_{2^{\ell-1}}:M \to M$ whose restriction to each $2$-cell is $f_{2^{\ell-1}}$.  To build $\arr_{2^\ell}$, we take a $2 \times 2 \times 2 \times 2$ grid of sub-cubes inside $[0,1]^4$, each of side length $\frac{1}{2}\cdot\frac{\ell-1}{\ell}$, and send each of them to $M$ via a homothetic rescaling of $\arr_{2^{\ell-1}} \circ \iota$.  Then the Lipschitz constant on each sub-cube is $C\ell \cdot 2^\ell$.
  
  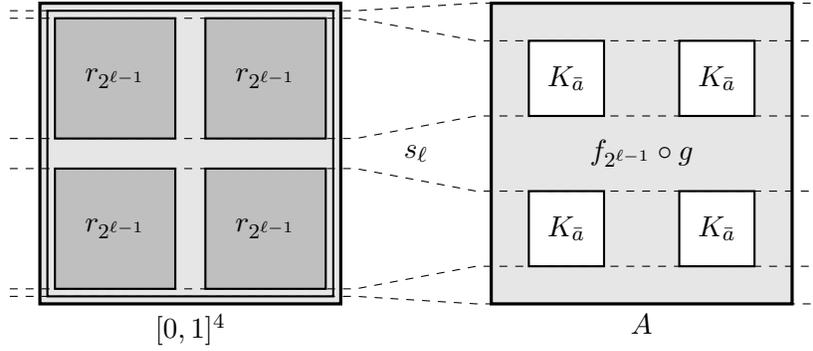
\begin{figure}
      \centering
      \begin{tikzpicture}
        
        \filldraw[very thick,fill=gray!20] (4,-2) rectangle (8,2);
        \draw[thick] (4.1,-1.9) rectangle (7.9,1.9);
        \foreach \x/\y in {4.2/-1.8, 4.2/0.2, 6.2/-1.8, 6.2/0.2} {
          \filldraw[thick,fill=gray!50] (\x,\y) rectangle (\x+1.6,\y+1.6);
          \draw (\x+0.8,\y+0.8) node {$\arr_{2^{\ell-1}}$};
        }
        \draw (6,-2) node[anchor=north] {$[0,1]^4$};

        \filldraw[very thick,fill=gray!20] (10,-2) rectangle (14,2);
        \foreach \x/\y in {10.5/-1.5, 10.5/0.5, 12.5/-1.5, 12.5/0.5} {
          \filldraw[thick,fill=white] (\x,\y) rectangle (\x+1,\y+1);
          \draw (\x+0.5,\y+0.5) node {$K_{\bar a}$};
        }
        \draw (12,-2) node[anchor=north] {$A$};
        \draw (12,0) node {$f_{2^{\ell-1}} \circ g$};

        \draw[dashed] (3.6,1.9) -- (8.2,1.9) -- (9.8,2) -- (14.4,2);
        \draw[dashed] (3.6,1.8) -- (8.2,1.8) -- (9.8,1.5) -- (14.4,1.5);
        \draw[dashed] (3.6,0.2) -- (8.2,0.2) -- (9.8,0.5) -- (14.4,0.5);
        \draw[dashed] (3.6,-1.9) -- (8.2,-1.9) -- (9.8,-2) -- (14.4,-2);
        \draw[dashed] (3.6,-1.8) -- (8.2,-1.8) -- (9.8,-1.5) -- (14.4,-1.5);
        \draw[dashed] (3.6,-0.2) -- (8.2,-0.2) -- (9.8,-0.5) -- (14.4,-0.5);
        \draw (9,0) node {$s_\ell$};
      \end{tikzpicture}
      \caption{Inductively assembling the map $\arr_{2^\ell}$.  The light gray regions map to $M^{(2)}$ and the dark gray regions map to the $4$-cell.  Some regions are labeled with the restriction of $\arr_{2^\ell}$ to that region.}
      \label{interstitial}
  \end{figure}
  
  We must now extend the map to the rest of $[0,1]^4$, filling the space in between with the same Lipschitz constant.  These gaps have width on the order of $1/\ell$.
  
  First, we fix some notation.  Let $A \subseteq [0,1]^4$ be the complement of the $16$ open subcubes
  \[K_{\bar a}=(a_1,a_2,a_3,a_4)+(1/8,3/8)^4,\qquad\text{for each }\bar a=(a_1,a_2,a_3,a_4),\; a_i \in \{0,1/2\},\]
  and fix a Lipschitz map $g:A \to M^{(2)}$ which restricts to a map homothetic to $\partial$ on each $\partial K_{\bar a}$ and to $f_2 \circ \partial$ on $\partial [0,1]^4$.  Here we write $f_d:\bigvee_{i=1}^k S^2 \to \bigvee_{i=1}^k S^2$ for the map which induces the map from Lemma \ref{SntoSn} on each wedge summand.

  Now we construct $\arr_{2^\ell}$ as follows:
  \begin{itemize}
      \item In $[0,1]^4$ but outside of $(\frac{1}{8\ell},1-\frac{1}{8\ell})^4$, the map is a homotopy from $f_{2^\ell} \circ \partial$ to $f_{2^{\ell-1}} \circ f_2 \circ \partial$.  Such a homotopy with domain $S^3 \times [0,1]$ can be made $C_2 \cdot 2^\ell$-Lipschitz by Lemma \ref{SntoSn}, so this map is $C_3\ell \cdot 2^\ell$-Lipschitz for some fixed constant $C_3$.
      \item In $[\frac{1}{8\ell},1-\frac{1}{8\ell}]^4$ but outside of the $16$ sub-cubes of width $2\frac{\ell-1}{\ell}$, the map is $f_{2^{\ell-1}} \circ g \circ s_\ell$, where $s_\ell$ is a $2\ell$-Lipschitz piecewise linear map that sends the domain to $A$, as shown in Figure \ref{interstitial}.
  \end{itemize}
  Then we have
  \[\Lip \arr_{2^\ell}=\max\{C_2\ell \cdot 2^\ell,C_1 \cdot 2^{\ell-1} \cdot \Lip g \cdot 2\ell,\frac{2\ell}{\ell-1}\Lip \arr_{2^{\ell-1}}\}.\]
  By induction, the theorem is proven with $C=\max\{C_2,2C_1\Lip g,\Lip \arr_1\}$.
\end{proof}

\subsection{Building efficient self-maps}
We give a mostly elementary proof of Theorem \ref{self-maps}, building maps $\arr_q$ ``by hand''.  The definition of formality gives us a self-map $\arr_p:Y \to Y$ of degree $p^n$; the proof consists of homotoping the iterates $(\arr_p)^\ell$ to maps $\arr_{p^\ell}$ with a controlled Lipschitz constant.  Although we have no control over the Lipschitz constant of the original $\arr_p$, this only affects the multiplicative constant.

First, we assume that $Y$ is a finite CW complex of a particular form.  We construct $\arr_{p^\ell}$ by induction on skeleta, extending along one cell at a time.  Each $n$-cell maps to itself with degree $p^{\ell n}$, and contains a grid of homeomorphic preimages of its interior, $p^\ell$ to a side.  The tricky part, and the source of the polylog factor, is filling in the area between these preimages.  This is done by induction on $\ell$: we take $p^n$ copies of $\arr_{p^{\ell-1}}$, arranged in a grid, and glue them together using a homotopy built in the course of the $(n-1)$-dimensional construction.  The Lipschitz constant of this homotopy is proportional to the Lipschitz constant obtained for self-maps of $Y^{(n-1)}$; since there are $\log \ell$ nested layers, we gain a factor of $\log\ell$ in moving from $Y^{(n-1)}$ to $Y^{(n)}$.

In passing from self-maps of the CW complex to those of our original manifold, we gain an additional factor of $a$ for the degree.


We now give the details of this argument.  This is the heart of the proof of Theorem \ref{self-maps}, although it only covers a special case.  The remainder of the section after this proof is devoted to showing that this is sufficient to prove the general case.


\begin{lem} \label{detailed}
  Let $Z$ be a simply connected finite CW complex with the following properties:
  \begin{itemize}
  \item $H^i(Z)$ is nontrivial in $d$ different dimensions (not including $i=0$).
  \item The cellular chain complex has zero differential.  (In other words, the cells are in bijection with a basis for $H^*(Z)$.)
  \item The attaching maps of $Z$ are Lipschitz maps $D^n \to Z^{(n-1)}$.
  \end{itemize}
  Let $\arr_p:Z \to Z$ be a map which induces multiplication by $p^i$ on $H^i(Z;\mathbb Q)$ for every $i>0$.  Then there is a metric on $Z$ such that every iterate $(\arr_p)^\ell$ of $\arr_p$ is homotopic to a $C(\arr_p,Z)\ell^{d-1}p^\ell$-Lipschitz map $\arr_{p^\ell}:Z \to Z$.  Moreover, $\arr_{p^\ell}$ is homotopic to $\arr_{p^{\ell-1}} \circ \arr_p$ via a $C'(\arr_p,Z)\ell^{d-1}p^\ell$-Lipschitz homotopy $H_\ell:Z \times [0,1] \to Z$.
\end{lem}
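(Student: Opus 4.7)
The plan is to proceed by simultaneous induction on $\ell$ and, within each step of the outer induction, on the skeleton dimension of $Z$. At each stage we produce $\arr_{p^\ell}|_{Z^{(n)}}$ together with a Lipschitz homotopy $H_\ell^{(n)}$ from $\arr_{p^\ell}|_{Z^{(n)}}$ to $\arr_{p^{\ell-1}} \circ \arr_p|_{Z^{(n)}}$, tracking a polylog factor $\ell^{d_n-1}$, where $d_n$ counts the dimensions $i \in \{1,\dots,n\}$ with $H^i(Z) \neq 0$. We fix the cube metric on each cell (using the Lipschitz attaching maps to glue), making $Z$ a metric CW complex. The base case $\ell = 1$ is $\arr_p$ itself, after a preliminary homotopy to be described; its Lipschitz constant is absorbed into $C$.

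For the inductive step, I will extend one cell at a time. Assume $\arr_{p^\ell}|_{Z^{(n-1)}}$ and $H_\ell^{(n-1)}$ have been built with the claimed bounds, and let $e^n \cong [0,1]^n$ be a cell with $H^n(Z) \neq 0$. Generalizing the warmup construction, I subdivide $[0,1]^n$ into a $p \times \cdots \times p$ grid of sub-cubes of side length $\tfrac{1}{p} - \Theta(1/\ell)$, together with an outer shell of width $\Theta(1/\ell)$ near $\partial[0,1]^n$ and thin interstitial gaps between sub-cubes. On each sub-cube, $\arr_{p^\ell}$ is defined as the homothetic rescaling of $\arr_{p^{\ell-1}} \circ \iota_{e^n}$, where $\iota_{e^n}$ is the characteristic map of $e^n$; this yields Lipschitz constant $(1 + O(1/\ell)) p \Lip(\arr_{p^{\ell-1}}) = O(\ell^{d-1} p^\ell)$. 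On the outer shell, I interpolate using a rescaled copy of $H_\ell^{(n-1)}$ applied to the attaching map $\partial_{e^n}$, pulled back along a $\Theta(\ell)$-Lipschitz parametrization of the shell by $\partial[0,1]^n \times [0,1]$. This bridges the required boundary value $\arr_{p^\ell}|_{Z^{(n-1)}} \circ \partial_{e^n}$ on the outside to $\arr_{p^{\ell-1}} \circ \arr_p \circ \partial_{e^n}$ on the inside, which matches the outer boundary of the sub-cube assembly. Interior interstitial gaps are filled analogously. The Lipschitz constant of the filling pieces is $\ell \cdot O(\ell^{d_n-2} p^\ell) = O(\ell^{d_n-1} p^\ell)$, matching the sub-cube bound and producing exactly one extra factor of $\ell$ per new cohomology dimension. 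The homotopy $H_\ell^{(n)}$ is built in parallel by the same subdivision of $[0,1]^n \times [0,1]$, using rescaled copies of $H_{\ell-1}$ on sub-cubes and of $H_\ell^{(n-1)}$ in the interstitial regions.

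The main obstacle will be the careful matching of boundary data. The outer boundary of the sub-cube assembly reads as $\arr_{p^{\ell-1}} \circ f_p \circ \partial_{e^n}$ for some self-map $f_p$ of $Z^{(n-1)}$ inducing multiplication by $p^i$ on $H^i(\cdot;\QQ)$ and compatible with the grid subdivision; we need $f_p$ to agree, up to Lipschitz-controlled homotopy, with $\arr_p|_{Z^{(n-1)}}$ so that $H_\ell^{(n-1)}$ can be used to bridge it to $\arr_{p^\ell}|_{Z^{(n-1)}}$. I will arrange this by starting the outer induction with a preliminary replacement of $\arr_p$ by a homotopic map whose restriction to each cell has the desired grid structure, using an iterated application of Lemma \ref{SntoSn} skeleton by skeleton. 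A secondary difficulty is verifying that the degree of the assembled $\arr_{p^\ell}$ on each $n$-cell is the correct $p^{\ell n}$, which is where the zero-differential hypothesis comes in: since cells are in bijection with a basis for $H^*(Z)$, the action on cohomology is determined cell by cell and is multiplicative under iteration, so no cross-cell corrections are needed.
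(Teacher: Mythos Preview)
Your approach is essentially the same as the paper's: induct on skeleta and on $\ell$, place $p^n$ rescaled copies of $\arr_{p^{\ell-1}}$ in a grid, and use the lower-skeleton homotopy $H_\ell^{(n-1)}$ (compressed into a thin shell) to match boundary values, picking up one factor of $\ell$ per new cohomology dimension. The paper organizes this slightly differently---it starts from $\arr_{p^{\ell-1}}\circ \arr_p$ and \emph{defines} $\arr_{p^\ell}$ as the endpoint of an explicit two-stage homotopy $\tilde H$ (inserting $H_\ell^{(n-1)}$ on the outer half) followed by $J$ (a piecewise-linear reparametrization equalizing Lipschitz constants)---but the resulting map and the estimate are the same.

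Two points are off and worth fixing. First, your description of $H_\ell^{(n)}$ as using ``rescaled copies of $H_{\ell-1}$ on sub-cubes'' does not match the endpoints: on a sub-cube both $\arr_{p^\ell}$ and $\arr_{p^{\ell-1}}\circ \arr_p$ are rescalings of the \emph{same} map $\arr_{p^{\ell-1}}$, so the homotopy there is just a reparametrization of the domain (an expansion), not $H_{\ell-1}$. In the paper's formulation this comes for free because $H_\ell$ \emph{is} the construction. Second, the preliminary step giving $\arr_p$ grid structure on each $n$-cell cannot be done by Lemma~\ref{SntoSn} alone: that lemma concerns self-maps of $S^n$, whereas here you need to homotope a map $(D^n,\partial D^n)\to (Z,Z^{(n-1)})$ of relative degree $p^n$ so that the preimage of the open cell is exactly $p^n$ homothetic sub-cubes. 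This requires the cancellation-of-pairs argument (using that $Z$ is simply connected and $n\ge 3$), as the paper notes with a reference to \cite{GrMo} and \cite{White}. Your interstitial filling, left as ``analogously,'' is then $\arr_{p^{\ell-1}}$ composed with the fixed map $\arr_p|_{\text{interstitial region}}\to Z^{(n-1)}$ coming from this grid structure, which gives the $L_2$ term in the paper's estimate.
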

The homotopy $H_\ell$ is needed for the inductive step, in order to prove the
lemma one dimension higher.
\begin{proof}
  First suppose that $d=1$, and let $n=\dim Z$.  Then $Z$ is a wedge of $n$-spheres, so the base of the induction is provided by Lemma \ref{SntoSn}.

  Now suppose that we have proved the lemma for spaces with cells in $d-1$ dimensions, in particular for $Z^{(n-1)}$ where $\dim Z=n \geq 3$.  We start by building a metric on $Z$ as follows.  First, homothetically shrink $Z^{(n-1)}$ until the attaching maps of $n$-cells can be given by $1$-Lipschitz maps from $\partial[0,1]^n$.  Then give $Z$ the \emph{nearly Euclidean metric} (as defined further down in \S\ref{subS:metric}) derived from attaching cells isometric to $[0,1]^n$.  

  By Proposition \ref{htpy-to-lip}, proved further down, we can also assume that $\arr_p:Z \to Z$ is cellular and Lipschitz.  By applying a homotopy which is constant on the $(n-1)$-skeleton, we can also ensure that $\arr_p$ has the following property:
  \begin{quote}
    For every open $n$-cell $e$ of $Z$, $\overline{\arr_p^{-1}(e)}$ is a disjoint union of $p^n$ subcubes of $(0,1)^n$, arranged in a grid inside $e$, whose interiors map homothetically to $e$.
  \end{quote}
  Such a homotopy can be performed in several steps.  First, ensure that $\arr_p$ is smooth on the preimages of the ``middle halves'' of $n$-cells, and that the centers of the cells are regular values.  Then, by composing with a homotopy that expands a small neighborhood of the center to cover the whole cell, ensure that the preimage of each open $n$-cell is a disjoint union of homeomorphic copies.  Then, since $Z$ is simply connected and $n \geq 3$, it is possible to cancel out copies of opposite orientations.  The details of this purely topological argument can be found, for example, in \cite[Lemma 5.3]{GrMo} or \cite{White}.  Finally, we can deform this map to obtain the desired geometry.

  We now construct $\arr_{p^\ell}$ and $H_\ell$ by induction on $\ell$.  Suppose we have constructed a map $\arr_{p^{\ell-1}}$ that is $C(\arr_p,Z)(\ell-1)^{d-1}p^{\ell-1}$-Lipschitz and is an extension of $\arr_{p^{\ell-1}}^{(n-1)}$ to the $n$-cells of $Z$.  We will homotope $\arr_{p^{\ell-1}} \circ \arr_p$ to the desired $C(\arr_p,Z)\ell^{d-1}p^\ell$-Lipschitz map $\arr_{p^\ell}$.

  We first apply the homotopy $H_\ell^{(n-1)}$ to $Z^{(n-1)}$.  We extend this homotopy to a $n$-cell $e$ as follows.  Equip $e$ with polar coordinates $(s,\theta)$, with $\theta \in S^{n-1}$ and $s \in [0,1)$, and let $\partial_e:S^{n-1} \to Z^{(n-1)}$ denote the attaching map of $e$.  We define a homotopy $\tilde H:e \times [0,1] \to Z^{(n-1)}$ by
  \[\tilde H(s,\theta,t)=\begin{cases}
  H_\ell^{(n-1)}(\partial_e(\theta),t+2(s-1)), & s \geq 1-t/2, \\
  \arr_{p^{\ell-1}} \circ \arr_p(\theta,(1-t/2)^{-1}s), & s \leq 1-t/2.
  \end{cases}\]
  From this formula we see that:
  \begin{itemize}
  \item When $s=1$, $\tilde H(s,\theta,t)$ agrees with $H_\ell^{(n-1)}$.
  \item At $s=1-t/2$, $\tilde H$ is continuous since
  \[H_\ell^{(n-1)}(\partial_e(\theta),t+2(s-1))=H_\ell^{(n-1)}(\partial_e(\theta),0)=\arr_{p^{\ell-1}} \circ \arr_p(\theta,1).\]
  \end{itemize}
  
  \begin{figure}
    \centering
    \begin{minipage}{.33\textwidth}
        \centering
        \subfloat[\centering $\tilde H|_{t=0}=\arr_{p^{\ell-1}} \circ \arr_p$]
        {\includegraphics[width=4cm]{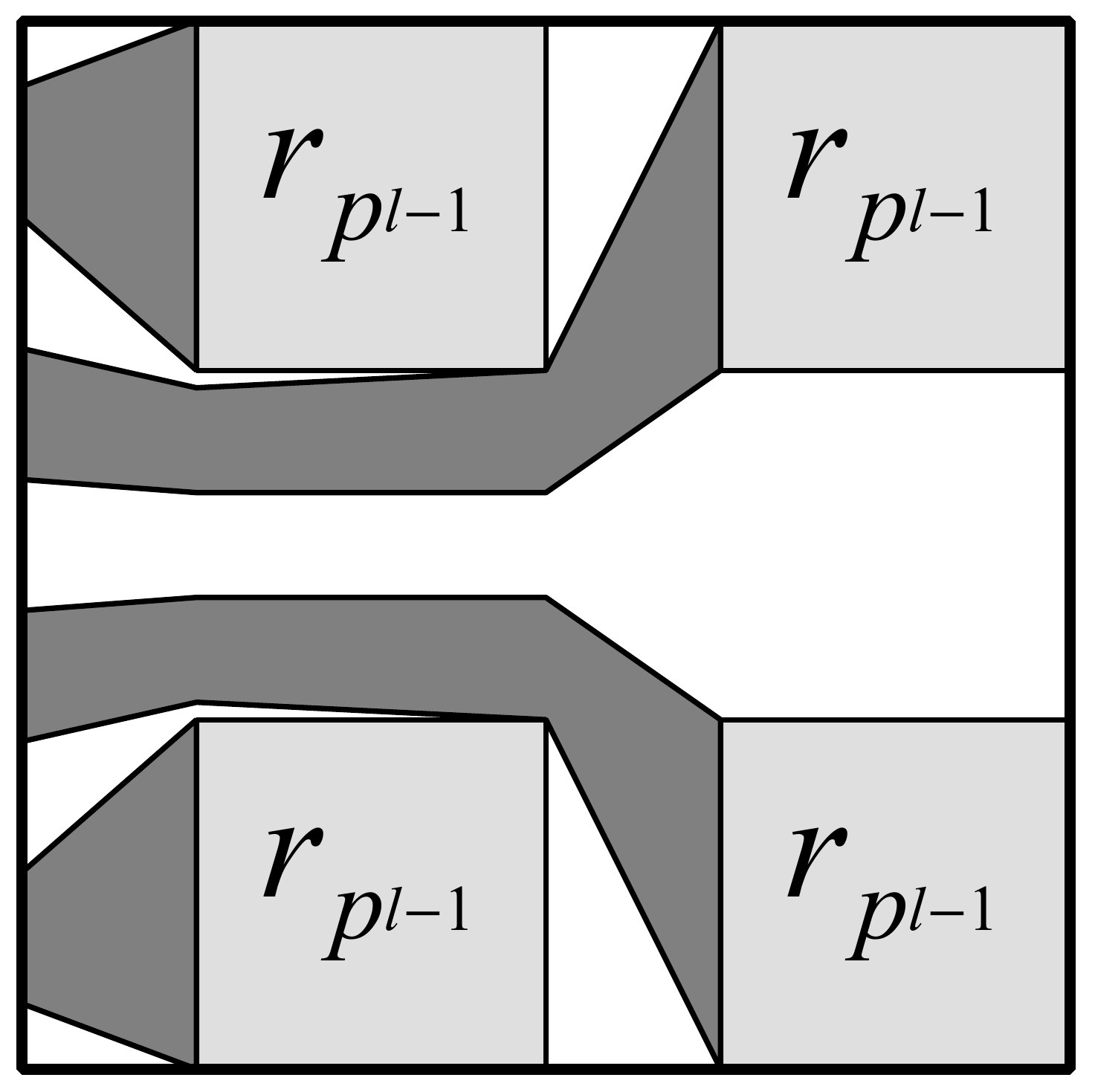}}
        \label{ft1}
    \end{minipage}%
    \begin{minipage}{.33\textwidth}
        \centering
        \subfloat[\centering $\tilde H|_{t=1}$: $L_i$'s differ]
        {\includegraphics[width=4cm]{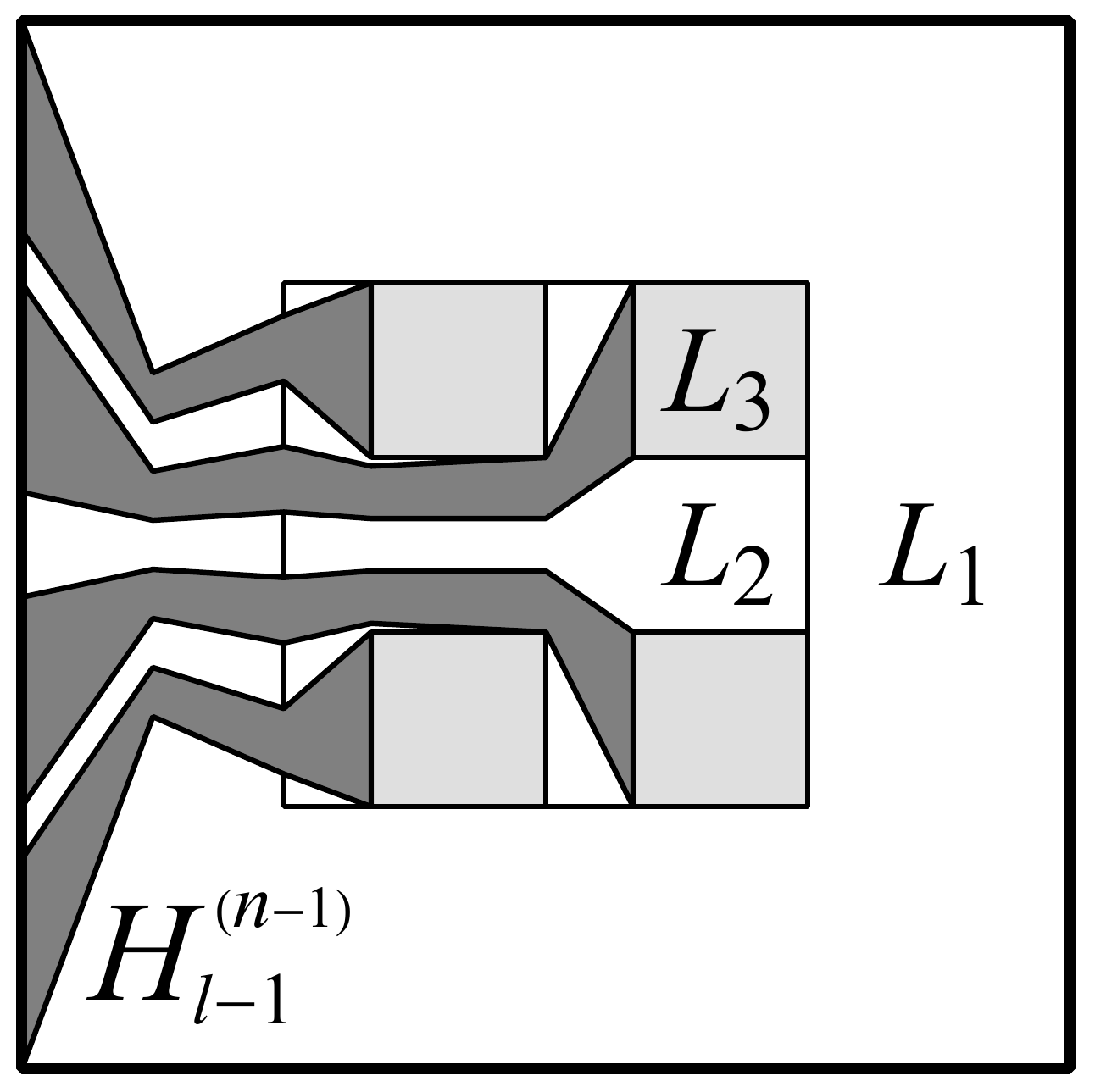}}
        \label{ft2}
    \end{minipage}
    \begin{minipage}{.33\textwidth}
        \centering
        \subfloat[\centering $J|_{t=1}$: $L_i$'s equalized]
        {\includegraphics[width=4cm]{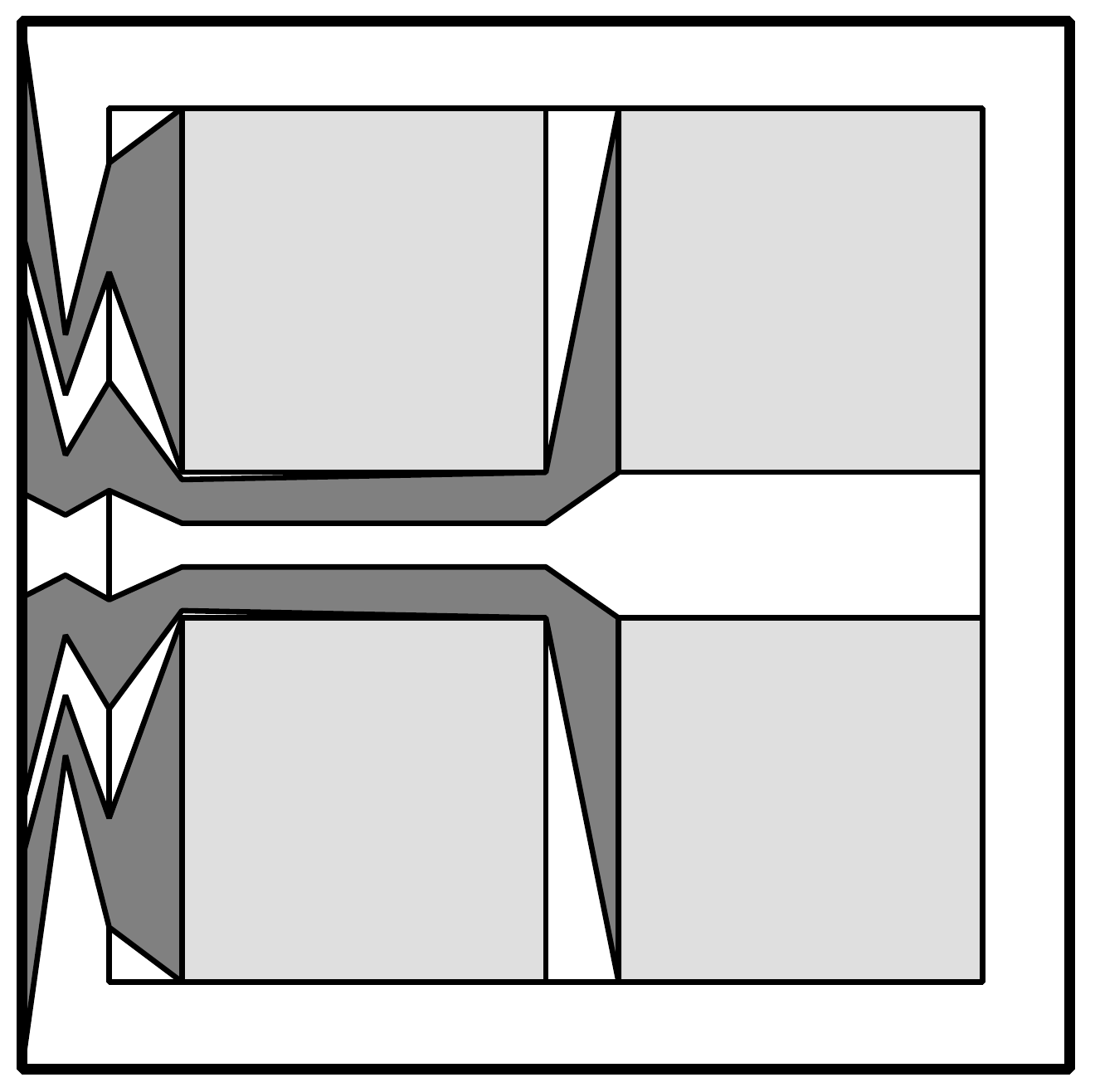}}
        \label{ft3}
    \end{minipage}
    \caption{Stages of the homotopy $H_\ell$, the concatenation of $\tilde H$ and $J$.}
  \end{figure}
  
  At this point, $\tilde H|_{e \times \{1\}}$ has different Lipschitz constants on different regions of $e$, which we bound by induction on $\ell$ and $d$:
  \begin{enumerate}[(i)]
  \item On the outer half of the disk, the Lipschitz constant is
    \[L_1=2\Lip H_\ell^{(n-1)} \leq 2C'(\arr_p,Z^{(n-1)})\ell^{d-2}p^\ell.\]
  \item On the inner half, but outside $\frac{1}{2}\arr_p^{-1}(e)$ (here $\frac{1}{2}$ refers to the homothety $(s,\theta) \mapsto (\frac{s}{2},\theta)$), the Lipschitz constant is
    \[L_2=2\Lip\bigl(\arr_{p^{\ell-1}} \circ \arr_p\bigr) \leq \Lip(\arr_p) \cdot 2C(\arr_p,Z^{(n-1)})(\ell-1)^{d-2}p^{\ell-1}.\]
    This bound holds because on this subdomain, the image of $\arr_p(\theta,s/2)$ lies in $Z^{(n-1)}$.
  \item In $\frac{1}{2}\arr_p^{-1}(e)$, the Lipschitz constant is
    \[L_3=D^{-1}\Lip \arr_{p^{\ell-1}} \leq D^{-1}C(\arr_p,Z)(\ell-1)^{d-1}p^{\ell-1},\]
    where $D$ is the side length of one of the subcubes comprising $\frac{1}{2}\arr_p^{-1}(e)$.
  \end{enumerate}
  In the second stage $J:Z \times [0,1] \to Z$ of the homotopy, which is constant on $Z^{(n-1)}$, we expand and shrink these three regions via a product of piecewise linear homotopies of $[0,1]$ so as to equalize the Lipschitz constants.  At time $1$, $e$ is nearly covered by a $p \times \cdots \times p$ grid of subcubes which each map to $Z$ via $r_{p^{\ell-1}}|_e$ composed with a homothety; the outer half of $\tilde H|_{e \times \{1\}}$ is relegated to a thin shell on the outside of the cube.  We can imagine expanding every part of the domain until the Lipschitz constant is $1$ on each relevant subinterval, and then shrinking the whole domain proportionally.  This shows that the resulting map $J|_{t=1}$ has Lipschitz constant bounded above by
  \begin{multline*}
    pDL_3+\left(\frac{1}{2}-pD\right)L_2+\frac{1}{2}L_1 \\
    \leq pC(\arr_p,Z)(\ell-1)^{d-1}p^{\ell-1} + \Lip(\arr_p)C(\arr_p,Z^{(n-1)})(\ell-1)^{d-2}p^{\ell-1} + C'(\arr_p,Z^{(n-1)})\ell^{d-2}p^\ell \\
  \leq C(\arr_p,Z)\ell^{d-1}p^\ell,
  \end{multline*}
  where the second inequality holds as long as
  \[C(\arr_p,Z) \geq p^{-1}\Lip(\arr_p)C(\arr_p,Z^{(n-1)})+C'(\arr_p,Z^{(n-1)}).\]


  Then we set $\arr_{p^\ell}=J|_{t=1}$ and $H_\ell$ to be the concatenation of $\tilde H$ and $J$.  By computing derivatives of $\tilde H$ and $J$ in the space and time directions, we see that
  \[\Lip(H_\ell)=\max\{L_1,L_2,L_3\},\]
  and therefore we can set $C'(\arr_p,Z) \leq \max\{2,(pD)^{-1}\}C(\arr_p,Z)$.
\end{proof}

\subsection{Lipschitz homotopy equivalence} \label{subS:metric}
To show that Lemma \ref{detailed} implies Theorem \ref{self-maps}, we need to introduce some geometric and topological facts.  We start with the geometry, discussing metrics on CW complexes: we would like to show that the ``special'' metric we imposed on the complex $Z$ in Lemma \ref{detailed} is not too special to be useful.

The relevant ideas date back to Gromov, see e.g.~\cite[\S7.20]{GrMS}, and are developed more systematically in \cite{LLY}.  The basic idea is that if two homotopy equivalent metric spaces are compact and sufficiently locally nice, then they are Lipschitz homotopy equivalent (in the obvious sense).

The importance of this is that asymptotic results about Lipschitz constants are preserved under Lipschitz homotopy equivalence.  That is, for metric spaces $X$ and $Y$, define the Lipschitz norm of a homotopy class $\alpha \in [X,Y]$ to be
\[\lVert\alpha\rVert_{\Lip}=\min \{\Lip(f) : f \in \alpha\}.\]
Suppose now that $f:X' \to X$ and $g:Y \to Y'$ are Lipschitz homotopy equivalences.  Then there are constants $C,K>0$ depending on $f$ and $g$ (but not $\alpha$) such that
\[\frac{1}{C}\lVert\alpha\rVert_{\Lip}-K \leq \lVert g \circ \alpha \circ f \rVert_{\Lip} \leq C\lVert\alpha\rVert_{\Lip}+K.\]
Therefore, asymptotics such as those in Theorem \ref{main} are invariant under Lipschitz homotopy equivalence.
\begin{defn}
  A \emph{nearly Euclidean CW complex} is a CW complex $X$ equipped with a metric constructed inductively as follows.  The 1-skeleton is a metric graph.  Once we have constructed a metric on $X^{(n-1)}$, we also fix a metric $d_i$ on $D^n$ for every $n$-cell $e_i$, such that $d_i$ is bilipschitz to the standard Euclidean metric and the attaching map $f_i:S^{n-1} \to X^{(n-1)}$ is Lipschitz with respect to the induced metric on $S^{n-1}=\partial D^n$.  Then the metric on $X^{(n)}$ is the quotient metric with respect to this gluing.
\end{defn}
In particular, notice that if $L=\max_i \Lip(f_i)$, then for points $x,y \in X^{(n-1)}$,
\[\frac{1}{L}d_{X^{(n-1)}}(x,y) \leq d_{X^{(n)}}(x,y) \leq d_{X^{(n-1)}}(x,y).\]

For example, every compact Riemannian manifold is smoothly triangulable; with any such triangulation it is a nearly Euclidean CW complex.  More generally, every simplicial complex with a simplexwise Riemannian metric is an example.
\begin{prop} \label{he-to-lhe}
  Suppose that $X$ and $Y$ are homotopy equivalent nearly Euclidean finite CW complexes.  Then they are Lipschitz homotopy equivalent.
\end{prop}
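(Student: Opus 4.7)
The plan is to deduce the proposition from a more flexible relative statement $(\ast)$: whenever $X$ is a nearly Euclidean finite CW complex, $A\subseteq X$ is a subcomplex, $Y$ is a nearly Euclidean finite CW complex, and $f:X\to Y$ is continuous with $f|_A$ Lipschitz, there is a Lipschitz map $f':X\to Y$ agreeing with $f$ on $A$ and a homotopy from $f$ to $f'$ rel $A$. Granting $(\ast)$, the proposition follows by a two-step bootstrap. Take a homotopy equivalence $f:X\to Y$ with inverse $g:Y\to X$ together with witnessing continuous homotopies $gf\simeq \id_X$, $fg\simeq \id_Y$, and apply $(\ast)$ with $A=\emptyset$ to obtain Lipschitz replacements $f',g'$. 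Concatenating the resulting homotopies with the original ones produces continuous homotopies from $g'f'$ to $\id_X$ and from $f'g'$ to $\id_Y$, whose endpoints are all Lipschitz. Since $X\times I$ (and similarly $Y\times I$) inherits a natural nearly Euclidean CW structure in which $X\times\{0,1\}$ is a subcomplex, a second application of $(\ast)$ upgrades these to Lipschitz homotopies, yielding the required Lipschitz homotopy equivalence.

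For $(\ast)$, I would proceed by induction on the cells of $X\setminus A$ in order of increasing dimension, maintaining at each stage a continuous map to $Y$ that is Lipschitz on $A\cup X^{(k-1)}$ and still homotopic rel $A$ to the original. For each open $k$-cell $e$ with characteristic map $\Phi_e:D^k\to X$, the composition $f\circ\Phi_e:D^k\to Y$ has Lipschitz boundary values inherited from the previous stage. After gluing back via a collar of $\Phi_e(S^{k-1})$, the inductive step reduces to an \emph{inner claim}: any continuous $h:D^k\to Y$ with $h|_{S^{k-1}}$ Lipschitz is homotopic rel $S^{k-1}$ to a Lipschitz map.

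The inner claim is the main obstacle. I would handle it by first establishing that any nearly Euclidean finite CW complex $Y$ is a Lipschitz absolute neighborhood retract (LANR): there exist a bilipschitz embedding $\iota:Y\hookrightarrow\RR^N$ and a Lipschitz retraction $\rho:U\to\iota(Y)$ from an open neighborhood $U$. This LANR property is itself proved by induction on the cells of $Y$: each closed cell is bilipschitz to a convex Euclidean domain (itself trivially an LANR in $\RR^n$), and attaching new cells along Lipschitz maps preserves the LANR property provided one retracts first along the new cell's normal direction into a collar of its boundary and then into the previous skeleton (cf.\ \cite[\S 7.20]{GrMS} and \cite{LLY}). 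Once $Y$ is realized as an LANR, the inner claim is standard: convolve $\iota\circ h$ with a smooth bump supported in the interior of $D^k$ to obtain a smooth (hence Lipschitz) $\tilde h:D^k\to\RR^N$ that agrees with $\iota\circ h$ near $S^{k-1}$; choosing the bump narrow enough forces $\tilde h$ into $U$, so $\rho\circ\tilde h$ is a Lipschitz map $D^k\to Y$ with the correct boundary, and the straight-line homotopy from $\iota\circ h$ to $\tilde h$ post-composed with $\rho$ provides the required homotopy rel $S^{k-1}$.
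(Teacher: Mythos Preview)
Your reduction to the relative statement $(\ast)$ is exactly the paper's Proposition~\ref{htpy-to-lip}, and your two-step bootstrap (including the second application to $X\times I$ to make the homotopies themselves Lipschitz) is correct and more explicit than the paper's one-line ``follows immediately.''

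The inner claim is where the routes diverge. The paper's key lemma is that $Y$ is \emph{locally Lipschitz contractible}---each point has a neighborhood admitting a Lipschitz deformation retraction to a point---proved by a short induction on skeleta using the cone structure of cells. The claim then follows by triangulating $D^k$ at a scale fine enough that each simplex maps into such a neighborhood, and extending over the triangulation skeleton by skeleton. You instead invoke the stronger global property that $Y$ is a Lipschitz ANR and mollify in the ambient $\RR^N$. Both strategies are valid and in the spirit of the same references you cite; the paper's lemma is lighter to prove and more intrinsic, while yours gives a one-shot smoothing argument once the embedding and retraction are in hand.

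One point in your mollification step needs care: you say $\tilde h$ agrees with $\iota\circ h$ near $S^{k-1}$, but $\iota\circ h$ is only known to be Lipschitz on $S^{k-1}$ itself, not in a collar of it inside $D^k$. As stated, $\tilde h$ need not be Lipschitz near the boundary. The fix is standard: first homotope $h$ rel $S^{k-1}$ so that it is radially constant on a collar (hence Lipschitz there, since it factors through the Lipschitz boundary map), and only then mollify on the complementary inner disk. With that adjustment your argument goes through.
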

In particular, the metric we constructed on $Z$ in the proof of Lemma \ref{detailed} is nearly Euclidean, and so $Z$ is Lipschitz homotopy equivalent to, for example, any homotopy equivalent compact Riemannian manifold.

This follows immediately from the following more general statement:
\begin{prop} \label{htpy-to-lip}
  Let $X$ and $Y$ be nearly Euclidean finite CW complexes, and $A \subset X$ a subcomplex.  Let $f:X \to Y$ be a map such that $f|_A$ is Lipschitz.  Then $f$ is homotopic rel $A$ to a Lipschitz map.  Moreover, if the original map is cellular, then so is the new map.
\end{prop}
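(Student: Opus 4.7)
The plan is to induct on the cells of $X \setminus A$ in order of increasing dimension. Suppose inductively that $f$ has been homotoped, rel $A$, to a map $f'$ that is Lipschitz on $A \cup X^{(n-1)}$. For each $n$-cell $e$ of $X$ not in $A$, with characteristic map $\chi_e \colon D^n \to X$, the composition $f' \circ \chi_e \colon D^n \to Y$ is continuous with Lipschitz restriction to $\partial D^n$, since the nearly Euclidean metric on $D^n$ coming from the CW structure is bilipschitz to the standard one.  If I can homotope $f' \circ \chi_e$ rel $\partial D^n$ to a Lipschitz map, then the resulting cell-by-cell homotopies, each being constant on $\partial D^n$, glue across $X^{(n-1)}$ to a homotopy on $A \cup X^{(n)}$; this then extends to a homotopy on $X$ rel $A \cup X^{(n-1)}$ via the homotopy extension property of CW pairs, preserving the inductive hypothesis.

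The core technical step is the following lemma: any continuous $g \colon D^n \to Y$ with $g|_{\partial D^n}$ Lipschitz is homotopic rel $\partial D^n$ to a Lipschitz map.  Since $g(D^n)$ is compact, it lies in a finite subcomplex of $Y$, so I may assume $Y$ itself is a finite nearly Euclidean CW complex.  I will use the fact that such a $Y$ embeds Lipschitzly into some $\RR^N$ as a Lipschitz neighborhood retract, with retraction $r \colon U \to Y$ from an open neighborhood $U$ of $Y$; this is constructed by embedding each cell bilipschitzly and extending the retraction skeleton by skeleton, using the Lipschitz structure on cell collars.  Viewing $g$ as a map into $\RR^N$, I approximate it uniformly by a piecewise linear (hence Lipschitz) map $\tilde g \colon D^n \to \RR^N$ agreeing with $g$ on $\partial D^n$, via simplicial approximation on a triangulation of $D^n$ that refines a suitable simplicial structure on $\partial D^n$ on which $g|_{\partial D^n}$ is already piecewise linear.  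For a sufficiently fine approximation, both $\tilde g(D^n)$ and the straight-line homotopy from $g$ to $\tilde g$ lie in $U$; post-composing with $r$ gives a Lipschitz map into $Y$ together with the required homotopy rel $\partial D^n$.

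For the cellularity claim, if the original $f$ is cellular then at the $n$-th inductive stage I apply the lemma within $Y^{(n)}$, itself a nearly Euclidean finite CW complex, so that both the approximation and the straight-line homotopy remain inside $Y^{(n)}$ and the final map stays cellular.  The principal obstacle is constructing the Lipschitz neighborhood retract $r \colon U \to Y$, since $Y$ is only piecewise bilipschitz-Euclidean and its gluings can produce non-smooth local geometry; this is handled by building the embedding and retraction one skeleton at a time, relying on the Lipschitzness of each attaching map and the freedom to fatten the embedded image of each new cell into $\RR^N$ within a controlled collar neighborhood.  Every other ingredient (simplicial approximation compatible with a boundary structure, the homotopy extension property, passage to finite subcomplexes by compactness) is standard, and no quantitative Lipschitz bounds are required.
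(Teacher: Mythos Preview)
Your overall strategy---induction on skeleta of $X$, reducing to the lemma that a continuous $g:D^n\to Y$ with Lipschitz boundary is homotopic rel boundary to a Lipschitz map---matches the paper. The difference lies in how you prove that lemma. The paper works intrinsically: it shows $Y$ is \emph{locally Lipschitz contractible} (every point has a neighborhood with a Lipschitz contraction), then triangulates $D^n$ finely enough that each simplex lands in such a neighborhood and builds the Lipschitz extension simplex by simplex. You instead embed $Y$ in $\RR^N$ as a Lipschitz neighborhood retract, PL-approximate $g$ there, and retract back.

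Your route is valid in principle, but two points deserve attention. First, a minor gap: you assert that $g|_{\partial D^n}$ is already PL on some triangulation, but a Lipschitz map need not be PL (and in the induction the boundary map is $r\circ \tilde g$ from the previous step, which is merely Lipschitz). This is easily repaired by using a collar of $\partial D^n$ to interpolate linearly between $g|_{\partial D^n}$ and a PL approximation of it, then PL-approximating on the interior; the result is Lipschitz rather than PL, which suffices. Second, and more substantively, the construction of the Lipschitz neighborhood retraction $r:U\to Y$ is doing all the real work, and your sketch (``extend the retraction skeleton by skeleton using cell collars'') is essentially equivalent to proving local Lipschitz contractibility---indeed, that is the standard way to show a space is a Lipschitz ANR. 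So your approach does not avoid the paper's lemma; it repackages it and then adds the extra machinery of an ambient embedding. The paper's argument is shorter because it uses the local contractibility directly, without the detour through $\RR^N$.
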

There is another useful consequence of this fact:
\begin{cor} \label{CWRiem}
  Given a finite CW complex $X$, we can always find a homotopy equivalent complex with a nearly Euclidean metric.
\end{cor}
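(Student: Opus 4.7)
My plan is to build a nearly Euclidean CW complex $X'$ homotopy equivalent to $X$ by induction on skeleta, using Proposition \ref{htpy-to-lip} to make each attaching map Lipschitz. The base case is trivial: take $X'^{(0)} = X^{(0)}$ as a discrete set of points, which is nearly Euclidean, and let $\phi_0 : X^{(0)} \to X'^{(0)}$ be the identity.

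For the inductive step, suppose we have constructed a nearly Euclidean finite CW complex $X'^{(n-1)}$ together with a homotopy equivalence $\phi_{n-1} : X^{(n-1)} \to X'^{(n-1)}$. For each $n$-cell $e_i$ of $X$ with attaching map $f_i : S^{n-1} \to X^{(n-1)}$, consider the composition $\phi_{n-1} \circ f_i : S^{n-1} \to X'^{(n-1)}$. Equip $S^{n-1}$ with any nearly Euclidean structure (for instance, the boundary of a standard simplex with the simplexwise Euclidean metric). Then $X'^{(n-1)}$ is nearly Euclidean by assumption, and $S^{n-1}$ is nearly Euclidean by construction, so Proposition \ref{htpy-to-lip} (applied with $A = \emptyset$) gives a Lipschitz map $f_i' : S^{n-1} \to X'^{(n-1)}$ homotopic to $\phi_{n-1} \circ f_i$. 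Now form $X'^{(n)}$ by attaching an $n$-cell (with the standard Euclidean metric on $D^n$) along each $f_i'$. By construction, $X'^{(n)}$ is nearly Euclidean.

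It remains to extend $\phi_{n-1}$ to a homotopy equivalence $\phi_n : X^{(n)} \to X'^{(n)}$. This is a standard fact of CW theory: since $\phi_{n-1}$ is a homotopy equivalence and the new attaching maps $f_i'$ satisfy $f_i' \simeq \phi_{n-1} \circ f_i$, the homotopy extension property and the mapping cylinder construction produce such an extension (one maps each cell of $X^{(n)}$ across the homotopy between $\phi_{n-1} \circ f_i$ and $f_i'$ into the corresponding cell of $X'^{(n)}$, and checks that the cofibration structure ensures the resulting map is a homotopy equivalence). Iterating up to $n = \dim X$ produces the desired $X'$. The main obstacle is really just the verification of this last point, but it is entirely routine homotopy theory once the Lipschitz attaching maps are in hand; the substantive geometric content has already been packaged into Proposition \ref{htpy-to-lip}.
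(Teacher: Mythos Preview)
Your proof is correct and follows essentially the same approach as the paper's: induct on skeleta, compose each attaching map with the already-constructed homotopy equivalence, apply Proposition \ref{htpy-to-lip} to homotope the result to a Lipschitz map, attach a cell along that, and extend the homotopy equivalence across the cell using the homotopy. The paper's version is terser and omits the explicit verification that the extended map remains a homotopy equivalence, but the content is the same.
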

\begin{proof}
  We use induction on skeleta.  Suppose we have constructed a complex $Y^{(k)}$ with a nearly Euclidean metric and a homotopy equivalence $f:X^{(k)} \to Y^{(k)}$.  Then for every $(k+1)$-cell of $X$ with attaching map $g:S^k \to X^{(k)}$, $f \circ g$ is homotopic to a Lipschitz map $\tilde g:S^k \to Y^{(k)}$.  Then we can attach a $(k+1)$-cell along $\tilde g$ and extend $f$ to the $(k+1)$-cell by combining $\tilde g$ and the homotopy.
\end{proof}
\begin{proof}[{Proof of Prop.~\ref{htpy-to-lip}.}]
  We start by proving a lemma:
  \begin{lem}
    $Y$ is \emph{locally Lipschitz contractible}, that is, for every $y \in Y$, there is a neighborhood $N_y \ni y$ which admits a Lipschitz deformation retraction to a point.  In particular, for every $n$, every Lipschitz map $S^n \to N_y$ extends to $D^{n+1}$ (as a Lipschitz map).
  \end{lem}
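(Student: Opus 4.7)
The plan is to prove local Lipschitz contractibility by induction on $n = \dim Y$, with the trivial base case $n = 0$. The inductive step splits into two cases depending on where $y$ sits. If $y$ lies in the interior of an open $n$-cell $e$, I identify $e$ with $D^n$ via the bilipschitz-Euclidean structure coming from the definition of nearly Euclidean, and take $N_y$ to be a small Euclidean ball around $y$ in $e$; the radial straight-line deformation retraction to $y$ is manifestly Lipschitz.

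The substantive case is $y \in Y^{(n-1)}$. By the inductive hypothesis applied inside $Y^{(n-1)}$, there is an open neighborhood $V$ of $y$ and a Lipschitz deformation retraction $H_0 : V \times [0,1] \to V$ contracting $V$ onto $y$. I will enlarge $V$ by adjoining a thin collar inside each $n$-cell: in polar coordinates $(r, \theta)$ on $D^n \cong e_i$ with attaching map $f_i : S^{n-1} \to Y^{(n-1)}$, set
\[ C_i := \{(r, \theta) : 1 - \delta < r \leq 1,\ \theta \in f_i^{-1}(V)\} \]
for small $\delta > 0$, and define $N_y := V \cup \bigcup_i \mathrm{image}(C_i)$. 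Because $f_i$ is continuous (it is even Lipschitz) and $f_i^{-1}(V) \supseteq f_i^{-1}(y)$, the set $N_y$ is open in $Y$. The retraction is built in two halves: during $t \in [0, 1/2]$, push each collar radially onto its attaching boundary by $(r, \theta) \mapsto ((1-2t)r + 2t, \theta)$ while fixing $V$ pointwise, so that at time $1/2$ the point $(r,\theta) \in C_i$ has been identified with $f_i(\theta) \in V$; during $t \in [1/2, 1]$, apply $H_0(\cdot, 2t-1)$ to contract everything into $y$.

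The ``in particular'' consequence is a standard coning: writing $D^{n+1}$ as the mapping cone $S^n \times [0,1] / (S^n \times \{1\})$, any Lipschitz $g : S^n \to N_y$ extends by $[\theta, t] \mapsto H(g(\theta), t)$, which is well-defined because $H(\cdot, 1) \equiv y$ and Lipschitz as a composition of Lipschitz maps. The step I expect to require the most care is bounding the Lipschitz constant of the first-stage retraction uniformly across cells: the radial stretching factor is at most $1$, but the $\theta$-direction estimate must be reconciled with the identification $(1,\theta) \sim f_i(\theta)$, and points in distinct cells are joined through $Y^{(n-1)}$ via paths whose length feels $\Lip(f_i)$. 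Since the complex is finite and each attaching map is Lipschitz by the nearly Euclidean hypothesis, all of these constants are bounded, so the two-stage construction yields an honest Lipschitz deformation retraction of $N_y$ to $\{y\}$.
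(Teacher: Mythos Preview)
Your argument is correct and follows essentially the same route as the paper: induction on skeleta, starting from a small Euclidean ball in the cell containing $y$ and then enlarging by collar neighborhoods $f_i^{-1}(V)\times[0,\delta)$ in each higher cell, with the two-stage retraction (radial push into the skeleton, then the lower-dimensional contraction). Your organization as induction on $\dim Y$ with a case split is equivalent to the paper's induction on skeleta, and you supply a bit more detail on the Lipschitz estimates and the coning argument for the ``in particular'' clause than the paper does.
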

  \begin{proof}
    We build such a neighborhood by induction on skeleta, using the standard construction for a contractible neighborhood inside a CW complex.  Let $y \in Y$, and let $k$ be such that $y$ is contained in an open $k$-cell. Then we can take a ball in that $k$-cell which is Lipschitz contractible in $Y^{(k)}$.  Now suppose we have constructed a contractible neighborhood $N(n)$ of $y$ in $Y^{(n)}$, and consider an $(n+1)$-cell with attaching map $f:S^n \to Y$.  Then, thinking of the cell as the cone on $S^n$, we can add $f^{-1}(N(n)) \times [0,\epsi)$ to our neighborhood.  Doing this for every cell gives us a neighborhood in $Y^{(n+1)}$ with an obvious deformation retraction to $N(n)$, which is Lipschitz since the metric on the cell is bilipschitz to the Euclidean metric.
  \end{proof}
  We now make $f$ Lipschitz, also by induction on skeleta.  Clearly $f|_{X^{(0)}}$ is Lipschitz to begin with.  Now suppose that $f|_{X^{(k)}}$ is Lipschitz (notice that this is true with respect to the metric induced from $X^{(k+1)}$ as well as that on $X^{(k)}$) and consider a $(k+1)$-cell not in $A$ with an inclusion map $e:D^{k+1} \to X$.  Now take a triangulation of $D^{k+1}$ at a small enough scale that $f \circ e$ takes every simplex into a Lipschitz contractible neighborhood.  By induction on the skeleta of this triangulation, we deform $f \circ e$ to a Lipschitz map, while leaving it constant on $\partial D^{k+1}$.

  If $f$ is cellular, then we can construct the $(k+1)$st stage of the homotopy as a map to $Y^{(k+1)}$, rather than to $Y$.  Then the resulting map is still
  cellular.
\end{proof}

\subsection{Properties of formal spaces} \label{formal}
Finally, we need to show that the topological properties of $Z$ are also not too special to be useful.  This requires some discussion of properties of formal spaces.

One property, which follows from \cite[Proposition 3.1]{Papa}, is that a map between formal spaces which induces isomorphisms on rational cohomology is rationally invertible:
\begin{prop} \label{back-and-forth}
  If $Y$ is formal and $f:Z \to Y$ is a map between simply connected complexes inducing an isomorphism on rational cohomology, then $Z$ is formal, and there is a map $g:Y \to Z$ such that $g \circ f$ induces multiplication by $q^n$ on $H^n(Y;\mathbb{Q})$, for some $q$.
\end{prop}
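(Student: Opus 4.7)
The plan is to build $g$ by approximating a rational homotopy inverse of $f$ and then ``integralizing'' it via a suitable iterate of the formality self-map of $Y$. Since $f$ is a rational cohomology isomorphism between simply connected finite complexes, Sullivan's theory ensures that the rationalization $f_\QQ: Z_\QQ \to Y_\QQ$ is a homotopy equivalence; fix a homotopy inverse $\bar g: Y_\QQ \to Z_\QQ$. Formality of $Y$ in the operational sense used in the paper provides a self-map $\arr_p: Y \to Y$ with $\arr_p^* = p^n \cdot \mathrm{id}$ on $H^n(Y;\QQ)$ for every $n$.

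For each $k \geq 1$, consider the rational map
\[h_k := \bar g \circ (\arr_p)_\QQ^k \circ r_Y \colon Y \to Z_\QQ,\]
where $r_Y: Y \to Y_\QQ$ is the rationalization map. I would first check that if $h_k$ admits an integral lift $g: Y \to Z$, meaning $r_Z \circ g \simeq h_k$, then $g \circ f$ automatically has the required cohomological action. Indeed, by naturality of rationalization, $(g \circ f)_\QQ \simeq \bar g \circ (\arr_p)_\QQ^k \circ f_\QQ$ as a self-map of $Z_\QQ$, and since $\bar g^* = (f^*)^{-1}$ (because $\bar g \circ f_\QQ \simeq \mathrm{id}_{Z_\QQ}$) and $(\arr_p)^*$ is multiplication by $p^n$ on $H^n$, a short computation gives that $(g \circ f)^* = p^{kn} \cdot \mathrm{id}$ on $H^n(Z;\QQ)$. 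Taking $q = p^k$ then simultaneously establishes the formality of $Z$ and yields the desired factorization $g \circ f = \arr_q$.

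The main step is therefore to show that for $k$ sufficiently large the rational map $h_k$ lifts to an integral $g \colon Y \to Z$. The obstructions to such a lift live in cellular cohomology of $Y$ with coefficients in the torsion subgroups $\ker(\pi_*(Z) \to \pi_*(Z_\QQ))$, since these are precisely the homotopy groups of the homotopy fiber of the rationalization $r_Z$; since $Y$ is finite and each $\pi_n(Z)$ is finitely generated, only finitely many torsion classes need to be annihilated. I would kill them by iterating: either by replacing $\arr_p$ with a higher iterate so as to act trivially on the obstruction groups after stabilization, or, working cellwise up the Postnikov tower of $Z$, by exploiting the classical fact that rational maps out of a finite complex are realizable integrally after composing with a sufficiently divisible self-map. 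This obstruction-theoretic step, tracking how $\arr_p$ interacts with torsion classes through the stages of the Postnikov tower, is the main obstacle and the technical crux of the argument; it is closely tied to the localization theory of Sullivan and Hilton--Mislin--Roitberg, which is where Papadima's statement ultimately resides.
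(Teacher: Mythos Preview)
The paper does not give its own proof of this proposition; it simply records that the result follows from \cite[Proposition 3.1]{Papa}, and later quotes a sharper version as Theorem \ref{thm:PWSM-B}. So there is nothing in the paper to compare your argument against beyond the citation.

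Your overall strategy---rationalize, invert $f_\QQ$, then integralize by precomposing with iterates of the formality self-map $\arr_p$---is the standard one and is in the spirit of how Papadima's argument proceeds. Your verification that a lift $g$ of $h_k$ gives $(g\circ f)^* = p^{kn}$ on $H^n$ is correct.

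There is, however, a genuine error in your obstruction-theoretic sketch. You assert that the homotopy groups of the fiber $F$ of $r_Z\colon Z \to Z_\QQ$ are the finite torsion subgroups $T_n = \ker(\pi_n(Z) \to \pi_n(Z_\QQ))$. This is false: the long exact sequence of the fibration yields
\[0 \longrightarrow (\QQ/\ZZ)^{\rk\pi_{n+1}(Z)} \longrightarrow \pi_n(F) \longrightarrow T_n \longrightarrow 0,\]
so $\pi_n(F)$ is torsion but typically not finite. Your conclusion that ``only finitely many torsion classes need to be annihilated'' therefore does not follow, and the claim that iterating $\arr_p$ eventually acts trivially on the obstruction groups requires a different justification than the one you indicate.

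You correctly identify the integralization step as the technical crux, but since you ultimately defer it to Papadima and the Sullivan--Hilton--Mislin--Roitberg localization machinery, what you have written is less a proof than a reduction to the cited result---which is, to be fair, exactly what the paper does too. If you wanted to push the argument further yourself, the cleaner route (closer to \cite{PWSM}) avoids the fiber of $r_Z$ altogether and instead climbs the Postnikov tower of $Z$ directly: one uses that $\hat\rho_p$ acts on each indecomposable of $\mathcal M_Y$ by a known power of $p$, and shows inductively that for $p$ sufficiently divisible the composed DGA map is realized by a genuine map at each finite stage.
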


Now, given $Y$, we build a rationally equivalent $Z$ which satisfies the topological hypotheses of Lemma \ref{detailed}:
\begin{prop} \label{cells}
  Let $Y$ be a simply connected space with finite-dimensional rational homology, and fix a basis for $H_n(Y;\mathbb{Q})$ for every $n$.  Then there is a CW complex $Z$ and a map $f:Z \to Y$ which induces isomorphisms on rational cohomology such that:
  \begin{enumerate}[(i)]
  \item The rational cellular chain complex of $Z$ has zero differential; that is, rational cellular chains on $Z$ are in bijection with $H_*(Z;\mathbb Q)$.
  \item The induced isomorphism $f_*:H_n(Z;\mathbb Q) \to H_n(Y;\mathbb Q)$ maps each cell to a multiple of a basis element.
  \end{enumerate}
\end{prop}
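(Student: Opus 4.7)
The plan is to construct $Z$ by induction on dimension, building skeleta $Z^{(n)}$ with maps $f_n \colon Z^{(n)} \to Y$ at each stage, and then to set $Z = Z^{(N)}$ for $N$ the top dimension in which $H_N(Y;\QQ)$ is nonzero. The inductive hypothesis is: the rational cellular chain complex of $Z^{(n)}$ has zero differential with exactly $k_i := \dim H_i(Y;\QQ)$ cells in each dimension $i \leq n$, and $f_n$ sends the homology class of each $i$-cell to a nonzero rational multiple of the corresponding basis element of $H_i(Y;\QQ)$. This hypothesis implies in particular that $f_n$ induces an isomorphism on $H_i(-;\QQ)$ for each $i \leq n$. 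For the base, I take $Z^{(0)}$ to be a single vertex mapping to a basepoint of $Y$; since $Y$ is simply connected, $H_1(Y;\QQ) = 0$ and no $1$-cells are required, so $Z^{(1)} = Z^{(0)}$.

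For the inductive step, replace $f_{n-1}$ by a cofibration via its mapping cylinder. The long exact sequence of the pair $(Y, Z^{(n-1)})$ together with the inductive hypothesis yields $H_i(Y, Z^{(n-1)};\QQ) = 0$ for $i < n$ and $H_n(Y;\QQ) \cong H_n(Y, Z^{(n-1)};\QQ)$. Since both $Y$ and $Z^{(n-1)}$ are simply connected, I can invoke the rational relative Hurewicz theorem to get $\pi_n(Y, Z^{(n-1)}) \otimes \QQ \cong H_n(Y, Z^{(n-1)};\QQ)$. Thus for each basis element $\alpha_{n,j}$ I pick $\beta_j \in \pi_n(Y, Z^{(n-1)})$ whose image in $H_n(Y;\QQ)$ is a positive integer multiple of $\alpha_{n,j}$. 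Representing $\beta_j$ by a map of pairs $(D^n, S^{n-1}) \to (Y, Z^{(n-1)})$, I attach the $j$-th new $n$-cell of $Z^{(n)}$ along its boundary map $\phi_j$ and extend $f_{n-1}$ across that cell using the characteristic map itself.

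To verify that the inductive hypothesis is preserved, I compute the new cellular differential: it sends the $j$-th new cell to the rational Hurewicz image in $H_{n-1}(Z^{(n-1)};\QQ)$ of the attaching map $\phi_j$. By naturality of the Hurewicz transformation, this equals the image of $[\beta_j] \in H_n(Y, Z^{(n-1)};\QQ)$ under the connecting map $\partial\colon H_n(Y, Z^{(n-1)};\QQ) \to H_{n-1}(Z^{(n-1)};\QQ)$, which vanishes because the next arrow $H_{n-1}(Z^{(n-1)};\QQ) \to H_{n-1}(Y;\QQ)$ is an isomorphism by induction. Hence the new differentials are zero, each new cell is a rational cycle in $H_n(Z^{(n)};\QQ)$, and $f_n$ sends its class to a nonzero multiple of $\alpha_{n,j}$, so the inductive hypothesis persists. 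The main technical input is the rational relative Hurewicz statement, which is a standard consequence of Serre's mod-$\mathcal{C}$ Hurewicz theorem applied with $\mathcal{C}$ the class of torsion abelian groups; the remainder is bookkeeping with the long exact sequence of a pair, together with the observation that after finitely many steps the induction terminates because $H_i(Y;\QQ)$ vanishes above some degree $N$.
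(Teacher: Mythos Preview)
Your proof is correct and follows essentially the same approach as the paper's: an inductive construction of skeleta using the rational relative Hurewicz theorem to realize the chosen basis elements of $H_n(Y;\QQ)$ as relative homotopy classes, then checking via the long exact sequence that the resulting attaching maps are rationally trivial in $H_{n-1}(Z^{(n-1)};\QQ)$. The only cosmetic differences are that you make the mapping-cylinder replacement explicit and verify the isomorphism on $H_n$ directly from the cell-to-basis-element correspondence, whereas the paper routes that check through the relative homotopy groups.
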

\begin{proof}
  We construct $Z$ and $f$ by induction on skeleta.  We set $Z^{(0)}=Z^{(1)}=*$.  Now suppose we have built $Z^{(n)}$ and a map $f_n:Z^{(n)} \to Y$ which induces an isomorphism on $H_k({-};\mathbb{Q})$, $k \leq n$.  By the rational relative Hurewicz theorem, the Hurewicz map induces an isomorphism
  \[\pi_{n+1}(Y,Z^{(n)}) \otimes \mathbb{Q} \to H_{n+1}(Y,Z^{(n)};\mathbb{Q}) \cong H_{n+1}(Y;\mathbb{Q}).\]
  So choose elements $\alpha_1,\ldots,\alpha_r \in \pi_{n+1}(Y,Z^{(n)})$ forming a basis for $\pi_{n+1}(Y,Z^{(n)}) \otimes \mathbb{Q}$.  We build $Z^{(n+1)}$ by attaching an $(n+1)$-cell $e_i$ along each $\partial\alpha_i$, $i=1,\ldots,r$, and extend $f_n$ to $f_{n+1}:Z^{(n+1)} \to Y$ by mapping each $e_i$ to $Y$ via a representative of $\alpha_i$.

  Since $(f_n)_*:H_n(Z^{(n)};\mathbb{Q}) \to H_n(Y;\mathbb{Q})$ is an isomorphism, by the long exact sequence of that pair, the Hurewicz image of each $\partial\alpha_i$ is zero.  Therefore the map
  \[H_{n+1}(Z^{(n+1)};\mathbb{Q}) \to H_{n+1}(Z^{(n+1)},Z^{(n)};\mathbb{Q})\]
  is an isomorphism; in other words, the cells of $Z^{(n+1)}$ form a basis for $H_{n+1}(Z^{(n+1)};\mathbb{Q})$.  Moreover, by the definition of the extension $f_{n+1}$, the map
  \[(f_{n+1})_*:\pi_{n+1}(Z^{(n+1)},Z^{(n)}) \otimes \mathbb{Q} \to \pi_{n+1}(Y,Z^{(n)}) \otimes \mathbb{Q}\]
  is an isomorphism.  But these groups are naturally isomorphic to $H_{n+1}(Z^{(n+1)};\mathbb{Q})$ and $H_{n+1}(Y;\mathbb{Q})$, respectively.  This shows that $f_{n+1}$ induces a bijection on $H_{n+1}({-};\mathbb{Q})$ as well.
  
  Once we have done this in every dimension in which $H_*(Y;\mathbb{Q}) \neq 0$, we have constructed the desired $Z$.  To satisfy condition (ii), notice that we can always pick the $\alpha_i$ to be integer multiples of the elements of our chosen basis.
\end{proof}

Now we conclude the section:
\begin{proof}[Proof of Theorem \ref{self-maps}]
  Let $Y$ be a simply connected formal compact Riemannian manifold.  Using Proposition \ref{cells}, we can find a complex $Z$ such that the cellular chain complex of $Z$ has zero differential and a rational equivalence $g:Z \to Y$.  Moreover, by Proposition \ref{back-and-forth}, there is a rational equivalence $f:Y \to Z$ such that $f \circ g$ induces multiplication by $a^n$ on $H^n(Y;\mathbb Q)$, for some $a>0$.

  By Corollary \ref{CWRiem}, we can put a nearly Euclidean metric on $Z$, and by Proposition \ref{he-to-lhe} we can assume $f$ and $g$ are Lipschitz.

  Now let $\arr_p:Z \to Z$ be a map that induces multiplication by $p^n$ on $H^n(Z;\mathbb Q)$.  By Lemma \ref{detailed} and Proposition \ref{he-to-lhe}, for any nearly Euclidean metric on $Z$, and for every $\ell$, there are $O(p^\ell\ell^{d-1})$-Lipschitz maps $\arr_{p^\ell}$ homotopic to $\arr_p^\ell$. Then the maps $f \circ \arr_{p^\ell} \circ g$ are again $O(p^\ell\ell^{d-1})$-Lipschitz and induce multiplication by $(ap^\ell)^n$ on $H^n(Y;\mathbb R)$.
\end{proof}


\section{Rational homotopy theory} \label{S:RHT}

The remainder of the paper will require machinery from rational homotopy theory.  We will give a very brief review of Sullivan's theory of minimal models, referring the reader to \cite{GrMo,FHT} for more details on the general background and \cite{PCDF,scal} for treatments geared towards quantitative topology.

Rational homotopy theory provides a way of translating the topology of simply connected spaces into algebraic language.  There are several equivalent such languages, but the main one we will use is that of differential graded algebras, as developed by Sullivan.

A \emph{(commutative) differential graded algebra}, or \emph{DGA}, is a cochain complex over a field, typically $\QQ$ or $\RR$, with a graded-commutative multiplication satisfying the graded Leibniz rule.  The prototypical examples are:
\begin{itemize}
\item The smooth forms $\Omega^*(X)$ on a smooth manifold $X$, or the simplexwise smooth forms on a simplicial complex.
\item Sullivan's \emph{minimal DGA} $\mathcal{M}_Y^*(\mathbb{F})$ for a simply connected space $Y$, which is a free graded commutative algebra generated in degree $n$ by the \emph{indecomposable} elements $V_n=\Hom(\pi_n(Y);\mathbb{F})$ and with a differential which takes elements of $V_n$ to elements of $\Lambda_{k=2}^{n-1} V_k$ and is dual to the $k$-invariants in the Postnikov tower of $Y$, $k_n \in H^{n+1}(Y_{n-1};\pi_n(Y))$.  We will write
\[\mathcal M_Y^*=\mathcal M_Y^*(\RR) \cong \Lambda_{n=2}^\infty V_n,\]
noting that this isomorphism is non-canonical.  We also write
\[\mathcal M_Y^*(n)=\Lambda_{k=2}^n V_k;\]
this is the minimal DGA of the $n$th Postnikov stage of $Y$.
\end{itemize}
There is an algebraic notion of homotopy between morphisms of DGAs which will not figure explicitly in this paper.  A \emph{quasi-isomorphism} between DGAs is a map inducing an isomorphism on cohomology.  The existence of such a map between $\mathcal A$ and $\mathcal B$ is not an equivalence relation; therefore we say that two DGAs are \emph{quasi-isomorphic} if they are connected by a zig-zag of one or more quasi-isomorphisms
\[\mathcal A \leftarrow \mathcal C_1 \rightarrow \cdots \leftarrow \mathcal C_k \rightarrow \mathcal B.\]

If $Y$ is a smooth manifold or simplicial complex, then it has a (non-unique) \emph{minimal model}, that is, a quasi-isomorphism $m_Y:\mathcal M_Y^* \to \Omega^*(Y)$ realizing the generators of the minimal DGA as differential forms.  The codomain of the minimal model may also be the algebra $\Omega^*_\flat(Y)$ of flat forms in the sense of Whitney, which are a completion of the smooth forms with respect to the $L^\infty$ norm; see \cite[\S2 and 6.1]{scal} and \cite[Ch.~IX]{GIT}.  When we want to be noncommittal about whether we are using smooth or flat forms, we write $\Omega^*_{(\flat)}(Y)$.

We will frequently leave the map $m_Y$ implicit when we speak of the \emph{rationalization} of a map $f:Y \to Z$, which is a map $\rho$ which completes the commutative square
\[\xymatrix{
  \mathcal M_Z \ar[r]^\rho \ar[d]_{m_Z} & \mathcal M_Y \ar[d]_{m_Y} \\
  \Omega^*_{(\flat)}Z \ar[r]^{f^*} & \Omega^*_{(\flat)}Y
}\]
up to homotopy.  Such a map $\rho$ always exists and is unique up to homotopy of DGA homomorphisms.

In the rest of this section, we introduce some prior results in quantitative homotopy theory as well as some information about formal spaces.

\subsection{The shadowing principle}

The main technical result of \cite{PCDF} shows a kind of coarse density of genuine maps in the space of ``formal'' rational-homotopic maps between spaces $X$ and $Y$.  That is, given a homomorphism $\mathcal{M}_Y^* \to \Omega_{(\flat)}^*(X)$, one can produce a nearby genuine map $X \to Y$ whose Lipschitz constant depends on geometric properties of the homomorphism.

To state this precisely, we first introduce some definitions.  Let $X$ and $Y$ be finite simplicial complexes or compact Riemannian manifolds such that $Y$ is simply connected and has a minimal model $m_Y:\mathcal{M}_Y^*\to\Omega_\flat^*Y$.  Fix norms on the finite-dimensional vector spaces $V_k$ of degree $k$ indecomposables of $\mathcal{M}_Y^*$; then for homomorphisms $\ph:\mathcal{M}_Y^* \to \Omega_\flat^*(X)$ we define the formal dilatation
\[\Dil(\ph)=\max_{2 \leq k \leq \dim X} \lVert\ph|_{V_k}\rVert_{\mathrm{op}}^{1/k},\]
where we use the $L^\infty$ norm on $\Omega_\flat^*(X)$.  Notice that if $f:X \to Y$ is an $L$-Lipschitz map, then $\Dil(f^*m_Y) \leq CL$, where the exact constant depends on the dimension of $X$, the minimal model on $Y$, and the norms.  Thus the dilatation is an algebraic analogue of the Lipschitz constant.

Given a formal homotopy
\[\Phi:\mathcal{M}_Y^* \to \Omega_\flat^*(X \times [0,T]),\]
we can define the dilatation $\Dil_T(\Phi)$ in a similar way.  The subscript indicates that we can always rescale $\Phi$ to spread over a smaller or larger interval, changing the dilatation; this is a formal analogue of defining separate Lipschitz constants in the time and space direction, although in the DGA world they are not so easily separable.

Now we can state some results from \cite{PCDF}.  They are stated in that paper in terms of smooth forms; for the argument that they can be adapted to flat forms, see \cite[\S6]{scal}.
\begin{thm}[{A special case of the shadowing principle, \cite[Thm.~4--1]{PCDF}}] \label{shadow}
  Let $X$ be a Riemannian manifold or simplicial complex of locally bounded geometry, and let $Y$ be a simply connected compact Riemmanian manifold or simplicial complex.  Let $\ph:\mathcal{M}_Y^* \to \Omega_{(\flat)}^*(X)$ be a homomorphism with $\Dil(\ph) \leq L$ which is formally homotopic to $f^*m_Y$ for some $f:X \to Y$.  Then $f$ is homotopic to a $g:X \to Y$ which is $C(X,Y)(L+1)$-Lipschitz and such that $g^*m_Y$ is homotopic to $\ph$ via a homotopy $\Phi$ with $\Dil_{1/L}(\Phi) \leq C(X,Y)(L+1)$.
\end{thm}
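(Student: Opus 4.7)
The plan is to induct up the Postnikov tower of $Y$. Write $Y$ as a sequence of principal fibrations $Y_n \to Y_{n-1}$ with fiber a product of Eilenberg--MacLane spaces $K(\pi_n(Y), n)$; correspondingly the minimal DGA is the nested union of the sub-DGAs $\mathcal{M}_Y^*(n) = \Lambda_{k=2}^n V_k$, with the differential on $V_n$ encoding the $n$th $k$-invariant of $Y$. Given $\ph$ with $\Dil(\ph) \leq L$, I build inductively a map $g_n : X \to Y_n$ together with a formal homotopy $\Phi_n$ from $g_n^* m_{Y_n}$ to $\ph|_{\mathcal{M}_Y^*(n)}$, arranging at each stage that $g_n$ is $O(L)$-Lipschitz and $\Phi_n$ has dilation $O(L)$ on the time interval $[0, 1/L]$. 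The promised $g$ is then $g_n$ for $n = \dim X$, since maps from $X$ into any higher Postnikov stage are determined by that truncation.

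The base case $n = 2$ reduces to the following local question: given a closed $2$-form $\alpha$ on $X$ with $\lVert\alpha\rVert_\infty \leq L^2$, produce a map $X \to K(\ZZ, 2)$ whose pullback is cohomologous to $\alpha$ and which has Lipschitz constant $O(L)$. This is handled by rounding $[\alpha]$ to the integer class specified by $f^*$ (using the hypothesis that $\ph$ is formally homotopic to $f^* m_Y$ to pin down the integer lift), and then realizing the resulting class by a convolution/averaging construction on $X$ whose Lipschitz constant is controlled by the $L^\infty$ norm of the rounded form.

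The inductive step lifts $g_{n-1}$ along the fibration $Y_n \to Y_{n-1}$. Pulling back the $k$-invariant along $g_{n-1}$ yields a class in $H^{n+1}(X; \pi_n(Y))$ whose vanishing is the obstruction to lifting. The formal homotopy $\Phi_{n-1}$ combined with $\ph$ applied to the generators of $V_n$ witnesses the vanishing of this class in real cohomology via an explicit closed $n$-form whose $L^\infty$ norm is controlled by $\Dil(\ph)$ and $\Dil(\Phi_{n-1})$. After rounding to an integer class and realizing it as in the base case, one obtains $g_n$; the formal homotopy $\Phi_n$ extending $\Phi_{n-1}$ is then built algebraically from this data at the DGA level.

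The main obstacle is quantitative: a naive composition of bounds at each Postnikov stage would multiply, rather than uniformly control, the Lipschitz constants. The fix exploits the locally bounded geometry of $X$ to carry out both the rounding and the realization in charts of diameter $\sim 1/L$, so that within each chart all relevant forms are of $O(1)$ size and all constructed maps are $O(L)$-Lipschitz, after which the charts are patched via a quantitative cellular approximation argument. This patching is what dictates the $1/L$ time scale for the final $\Phi$ and absorbs the geometric constant $C(X, Y)$; it is the main technical content of \cite{PCDF}, to which I would defer for the detailed verification.
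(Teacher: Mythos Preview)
The paper does not give its own proof of this theorem: it is stated as a citation of \cite[Thm.~4--1]{PCDF}, with only a sentence of intuition afterward (``one can produce a genuine map by a small formal deformation of $\ph$'').  So there is nothing in the paper to compare your argument against.

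That said, your outline is a faithful sketch of the strategy in \cite{PCDF}: induct up the Postnikov tower of $Y$, at each stage use the dilatation bound on $\ph$ together with the running formal homotopy to produce a controlled obstruction form, round and realize it as a genuine map into the appropriate Eilenberg--MacLane space, and patch on charts of scale $1/L$ to keep the Lipschitz constants from compounding.  You correctly isolate the last point as the heart of the matter and appropriately defer the technical verification to the reference.  Since the paper itself treats the result as a black box, your deferral is consistent with how the statement is used here.
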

In other words, one can produce a genuine map by a small formal deformation of $\ph$.  Note that in the above result, $X$ does not have to be compact.  In fact, the constants depend only on the bounds on the local geometry of $X$.

We also present one relative version of this result:
\begin{thm}[{Cf.~\cite[Thm.~5--7]{PCDF}}] \label{relshadow}
  Let $X$ and $Y$ be finite simplicial complexes or compact Riemannian manifolds, with $Y$ simply connected.  Let $f,g:X \to Y$ be two nullhomotopic $L$-Lipschitz maps and suppose that $f^*m_Y$ and $g^*m_Y$ are formally homotopic via a homotopy $\Phi:\mathcal{M}_Y^* \to \Omega_{(\flat)}^*(X \times [0,T])$ with $\Dil_T(\Phi) \leq L$.  Then there is a $C(X,Y)(L+1)$-Lipschitz homotopy $F:X \times [0,T] \to Y$ between $f$ and $g$.
\end{thm}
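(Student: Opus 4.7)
The plan is to apply the relative shadowing principle of \cite[Thm.~5-7]{PCDF} to the product space $Z = X \times [0,T]$ with boundary subcomplex $A = X \times \{0,T\}$. The input data will be: the $L$-Lipschitz map $f \sqcup g: A \to Y$, whose pullback satisfies $\Phi|_A = (f \sqcup g)^* m_Y$ by definition of ``formal homotopy,'' and the DGA extension $\Phi: \mathcal{M}_Y^* \to \Omega^*_{(\flat)}(Z)$, whose dilatation computed in the product metric on $Z$ is exactly $\Dil_T(\Phi) \leq L$.

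Before invoking the shadowing principle, I would verify the underlying rational realizability: the shadowing principle requires that $\Phi$ be formally homotopic rel $A$ to the pullback of some continuous extension of $f \sqcup g$ to all of $Z$. This is supplied by the hypothesis that $f$ and $g$ are nullhomotopic: concatenating the two nullhomotopies through a basepoint produces a continuous extension $\tilde F: Z \to Y$, and the rational obstruction theory on $Z$ relative to $A$ (as developed in \cite{PCDF}) allows us to arrange, possibly after a homotopy of $\tilde F$ rel $A$, that $\tilde F^* m_Y$ is formally homotopic rel $A$ to $\Phi$.

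With these hypotheses met, the relative shadowing principle outputs a Lipschitz extension $F: Z \to Y$ of $f \sqcup g$, homotopic rel $A$ to $\tilde F$, with
\[\Lip(F) \leq C(Z,Y)\bigl(\Dil_T(\Phi) + 1\bigr).\]
Taking $F$ as the sought homotopy gives the conclusion, provided the constant $C(Z,Y)$ can be replaced by $C(X,Y)$ independent of $T$.

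The main technical point is precisely this independence of the constant from $T$. It is flagged already in the paragraph after Theorem \ref{shadow}: the constants in the shadowing principle depend only on bounds on the local geometry of the source space, not on its diameter or global shape. Since $X \times [0,T]$ has local geometry uniformly controlled by that of $X$ (it is locally isometric to $X \times \RR$, regardless of $T$), extracting the proof of \cite[Thm.~5-7]{PCDF} gives $C(Z,Y) = C(X,Y)$, yielding the claimed bound $\Lip(F) \leq C(X,Y)(L+1)$. This uniform control in $T$ is the only place where the argument goes beyond a direct citation, and it is the step I would expect to require the most care.
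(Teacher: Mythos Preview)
The paper does not prove this theorem itself; it is quoted as a result from \cite[Thm.~5--7]{PCDF}, with only a paragraph of commentary afterward explaining the role of the nullhomotopic hypothesis. Your outline---apply the relative shadowing principle to $(Z,A)=(X\times[0,T],\,X\times\{0,T\})$ and observe that the constants depend only on the local geometry of $X$---is the correct reduction, and the $T$-independence is indeed the point that goes beyond a direct citation.

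There is one subtle inaccuracy in your realizability step. You assert that by homotoping $\tilde F$ rel $A$ one can arrange that $\tilde F^*m_Y$ is formally homotopic rel $A$ to the given $\Phi$. But the map from genuine relative homotopy classes of extensions of $f\sqcup g$ to formal ones is, stage by stage in obstruction theory, the inclusion of the lattice $H^{k+1}(Z,A;\pi_k(Y))$ into the real vector space $H^{k+1}(Z,A;\pi_k(Y)\otimes\RR)$, which is not surjective. The correct move---and this is exactly what the paper's remark ``in the zero homotopy class, one can always remedy this by a small modification'' is pointing at---is instead to modify $\Phi$ by a formal homotopy of dilatation bounded by the covolume of this lattice (a quantity depending only on $H^*(X)$ and $\pi_*(Y)$, hence on $X$ and $Y$ but not on $T$) so that the modified $\Phi'$ lands on a lattice point and is then realizable by some genuine $\tilde F$. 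The nullhomotopic hypothesis is what makes this work cleanly: it guarantees the set of genuine extensions is nonempty and that the bound is uniform; in a nonzero homotopy class the obstructions at successive stages interact and the size of the required modification can depend on the class in an uncontrolled way, which is precisely the warning the paper gives after the statement.
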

It is important for this result that the maps be nullhomotopic, rather than just in the same homotopy class.  This is because we did not require our formal homotopy to be in the relative homotopy class of a genuine homotopy.  In the zero homotopy class, one can always remedy this by a small modification, but in general the minimal size of the modification may depend in an opaque way on the homotopy class.

\subsection{Formal spaces, again} \label{S:formal}

In \S\ref{formal}, we introduced formal spaces as spaces which admit self-maps of a certain type.  However, the original definition comes from rational homotopy theory, and there are a number of other equivalent definitions.  As we will use several of these definitions, we collect a number here, most found in the work of Sullivan \cite[\S12]{SulLong} and Halperin and Stasheff \cite[\S3]{HaSt}.

For any simply connected space $Y$, fix an isomorphism
\[\mathcal M_Y^* \cong \Lambda_{n=2}^\infty V_n.\]
Now we can state some equivalent definitions of formality of $Y$:
\begin{prop} \label{prop:formal}
  The following are equivalent for a simply connected space $Y$.
  \begin{enumerate}[(i)]
    \item \label{forms-qi} The algebra of forms $\Omega^*Y$ is quasi-isomorphic to $H^*(Y;\RR)$.
    \item \label{min-qi} There is a quasi-isomorphism $\mathcal M_Y^* \to H^*(Y;\RR)$.
    \item \label{H* in U_0} The cohomology of $\mathcal M_Y^*$ is a quotient of the subalgebra $W_0 \subseteq \mathcal M_Y^*$ generated by indecomposables with zero differential.  (In other words, a minimal DGA is \emph{non}-formal if and only if it has a cohomology class which has no representative in $W_0$.)
    \item \label{bigrading} There is a (non-canonical) second grading $\mathcal M_Y^*=\bigoplus_i W_i$ such that $H^*(Y;\RR)$ lives in $W_0$ and the differential with respect to the second grading has degree $-1$.  That is:
    \begin{itemize}
        \item $H^*(Y;\RR) \cong W_0/dW_1$.
        \item If $a \in W_i$ and $b \in W_j$, then $ab \in W_{i+j}$.
        \item If $a \in W_i$, then $da \in W_{i-1}$.
    \end{itemize}
    \item \label{aut} The \emph{grading automorphism} $\rho_t:H^*(Y;\RR) \to H^*(Y;\RR)$ sending every $\alpha \in H^n(Y;\RR)$ to $t^n\alpha$ is induced by an automorphism $\hat\rho_t:\mathcal M_Y^* \to \mathcal M_Y^*$.
  \end{enumerate}
\end{prop}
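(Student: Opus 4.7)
The plan is to establish a cycle of implications among (i)--(v), with the main substantive content being (ii) $\Leftrightarrow$ (iv), due to Halperin and Stasheff. The equivalence (i) $\Leftrightarrow$ (ii) is immediate from the minimal-model quasi-isomorphism $m_Y:\mathcal M_Y^* \to \Omega^*Y$: the map in (ii) gives the zigzag in (i) by composition with $m_Y$, and conversely any zigzag from $\Omega^*Y$ to $H^*(Y;\RR)$ lifts to a direct map out of the cofibrant $\mathcal M_Y^*$.

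Given a bigrading as in (iv), I would deduce (iii), (v), and (ii) as follows. For (iii): since $d(W_0) \subseteq W_{-1} = 0$, the bigraded $W_0$ consists of closed elements and is contained in the subalgebra of (iii) generated by closed indecomposables, so the identification $H^*(Y;\RR) = W_0/dW_1$ exhibits $H^*(Y;\RR)$ as a quotient of the $W_0$ of (iii). For (v): define $\hat\rho_t$ on each generator $v \in V_n \cap W_i$ to be multiplication by $t^{n+i}$ and extend multiplicatively; since $d$ increases degree by one and decreases bidegree by one, the weight $n+i$ is preserved, so $\hat\rho_t$ commutes with $d$. For (ii): define $\phi:\mathcal M_Y^* \to H^*(Y;\RR)$ to be the projection $W_0 \twoheadrightarrow W_0/dW_1$ on $W_0$ and zero on the higher $W_i$'s; treating the cases $i = 0, 1, \geq 2$ separately shows $\phi \circ d = 0$, and the identification $H^*(Y;\RR) = W_0/dW_1$ makes $\phi$ a quasi-isomorphism.

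Closing the cycle requires (v) $\Rightarrow$ (iv) and (iii) $\Rightarrow$ (ii). For (v) $\Rightarrow$ (iv): each $V_n$ is finite-dimensional and $\hat\rho_t$ is a polynomial family of DGA automorphisms, so one decomposes $V_n$ into generalized eigenspaces of $\hat\rho_t$; compatibility with the prescribed action on $H^*$ forces the eigenvalues to be of the form $t^{n+i}$, and the $i$th generalized eigenspace defines $V_n \cap W_i$. For (iii) $\Rightarrow$ (ii): assuming every cohomology class is represented by a product of closed indecomposables, one builds a DGA map $\mathcal M_Y^* \to H^*(Y;\RR)$ by induction on degree, sending each closed indecomposable to its cohomology class and each non-closed indecomposable to zero; the hypothesis (iii) ensures that the obstruction to $\phi \circ d = 0$ at each step vanishes.

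The main obstacle is (ii) $\Rightarrow$ (iv), the construction of a bigraded model from a quasi-isomorphism $\phi:\mathcal M_Y^* \to H^*(Y;\RR)$. I would follow Halperin and Stasheff: build the bigrading by induction on the degree $n$, at each stage splitting $V_n = \bigoplus_i (V_n \cap W_i)$ so that the differential acquires bidegree $-1$ and $\phi|_{V_n \cap W_0}$ captures the contribution of $V_n$ to cohomology, in a manner compatible with the bigrading already built on $V_{<n}$. The relevant obstruction at each step lies in a cohomology group of the partially constructed bigraded model and vanishes because $\phi$ is a quasi-isomorphism. The bookkeeping is delicate, but the essential point is that the quasi-isomorphism provides a canonical (up to homotopy) splitting of the indecomposables into those carrying cohomology and those whose differentials cancel exact cycles. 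For the complete argument I would refer to \cite{HaSt}.
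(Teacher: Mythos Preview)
Your overall strategy matches the paper's: both establish the equivalences by a web of implications anchored in the Halperin--Stasheff bigraded model and Sullivan's analysis of the grading automorphism, and both treat (i)$\Leftrightarrow$(ii) and (iv)$\Rightarrow$(ii),(iii),(v) in the same way. The paper routes the remaining implications as (ii)$\Rightarrow$(iii)$\Rightarrow$(iv) with explicit inductive constructions, whereas you route (ii)$\Rightarrow$(iv) by deferring to \cite{HaSt} and attempt (iii)$\Rightarrow$(ii) directly; for (v)$\Rightarrow$(iv) the paper passes to the semisimple part of $\hat\rho_t$ via an Iwasawa-type decomposition before reading off eigenspaces, while your generalized-eigenspace argument achieves the same bigrading once one checks the eigenvalues are $t^{n+i}$.

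There is, however, a genuine gap in your (iii)$\Rightarrow$(ii). The map ``closed indecomposables $\mapsto$ their class, non-closed indecomposables $\mapsto 0$'' depends on a choice of complement to $\ker(d|_{V_n})$ inside each $V_n$, and for an arbitrary complement there is no reason the obstruction $\phi(dv)$ vanishes: writing $dv=P+Q$ with $P\in W_0$ and $Q$ in the ideal generated by non-closed indecomposables, one has $\phi(dv)=[P]$, and hypothesis (iii) only tells you $[Q]$ has \emph{some} $W_0$ representative, not that $[Q]=0$. What is true is that a \emph{suitable} complement exists---namely the one coming from the bigrading---but producing it is exactly the content of the paper's (iii)$\Rightarrow$(iv), which inductively splits $V_k$ into pieces $V_k\cap W_i$ so that $d$ acquires bidegree $-1$. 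Once that is done, the projection to $W_0/dW_1$ is your map $\phi$ and is automatically a DGA map. So your (iii)$\Rightarrow$(ii) should be replaced by, or pass through, the paper's (iii)$\Rightarrow$(iv).
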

The arguments proving the equivalence of \ref{min-qi}--\ref{aut} do not depend on the ground field used for the DGAs.  Moreover, Sullivan \cite[Thm.~12.1]{SulLong} shows that the definitions of formality with respect to any ground field $\FF \supseteq \QQ$ are equivalent.  More generally, without reference to spaces, we can say a DGA is \emph{formal} if it is quasi-isomorphic to its cohomology ring.
\begin{proof}[Proof sketch and remarks]
  \ref{forms-qi}$\iff$\ref{min-qi}. Since the minimal model $\mathcal M_Y^* \to \Omega^*(Y)$ is a quasi-isomorphism, one is quasi-isomorphic to $H^*(Y;\RR)$ if and only if the other is.  Moreover, it is a property of minimal DGAs that if $\mathcal M_Y^*$ is quasi-isomorphic to another DGA $\mathcal A$, then there is in fact a quasi-isomorphism $\mathcal M_Y^* \to \mathcal A$.

  It follows from \ref{min-qi} that, while many rational homotopy types may have the same cohomology ring, exactly one of these is formal, and its minimal DGA can be constructed ``formally'' from the cohomology ring: at stage $k$, one adds indecomposables in degree $k$ that kill the relative $(k+1)$st cohomology of the map $\mu_{k-1}:\mathcal{M}_Y^*(k-1) \to H^*(Y;\RR)$ and extends $\mu_{k-1}$ to a map $\mu_k:\mathcal{M}_Y^*(k) \to H^*(Y;\RR)$.  This is the genesis of the term ``formal''.

  Using this construction, one inductively proves that \ref{min-qi}$\Rightarrow$\ref{H* in U_0}, by showing that for each $\mathcal{M}_Y^*(k)$ $W_0$ contains cycles representing the cohomology through dimension $k$.  For $\mathcal{M}_Y^*(2)$ this is clearly true since $\mathcal{M}_Y^*(2) \subseteq W_0$.  Now suppose we have a map $\mu_{k-1}:\mathcal{M}_Y^*(k-1) \to H^*(Y;\RR)$.  By induction, the map
  \[(\mu_{k-1})_*:H^{k+1}(\mathcal M_Y^*(k-1)) \to H^{k+1}(Y;\RR)\]
  has image in the subring generated by $H^{\leq k-1}(Y;\RR)$, and therefore we can pick preimages in $W_0$.  The rest of $H^{k+1}(\mathcal{M}_Y^*(k-1))$ is killed by differentials of elements of $V_k$.  On the other hand, the cokernel of
  \[(\mu_{k-1})_*:H^k(\mathcal M_Y^*(k-1)) \to H^k(Y;\RR) \cong H^k(\mathcal M_Y^*(k))\]
  is spanned by elements of $V_k$ with zero differential, which are also in $W_0$.  Together, these span $H^k(\mathcal M_Y^*)$.

  \ref{H* in U_0}$\Rightarrow$\ref{bigrading} is also proved by induction on dimension of indecomposables.  Suppose that we have defined the bigrading on $\mathcal M_Y^*(k-1)$.  By induction, the space of $(k+1)$-cycles in $\mathcal M_Y^*(k-1)$ splits as a direct sum of subspaces $Z_i \subseteq W_i$, since differentials of terms in $W_i$ can only cancel out with those of others in $W_i$.  Moreover, all of $\bigoplus_{i \geq 1} Z_i$ must be in the image of the differential on $V_k$.  This allows us to split $V_k$ as a direct sum of elements of various $W_i$, $i \geq 0$, so as to ensure $dW_i \subseteq W_{i-1}$.  We assign $\ker d \subseteq V_k$ to $W_0$.

  \ref{bigrading}$\Rightarrow$\ref{min-qi}.  If a bigrading as in \ref{bigrading} exists, then $\mathcal M_Y^* \to W_0/dW_1$ is a quotient map of DGAs.

  \ref{bigrading}$\Rightarrow$\ref{aut}.  We can define $\hat\rho_t(a)=t^{n+i}a$ for every $a \in W_i \cap V_n$.

  \ref{aut}$\Rightarrow$\ref{bigrading} (see also \cite[Thm.~12.7]{SulLong}).  This argument requires some information about the automorphism group of $\mathcal M_Y^*$, which is a linear algebraic subgroup of the group $\bigoplus_n \Aut(V_n)$.  Taking the ``diagonal part'' in the Iwasawa decomposition of $\hat\rho_t$, we get another automorphism, also inducing the map $\rho_t$ on cohomology, which has a basis of eigenvectors in each of the $V_n$.  An inductive argument then shows that these eigenvalues are of the form $t^{n+i}$, and setting $W_i \cap V_n$ to be the eigenspace for the eigenvalue $t^{n+i}$ gives a bigrading as in \ref{bigrading}.
\end{proof}

Now we connect these definitions to that in the previous section.  If $Y$ is a finite complex and \ref{aut} is satisfied with $\mathbb Q$ coefficients, then any family of lifts $\hat\rho_t$ can be realized by genuine maps:
\begin{thm}[{\cite[Theorem A]{PWSM}}] \label{thm:PWSM}
  Let $Y$ be a formal, simply connected finite CW complex and let $\hat\rho_t:\mathcal M_Y \to \mathcal M_Y$ be the map
  \[\hat\rho_t(w)=t^{n+i}w,\qquad w \in W_i \cap V_n,\]
  for some bigrading $\mathcal M_Y=\bigoplus_i W_i$.  Then there is an integer $t_0 \geq 1$ such that for every $z \in \mathbb{Z}$, there is a genuine map $\arr_z:Y \to Y$ whose rationalization is $\hat\rho_{zt_0}$.
\end{thm}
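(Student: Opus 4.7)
The plan is to realize $\hat\rho_t$ as a genuine self-map of $Y$ by working first in the rational homotopy category and then descending to $Y$ itself via obstruction theory, paying for this descent by replacing $t$ with an appropriate multiple $zt_0$.

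First, by Sullivan's equivalence between rational homotopy types of simply connected spaces and isomorphism classes of minimal DGAs, the algebra automorphism $\hat\rho_t:\mathcal M_Y \to \mathcal M_Y$ corresponds, for each integer $t$, to a homotopy class of self-maps $\tilde\rho_t:Y_\QQ \to Y_\QQ$ of the rationalization. Composing with the rationalization map $r:Y \to Y_\QQ$ yields a map $Y \to Y_\QQ$ whose induced map on $\mathcal M_Y$ agrees with $\hat\rho_t$ up to homotopy; the remaining task is to lift this map through $r$ to a self-map $Y \to Y$.

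I would carry out this lift by induction along the Postnikov tower $\{Y_n\}$ of $Y$. Given a self-map $f_{n-1}:Y_{n-1} \to Y_{n-1}$ already constructed to realize $\hat\rho_t$ on $\mathcal M_Y(n-1)$, the obstruction to extending across the $n$th stage is the discrepancy between $f_{n-1}^*(k_n)$ and the class corresponding to $\hat\rho_t^*(k_n)$, viewed in $H^{n+1}(Y_{n-1};\pi_n(Y))$. Over $\QQ$ this discrepancy vanishes by the very definition of $\hat\rho_t$, so the obstruction is a torsion class in $H^{n+1}(Y_{n-1};\pi_n(Y)_{\mathrm{tors}})$. Since $Y$ is a finite complex, only finitely many Postnikov stages up through dimension $\dim Y$ matter, and the relevant torsion groups have bounded exponent.

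The crucial integrality step is to choose $t_0$ so that the action of $\hat\rho_{zt_0}$ kills all of these torsion obstructions simultaneously for every $z \in \ZZ$. Since $\hat\rho_t$ acts on each relevant cohomology group as a polynomial in $t$ with integer coefficients (determined by the eigenvalues $t^{n+i}$ on the bigrading components), taking $t_0$ divisible by the exponents of all torsion groups arising at every stage of the recursion ensures the obstructions vanish whenever $t_0 \mid t$. The compatible family of lifts $Y \to Y_n$ for $n \geq \dim Y$ then produces a genuine self-map $Y \to Y$, using the fact that $Y \to Y_{\dim Y}$ is a $(\dim Y + 1)$-equivalence and a cellular approximation argument on $Y$.

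The main obstacle is ensuring that the inductive lifts fit together compatibly, since the choice of $f_{n-1}$ affects the form of the obstruction at stage $n$, and we must guarantee that a single $t_0$ works for every $z$ simultaneously rather than having to enlarge $t_0$ as $z$ varies. This is handled by exploiting the naturality of the $\hat\rho_t$-action through the tower: because the obstruction at each stage is a polynomial in $t$ with bounded-torsion coefficient groups, divisibility by $t_0$ propagates from stage to stage, and a finite bookkeeping argument shows that one fixed $t_0$ governs all the torsion exponents that appear throughout the recursion.
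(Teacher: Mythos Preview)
The paper does not prove this theorem: it is quoted verbatim as \cite[Theorem A]{PWSM} and used as a black box. So there is no ``paper's own proof'' to compare against.

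That said, your outline is the standard obstruction-theoretic argument and is in the right spirit. A couple of points of imprecision are worth flagging. First, the induction should be set up with maps $Y \to Y_n$ (or equivalently, lifting $Y \to Y_\QQ$ through the Postnikov tower of $Y$), not with self-maps $Y_{n-1} \to Y_{n-1}$; the obstruction then lives in $H^{n+1}(Y;\pi_n(Y))$, where $Y$ being a finite complex guarantees this group is finitely generated and hence that a rationally trivial class is genuinely torsion. If you work with $Y_{n-1}$ as domain, its cohomology in high degrees need not be finitely generated, and the passage ``rationally zero $\Rightarrow$ torsion'' can fail. Second, the claim that a single $t_0$ handles all $z$ simultaneously is the real content of the theorem, and your final paragraph gestures at it without quite pinning it down: the point is that the obstruction at stage $n$, viewed as a function of $t$, is a polynomial in $t$ whose coefficients lie in a fixed finite group (independent of $t$), so once $t_0$ annihilates those coefficient groups, every multiple $zt_0$ does too. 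Making this precise requires tracking how the choice of lift at stage $n-1$ can itself be made polynomial in $t$, which is where the one-parameter structure of $\hat\rho_t$ is genuinely used.
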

The same paper also gives a stronger version of Proposition \ref{back-and-forth}:
\begin{thm}[{\cite[Theorem B]{PWSM}}] \label{thm:PWSM-B}
  Let $Y$ be a formal, simply connected finite CW complex and let $\hat\rho_t:\mathcal M_Y \to \mathcal M_Y$ be the map
  \[\hat\rho_t(w)=t^{n+i}w,\qquad w \in W_i \cap V_n,\]
  for some bigrading $\mathcal M_Y=\bigoplus_i W_i$.  Suppose that $Z$ is another simply connected complex and $f:Z \to Y$ is a map inducing an isomorphism on rational cohomology.  Then for some $p$, there is a map $g:Y \to Z$ such that the rationalization of $f \circ g$ is $\hat\rho_p$.
\end{thm}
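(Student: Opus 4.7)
The plan is to reduce the statement to Theorem~A by constructing $g$ so that $f \circ g$ is homotopic to a scaling self-map $\arr_q$ of $Y$. Since $f$ induces an isomorphism on rational cohomology into the formal space $Y$, Proposition~\ref{back-and-forth} makes $Z$ formal as well, and the rationalization of $f$ is modeled by a quasi-isomorphism of minimal DGAs $\rho\colon \mathcal{M}_Y \to \mathcal{M}_Z$. A quasi-isomorphism between minimal DGAs is an isomorphism, so $\rho$ has an inverse $\rho^{-1}\colon \mathcal{M}_Z \to \mathcal{M}_Y$. I would also transport the given bigrading on $\mathcal{M}_Y$ through $\rho$, so that the grading automorphisms on the two sides correspond.

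By Theorem~A, fix $t_0$ such that for every positive integer $s$ the automorphism $\hat\rho_{st_0}$ of $\mathcal{M}_Y$ is the rationalization of a genuine self-map $\arr_{st_0}\colon Y \to Y$. For each $q = st_0$, form the DGA map
\[
\psi_q \;:=\; \hat\rho_q \circ \rho^{-1}\;\colon\; \mathcal{M}_Z \longrightarrow \mathcal{M}_Y.
\]
If some $\psi_q$ is the rationalization of a genuine map $g\colon Y \to Z$, then $f \circ g$ rationalizes to $\psi_q \circ \rho = \hat\rho_q$, which is the desired conclusion. The entire task therefore reduces to realizing $\psi_q$ by a genuine map for a suitably divisible $q$.

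This realization is a classical Postnikov-tower lifting problem. I would construct $g$ inductively up the Postnikov tower $\{Z_n\}$ of $Z$, at each stage extending a partial lift $Y \to Z_{n-1}$ compatible with $\psi_q$ to one landing in $Z_n$. The obstruction at the $n$-th stage lies in $H^{n+1}(Y;\pi_n Z)$, and the rational compatibility supplied by $\psi_q$ forces it to be torsion. The quantitative input is that $\hat\rho_q$ multiplies every element of $V_n \cap W_i$ by $q^{n+i}$, a strictly positive power of $q$; hence $\psi_q$ scales each indecomposable of $\mathcal{M}_Z$ by such a factor, so that replacing $q$ by $Nq$ multiplies each torsion obstruction by a positive power of $N$. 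A sufficiently divisible $q$ thus simultaneously clears the denominators hidden in $\rho^{-1}$ and kills every finite-order obstruction. The main obstacle is guaranteeing that a single $q$ works across all Postnikov stages; this succeeds because $Y$ is finite-dimensional, so only finitely many stages contribute non-trivially, and each relevant obstruction is a finitely generated torsion group, which is annihilated once $q$ is divisible by its exponent.
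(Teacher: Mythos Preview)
The paper does not prove this statement; it is quoted verbatim as Theorem~B of the cited reference \cite{PWSM}, so there is no in-paper proof to compare against. Your task is therefore to reprove an external result, and your outline is the standard one: invert the rationalization of $f$ (using that a quasi-isomorphism of minimal Sullivan DGAs is an isomorphism), compose with the scaling automorphism to get a target DGA map $\psi_q\colon \mathcal{M}_Z \to \mathcal{M}_Y$, and then realize $\psi_q$ by a genuine map $Y \to Z$ via Postnikov obstruction theory, using divisibility of $q$ to kill finitely many torsion obstructions. This is indeed the shape of the argument in \cite{PWSM} and its predecessors (e.g.\ \cite{Papa}).

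One point deserves more care than you give it. You assert that ``replacing $q$ by $Nq$ multiplies each torsion obstruction by a positive power of $N$,'' but the obstruction at stage $n$ depends on the lift chosen at all earlier stages, and changing $q$ changes those earlier lifts as well. The clean way to organize this is to fix once and for all a rationalization $Z \to Z_{(0)}$, observe that $\psi_{Nq} = \hat\rho_N \circ \psi_q$ corresponds on spaces to precomposing the rational map $Y \to Z_{(0)}$ with a self-map of $Y$ realizing $\hat\rho_N$ (which exists by Theorem~\ref{thm:PWSM}), and then note that this precomposition acts on the obstruction class in $H^{n+1}(Y;\pi_n(\mathrm{fiber}))$ by the pullback of that self-map. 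Since that self-map acts on $H^{n+1}(Y;\ZZ)$ by multiplication by $N^{n+1}$ up to torsion, and the obstruction itself has finite order, a sufficiently divisible $N$ annihilates it. The inductive bookkeeping---ensuring the lifts at earlier stages are chosen compatibly so that this naturality holds---is where the actual work lies, and your sketch elides it. This is not a fatal gap, but it is the substance of the proof rather than a routine detail.
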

We then get the following upgraded statement of Theorem \ref{self-maps}:
\begin{thm} \label{self-maps+}
  Let $Y$ be a formal, simply connected finite CW complex whose rational homology is nontrivial in $d$ positive degrees, and let $\hat\rho_t:\mathcal M_Y \to \mathcal M_Y$ be the map
  \[\hat\rho_t(w)=t^{n+i}w,\qquad w \in W_i \cap V_n,\]
  for some bigrading $\mathcal M_Y=\bigoplus_i W_i$.  Then there is a constant $C>0$, depending on the choice of $\hat\rho_t$ as well as $Y$, such that for every homotopy class in $[Y,Y]$ whose rationalization is $\hat\rho_t$ there is a $(Ct(\log t)^{d-1}+C)$-Lipschitz representative $f:Y \to Y$.
\end{thm}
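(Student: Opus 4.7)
The plan is to combine the iterative construction of Theorem \ref{self-maps} (i.e., Lemma \ref{detailed}) with the algebraic existence results from Theorems \ref{thm:PWSM} and \ref{thm:PWSM-B} to handle arbitrary realizable $t$, not just the sparse family $t=ap^\ell$ of Theorem \ref{self-maps}.

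First I would pass to a cellular model. By Proposition \ref{cells} there is a simply connected finite CW complex $Z$ satisfying the hypotheses of Lemma \ref{detailed} and a rational equivalence $\phi:Z\to Y$. Corollary \ref{CWRiem} equips $Z$ with a nearly Euclidean metric, and Proposition \ref{he-to-lhe} lets me assume $\phi$ is Lipschitz; Theorem \ref{thm:PWSM-B} provides a Lipschitz $\psi:Y\to Z$ such that $\phi\circ\psi$ realizes $\hat\rho_p$ for some fixed integer $p$. Since $Y$ and $Z$ have the same real homotopy type with a matching bigrading, constructing efficient self-maps $\arr_t:Z\to Z$ realizing $\hat\rho_t$ yields, via $\phi\circ\arr_t\circ\psi$, efficient representatives of the target classes on $Y$, with a bounded Lipschitz blowup and a shift of the index $t$ absorbed into the constants.

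Next I would invoke Theorem \ref{thm:PWSM} to obtain an integer $t_0\geq 2$ (enlarging if necessary) together with base self-maps $\arr_z:Z\to Z$ realizing $\hat\rho_{zt_0}$ for each $z\in\mathbb{Z}$. Applying Lemma \ref{detailed} to the base map $\arr_1$ produces, for each $\ell$, a self-map $R_\ell:Z\to Z$ realizing $\hat\rho_{t_0^\ell}$ with Lipschitz constant $O(t_0^\ell\ell^{d-1})$. The central technical step is to generalize Lemma \ref{detailed} to accept variable base maps at each iteration: given a sequence $A_1,\ldots,A_\ell$ of cellular self-maps of $Z$ realizing $\hat\rho_{q_1},\ldots,\hat\rho_{q_\ell}$ (each $q_i\geq 2$) whose Lipschitz constants are uniformly bounded by some constant $K$, the same inductive argument---placing a $q_{i+1}^n$-grid of rescaled copies of the previous construction inside each $n$-cell at step $i+1$, surrounded by a shell of width $\sim 1/\ell$ in which a homotopy produced by the skeletal induction interpolates between the inner and outer boundary behaviors---yields a self-map $B_\ell$ realizing $\hat\rho_{q_1\cdots q_\ell}$ with Lipschitz constant $O(K\cdot q_1\cdots q_\ell\cdot\ell^{d-1})$.

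To conclude, given any realizable $t$, I would factor $t=t_0^{\ell'}\cdot z_1\cdots z_k$ with the $z_i$ drawn from a bounded range and $\ell'+k=O(\log t)$, and apply the generalized iteration with base maps from the uniformly bounded finite set $\{\arr_1,\arr_{z_1},\ldots\}$ produced by Theorem \ref{thm:PWSM}. This delivers an $O(t(\log t)^{d-1})$-Lipschitz representative; small $t$ (for which the asymptotic bound is meaningless) are covered by the additive $+C$. The main obstacle is the generalized lemma itself: one must verify that the shell construction and the ``equalization'' step from Lemma \ref{detailed} survive variable base maps and still deliver the $\ell^{d-1}$ polylog factor with the uniform Lipschitz bound on the $A_i$ entering only as an overall multiplicative constant. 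A secondary subtlety is the factorization step when $t$ has a large prime factor outside the bounded range; here I would either permit a single ``large'' base map whose Lipschitz constant is controlled by an inner induction on the size of $t$, or arrange the reduction to $Z$ so that realizable $t$ always decompose into bounded pieces times powers of $t_0$.
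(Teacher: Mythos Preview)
Your proposal takes a harder route than necessary, and the hard step (factoring an arbitrary realizable $t$ into bounded pieces) does not go through as written.

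The paper's proof is much shorter: it does \emph{not} generalize the iterative construction of Lemma \ref{detailed} at all. Instead, it uses Theorems \ref{thm:PWSM} and \ref{thm:PWSM-B} only to pin down the rationalization of the maps in Theorem \ref{self-maps}, yielding the sparse family $f_q$ with $q=ap^\ell$ and $\Lip f_q=O(q(\log q)^{d-1})$, each realizing $\hat\rho_q$. Then for an arbitrary $t$ it picks the unique $q=ap^\ell$ with $ap^{\ell-1}\le t<q$, so $q\le pt$, and applies the shadowing principle (Theorem \ref{shadow}) to the algebraic map
\[
f_q^*m_Y\circ\hat\rho_{t/q}:\mathcal M_Y\to\Omega^*Y,
\]
which is formally homotopic to $r_t^*m_Y$ for any genuine $r_t$ realizing $\hat\rho_t$. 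Since $t/q\le 1$, this map has dilatation $\lesssim \Lip f_q$, and shadowing produces a genuine representative in the homotopy class of $r_t$ with Lipschitz constant $O(\Lip f_q)=O(t(\log t)^{d-1})$. No factorization of $t$ and no variable-base-map version of Lemma \ref{detailed} is needed.

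The gap in your approach is exactly the one you flag: if $t/t_0$ has a large prime factor $P$, it cannot be written as a product of $O(\log t)$ factors from a fixed bounded range, and your two suggested fixes are circular. Allowing a single ``large'' base map $\arr_P$ from Theorem \ref{thm:PWSM} gives no a priori Lipschitz control on $\arr_P$---controlling it is precisely the content of the theorem. And there is no way to arrange the reduction to $Z$ so that every realizable $t$ factors into bounded pieces, since by Theorem \ref{thm:PWSM} every integer multiple of $t_0$ is realizable. The shadowing principle is what lets the paper ``interpolate'' between the sparse values $ap^\ell$ and an arbitrary $t$ without any such factorization; this is the missing idea in your outline.
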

\begin{proof}
  Using Theorems \ref{thm:PWSM} and \ref{thm:PWSM-B}, we obtain topological control over the maps $f$, $g$, and $\arr_p$ used in the proof of Theorem \ref{self-maps}.  Then we see that there are $a$ and $p$ such that for every $q=ap^\ell$ there is a $C_0(q(\log q)^{d-1}+1)$-Lipschitz map $f_q:Y \to Y$ whose rationalization is $\hat\rho_q$, where $C_0$ depends on the family $\hat\rho_t$.
  
  Now suppose that $r_t:Y \to Y$ is some map whose rationalization is $\hat\rho_t$, and let $m_Y:\mathcal M_Y \to \Omega^*Y$ be a minimal model of $Y$.  Let $q=ap^\ell$ satisfy $ap^{\ell-1} \leq t<ap^\ell$.  Then the map
  \[f_q^*m_Y\hat\rho_{t/q}:\mathcal M_Y \to \Omega^*Y\]
  is algebraically homotopic to $r_t^*m_Y$.  Notice also that, with an appropriate norm on indecomposables, the operator norm of $\hat\rho_{t/q}$ is $t/q$.  Therefore, by the shadowing principle, there is an $C_1(Y)((t/q)\Lip f_q+1)$-Lipschitz map in the homotopy class of $r_t$.  Then we are done because
  \[\Lip f_q \leq C_0(pt\log(pt)^{d-1}+1). \qedhere\]
\end{proof}



\section{A finite criterion for scalability} \label{S:optconj}

In this section we prove Theorems \ref{optconj} and \ref{elliptic}.  In \cite{scal}, it was shown that the following conditions are equivalent for a finite simplicial complex or compact manifold $Y$ which is formal and simply connected:
\begin{enumerate}[(i)]
\item There is a homomorphism $H^*(Y) \to \Omega_\flat^*Y$ of differential graded algebras which sends each cohomology class to a representative of that class.  Here $\Omega_\flat^*Y$ denotes the flat forms, an algebra of not-necessarily-smooth differential forms studied by Whitney.
\item There is a constant $C(Y)$ and infinitely many $p \in \mathbb{N}$ such that there is a $C(Y)(p+1)$-Lipschitz self-map which induces multiplication by $p^n$ on $H^n(Y;\mathbb{R})$.
\item For all finite simplicial complexes $X$, nullhomotopic $L$-Lipschitz maps $X \to Y$ have $C(X,Y)(L+1)$-Lipschitz nullhomotopies.
\item For all $n<\dim Y$, homotopic $L$-Lipschitz maps $S^n \to Y$ have $C(Y)(L+1)$-Lipschitz homotopies.
\end{enumerate}
Spaces satisfying these conditions are called \emph{scalable}.  Now we will show:
\begin{thm} \label{thm:finite}
  The following condition is also equivalent to those above:
  \begin{enumerate}[(i),resume]
  \item For some $n_1,\ldots,n_N$, there is an injective $\RR$-algebra homomorphism
    \[h:H^*(Y;\mathbb R) \to \bigoplus_{i=1}^N \Lambda^*\RR^{n_i}.\]
  \end{enumerate}
  If $Y$ is a closed $n$-manifold (or, more generally, satisfies Poincar\'e duality over the reals), the following conditions are also equivalent to those above:
  \begin{enumerate}[(i),resume]
  \item[(v$' $)] There is an injective $\mathbb R$-algebra homomorphism $h:H^*(Y;\mathbb R) \to \Lambda^*\mathbb R^n$.
  \item There is a $1$-Lipschitz map $f:\mathbb R^n \to M$ of positive asymptotic degree.
  \end{enumerate}
\end{thm}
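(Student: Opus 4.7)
From the results in \cite{scal} recalled in the introduction, $Y$ is scalable if and only if there is a DGA homomorphism $\phi:H^*(Y;\RR) \to \Omega^*_\flat Y$ realizing each cohomology class as a representative form; call this property $(\star)$. The plan is to close the loops $(\star) \Leftrightarrow (v)$ in general and $(\star) \Leftrightarrow (v') \Leftrightarrow (vi)$ under Poincar\'e duality.

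For $(\star) \Rightarrow (v)$ and $(v')$: given $\phi$, at a Lebesgue point $y$ one obtains an algebra homomorphism $\ev_y\phi:H^*(Y;\RR) \to \Lambda^*T_y^*Y \cong \Lambda^*\RR^n$. Since $\phi(\alpha)$ is nonzero as a form whenever $\alpha \neq 0$, a descending-kernel induction on the finite-dimensional $H^*(Y;\RR)$ produces finitely many $y_1,\ldots,y_N$ whose combined evaluation is injective, yielding (v). Under Poincar\'e duality, a single $y$ with $\ev_y\phi(\alpha_{\mathrm{top}}) \neq 0$ suffices, such $y$ existing since $\int_Y\phi(\alpha_{\mathrm{top}}) \neq 0$: for nonzero $\alpha \in \ker(\ev_y\phi)$, PD produces $\beta$ with $\alpha\beta = c\alpha_{\mathrm{top}}$, $c \neq 0$, and multiplicativity gives the contradiction $0 = \ev_y\phi(\alpha)\cdot\ev_y\phi(\beta) = c\,\ev_y\phi(\alpha_{\mathrm{top}}) \neq 0$.

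For $(v') \Rightarrow (vi)$: view $\Lambda^*\RR^n$ as the subalgebra of constant forms in $\Omega^*_\flat\RR^n$. Composing $h$ with the formality quasi-isomorphism of Proposition \ref{prop:formal}(ii) gives a DGA map $\psi:\mathcal M_Y^* \to \Omega^*_\flat\RR^n$ of bounded dilatation. The shadowing principle (Theorem \ref{shadow}), applied on $\RR^n$ which has uniformly bounded local geometry, then yields a Lipschitz $f:\RR^n \to Y$ with $f^*m_Y$ formally homotopic to $\psi$; rescaling makes $f$ be $1$-Lipschitz. The formal homotopy writes $f^*d\vol_Y = c\,dx_1\wedge\cdots\wedge dx_n + d\tilde\eta$ with $\tilde\eta$ bounded in $L^\infty$ and $c \neq 0$, so $\int_{B_R}f^*d\vol_Y = c\vol(B_R) + O(R^{n-1})$, giving positive asymptotic degree. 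Conversely, for $(vi) \Rightarrow (v')$, for each $\alpha$ represented by a smooth form $\omega$ define $h(\alpha) \in \Lambda^*\RR^n$ as the ultralimit (over a fixed non-principal ultrafilter on $\mathbb N$, to ensure existence) of the constant-form ball-averages of $f^*\omega$ over $B_R$; well-definedness modulo exact forms follows from Stokes and the uniform boundedness of $f^*\omega$, positive asymptotic degree makes $h(\alpha_{\mathrm{top}}) \neq 0$, and PD then forces $h$ injective.

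For $(vi) \Rightarrow (\star)$, and more generally $(v) \Rightarrow (\star)$: in the PD case, I concatenate a Lipschitz collapse $M \to B_\epsilon(p) \cong B_1$ (of constant Lipschitz constant $\sim 1/\epsilon$) with a scaling $B_1 \to B_L$ and with $f|_{B_L}:B_L \to M$ to build $O(L)$-Lipschitz self-maps of degree $\Omega(L^n)$, giving scalability directly. The main obstacle of the whole proof is the general, possibly non-PD, multi-summand case: each $h_i$ gives a map $f_i:\RR^{n_i} \to Y$ by shadowing, but weaving these into a single DGA homomorphism $\phi:H^*(Y;\RR) \to \Omega^*_\flat Y$ with the correct cohomological behavior requires a delicate patching argument via the relative shadowing principle (Theorem \ref{relshadow}), with carefully chosen cutoffs designed to preserve multiplicativity on the overlaps where more than one summand contributes.
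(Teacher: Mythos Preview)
Your forward implications $(\star)\Rightarrow(v),(v')$ and $(v')\Rightarrow(vi)$ are correct and match the paper. The reverse directions have real gaps. For $(vi)\Rightarrow(v')$, ball-averaging is not multiplicative: the average of $f^*\omega_1\wedge f^*\omega_2$ over $B_R$ is not the wedge of the two averages, so your $h$ is only a linear map, not a ring homomorphism, and the Poincar\'e-duality injectivity argument does not apply. The paper instead rescales via $f_t(x)=f(tx)$ and passes to a flat limit of $t^{-d_j}f_t^*\omega_j$ along a subsequence; pullback respects products exactly, and the defining relations $R_r$ vanish in the limit because their primitives scale like $t^{-1}$. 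Evaluating the limit forms at a generic point then gives the injective homomorphism. Your $(vi)\Rightarrow(\star)$ ``collapse'' construction is likewise incomplete: to promote $f|_{B_L}$ to a self-map of $M$ you must extend over $M\setminus B_\epsilon(p)$, equivalently cap off $f|_{\partial B_L}:S^{n-1}\to M$, and Lipschitz control on that extension is exactly condition~(iii), which you are trying to prove.

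The central gap is $(v)\Rightarrow(\star)$ in the general case. ``Patching with cutoffs designed to preserve multiplicativity'' is not a viable plan: multiplying closed forms by cutoffs destroys closedness, and a cutoff-weighted combination of the $h_i$ has no reason to be multiplicative on overlaps. The paper's argument is entirely different and is where most of the work lies. It proves $(v)\Rightarrow(ii)$ directly by constructing self-maps of a rationally equivalent CW complex $Z$ (whose cells biject with a homology basis) skeleton by skeleton. Assuming inductively that $Z^{(n-1)}$ is scalable, for each $n$-cell one restricts $h$ to suitable $n$-planes $V_j\subset\RR^{n_{i_j}}$ to obtain homomorphisms $h_j:H^*(Z;\RR)\to\Lambda^*V_j$ that collectively detect $H^n$, composes with the formality quasi-isomorphism, and applies shadowing to produce $O(p^\ell)$-Lipschitz maps $[0,1]^n\to Z$ carrying prescribed $n$-homology classes up to an $O(p^{\ell(n-1)})$ error. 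These pieces, together with an explicit error-correcting map and a boundary homotopy supplied by the scalability of $Z^{(n-1)}$, are then glued as genuine maps---not as forms---to extend the self-map over the $n$-cells.
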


\begin{rmk}
  Condition (v$' $) can also be thought of as saying that there is an injective homomorphism $H^*(Y;\mathbb R) \to H^*(T^n;\mathbb R)$.  When is this homomorphism induced by a genuine map $T^n \to Y$ of positive degree?  A necessary condition is that the homomorphism can also be realized over the rationals.  In fact, this condition is also sufficient.  A homomorphism $H^*(Y; \mathbb Q) \to H^*(T^n;\mathbb Q)$ lifts (non-uniquely) to a homomorphism of minimal models.  By \cite[Proposition 3.1]{Papa}, after composing with a self-map $\mathcal M_Y \to \mathcal M_Y$ that induces multiplication by $p^n$ on $H^n(Y;\mathbb Q)$ for some $p$, this homomorphism becomes the rationalization of a genuine map $T^n \to Y$.
  
  This does not always happen.  For example, take the real Poincar\'e duality space
  \[Y=(S^2 \times S^2)/(x,*) \sim (*,x) \mathbin{\#} 2\mathbb CP^2 \mathbin{\#} 3 \overline{\mathbb CP^2},\]
  where $*$ is a basepoint.  The cup product $H^2(Y) \times H^2(Y) \to H^4(Y)$ is the quadratic form $\langle 2,1,1,-1,-1,-1 \rangle$, which has discriminant $-2$, and therefore is not rationally equivalent to the quadratic form induced by the cup product $H^2(T^4) \times H^2(T^4) \to H^4(T^4)$.  However, $Y$ is scalable, since over the reals, the two quadratic forms are equivalent.
  
  To get a manifold counterexample, embed $Y$ in $\mathbb R^{10}$ and let $M$ be the boundary of a thickening $W$ of this embedding.  Using Alexander duality and the Mayer--Vietoris sequence, we see that the injection $M \to W$ induces an isomorphism
  \[H^*(Y) \cong H^*(W) \xrightarrow{\simeq} H^{\leq 4}(M),\]
  and the classes in $H_{\geq 5}(M)$ are Poincar\'e duals of those coming from $W$.  This determines the rational and hence the real cohomology ring: $H^*(M;\RR) \cong H^*(Y;\RR) \times \RR\langle h^5 \rangle$, where $h^5$ is Poincar\'e dual to the fundamental class of $Y$.  Clearly this embeds in $\Lambda^*\RR^9$.  A somewhat more subtle argument shows:
  \begin{prop}
      $M$ is formal.
  \end{prop}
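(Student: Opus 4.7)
The plan is to show that $M$ has the real homotopy type of the product $Y \times S^5$, from which formality follows since both factors are formal and formality is preserved under products.

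First, I would establish formality of $Y$. The summand $Y_0 = (S^2 \times S^2)/((x,*) \sim (*,x))$ has a CW structure with one cell in each of dimensions $0$, $2$, $4$, with the $4$-cell attached along the Whitehead product $[a,a] = 2\eta \in \pi_3(S^2)$, where $a \in \pi_2(S^2)$ is the identity and $\eta$ is the Hopf map; hence its real cohomology ring is $\RR[x]/(x^3)$ with $|x|=2$. Over $\RR$, the intersection form of $Y$, namely $\langle 2,1,1,-1,-1,-1\rangle$, is equivalent to $\langle 1,1,1,-1,-1,-1\rangle$ (since $2 \in (\RR^\times)^2$), which is the intersection form of the simply-connected $4$-manifold $N = 3\mathbb{CP}^2 \mathbin{\#} 3\overline{\mathbb{CP}^2}$. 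Since $Y$ is a simply-connected CW complex with real cohomology concentrated in degrees $\leq 4$, every $k$-fold Massey product of degree-$\geq 2$ classes (with $k \geq 3$) lands in degree $\geq k+2 \geq 5$, where $H^*(Y;\RR)$ vanishes; thus all higher operations on $H^*(Y;\RR)$ vanish trivially, and $Y$ is formal, real-homotopy equivalent to the formal $4$-manifold $N$.

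Second, I would realize $M$ as the total space of a spherical fibration over $Y$. Since $Y$ embeds in $\RR^{10}$ in the stable codimension range (codimension $6 > \dim Y + 1$), after replacing $Y$ with a PL-equivalent simplicial complex if necessary, the restriction $p \colon M \to Y$ of the deformation retraction $W \to Y$ is a spherical fibration with fiber $S^5$, namely the Spivak normal fibration of $Y$ for this embedding. The rational Euler class of $p$ lies in $H^6(Y;\RR) = 0$ and so vanishes automatically. For an odd-dimensional spherical fibration with vanishing rational Euler class, the Sullivan minimal model of the total space splits as $\mathcal{M}_M \simeq \mathcal{M}_Y \otimes \Lambda(\alpha)$ with $|\alpha| = 5$ and $d\alpha = 0$; this is precisely the minimal model of $Y \times S^5$. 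Hence $M \simeq_\RR Y \times S^5$, a product of formal spaces, so $M$ is formal.

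The main obstacle is the second step: since $Y$ is not a manifold, $p \colon M \to Y$ is not literally a sphere bundle, only the Spivak normal fibration of a simply-connected Poincar\'e duality CW complex. The required inputs are the well-definedness of this fibration in the stable codimension range and the degeneration of the rational Serre spectral sequence for odd-dimensional spherical fibrations with vanishing rational Euler class; both are standard facts of rational homotopy theory, but deserve a careful citation or verification in the non-manifold setting.
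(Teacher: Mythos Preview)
Your approach is different from the paper's, and there is a genuine gap. You invoke the Spivak normal fibration of $Y$, but the Spivak theorem requires $Y$ to be an \emph{integral} Poincar\'e complex, and $Y$ is not one: its intersection form $\langle 2,1,1,-1,-1,-1\rangle$ has determinant $-2$, so cap product with the fundamental class is not an isomorphism $H^2(Y;\ZZ)\to H_2(Y;\ZZ)$. (This non-unimodularity is precisely the point of the example---it is what obstructs an injection $H^*(Y;\QQ)\hookrightarrow H^*(T^4;\QQ)$.) The paper only calls $Y$ a ``real Poincar\'e duality space.'' Consequently, it is not automatic that the homotopy fiber of $M\to Y$ is a sphere, even rationally; your last paragraph flags that something needs checking here but misidentifies the issue as ``$Y$ is not a manifold'' rather than ``$Y$ fails integral Poincar\'e duality.'' To repair the argument you would need either a rational analogue of Spivak's theorem for $\QQ$-Poincar\'e complexes, or a direct computation showing the homotopy fiber of $M\to Y$ is rationally $S^5$; neither is supplied.

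By contrast, the paper's proof bypasses the fibration question entirely. It uses the formality criterion \ref{H* in U_0} of Proposition~\ref{prop:formal}: it suffices that the algebra generators of $H^*(M;\RR)$ (six classes in degree $2$ and one class $h^5$ in degree $5$) are represented by indecomposables with zero differential, which follows once the rational Hurewicz map $\pi_k(M)\otimes\QQ\to H_k(M;\QQ)$ is surjective for $k=2,5$. Degree $2$ is immediate; degree $5$ comes from Lefschetz duality $H_i(W,M)\cong H^{10-i}(Y)=0$ for $i\le 5$, the relative Hurewicz isomorphism $\pi_6(W,M)\cong H_6(W,M)$, and the long exact sequence of the pair. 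This argument uses only that $W$ is a manifold with boundary, not any fibration structure of $M\to Y$. If your route can be completed, it would yield the stronger conclusion $M\simeq_\RR Y\times S^5$; but as written, the key step is unjustified.
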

  \begin{proof}
    We will show that the minimal model of $M$ satisfies condition \ref{H* in U_0} of Proposition \ref{prop:formal}.  As an algebra, $H^*(M;\RR)$ is generated by six $2$-dimensional classes and the $5$-dimensional class $h^5$.  It suffices to show that these classes lie in $W_0$.  To do this, we show that the Hurewicz map $\pi_k(M) \otimes \QQ \to H_k(M;\QQ)$ is surjective for $k=2$ and $5$.  After dualizing, this means that all elements of $H^k(M;\QQ)$ are represented by indecomposables in $V_k$.

    For $k=2$, this is true by the Hurewicz theorem.  For $k=5$, we apply the relative Hurewicz theorem for the pair $(W,M)$.  By the long exact sequence of a pair, $H_i(W,M) \cong 0$ for $i \leq 5$, and so $H_6(W,M) \cong \pi_6(W,M)$.  Then from the commutative diagram of exact sequences
    \[\xymatrix{
      \pi_6(W,M) \ar[r] \ar[d]^{\cong} & \pi_5(M) \ar[r] \ar[d] & \pi_5(W) \ar[d] \\
      H_6(W,M) \ar@{->>}[r] & H_5(M) \ar[r] & 0,
    }\]
    it is evident that the Hurewicz map $\pi_5(M) \to H_5(M)$ is surjective.
  \end{proof}
  Therefore $M$ is scalable.  On the other hand, if there were a injection $H^*(M;\mathbb Q) \to H^*(T^9;\mathbb Q)$, this would induce an injection $H^*(Y) \to H^*(T^4;\mathbb Q)$, which we already showed cannot exist.
  
  Thus one can distinguish a class of ``rationally scalable'' manifolds within the larger class of scalable spaces.  It would be interesting to know what other properties distinguish these two classes.
\end{rmk}

\begin{proof}[Proof of Theorem \ref{thm:finite}]
We will prove that (i) implies (v) for all simply connected finite complexes (which is straightforward) and that (v) implies (ii) for all simply connected finite complexes (which is an application of the shadowing principle).  We will also show that for scalable closed $n$-manifolds, (v) implies (v$' $); the converse is obvious.  Then we will show that scalable closed $n$-manifolds satisfy (vi) and, conversely, (vi) implies (v$' $) for any closed $n$-manifold.

To see that (i) implies (v), choose a basis $u_1,\ldots,u_N$ for $H^*(Y;\QQ)$ and let $\omega_1,\ldots,\omega_N$ be the corresponding flat differential forms.  Then for each $i$, there is a set of positive measure on which $\omega_i \neq 0$.  Since the homomorphism $H^*(Y) \to \Omega_\flat^*(Y)$ is multiplicative almost everywhere, we can choose a point $x_i \in Y$ such that $u_j \mapsto \omega_j|_{x_i}$ is a homomorphism $h_i:H^*(Y;\RR) \to \Lambda^*\RR^{n_i}$ such that $h_i(u_i) \neq 0$.  Then we can take 
\[h=(h_1,\ldots,h_N):H^*(Y;\mathbb R) \to \bigoplus_{i=1}^N \Lambda^*\mathbb R^{n_i}.\]

If Poincar\'e duality is satisfied, then (v) implies (v$' $) since we can project $h$ to some $\Lambda^*\mathbb R^{n_i}$ under which the image of the fundamental class is nonzero.  This projection is still injective.

Now we prove that if $Y$ is a closed $n$-manifold, then (v$'$) implies (vi), in part as a warmup for the more elaborate proof that (v) implies (ii).  Since $Y$ is formal, there is a quasi-isomorphism $\ph:\mathcal M_Y^* \to H^*(Y;\RR)$.  Composing this with the homomorphism $h:H^*(Y;\RR) \to \Lambda^*\RR^n$, we get a homomorphism
\[\eta:\mathcal M_Y^* \to \Omega^*\RR^n, \qquad \eta|_x=h \circ \ph\text{ for all }x \in \RR^n,\]
whose image consists of constant forms, and such that the image of the fundamental class $\omega_{[Y]}$ is (perhaps after rescaling) the volume form.  Since $\RR^n$ is contractible and has locally bounded geometry, we can apply the shadowing principle to $\eta$ to produce a Lipschitz map $f:\RR^n \to Y$ which is related to $\eta$ by a formal homotopy
\[\Phi:\mathcal M_Y^* \to \Omega^*(\RR^n \times [0,1])\]
such that $\Dil(\Phi)<\infty$.

The pullback map on forms induced by $f$ should be thought of as looking \emph{on average} like $\eta$.  Geometrically, $f$ can be built so that $\RR^n$ is tiled (periodically or aperiodically) by homeomorphic preimages of an open dense subset of $Y$.  From a Fourier point of view, $f$ has a large constant term and the rest of the nonzero terms are at very high frequency.  Intuitively, such a map must have positive asymptotic degree.  To show this formally, we apply Stokes' theorem to the form $\Phi(\omega_{[Y]})$ on $B_R(0) \times [0,1]$, getting
\[\int_{B_R(0)} f^*d\vol=\int_{B_R(0)} \eta(\omega_{[Y]})-\int_{\partial B_R(0) \times [0,1]} \Phi(\omega_{[Y]})=\vol(B_R(0))+O(R^{n-1}).\]
We can turn $f$ into a $1$-Lipschitz map of positive asymptotic degree by rescaling.

Now we will prove that (v) implies (ii).  We prove this by constructing maps skeleton-by-skeleton.  When we extend to $n$-cells, we do it by piecing together ``almost constant'' maps from $\RR^n$, like the map $f$ in the previous two paragraphs.

Suppose $Y$ satisfies (v) (and therefore so does any complex in its rational homotopy class).  By Proposition \ref{cells} we may replace $Y$ with a rationally equivalent complex $Z$ whose rational cellular chain complex has zero differential; in other words, the cells of $Z$ form a basis for $H_*(Z;\mathbb R)$.  We equip $Z$ with a nearly Euclidean metric.  Theorem \ref{thm:PWSM-B} implies that to show that $Y$ satisfies (ii), it suffices to show that $Z$ does.

Fix a second grading $\mathcal M_Z^* \cong \bigoplus_i W_i$ as in Proposition \ref{prop:formal}\ref{bigrading}.  We get a quasi-isomorphism $\ph:\mathcal M_Z^* \to H^*(Z;\mathbb R)$ by projecting to $W_0/dW_1$, and an automorphism $\rho_t:\mathcal M_Z^* \to \mathcal M_Z^*$ which takes $w \in W_i \cap V_n$ to $t^{n+i}w$; then $\ph \circ \rho_t=t^{\deg}\ph$.  Moreover, by Theorem \ref{thm:PWSM}, for some $p>1$ there is a genuine self-map $\arr_p:Z \to Z$ whose rationalization is $\rho_p$, and in particular
induces multiplication by $p^n$ on $H^n(Z;\mathbb R)$.

We will show that $Z$ satisfies (ii) by induction on skeleta.  From (i), it follows that skeleta of scalable spaces are scalable.  Conversely, we will show that if $Z$ is an $n$-complex satisfying (v) and $Z^{(n-1)}$ is scalable, then so is $Z$.  We first show that if $Z^{(n-1)}$ is scalable, then for every $\ell>0$, the iterate $(\arr_p)^\ell|_{Z^{(n-1)}}$ is homotopic to an $O(p^\ell)$-Lipschitz map.  Moreover, for each $n$-cell, condition (v) lets us build an $O(p^\ell)$-Lipschitz map from $[0,1]^n$ to $Z$ whose degree over that cell is $p^{\ell n}$.  We construct self-maps of $Z$ satisfying (ii) by patching these together; this shows that $Z$ is also scalable.

Now we give the details.  Let $\mathbf Z$ and its submanifold $\mathbf Z^{(n-1)}$ be compact Riemannian manifolds with boundary homotopy equivalent to $Z$ and $Z^{(n-1)}$.  Let $\ph:\mathcal M_Z^* \to H^*(Z; \RR)$ be a quasi-isomorphism, which exists since $Z$ is formal, and let $i_{n-1}:Z^{(n-1)} \to Z$ be the inclusion map.  

Suppose, by induction, that $Z^{(n-1)}$ is scalable.  By condition (i), there is an injective homomorphism $H^*(Z^{(n-1)}; \RR) \to \Omega_\flat^*(\mathbf Z^{(n-1)})$ which sends each class to a representative; composing with $i_{n-1}^*\ph$ gives a map $\mathcal M_Z^* \to \Omega_\flat^*(\mathbf Z^{(n-1)})$, and by a Poincar\'e lemma argument this extends to a minimal model $m_{Z}:\mathcal M_Z^* \to \Omega_\flat^*(\mathbf Z)$ whose projection to $\Omega_\flat^*(\mathbf Z^{(n-1)})$ factors through $\ph$.  Then $(\arr_p^\ell)^*m_{Z}$ is formally homotopic to $m_{Z}\rho_p^\ell$.  By the shadowing principle \ref{shadow} and the Lipschitz homotopy equivalence between $\mathbf Z$ and $Z$, this lets us homotope $\arr_p^\ell$ to a map $\arr_{p^\ell,n-1}:Z \to Z$ which is $O(p^\ell)$-Lipschitz on $Z^{(n-1)}$.

Now we explain how to extend this map to the $n$-cells.  Let $\iota_1,\ldots,\iota_r:[0,1]^{n} \to Z$ be the inclusion maps of the $n$-cells of $Z$, and let $a_1,\ldots,a_r \in H_n(Z)$ be the corresponding homology classes.  Recall that we are assuming that there is an injective homomorphism $h:H^*(Z;\RR) \to \bigoplus_{i=1}^N \Lambda^*\RR^{n_i}$.  Since for every $i$, $\Lambda^n\RR^{n_i}$ is spanned by simple tensors, we can choose $n$-dimensional subspaces
\[V_1 \subseteq \RR^{n_{i_1}},\ldots,V_r \subset \RR^{n_{i_r}}\]
such that the projections
\[h_j=h|_{V_j}:H^*(Z; \RR) \to \Lambda^*V_j\]
collectively distinguish all elements of $H^n(Z; \RR)$.  Each $h_j|_{H^n(Z;\RR)} \in \Hom(H^n(Z;\RR),\RR)$ can be identified with a $b_j \in H_n(Z;\RR)$, and we can find coefficients $x_{ij}$ such that
\[a_i=\sum_{j=1}^r x_{ij}b_j.\]

For each $j=1,\ldots,r$ and $c \in \RR$, consider the map $\eta_{c,j}:\mathcal M_Z^* \to \Omega^*([0,1]^n)$ where, for every $x \in [0,1]^n$,
\[\eta_{c,j}(a)|_{T_xI^n}=c^k h_j \circ \ph(a),\qquad a \in \mathcal M_Z^k.\]
Applying the shadowing principle, we get an $O(c)$-Lipschitz map $f_{c,j}:[0,1]^n \to Z$ such that $f^*_{c,j}\ph$ is related to $\eta_{c,j}$ by a formal homotopy
\[\Phi:\mathcal M_Z^* \to \Omega^*([0,1]^{n} \times [0,c^{-1}])\]
satisfying $\Dil(\Phi)=O(c)$.  We can moreover assume without loss of generality that $f_{c,j}$ sends $\partial [0,1]^n$ to $Z^{(n-1)}$.  If $f_{c,j}$ does not have this property, it has a short homotopy to a map that does, by the following lemma:
\begin{lem}
    Let $L \geq 1$ and let $f:[0,L]^n \to X$ be an $1$-Lipschitz map to an $n$-dimensional CW complex $X$ with a nearly Euclidean metric.  Then there is a $C(X)$-Lipschitz homotopy
    \[H:[0,L]^n \times [0,1] \to X\]
    between $f$ and a map which sends $\partial [0,L]^n$ to $X^{(n-1)}$.
\end{lem}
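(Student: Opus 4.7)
The plan is to modify $f$ only on a thin collar of $\partial[0,L]^n$ while leaving the rest of $[0,L]^n$ unchanged.  The argument proceeds in two stages: first, construct a Lipschitz homotopy $H_\partial$ on $\partial[0,L]^n$ pushing the image of $f|_{\partial[0,L]^n}$ into $X^{(n-1)}$; second, interpolate $H_\partial$ across a collar.

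For the first stage, the key input is a quantitative cellular approximation theorem of the following shape: any $1$-Lipschitz map from a compact $(n-1)$-dimensional cubical complex $Y$ to $X$ admits a $C(X)$-Lipschitz homotopy through $X$ to a map with image in $X^{(n-1)}$.  I would prove this using the same inductive framework as Proposition \ref{htpy-to-lip}.  Fix $\delta = \delta(X) > 0$ such that every closed $\delta$-ball in $X$ lies in a $K(X)$-Lipschitz contractible neighborhood, and subdivide $Y$ cubically with mesh at most $\delta/(10\sqrt{n})$.  Then build $\tilde f_\partial$ and the homotopy $H_\partial$ skeleton by skeleton: on a $k$-simplex $\sigma$ of the subdivision, given that the homotopy has already been defined on $\partial\sigma$ and carries $f|_{\partial\sigma}$ into $X^{(n-1)}$, extend to $\sigma$.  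The extension uses local Lipschitz contractibility within each $n$-cell $e$ of $X$ whose closure contains $f(\sigma)$, followed by a radial projection from a point in the interior of $e$ chosen locally to miss the image $f(\sigma)$ by a definite margin.  This is possible because $f(\sigma)$ is an $(n-1)$-dimensional subset of an $n$-cell and hence has Hausdorff $n$-measure zero; the resulting projection is Lipschitz with constant depending only on $X$.

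For the second stage, let $C_1 = C_1(X)$ be the Lipschitz constant of $H_\partial$ from the first stage, and set $\epsilon = 1/(2C_1)$.  Parametrize the collar $N = \{x \in [0,L]^n : d(x,\partial[0,L]^n) \leq \epsilon\}$ as $\partial[0,L]^n \times [0,\epsilon]$, with $s=0$ corresponding to $\partial[0,L]^n$.  Define $H:[0,L]^n \times [0,1] \to X$ by $H(x,t) = f(x)$ outside $N$, and for $(\theta,s) \in N$,
\[
H((\theta,s),t) = H_\partial\bigl(\theta,\, t(1 - s/\epsilon)\bigr),
\]
together with a short connecting path from $f(\theta,\epsilon)$ to $f(\theta,0) = H_\partial(\theta,0)$ in the tube $\{\theta\} \times [0,\epsilon]$ to patch the inner boundary of the collar (this path has length at most $\epsilon$ since $f$ is $1$-Lipschitz).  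All partial derivatives of $H$ in the $\theta$, $s$, and $t$ directions are bounded by some $C(X)$, and at $t=1$, $s=0$ the map takes values in $X^{(n-1)}$ as required.

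The main obstacle is the quantitative cellular approximation in the first stage.  The topological existence of a cellular approximation is standard, but controlling its Lipschitz constant independently of $L$ requires processing one small simplex at a time and choosing the radial projection center per simplex to miss the image of $f$ on that simplex by a margin depending only on $X$.  A uniform bound is possible because both the simplex's image and the $n$-cell containing it have bounded geometry, while adjacent simplices are glued consistently via the inductive hypothesis on lower skeleta, so that the choices of projection centers on neighboring pieces are reconciled by an additional short Lipschitz homotopy within $X^{(n-1)}$.
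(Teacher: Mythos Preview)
Your two-stage outline (first push $f|_{\partial[0,L]^n}$ into $X^{(n-1)}$ by a controlled homotopy, then extend over a collar) matches the paper's structure exactly.  The difference is in how the boundary homotopy is produced.  The paper does not use radial projection.  Instead it triangulates $\partial[0,L]^n$ at a fixed small scale, fixes once and for all an embedded $\Delta^n$ in each $n$-cell whose interior contains the inner half-ball, and then linearly homotopes every vertex landing deep in a cell to the nearest vertex of that $\Delta^n$, extending affinely over simplices (and staying constant on the part already near $X^{(n-1)}$).  Since the domain simplices are at most $(n-1)$-dimensional, their affine images lie in proper faces of $\Delta^n$ and hence in the collar $D^n\setminus B_{1/2}(0)$; a straight-line retraction then finishes the job.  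The advantage is that the targets are a \emph{fixed finite set} per cell, so the homotopy is globally defined at once and there is no simplex-by-simplex reconciliation to perform.

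Your radial-projection route can be made to work, but as written it has a real gap at exactly the point you flag as ``the main obstacle.''  Choosing a center $p_\sigma$ per simplex at definite distance from $f(\sigma)$ is fine (though note that ``$\mathcal H^n$-measure zero'' is not enough for a quantitative margin---you need the bounded $(n-1)$-content of $f(\sigma)$).  The problem is the reconciliation: two centers $p_1,p_2$ in the same cell, each at definite distance from a common boundary point $x$, can radially project $x$ to points on $\partial e$ that are essentially antipodal, so the ``additional short Lipschitz homotopy within $X^{(n-1)}$'' you invoke is neither short nor obviously $C(X)$-Lipschitz.  Fixing this requires either a Federer--Fleming-style iterated projection through a fine cubical subdivision of each cell, or abandoning radial projection in favor of something with globally consistent targets---which is precisely what the paper's simplicial-approximation step provides.

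A minor point: your collar formula $H((\theta,s),t)=H_\partial(\theta,t(1-s/\epsilon))$ does not equal $f$ at $t=0$ inside the collar (it gives $f(\theta,0)$ rather than $f(\theta,s)$), so the homotopy does not start at $f$.  The paper sidesteps this by simply pulling back along a Lipschitz retraction $[0,L]^n\times[0,1]\to \partial[0,L]^n\times[0,1]\cup[0,L]^n\times\{0\}$, which is the cleanest way to pass from the boundary homotopy to the full one.
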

Applying the lemma to $f_{c,j}$, we get a map with the desired property.   We modify $\Phi$ by appending the pullback map induced by the homotopy given by the lemma.
\begin{proof}
    Note first that it suffices to construct the homotopy on $\partial [0,L]^n \times [0,1]$.  Then it can be extended to $[0,L]^n \times [0,1]$ by pulling back along a projection map
    \[[0,L]^n \times [0,1] \to \partial [0,L]^n \times [0,1] \cup [0,L]^n \times \{0\}.\]

    Recall that we can write $X=X^{(n-1)} \cup_{\partial_i} \bigcup_i D^n$ where $\partial_i:S^n_i \to X^{(n-1)}$ are Lipschitz attaching maps, and the metric on $X$ is the quotient metric under this identification.

    The homotopy $\partial [0,L]^n \times [0,1] \to X$ will have two steps.  In the first step we homotope $f|_{\partial [0,L]^n}$ into a collar neighborhood of $Z^{(n-1)}$, namely
    \[X^{(n-1)} \cup_{\partial_i} \bigcup_i D^n \setminus B_{1/2}(0),\]
    while keeping the map $C(X)$-Lipschitz.  In the second step we retract from this collar down into $Z^{(n-1)}$ via a straight-line homotopy, which is $C(X)$-Lipschitz by definition.

    To perform the first step, we first fix a $C(n)$-Lipschitz embedding of $\Delta^n$ in $B_1(0)$ such that the interior of the image contains $B_{1/2}(0)$.  This induces an embedding $\iota_i:\Delta^n \to Z$ for every $n$-cell.  Now we triangulate $\partial [0,L]^n$ using simplices uniformly bilipschitz to the standard simplex such that the diameter of each simplex is at most $1/8$.  Then we choose the homotopy in the first step as follows:
    \begin{itemize}
        \item Vertices whose image lies in $B_{5/8}(0)$ inside the $i$th cell are homotoped linearly to the nearest vertex of $\iota_i(\Delta^n)$.  This homotopy extends on the subcomplex spanned by these vertices (which was originally mapped to $B_{3/4}(0)$ inside the $i$th $n$-cell) to a linear homotopy to a simplicial map to $\iota_i(\Delta^n)$.
        \item The homotopy is constant on the subcomplex spanned by vertices whose image lies outside $\bigcup_i B_{5/8}(0)$.  Note that the image of this subcomplex lies outside $\bigcup_i B_{1/2}(0)$.
        \item On simplices that include vertices from both subcomplexes, we extend the homotopy by interpolating linearly on the join.
    \end{itemize}
    This homotopy is again $C(X)$-Lipschitz.
\end{proof}

Since $f_{c,j}$ maps the boundary of the cube to $Z^{(n-1)}$, it makes sense to discuss the homology class of $f_{c,j}$ in $Z$, which we write $a(f_{c,j}) \in H_{n}(Z;\RR)$.  By Stokes' theorem, for any cohomology class $u \in H^{n}(Z;\RR)$,
\[u(a(f_{c,j}))=\int_{[0,1]^{n} \times \{0\}} \Phi(u)+\int_{\partial [0,1]^{n} \times [0,c^{-1}]} \Phi(u)=c^n h_j(u)+O(c^{n-1}).\]
In other words, $a(f_{c,j})=c^n b_j+O(c^{n-1})$, and therefore
\[p^{\ell n} a_i=\sum_{j=1}^r a(f_{p^\ell x_{ij}^{1/n},j}).\]
We will construct an extension of $r_{p^\ell,n-1}$ to the $i$th cell by patching together $f_{p^\ell x_{ij}^{1/n},j}$ for each $j$ together with an ``error-correcting'' map which gets rid of the $O(c^{n-1})$ error term in the homology class and a homotopy which connects the map on the boundary of the cube to $r_{p^\ell,n-1} \circ \iota_i$.

We first build the error-correcting map.  For each $i=1,\ldots,r$, fix a map
\[g_i:([0,1]^n,\partial [0,1]^n) \to (Z,Z^{(n-1)})\]
which maps to the $i$th $n$-cell with degree 1 and sends all but one of the faces of $[0,1]^n$ to a basepoint $p_0$.  Splitting $[0,1]^{n-1} \times [0,p^{-\ell}]$ into an $(n-1)$-dimensional grid of subdomains, $O(p^\ell)$ to a side, we build an $O(p^\ell)$-Lipschitz map
\[f_{\text{error}}:[0,1]^{n-1} \times [0,p^{-\ell}] \to Z\]
by mapping each subdomain to $Z$ via the appropriate $g_i$ (and mapping any leftover subdomains via a constant map to $p_0$) so that the induced homology class is the sum of the error terms of each $f_{p^\ell x_{ij}^{1/n},j}$.

Now let $g:[0,1]^{n} \to \bigvee_{r+1} [0,1]^{n}$ be an $\text{const}(r)$-Lipschitz map whose relative degree over each cube is $1$.  Then the map
\[\tilde f=(f_{p^\ell x_{i1}^{1/n},1} \vee \cdots \vee f_{p^\ell x_{ir}^{1/n},r} \vee f_{\text{error}}) \circ g:[0,1]^n \to Z.\]
is in the homotopy class of $p^{\ell n}[\iota_i] \in \pi_n(Z,Z^{(n-1)})$.  Since $\tilde f$ and $r_{p^\ell,n-1} \circ \iota_i$ are in the same class in $\pi_n(Z,Z^{(n-1)})$, their restrictions to the boundary are in the same class in $\pi_{n-1}(Z^{(n-1)})$.

Therefore, since $Z^{(n-1)}$ is scalable, using condition (iii) we can construct an $O(p^\ell)$-Lipschitz homotopy in $Z^{(n-1)}$ between $\tilde f|_{\partial [0,1]^n}$ and $\arr_{p^\ell,n-1} \circ \iota_i|_{\partial [0,1]^n}$.  We then extend $\arr_{p^\ell,n-1}|_{Z^{(n-1)}}$ to our $n$-cell in an $O(p^\ell)$-Lipschitz way using this homotopy on the outer part of the cell and $\tilde f$ on the inner part.

After we do this for every $n$-cell, we get an $O(p^\ell)$-Lipschitz map $Z \to Z$ that induces the right action on homology.  Although this map may not be homotopic to $\arr_p^\ell|_{Z}$, this is sufficient to prove condition (ii) and therefore the inductive step.

Now we argue that (vi) implies (v$' $).  One way to see this is by a direct application of Theorem \ref{balldegbound}, which shows that (vi) implies (v$' $) for any closed $n$-manifold, as well as giving a quantitative result describing how fast the degree goes to $0$ asymptotically if (v$'$) is not satisfied.

We can also use a softer, less technical argument related to Lemma \ref{topvsrel}.  Suppose there is a $1$-Lipschitz map $f:\RR^n \to Y$ of positive asymptotic degree.  Let $u_j \in H^{d_j}(Y;\RR)$ be a set of generators for the cohomology algebra of $Y$.  Suppose that the relations of the cohomology algebra are given by $R_r(u_1,\ldots,u_J)=0$, where $R_r$ is a homogeneous polynomial of graded degree $D_r$ in the free exterior algebra $\Lambda(u_1,\ldots,u_J)$.  Define forms $\omega_j \in \Omega^{d_j}(Y)$ representing the $u_j$ and $\alpha_r \in \Omega^{D_r-1}(Y)$ such that $d\alpha_r=R_r(\omega_1,\ldots,\omega_J)$.

For every $t>0$ define $f_t(x)=f(tx)$; this is a $t$-Lipschitz map.  Now we consider forms
\[\omega_{j,t}=\frac{f_t^*\omega_j}{t^{d_j}}, \qquad \alpha_{r,t}=\frac{f_t^*\alpha_r}{t^{D_r}}.\]
Since pulling back along a $t$-Lipschitz map multiplies the infinity-norm of a $k$-form by at most $t^k$, we have
\[\lVert \omega_{j,t} \rVert_\infty \leq 1, \qquad \lVert \alpha_{r,t} \rVert_\infty \leq 1/t.\]
By definition of positive asymptotic degree, there is an $\epsi>0$ and a sequence of $t \to \infty$ such that $\int_{B_1(\RR^n)} f_t^*d\vol_M \geq \epsi$.  By the Arzel\`a--Ascoli theorem, this sequence has a subsequence $t_1,t_2,\ldots \to \infty$ for which the $\omega_{j,t_k}$ converge in the flat norm; we have
\[\lim_{k \to \infty} \omega_{j,t_k}=\omega_{j,\infty} \in \Omega^{d_j}_\flat(\RR^n), \qquad \lim_{k \to \infty} \alpha_{r,t_k}=0.\]
This implies that the ring homomorphism $\Lambda(u_1,\ldots,u_J) \to \Omega^*_\flat(\RR^n)$ defined by $wu_j \mapsto \omega_{j,\infty}$ passes to a well-defined map on the quotient ring by the relations $R_r$, giving a ring homomorphism
\[\ph_\infty:H^*(M;\RR) \to \Omega^*_\flat(\RR^n).\]
Moreover, flat convergence implies that
\[\int_{B_1(\RR^n)} \ph_\infty(d\vol_M) \geq \epsi.\]
In particular, $\ph_\infty(d\vol_M)$ is nonzero on some set of positive measure.  While flat forms are not well-defined pointwise, they are well-defined up to a measure zero set, so we can choose representatives and then choose a point in this set of positive measure where these representatives actually restrict to a ring homomorphism
\[H^*(M;\RR) \to \Lambda^*\RR^n.\]
This homomorphism sends the fundamental class to a nonzero element, so by Poincar\'e duality it is injective.
%
\end{proof}

\section{Efficient nullhomotopies} \label{S:homotopies}

Now we prove Theorem \ref{main:homotopies}, which we restate here:

\begin{thm} \label{homotopies}
  Let $Y$ be a finite formal CW complex with a piecewise Riemannian metric and Lipschitz attaching maps such that $H_n(Y;\mathbb{Q})$ is nonzero for $d$ different values of $n>0$.  Then for any finite simplicial complex $X$, any nullhomotopic $L$-Lipschitz map $f:X \to Y$ is $O(L(\log L)^{d-1})$-Lipschitz nullhomotopic.
\end{thm}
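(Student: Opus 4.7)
The plan is to factor $f$, up to homotopy, as $\arr_{p^\ell}\circ g$, where $\arr_{p^\ell}:Y\to Y$ is the efficient self-map supplied by Theorem~\ref{self-maps+} for $p^\ell\sim L$ (so $\Lip(\arr_{p^\ell})\lesssim L(\log L)^{d-1}$), and $g:X\to Y$ is a bounded-Lipschitz nullhomotopic map.  A bounded nullhomotopy of $g$ is then amplified through $\arr_{p^\ell}$ to a nullhomotopy of $\arr_{p^\ell}\circ g$, and concatenated with a short homotopy from $f$ to $\arr_{p^\ell}\circ g$ produced by the relative shadowing principle.

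First, fix a bigrading $\mathcal{M}_Y=\bigoplus_iW_i$ witnessing formality and let $\hat\rho_s$ be its associated automorphisms ($w\in V_k\cap W_i\mapsto s^{k+i}w$); choose $\ell$ with $p^\ell\in[L/2p,L]$.  Consider the formal map $\ph=f^{*}m_Y\circ\hat\rho_{p^{-\ell}}:\mathcal{M}_Y\to\Omega^{*}X$.  Because $\hat\rho_{p^{-\ell}}$ scales $V_k\cap W_i$ by $p^{-\ell(k+i)}$ and $\Dil(f^{*}m_Y)\lesssim L$, maximizing over $i\ge 0$ gives $\Dil(\ph)\lesssim L/p^\ell=O(1)$.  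Since $f$ is nullhomotopic, so is $\ph$ algebraically, so Theorem~\ref{shadow} applied with the constant map as reference yields a nullhomotopic $O(1)$-Lipschitz $g:X\to Y$ together with a formal homotopy $\Psi$ of dilatation $O(1)$ from $g^{*}m_Y$ to $\ph$.

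Next, I produce an $O(1)$-Lipschitz nullhomotopy of $g$.  Concatenating $\Psi$ with the degenerating family $s\mapsto\ph\circ\hat\rho_s=f^{*}m_Y\circ\hat\rho_{p^{-\ell}s}$, $s\in[0,1]$, gives a formal nullhomotopy of $g^{*}m_Y$ whose leading term on $V_k\cap W_i$ is bounded by $(p^{-\ell}s)^{k+i}L^k\le O(1)^k$, so the whole thing has dilatation $O(1)$.  Theorem~\ref{relshadow} applied to $g$ and the constant map therefore supplies a $C(X,Y)$-Lipschitz nullhomotopy $H_g:X\times[0,1]\to Y$.  The composition $\arr_{p^\ell}\circ H_g$ is then a nullhomotopy of $\arr_{p^\ell}\circ g$ with Lipschitz constant $\lesssim\Lip(\arr_{p^\ell})\cdot\Lip(H_g)=O(L(\log L)^{d-1})$.

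Finally, I connect $f$ to $\arr_{p^\ell}\circ g$ by a short homotopy.  The formal homotopy $\Psi\circ\hat\rho_{p^\ell}$ relates $(\arr_{p^\ell}\circ g)^{*}m_Y=g^{*}m_Y\circ\hat\rho_{p^\ell}$ to $f^{*}m_Y$; because the bigrading on each $V_k$ takes only finitely many values depending only on $Y$, its dilatation is $\lesssim L(\log L)^{d-1}$, and Theorem~\ref{relshadow} yields a genuine $O(L(\log L)^{d-1})$-Lipschitz homotopy between these two nullhomotopic maps.  Concatenating it with $\arr_{p^\ell}\circ H_g$ gives the desired nullhomotopy.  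The main technical subtlety is making the formal maps $\ph\circ\hat\rho_s$ and $\Psi\circ\hat\rho_{p^\ell}$ into valid DGA homomorphisms $\mathcal{M}_Y\to\Omega^{*}(X\times[0,T])$ with the claimed dilatations; the needed $ds$-direction correction terms are controlled because the bigrading on each $V_k$ has only finitely many strata, so all correction contributions can be absorbed into constants depending only on $Y$.
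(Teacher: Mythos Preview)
Your overall strategy---factor $f$ through an efficient self-map and a bounded map $g$, then push a bounded nullhomotopy of $g$ through $\arr_{p^\ell}$---is appealing, and steps 1--5 can be made to work (step 4 can in fact be replaced by the simpler observation, used at the end of the paper's proof, that a $C(X,Y)$-Lipschitz nullhomotopic map has a nullhomotopy of size bounded in terms of $X,Y$ alone).  The genuine gap is step 6.

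Your dilatation bound for $\Psi\circ\hat\rho_{p^\ell}$ is wrong.  On $V_k\cap W_i$ with $i\ge 1$, the automorphism $\hat\rho_{p^\ell}$ scales by $p^{\ell(k+i)}$, while the shadowing principle only guarantees $\lVert\Psi|_{V_k}\rVert_{\mathrm{op}}=O(1)^k$; it gives no extra smallness on the higher-weight pieces $W_i$, because the map $g$ it produces has no reason to satisfy $g^*m_Y|_{W_i}\approx 0$.  Hence $\Dil(\Psi\hat\rho_{p^\ell})\gtrsim p^{\ell(k+i)/k}\sim L^{1+i/k}$, a genuine power of $L$ strictly above $1$ whenever some $V_k\cap W_i\neq 0$ for $i\ge 1$---and this occurs for any $Y$ whose cohomology ring has a relation (already for $Y=S^2$, where $b\in V_3\cap W_1$ gives exponent $4/3$).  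The ``finitely many strata'' remark only bounds the exponent $1+i/k$, not the resulting power of $L$.  The same blow-up afflicts the implicit step you elide: $(\arr_{p^\ell}\circ g)^*m_Y$ is only \emph{homotopic} to $g^*m_Y\hat\rho_{p^\ell}$, not equal, and the formal homotopy between $(\arr_{p^\ell})^*m_Y$ and $m_Y\hat\rho_{p^\ell}$ must pass through the intrinsically high-dilatation object $m_Y\hat\rho_{p^\ell}$.

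This is exactly why the paper does not attempt a single jump from $f$ to $\arr_{p^\ell}\circ g$.  Instead it interpolates through $s\sim\log L$ intermediate maps $g_\ell\circ f_\ell$, where each step rescales by a fixed $p$ rather than $p^\ell$, so the dilatation amplification on $W_i$ stays bounded at every stage (Lemmas \ref{lem:rp} and \ref{lem:fk}).  Crucially, the $g_\ell$ are built by shadowing $\arr_{p^s}^*m_Y\rho_{p^{\ell-s}}$ (which has dilatation $\sim s^{d-1}p^\ell$ because $\arr_{p^s}$ is a genuine Lipschitz map) rather than $m_Y\rho_{p^\ell}$; this avoids ever confronting the bad object $m_Y\hat\rho_{p^\ell}$ directly.
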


We will use Theorem \ref{self-maps} to prove Theorem \ref{homotopies}.  The argument is similar to the proof of (ii)$\Rightarrow$(iii) of the main theorem of \cite{scal}.

\begin{proof}
  Let $X$ be a finite simplicial complex and $f:X \to Y$ a nullhomotopic $L$-Lipschitz map.  Fix a minimal model $m_Y:\mathcal M_Y \to \Omega^*Y$ and a family of automorphisms $\rho_t:\mathcal M_Y \to \mathcal M_Y$ which induce the grading automorphisms on cohomology sending a class $z \in H^n(Y;\RR)$ to $t^n z$.
  By Theorem \ref{thm:PWSM}, there is a $p>1$ and a self-map $\arr_p:Y \to Y$ whose rationalization is $\rho_p$.  Moreover, by Theorem \ref{self-maps+}, there is a sequence of $O(\ell^{d-1}p^\ell)$-Lipschitz maps $\arr_{p^\ell}$ homotopic to the $\ell$th iterate $\arr_p^\ell$. 
  
  We will define a nullhomotopy of $f$ by homotoping through a series of maps which are more and more ``locally organized''.  Specifically, for $1 \leq \ell \leq s=\lceil\log_pL\rceil$, we look at the map $\rho_{p^{-\ell}}$ which multiplies each degree $d$ generator by $p^{-\ell k}$ where $k \geq d$.  Thus applying the shadowing principle \ref{shadow} to the map
  \[f^*m_Y\rho_{p^{-\ell}}:\mathcal{M}_Y^* \to \Omega^*X\]
  gives a $C(X,Y)(L/p^\ell+1)$-Lipschitz map $f_\ell:X \to Y$.  Similarly, we get a $C(Y)(s^{d-1}p^\ell+1)$-Lipschitz self-map $g_\ell:Y \to Y$ homotopic to $\arr_{p^\ell}$ by applying the shadowing principle to the map
  \[\arr_{p^s}^*\rho_{p^{\ell-s}}:\mathcal M_Y^* \to \Omega^*Y.\]

  We will build a nullhomotopy of $f$ through the sequence of maps
  \[\xymatrix{
    f \ar@{-}[rd] & g_1 \circ f_1 \ar@{-}[rd] & g_2 \circ f_2 \ar@{-}[rd] & \ldots \ar@{-}[rd] & \arr_{p^s} \circ f_s \ar@{-}[r] & \text{const}. \\
    & \arr_p \circ f_1 \ar@{-}[u] & g_1 \circ \arr_p \circ f_2 \ar@{-}[u] & \ldots & g_{s-1} \circ \arr_p \circ f_s \ar@{-}[u]
  }\]
  As we go right, the \emph{length} (Lipschitz constant in the time direction) of the $\ell$th intermediate homotopy increases---it is $O(s^{d-1} p^\ell)$---while the \emph{thickness} (Lipschitz constant in the space direction) stays a constant $O(s^{d-1}L)$.  Thus all together, these homotopies can be glued into an $O(s^{d-1}p^s)$-Lipschitz nullhomotopy of $f$.

  Informally, the intermediate maps $g_\ell \circ f_\ell$ look at scale $p^\ell/L$ like thickness-$p^\ell$ ``bundles'' or ``cables'' of identical standard maps at scale $1/L$.  This structure makes them essentially as easy to nullhomotope as $L/p^\ell$-Lipschitz maps.

  We now build the aforementioned homotopies:
  \begin{lem} \label{lem:rp}
    There is a homotopy $G_\ell:Y \times [0,1] \to Y$ between $g_\ell$ and $g_{\ell-1} \circ \arr_p$ which has constant length and thickness $O(s^{d-1} p^\ell)$.
  \end{lem}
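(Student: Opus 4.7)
The plan is to construct an explicit formal homotopy $\Phi:\mathcal M_Y^*\to\Omega^*(Y\times[0,T])$, for some $T=O(1)$, between $g_\ell^*m_Y$ and $(g_{\ell-1}\circ\arr_p)^*m_Y$ with $\Dil_T(\Phi)=O(s^{d-1}p^\ell)$, and then realize it geometrically via the shadowing principle.

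I would build $\Phi$ as a concatenation of three pieces. The first piece $\Phi_1$ is the formal homotopy produced by Theorem~\ref{shadow} in the construction of $g_\ell$; it connects $g_\ell^*m_Y$ to the formal target $A:=\arr_{p^s}^*m_Y\rho_{p^{\ell-s}}$ and has dilatation $O(s^{d-1}p^\ell)$. Symmetrically, $\Phi_3$ is obtained by pulling back along $\arr_p$ (a map of $O(1)$ Lipschitz constant) the shadowing homotopy for $g_{\ell-1}$, yielding a formal homotopy of dilatation $O(s^{d-1}p^\ell)$ from $B:=\arr_p^*\arr_{p^s}^*m_Y\rho_{p^{\ell-1-s}}$ to $(g_{\ell-1}\circ\arr_p)^*m_Y$.

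For the middle piece $\Phi_2$ connecting $A$ to $B$, I use that both are formally homotopic to $m_Y\rho_{p^\ell}$. On the $A$-side, the rationalization homotopy $\arr_{p^s}^*m_Y\sim m_Y\rho_{p^s}$ has dilatation $O(s^{d-1}p^s)$; postcomposing with the contraction $\rho_{p^{\ell-s}}$ of dilatation $O(p^{\ell-s})$ brings the total to $O(s^{d-1}p^\ell)$. On the $B$-side, the analogous rationalization homotopy for $\arr_{p^s}\circ\arr_p$ (still $O(s^{d-1}p^s)$-Lipschitz), postcomposed with $\rho_{p^{\ell-1-s}}$, gives dilatation $O(s^{d-1}p^{\ell-1})$. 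Reversing one and concatenating produces $\Phi_2$, and gluing $\Phi_1*\Phi_2*\Phi_3$ and rescaling the time interval to $T=O(1)$ yields the desired $\Phi$.

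The main obstacle is the passage from $\Phi$ to $G_\ell$. Theorem~\ref{relshadow} as stated requires nullhomotopic endpoints, whereas $g_\ell$ and $g_{\ell-1}\circ\arr_p$ both realize the nonzero class of $\arr_p^\ell$. I would invoke the more general form of the relative shadowing principle from \cite{PCDF}, which applies to arbitrary pairs of Lipschitz maps in the same homotopy class, provided that $\Phi$ lies in the correct relative formal-homotopy class. Any residual obstruction lives in a finite-dimensional quotient of $[\Sigma Y,Y]\otimes\QQ$ and can be eliminated by a bounded algebraic modification of $\Phi$ on a short subinterval of $[0,T]$, without inflating the dilatation estimate. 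The output is the desired $G_\ell:Y\times[0,T]\to Y$ of length $O(1)$ and thickness $O(s^{d-1}p^\ell)$.
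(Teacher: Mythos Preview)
Your overall strategy---concatenate controlled formal homotopies and then apply the relative shadowing principle---is exactly the paper's. Your piece $\Phi_1$ is identical to the paper's first piece, and $\Phi_3$ (pulling the shadowing homotopy for $g_{\ell-1}$ back along $\arr_p$) is a legitimate alternative to what the paper does at the end.

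Where you diverge is in $\Phi_2$. You route through $m_Y\rho_{p^\ell}$, which forces you to invoke a ``rationalization homotopy $\arr_{p^s}^*m_Y\sim m_Y\rho_{p^s}$ of dilatation $O(s^{d-1}p^s)$.'' That bound is not free: the dilatation of a formal homotopy is not controlled by the dilatations of its endpoints, and $\arr_{p^s}$ was built in Theorem~\ref{self-maps+} via the explicit construction of Lemma~\ref{detailed}, not via shadowing. You would have to go back into that construction (or re-shadow $m_Y\rho_{p^s}$ and connect the result to $\arr_{p^s}$) to actually produce such a homotopy. The same issue arises, compounded, for your $B$-side homotopy for $\arr_{p^s}\circ\arr_p$. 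The paper sidesteps this entirely: its middle piece is simply $\Psi_{\ell-1}\rho_p$, the shadowing homotopy for $g_{\ell-1}$ postcomposed with $\rho_p$, which goes directly from $A=\arr_{p^s}^*m_Y\rho_{p^{\ell-s}}$ to $g_{\ell-1}^*m_Y\rho_p$; the final leg is $(g_{\ell-1}^*\otimes\id)\Upsilon$, where $\Upsilon$ is one \emph{fixed} formal homotopy between $m_Y\rho_p$ and $\arr_p^*m_Y$, independent of $\ell$ and $s$. Thus the paper uses only the $\Psi_i$ (which carry dilatation bounds by Theorem~\ref{shadow}) and the constant-size $\Upsilon$, never needing a controlled rationalization homotopy for $\arr_{p^s}$ itself.

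On the nullhomotopy hypothesis in Theorem~\ref{relshadow}: you are right to flag it, and your instinct to appeal to the more general statement in \cite{PCDF} is correct. The paper in fact just invokes~\ref{relshadow} without comment, so on this point you are being more careful than the paper.
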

  Note that the conclusion of Lemma \ref{lem:rp} is similar to that of Lemma \ref{detailed}, but applies to a larger class of spaces.
  \begin{lem} \label{lem:fk}
    There is a homotopy $F_\ell:X \times [0,1] \to Y$ between $f_\ell$ and $\arr_p \circ f_{\ell+1}$ which has constant length and thickness $O(p^\ell)$.
  \end{lem}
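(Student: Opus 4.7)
The plan is to produce $F_\ell$ by constructing a formal homotopy at the level of minimal models and then invoking the relative shadowing principle (Theorem~\ref{relshadow}). A preliminary observation: since $f$ is nullhomotopic, $f^*m_Y$ is formally nullhomotopic, so $f^*m_Y\rho_{p^{-\ell}}$ is as well; Theorem~\ref{shadow} then shows that $f_\ell$ and $f_{\ell+1}$, and hence $\arr_p \circ f_{\ell+1}$, are nullhomotopic---this is needed to invoke Theorem~\ref{relshadow}.

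By construction, Theorem~\ref{shadow} provides formal homotopies $\Phi_\ell \colon f_\ell^* m_Y \simeq f^* m_Y \rho_{p^{-\ell}}$ and $\Phi_{\ell+1} \colon f_{\ell+1}^* m_Y \simeq f^* m_Y \rho_{p^{-(\ell+1)}}$ of respective dilatations $O(L/p^\ell+1)$ and $O(L/p^{\ell+1}+1)$. Precomposing $\Phi_{\ell+1}$ with the rationalization automorphism $\rho_p$ gives a formal homotopy from $f_{\ell+1}^*m_Y\rho_p$ to $f^*m_Y\rho_{p^{-\ell}}$; since $\rho_p$ multiplies each generator of $V_k \cap W_i$ by $p^{k+i}$, this operation increases the dilatation only by a bounded factor depending on $p$ and the maximum $W$-index, preserving the bound $O(L/p^\ell+1)$. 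Separately, because $\arr_p$ has rationalization $\rho_p$, there is a formal homotopy $\Psi \colon \arr_p^* m_Y \simeq m_Y \rho_p$ whose dilatation depends only on $Y$, $m_Y$, and $\arr_p$; pulling back along $f_{\ell+1}$ yields a formal homotopy between $(\arr_p \circ f_{\ell+1})^* m_Y$ and $f_{\ell+1}^* m_Y \rho_p$ of dilatation bounded by a constant times $\Lip(f_{\ell+1})+1 = O(L/p^\ell+1)$. Concatenating these three formal homotopies (with appropriate orientation reversal) produces a single formal homotopy
\[f_\ell^*m_Y \simeq (\arr_p \circ f_{\ell+1})^*m_Y\]
of total dilatation $O(L/p^\ell+1)$, carried on a time interval whose length is the sum of those of the three pieces.

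Applying Theorem~\ref{relshadow} then yields a genuine homotopy between $f_\ell$ and $\arr_p \circ f_{\ell+1}$ of Lipschitz constant $O(L/p^\ell+1)$; reparametrizing time to $[0,1]$ produces the desired $F_\ell$. The main technical care lies in tracking the dilatation through the three operations---precomposition with $\rho_p$, pullback along $f_{\ell+1}$, and concatenation of formal homotopies carried on potentially different time intervals---each of which is routine but must combine to give the final bound. The hypothesis that $f$ is nullhomotopic is essential here, since Theorem~\ref{relshadow} as stated requires nullhomotopic endpoints; without this, one would need to control the formal homotopy class relative to some fixed genuine homotopy, which is not automatic.
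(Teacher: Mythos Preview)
Your proof is correct and follows essentially the same route as the paper's: build three formal homotopies (the reversed $\Phi_\ell$, then $\Phi_{\ell+1}\rho_p$, then the pullback along $f_{\ell+1}$ of a fixed formal homotopy $\Upsilon$ between $\arr_p^*m_Y$ and $m_Y\rho_p$), concatenate them, and apply the relative shadowing principle. Your explicit verification that the endpoints are nullhomotopic and your tracking of dilatation through each operation are more detailed than the paper's terse version, but the argument is the same; in fact your bound $O(L/p^\ell+1)$ is the one actually used in the downstream estimates.
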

  This induces homotopies of thickness $O(s^{d-1}p^s)$ and length
  $O(s^{d-1} p^\ell)$:
  \begin{itemize}
  \item $G_\ell \circ (f_\ell \times \id)$ from $g_{\ell-1} \circ r_p \circ f_\ell$ to $g_\ell \circ f_\ell$ of thickness $O(s^{d-1}p^s)$ and length $O(p^\ell)$;
  \item $g_\ell \circ F_\ell$ from $g_\ell \circ f_\ell$ to
    $g_\ell \circ r_p \circ f_{\ell+1}$ of thickness $O(s^{d-1}p^s)$ and length $O(s^{d-1}p^\ell)$.
  \end{itemize}
  It remains to build a homotopy from $r_p$ to the $C(Y)(s^{d-1}p+1)$-Lipschitz map $g_1$.  By \cite[Theorem 5--6]{PCDF}, such a homotopy $\tilde G: Y \times [0,1] \to Y$ can be chosen to have thickness $O(s^{d-1})$ and length $O(s^{d(d-1)})$.  Thus the homotopy $\tilde G \circ (f_1 \times \id)$ has thickness $O(s^{d-1}p^s)$ and length $O(s^{d(d-1)})$.
  
  Finally, the map $f_s$ is $C(X,Y)$-Lipschitz and therefore has a short homotopy to one of a finite set of nullhomotopic simplicial maps $X \to Y$.  For each map in this finite set, we can pick a fixed nullhomotopy, giving a constant bound for the Lipschitz constant of a nullhomotopy of $f_s$ and therefore a linear one for $r_{p^s} \circ f_s$.

  The lengths of these homotopies are bounded above by a geometric series which sums to $O(L(\log L)^d)$, completing the proof of the theorem modulo the two lemmas above.
\end{proof}
\begin{proof}[Proof of Lemma \ref{lem:rp}.]
  We use the fact that the maps $g_\ell$ were built using the shadowing principle.  Thus, there are formal homotopies $\Psi_i$ of length $C(X,Y)$ between $\arr_{p^s}^*m_Y\rho_{p^{s-i}}$ and $g_i^*m_Y$.  There is also a formal homotopy $\Upsilon$ between $\arr_p^*m_Y$ and $m_Y\rho_p$.  This allows us to construct the following formal homotopies:
  \begin{itemize}
  \item $\Psi_\ell$, time-reversed, between $g_\ell^*m_Y$ and $\arr_{p^s}^*m_Y\rho_{p^{s-\ell}}$, of length $C(Y)$;
  \item $\Psi_{\ell-1}\rho_p$ between $\arr_{p^s}^*m_Y\rho_{p^{s-\ell}}$ and $g_{\ell-1}^*m_Y\rho_p$, of length $C(Y)p$;
  \item and $(g_{\ell-1}^* \otimes \id)\Upsilon$ between $g_{\ell-1}^*m_Y\rho_p$ and $g_{\ell-1}^*\arr_p^*m_Y$, of length $C(Y)$.
  \end{itemize}
  Concatenating these three homotopies and applying the relative shadowing principle \ref{relshadow} to the resulting map $\mathcal{M}^*_Y \to \Omega^*(Y \times [0,1])$ rel ends, we get a linear thickness homotopy of length $O(p)$ between the two maps.
\end{proof}
\begin{proof}[Proof of Lemma \ref{lem:fk}.]
  We use the fact that the maps $f_\ell$ and $f_{\ell+1}$ were built using the shadowing principle.  Thus there are formal homotopies $\Phi_i$ of length $C(X,Y)$ between $f^*m_Y\rho_{p^{-i}}$ and $f_i$.  This allows us to construct the following formal homotopies:
  \begin{itemize}
  \item $\Phi_\ell$, time-reversed, between $f_\ell$ and $f^*m_Y\rho_{p^{-\ell}}$, of length $C(X,Y)$;
  \item $\Phi_{\ell+1}\rho_p$ between $f^*m_Y\rho_{p^{-\ell}}$ and $f_{\ell+1}^*m_Y\rho_p$, of length $C(X,Y)p$;
  \item and $(f_{\ell+1}^* \otimes \id)\Upsilon$ between $f_{\ell+1}^*m_Y\rho_p$ and $f_{\ell+1}^*\arr_p^*m_Y$, of length $C(X,Y)$.
  \end{itemize}
  Concatenating these three homotopies and applying the relative shadowing principle \ref{relshadow} to the resulting map $\mathcal{M}^*_Y \to \Omega^*(X \times [0,1])$ rel ends, we get a linear thickness homotopy of length $O(p)$ between the two maps.
\end{proof}

\section{Non-formal spaces} \label{S:NF}

In this section, we discuss the relationship between the degree and Lipschitz constants of self-maps of non-formal manifolds.

First, we note that such manifolds may have no self-maps of degree $>1$ at all.  Such manifolds are called \emph{inflexible}; examples of this phenomenon are given in \cite{ArLu,CLoh,CV,Am}.  Manifolds which have self-maps of high degree are called \emph{flexible}.

Among flexible manifolds, a distinguished class are those with positive weights.  A space $Y$ has \emph{positive weights} if its minimal model $\mathcal M_Y$ has a one-parameter family of ``rescaling'' automorphisms, i.e.\ there is a basis $\{v_i\}$ for the indecomposables and integers $n_i$ such that the map $\lambda_t:\mathcal M_Y \to \mathcal M_Y$ sending $v_i \mapsto t^{n_i}v_i$ is a DGA automorphism for any $t \in (0,\infty)$.  This can be thought of as a generalization of formality: formal spaces are distinguished by the fact that one can define rescaling automorphisms that send every cohomology class $z \mapsto t^{\dim z} z$, see \S\ref{S:formal}.

\begin{ex} \label{ex:NF}
  One non-formal manifold with positive weights is the example given in the introduction, the total space $M$ of the bundle $S^3 \to M \to S^2 \times S^2$ obtained by pulling back the Hopf fibration along a degree 1 map $S^2 \times S^2 \to S^4$.  According to \cite[p.~95]{FOT}, its minimal model is given by
  \[\mathcal M_M=\bigl(\Lambda(a_1^{(2)},a_2^{(2)},b_{11}^{(3)},b_{12}^{(3)},b_{22}^{(3)}) \mid da_i=0,db_{ij}=a_ia_j\bigr)\]
  and therefore, for any $t$, it has an automorphism which takes $a_i \mapsto ta_i$ and $b_{ij} \mapsto t^2b_{ij}$.  Now,
  \begin{align*}
      H^5(M;\mathbb Q) &\cong \langle b_{11}a_2-a_1b_{12}, b_{12}a_2-a_1b_{22} \rangle \\
      H^7(M;\mathbb Q) &\cong \langle b_{11}a_2^2-a_1a_2b_{12} \sim a_1^2b_{22}-a_1a_2b_{12} \rangle,
  \end{align*}
  and therefore this automorphism multiplies elements of $H^5(M;\mathbb Q)$ by $t^3$ and elements of $H^7(M;\mathbb Q)$ by $t^4$.
\end{ex}

A priori, automorphisms of the minimal model need not be realized by genuine maps of finite complexes.  But manifolds with positive weights have self-maps of arbitrarily high degree \cite[Theorem 3.2]{CMV}.  In fact, for any family of scaling automorphisms $\lambda_t$, there is some $t_0>0$ such that for every $z \in \mathbb Z$, $\lambda_{zt_0}$ is the rationalization of a genuine map $Y \to Y$ \cite[Theorem A]{PWSM}.

Of course, not every flexible manifold has positive weights.  For example, if $M$ is inflexible and $N$ has positive weights, then $M \times N$ is flexible, but does not have positive weights.

\subsection{Upper bounds on degree}
Having introduced the main actors, we prove Theorem \ref{main:NF}, which we restate here for convenience:
\begin{thm*}
  Let $M$ be a closed simply connected $n$-manifold which is not formal.  Then either $M$ is inflexible (has no self-maps of degree $>1$) or the maximal degree of an $L$-Lipschitz map $M \to M$ is bounded by $L^\alpha$ for some rational $\alpha<n$.
\end{thm*}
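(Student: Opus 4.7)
My plan is to translate the problem into a question about DGA endomorphisms of the minimal model $\mathcal{M}_M$, in which the Lipschitz constant becomes the formal dilatation, and then to combine the semialgebraic structure of that problem with the algebraic characterizations of formality in Proposition \ref{prop:formal}. I assume from the start that $M$ is flexible, since otherwise there is nothing to prove.

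First I would fix a minimal model $m_M:\mathcal{M}_M \to \Omega^*_\flat(M)$ in which the indecomposable generators are bounded flat forms, choose norms on each $V_k$, and associate to every $L$-Lipschitz self-map $f$ its rationalization $\phi_f:\mathcal{M}_M \to \mathcal{M}_M$. This satisfies the dilatation bound $\|\phi_f|_{V_k}\|_{\mathrm{op}} \leq C(M)L^k$ for $2 \leq k \leq n$, and acts on $H^n(\mathcal{M}_M;\RR) \cong \RR$ as multiplication by $\deg f$. Truncating to the Postnikov $n$-stage, the set $\mathcal{E}$ of DGA endomorphisms of $\mathcal{M}_M^{(n)}$ is a real algebraic subvariety of a finite-dimensional vector space, cut out by the polynomial equations $d\phi(v)=\phi(dv)$ on indecomposables. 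The degree $D:\mathcal{E}\to\RR$ is polynomial, while the formal dilatation $N(\phi):=\max_k \|\phi|_{V_k}\|^{1/k}$ is semialgebraic; by Tarski--Seidenberg the extremal function $D_*(t):=\sup\{|D(\phi)|:N(\phi)\leq t\}$ is semialgebraic in $t$, hence has rational asymptotic growth $D_*(t)=O(t^\alpha)$ for some rational $\alpha\in[0,n]$. Combined, this yields $\deg f \leq C(M)L^\alpha$.

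The remaining task is to exclude $\alpha=n$ when $M$ is non-formal. Suppose for contradiction that $\alpha=n$, and pick $\phi_i\in\mathcal{E}$ with $t_i:=N(\phi_i)\to\infty$ and $D(\phi_i)\geq c\,t_i^n$. Form the graded algebra endomorphisms $\Psi_i:=\lambda_{t_i^{-1}}\circ\phi_i$, where $\lambda_s$ is the algebra endomorphism of $\mathcal{M}_M$ sending a homogeneous element of degree $k$ to $s^k$ times itself. Each $\Psi_i$ has uniformly bounded operator norm on indecomposables, and the DGA relation $d\phi_i=\phi_i d$ translates into the twisted identity $\Psi_i\circ d = t_i^{-1}\cdot d\circ\Psi_i$. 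Passing to a subsequence in the finite-dimensional $\mathcal{M}_M^{(n)}$, the $\Psi_i$ converge to a graded algebra endomorphism $\Psi_\infty$ satisfying $\Psi_\infty\circ d\equiv 0$, which descends to a ring endomorphism of $H^*(M;\RR)$ multiplying the top class by $c>0$. Applying the curve selection lemma to $\mathcal{E}$ then promotes the $\phi_i$ to a semialgebraic one-parameter family $\phi_t$ with $D(\phi_t)=\Theta(t^n)$ and $N(\phi_t)=\Theta(t)$; a diagonalization/Iwasawa argument along this curve, in the spirit of the proof of Proposition \ref{prop:formal}\ref{aut}$\Rightarrow$\ref{bigrading}, yields a one-parameter family of DGA automorphisms of $\mathcal{M}_M$ realizing the grading automorphism $\rho_t$ on $H^*(M;\RR)$. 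By Proposition \ref{prop:formal}\ref{aut}, this is equivalent to $M$ being formal, contradicting the hypothesis.

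The main obstacle is this last step: promoting the degenerate limit $\Psi_\infty$ (a graded algebra map annihilated by $d$, not itself a DGA map) into a genuine family of DGA automorphisms of $\mathcal{M}_M$. The intuition is that saturating $D(\phi)\asymp N(\phi)^n$ forces each restriction $\phi|_{V_k}$ to scale at the maximal rate $N^k$, which is precisely the grading behavior. Making this rigorous requires a careful asymptotic analysis along the extremal semialgebraic curve; the main technical ingredients I envision are Lojasiewicz-type inequalities relating $D$ and $N$ on $\mathcal{E}$, together with a spectral decomposition of the algebraic-torus representation on $\mathcal{M}_M$ that such a curve defines.
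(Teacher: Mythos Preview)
Your proposal has a genuine gap at exactly the point you flag, and the gap is worse than you indicate. The claim that $\Psi_\infty$ ``descends to a ring endomorphism of $H^*(M;\RR)$'' is not justified: from $\Psi_\infty\circ d=0$ you know exact elements map to zero, but you do \emph{not} know $d\circ\Psi_\infty=0$, so closed elements need not map to closed elements, and there is no induced map on cohomology. More fundamentally, composing with the grading map $\lambda_{t^{-1}}$ throws you out of the DGA endomorphisms (indeed $\lambda_s$ is a DGA map precisely when $M$ is formal), and your degenerate limit carries no obvious information about DGA automorphisms. The proposed rescue via curve selection and an ``Iwasawa argument along the curve'' is too vague: a semialgebraic curve of DGA endomorphisms with $D(\phi_t)\asymp t^n$ does not obviously produce a one-parameter group realizing $\rho_t$, and you have not explained what structure would force that.

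The paper's argument avoids leaving the automorphism group. It first proves an elementary eigenvalue bound: if $\lambda$ is an eigenvalue of $f^*$ on $H^k(M;\CC)$ then $|\lambda|\le(\Lip f)^k$, shown by minimizing $\|\omega\|_\infty$ over representatives of an eigenclass and comparing with $\|f^*\omega\|_\infty$. Combined with Poincar\'e duality (an eigenvalue $\lambda$ on $H^k$ pairs with one of size $d/\lambda$ on $H^{n-k}$), this forces every eigenvalue on $H^k$ to have modulus close to $(\deg f_i)^{k/n}$ along an extremal sequence. The paper then passes to the automorphisms $\varphi_i$ of the complexified \emph{Lie} minimal model $\mathcal L_M(\CC)$, which form a linear algebraic group acting on the graded indecomposables $L_k\cong H_k(M;\CC)$. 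Using a Borel subgroup, each $\varphi_i$ is conjugated to upper triangular form, and its diagonal part $\varphi_i''$ is again an automorphism of $\mathcal L_M(\CC)$. The normalization is then done \emph{inside the group} by taking the real power $\psi_i=(\varphi_i'')^{\log_{\deg f_i}2^n}$; these are honest DGA automorphisms lying in a compact set, so a subsequential limit $\psi_\infty$ exists and is diagonal with eigenvalues of modulus $2^k$ on $L_k$. Replacing each eigenvalue by its modulus still gives an automorphism (the differential is an eigenvector with the same eigenvalue), yielding the grading automorphism and hence formality.

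The key idea you are missing is to normalize by raising a diagonal automorphism to a real power rather than by composing with the external map $\lambda_{t^{-1}}$; this keeps you inside the automorphism group throughout and makes the compactness argument go through. As a minor remark, your Tarski--Seidenberg detour for rationality of $\alpha$ is unnecessary: once any real $\alpha<n$ works, so does any rational number in $(\alpha,n)$.
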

\begin{ex}
  As stated in the introduction, for the $7$-manifold $M$ described in Example \ref{ex:NF}, we get $\alpha=20/3<7$.  To see this, consider an automorphism $\rho:\mathcal M_M \to \mathcal M_M$ of the minimal model of $M$.  Such an automorphism is determined by the images
  \begin{align*}
    \rho(a_1) &= t_{11}a_1+t_{12}a_2 \\
    \rho(a_2) &= t_{21}a_1+t_{22}a_2.
  \end{align*}
  Then a computation determines that
  \[\deg \rho=\rho([M])=(\det T)^2 [M]\]
  where $T=\begin{pmatrix} t_{11}&t_{12} \\ t_{21}&t_{22} \end{pmatrix}$, and the action of $\rho$ on $H^5(M;\mathbb R)$ with respect to the given basis has matrix $(\det T)T$.  Let $\lambda_1,\lambda_2$ be the eigenvalues of $T$ with $\lvert\lambda_1\rvert \leq \lvert\lambda_2\rvert$.  Then by Lemma \ref{iterates} below, for any self-map $f:M \to M$ whose rationalization is $\rho$,
  \[\Lip f \geq \lvert \lambda_1\lambda_2^2 \rvert^{1/5} \geq \lvert\det T\rvert^{3/10}=\lvert\deg f\rvert^{3/20}.\]
\end{ex}
\begin{proof}[Proof of Theorem \ref{main:NF}]
  We prove the contrapositive.  Suppose that there is a sequence of maps $f_i:M \to M$ with strictly increasing degrees such that for every $\alpha<n$, $\deg f_i$ eventually grows faster than $(\Lip f_i)^\alpha$.  We will show that $M$ must be formal.
  
  This requires a lemma:
  \begin{lem} \label{iterates}
    Let $f:M \to M$, and suppose the induced map $f_*:H^k(M;\CC) \to H^k(M;\CC)$ has an eigenvalue $\lambda$.  Then 
    \[\Lip f \geq \lvert\lambda\rvert^{1/k}.\]
  \end{lem}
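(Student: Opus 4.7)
The plan is to compare the spectral radius of $f_*$ on $H^k$ with the operator norm of pullback on differential forms, using iteration to wash out the multiplicative constant coming from the choice of norm.

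First I would fix an inner product on $H^k(M;\CC)$ (for example, the $L^2$ product on harmonic representatives via Hodge theory) and let $\lVert\cdot\rVert$ be the associated operator norm on $\mathrm{End}(H^k(M;\CC))$. Because $H^k(M;\CC)$ is finite-dimensional, Gelfand's formula gives the spectral radius as $\mathrm{spr}(f_*) = \lim_{m\to\infty}\lVert f_*^m\rVert^{1/m}$, and since $\lambda$ is an eigenvalue we have $\lvert\lambda\rvert \leq \mathrm{spr}(f_*)$.

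Next I would bound $\lVert f_*\rVert$ by the Lipschitz constant. An $L$-Lipschitz map multiplies the pointwise comass of a $k$-form by at most $L^k$, so the comass norm of $f^*\omega$ is at most $L^k$ times the comass norm of $\omega$. Representing cohomology classes by harmonic forms and passing to the harmonic projection introduces a bounded factor depending only on the Riemannian structure on $M$, so there is a constant $C=C(M,g)$ such that
\[
\lVert f_*\rVert \leq C(\Lip f)^k.
\]
Crucially, applying this same estimate to an iterate $f^m$, whose Lipschitz constant is at most $(\Lip f)^m$, gives
\[
\lVert f_*^m\rVert = \lVert (f^m)_*\rVert \leq C(\Lip f^m)^k \leq C(\Lip f)^{mk}.
\]

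Finally, taking $m$-th roots and sending $m\to\infty$, the constant $C^{1/m}$ tends to $1$, so
\[
\lvert\lambda\rvert \leq \mathrm{spr}(f_*) = \lim_{m\to\infty}\lVert f_*^m\rVert^{1/m} \leq (\Lip f)^k,
\]
which rearranges to $\Lip f \geq \lvert\lambda\rvert^{1/k}$. There is no real obstacle here: the only thing to be careful about is that the naive pointwise estimate on pullback carries an unwanted constant, and the standard trick of iterating and invoking the spectral-radius formula is precisely what removes it.
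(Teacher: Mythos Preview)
Your proof is correct, but it takes a different route from the paper.

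The paper avoids iteration entirely by choosing the right norm on cohomology from the start. Define the comass norm of a class $a\in H^k(M;\RR)$ as the infimum of $\lVert\omega\rVert_\infty$ over flat representatives $\omega$ of $a$; the infimum is achieved by some $\omega$. Then $f^*\omega$ represents $\lambda a$, so by minimality $\lVert f^*\omega\rVert_\infty \geq \lvert\lambda\rvert\,\lVert\omega\rVert_\infty$, while the Lipschitz bound gives $\lVert f^*\omega\rVert_\infty \leq (\Lip f)^k\lVert\omega\rVert_\infty$ with constant exactly $1$. Combining these yields $\lvert\lambda\rvert \leq (\Lip f)^k$ directly. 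For a non-real eigenvalue the paper works with an $f^*/\lvert\lambda\rvert$-invariant ellipse inside the associated real $2$-plane and minimizes $\lVert\cdot\rVert_\infty$ over representatives of points on the ellipse.

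Your approach introduces a constant $C$ by working with the $L^2$/harmonic norm on $H^k$ (where harmonic projection costs a factor), and then removes it via Gelfand's spectral-radius formula applied to the iterates $f^m$. This is perfectly valid and has the virtue of treating real and complex eigenvalues uniformly, at the price of being slightly less direct. The ``unwanted constant'' you mention is not intrinsic to the pointwise pullback estimate---it comes from your choice of norm---and the paper's observation is that the comass norm makes it disappear without needing to iterate.
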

  \begin{proof}
    The eigenvalue $\lambda$ is either real or one of a conjugate pair of complex eigenvalues.  If it is real, choose a $\lVert{\cdot}\rVert_\infty$-minimizing form $\omega \in \Omega^k_\flat(M)$ among those which represent an eigenvector $a \in H^k(M;\RR)$.  Then
    \[\lvert\lambda\rvert\cdot\lVert\omega\rVert_\infty \leq \lVert f^*\omega \rVert_\infty \leq (\Lip f)^k \rVert\omega\rVert_\infty.\]
    If $\lambda$ is not real, choose an invariant two-dimensional subspace of $H^k(M;\RR)$ whose complexification contains eigenvectors for $\lambda$ and $\overline\lambda$, and within this, an $f^*/\lvert\lambda\rvert$-invariant ellipse $E$.  Let $\omega \in \Omega^k_\flat(M)$ be a $\lVert{\cdot}\rVert_\infty$-minimizing form among those representing elements of $E$.  Then once again
    \[\lvert\lambda\rvert\cdot\lVert\omega\rVert_\infty \leq \lVert f^*\omega \rVert_\infty \leq (\Lip f)^k \rVert\omega\rVert_\infty. \qedhere\]
  \end{proof}
  Now suppose $f:M \to M$ is of degree $d$ and $f_*:H^k(M;\CC) \to H^k(M;\CC)$ has some eigenvalue $\lambda$ such that $\lvert\lambda\rvert \neq d^{k/n}$.  Then either $\lvert\lambda\rvert>d^{k/n}$, or by Poincar\'e duality the induced map on $H^{n-k}(M;\CC)$ has an eigenvalue $\mu$ with $\lvert\mu\rvert>d^{\frac{n-k}{n}}$.  Therefore, by our hypotheses and Lemma \ref{iterates}, as $i \to \infty$, the absolute values of eigenvalues of $(f_i)_*:H_k(M;\CC) \to H_k(M;\CC)$ uniformly approach $(\deg f_i)^{k/n}$.  That is, for any such eigenvalue $\lambda$,
  \[k/n-C_i \leq \log_{\deg f_i} \lvert\lambda\rvert \leq k/n+C_i, \qquad \text{where }\lim_{i \to \infty} C_i=0.\]
  
  Now consider the automorphisms $\ph_i:\mathcal L_M(\CC) \to \mathcal L_M(\CC)$ induced by the $f_i$.  Here $\mathcal L_M(\CC)$ is the complexified \emph{Lie minimal model} of $M$, a free differential graded Lie algebra whose indecomposables in degree $k$ are $L_k \cong H_k(M;\CC)$, and $\ph_i|_{L_k}=(f_i)_*$.  The Lie minimal model is in many ways dual to the Sullivan minimal model; see \cite[Part IV]{FHT} for the detailed theory.  The endomorphisms of $\mathcal L_M$ form an affine variety in the vector space of graded linear maps $H_*(M;\CC) \to H_*(M;\CC)$, and the automorphisms $\Aut(\mathcal L_M(\CC))$ form a linear algebraic group which is Zariski open inside that variety.  Moreover, the Zariski closure of $\Aut(\mathcal L_M(\CC))$, which is the same as its metric closure, is contained in the endomorphism variety.
  
  We now apply the theory of linear algebraic groups, see e.g.\ \cite[\S III.10 and IV.11]{Borel}.  (A similar argument is applied to rational homotopy theory in \cite[\S2]{BMSS}.)  Choose a Borel subgroup $G \subseteq \Aut(\mathcal L_M(\CC))$; by the Lie--Kolchin theorem \cite[Ch.~III, Theorem 10.5]{Borel}, this is the subgroup of matrices which are upper triangular with respect to some basis $\mathcal B$ of $H_*(M;\CC)$.  Moreover, since elements of $\Aut(\mathcal L_M(\CC))$ preserve the grading of $H_*(M;\CC)$, we can assume that $\mathcal B$ is a graded basis.  By \cite[Ch.~IV, Theorem 11.10]{Borel}, every $\ph_i$ is conjugate to some $\ph_i' \in G$.  Moreover, by \cite[Ch.~III, Theorem 10.6]{Borel}, for every $\ph_i'$, $G$ also contains the diagonal matrix $\ph_i''$ obtained by zeroing out the off-diagonal entries of $\ph_i'$.
  
  As a vector space, $\mathcal L_M(\CC)$ is spanned by iterated Lie brackets of elements of $\mathcal B$.  Therefore, each $\ph_i''$ is diagonal on all of $\mathcal L_M(\CC)$ with respect to a basis of iterated brackets of elements of $\mathcal B$.  Moreover, if $a \in L_k$ is an eigenvector of $\ph_i''$, then $\partial a$ is also an eigenvector with the same eigenvalue.  Therefore, there are well-defined automorphisms
  \[\psi_i=(\ph_i'')^{\log_{\deg f_i} 2^n}:\mathcal L_M(\CC) \to \mathcal L_M(\CC).\]
  The sequence $\{\psi_i\}$ lies in a compact set of automorphisms and therefore has a subsequence which converges to some $\psi_\infty:\mathcal L_M(\CC) \to \mathcal L_M(\CC)$.  This $\psi_\infty$ is also diagonal with respect to $\mathcal B$ and its eigenvalues on $L_k$ have absolute value $2^k$.
  
  As with the $\ph_i''$, $\psi_\infty$ is also diagonalizable as a linear automorphism of $\mathcal L_M(\CC)$, and if $a \in L_k$ is an eigenvector of $\psi_\infty$, then so is $\partial a \in \mathcal L_M(\CC)_{k-1}$.  Therefore, if we replace each eigenvalue of $\psi_\infty$ with its absolute value, then the resulting linear map, which sends every element $a \in L_k$ to $2^k a$, is still an automorphism of $\mathcal L_M(\CC)$.  This automorphism descends to $\mathcal L_M(\QQ)$.  Since the automorphisms of a rational minimal model are the same as those of the rationalized space $M_{(0)}$, this shows that $M$ is formal.
\end{proof}

\subsection{Lower bounds on degree}
Using the techniques of \S\ref{S:lower}, we can give lower bounds on the maximal degree of an $L$-Lipschitz self-map of a manifold with positive weights that complement the upper bound of Theorem \ref{main:NF}:
\begin{thm} \label{lower:NF}
  Let $Y$ be a compact manifold with positive weights and $\rho_t:\mathcal M_Y \to \mathcal M_Y$ a scaling automorphism of its minimal model.  Let $\{z_i\}$ be a graded basis for the rational homology of $Y$ such that $\rho_t$ induces the map $z_i \mapsto t^{n_i}z_i$, and let
  \begin{align*}
      \gamma_n &= \max \{n_i/n \mid \dim z_i=n\} \\
      \alpha_n &= \max_{k \leq n} \gamma_n \\
      \alpha &= \alpha_{dim Y} \\
      d &= \#\{n \mid \gamma_n=\alpha\}.
  \end{align*}
  Then there are integers $a>0$ and $p>1$ such that for every $q=ap^\ell$ there is an $O(q^\alpha(\log q)^{d-1})$-Lipschitz map whose rationalization is $\rho_q$. 
\end{thm}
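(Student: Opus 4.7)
The plan is to adapt the inductive construction of Theorem \ref{self-maps+} to the non-uniform scaling of positive weights, via a double induction on the exponent $\ell$ and on skeletal dimension $n$, using different grid densities for cells of different weights.

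First I would reduce to a convenient target. The proof of Proposition \ref{cells} does not use formality, so we may replace $Y$ with a rationally equivalent finite CW complex $Z$ whose cells correspond bijectively to a graded basis of $H_*(Y;\QQ)$; in particular the $n$-cell $e_i$ represents the class $z_i$ of weight $n_i$. By \cite[Theorem A]{PWSM}, which applies to any positive-weight space, some $\rho_{p_0}$ is realized by a genuine self-map. Replacing $p_0$ with $p = p_0^N$ for suitably divisible $N$, we may assume $P_i := p^{\gamma_n(i)} = p^{n_i/n}$ is an integer for every cell $e_i$. We equip $Z$ with a nearly Euclidean metric (Corollary \ref{CWRiem}) and apply Proposition \ref{htpy-to-lip} to take the base self-map $\arr_p:Z \to Z$ to be cellular and Lipschitz, with each $n$-cell $e_i$ containing a standard $P_i \times \cdots \times P_i$ grid of homothetic preimages of $e_i$.

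Next I would build maps $\arr_{p^\ell}$ and homotopies $H_\ell$ between $\arr_{p^\ell}$ and $\arr_{p^{\ell-1}} \circ \arr_p$ by double induction on $\ell$ and $n$, aiming to show $K_\ell^{(n)} := \Lip(\arr_{p^\ell}|_{Z^{(n)}}) \leq C p^{\ell \alpha_n} \ell^{d_n - 1}$. The inductive step over an $n$-cell $e_i$ replaces the uniform $p^n$-grid of Lemma \ref{detailed} with a $P_i \times \cdots \times P_i$ grid of $p^{n_i}$ homothetic rescalings of $\arr_{p^{\ell-1}}|_{e_i}$, packed into an inner region of linear size $r_{\mathrm{in}}$, and uses an outer shell of radial extent $r_{\mathrm{out}} = 1 - r_{\mathrm{in}}$ to carry the inductive $(n-1)$-skeleton homotopy $H_\ell^{(n-1)}$. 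After the equalization maneuver of Lemma \ref{detailed}, the resulting Lipschitz constant satisfies
\[K_\ell^{(n)} \lesssim \max\bigl(P_i K_{\ell-1}^{(n)} / r_{\mathrm{in}},\; K_\ell^{(n-1)} / r_{\mathrm{out}}\bigr),\]
with $r_{\mathrm{in}}, r_{\mathrm{out}}$ free to choose subject to $r_{\mathrm{in}} + r_{\mathrm{out}} \le 1$.

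Balancing the two terms splits the induction into three cases according to the comparison of $\gamma_n$ with $\alpha_{n-1}$. If $\gamma_n > \alpha_{n-1}$, the inner term wins for any fixed $r_{\mathrm{in}}, r_{\mathrm{out}}$ bounded away from $0$, giving $K_\ell^{(n)} \sim p^{\ell\gamma_n}$ with $d_n = 1$ (the log exponent resets). If $\gamma_n = \alpha_{n-1}$, balancing forces $r_{\mathrm{out}} \sim 1/\ell$, exactly reproducing the extra factor of $\ell$ in Lemma \ref{detailed} and matching $d_n = d_{n-1} + 1$. If $\gamma_n < \alpha_{n-1}$, balancing forces $r_{\mathrm{in}} \sim p^{\gamma_n - \alpha_{n-1}}$, a fixed fraction strictly less than $1$ but independent of $\ell$, and $K_\ell^{(n)} \lesssim K_\ell^{(n-1)}$ with no additional logarithmic factor, matching $d_n = d_{n-1}$. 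Setting $n = \dim Y$ gives the desired estimate on $Z$; a final application of Proposition \ref{he-to-lhe} together with a positive-weight analogue of Theorem \ref{thm:PWSM-B} transfers the maps back to $Y$, producing $O(q^\alpha (\log q)^{d-1})$-Lipschitz maps realizing $\rho_q$ for $q = a p^\ell$ with $a = p_0$.

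The main obstacle I expect is the geometric realization of the third case: when $\gamma_n < \alpha_{n-1}$, the $P_i$-grid of homothetic preimages must be packed into a small but fixed fraction of the cell while the outer shell expands to fill the rest. The equalization step in Lemma \ref{detailed} was written for a thick inner region and a thin outer shell (the second case above), so extending it to the reverse regime requires care to avoid amplifying the Lipschitz constant on the inner grid beyond $K_\ell^{(n-1)}$ during the radial interpolation between grid and shell.
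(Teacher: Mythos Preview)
Your proposal is correct and follows essentially the same route as the paper: reduce to a cell complex $Z$ via Proposition~\ref{cells}, realize $\rho_p$ by a genuine map using \cite[Theorem~A]{PWSM}, arrange a $P_i \times \cdots \times P_i$ grid of homothetic preimages in each cell (iterating so that $p^{n_i/n}$ is integral), and run the double induction of Lemma~\ref{detailed} with the three-case analysis comparing $n_i/n$ to $\alpha_{n-1}$; the transfer back to $Y$ uses exactly the positive-weight analogue of Theorem~\ref{thm:PWSM-B} you anticipate, stated in the paper as Proposition~\ref{back-and-forth+}.

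Your flagged obstacle in the third case is not actually a difficulty. The paper does not choose $r_{\mathrm{in}},r_{\mathrm{out}}$ separately and then worry about interpolation; instead it applies the same additive equalization as in Lemma~\ref{detailed}, obtaining
\[
\Lip(\arr_{p^\ell}|_{e_i}) \le p^{n_i/n}DL_3 + \bigl(\tfrac12 - p^{n_i/n}D\bigr)L_2 + \tfrac12 L_1,
\]
and simply observes which terms dominate. When $\alpha_{n-1}>n_i/n$ the $L_1,L_2$ terms control the sum and the $L_3$ term is harmlessly absorbed, with no need to shrink the grid into a small region or modify the radial rescaling $J$. Likewise $\Lip(H_\ell)=\max\{L_1,L_2,L_3\}$ is governed by $L_1$ in this case. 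So the equalization step works uniformly across all three regimes without the extra care you anticipate.
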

\begin{ex}
  In particular, this shows that the $7$-manifold $M$ described in Example \ref{ex:NF} has $L$-Lipschitz self-maps of degree $\sim L^{20/3}$: the bound of Theorem \ref{main:NF} is asymptotically sharp in this case.
  
  This is because for the automorphism $\rho_t:\mathcal M_M \to \mathcal M_M$ defined by
  \[a_i \mapsto ta_i, \qquad b_{ij} \mapsto t^2b_{ij},\]
  we get $n_i/\dim z_i=1/2$ when $z_i$ is any $2$-cycle, $3/5$ when $z_i$ is any $5$-cycle, and $4/7$ when $z_i$ is any 7-cycle.  Thus the maximum is only attained in dimension 5, and therefore the number $d$ defined in the statement of Theorem \ref{lower:NF} is $1$ in this case.  For a map $f:M \to M$ whose rationalization is $\rho_t$, we have $\deg f=t^4$; by Theorem \ref{lower:NF}, there are such maps which are $O(t^{3/5})$-Lipschitz.
\end{ex}
\begin{proof}[Proof of Theorem \ref{lower:NF}]
  The proof is almost identical to that of Theorem \ref{self-maps}, so we give an outline and indicate the main differences.
  
  As with Theorem \ref{self-maps}, we first reduce to the case of a nearly Euclidean cell complex $Z$ whose cells are in bijection with the basis for $H_*(Z;\mathbb Q) \cong H_*(Y;\mathbb Q)$ specified in the positive weight decomposition.  Such a complex exists by Proposition \ref{cells}.  The reduction is exactly the same as before, but requires a generalization of Proposition \ref{back-and-forth}:
  \begin{prop}[{\cite[Thm.~B]{PWSM}; see also the slightly weaker \cite[Thm.~3.4]{BCC}}] \label{back-and-forth+}
    Let $Y$ be a space with positive weights, and let $\rho_t:\mathcal M_Y \to \mathcal M_Y$ be a one-parameter family of automorphisms.  If $f:Z \to Y$ is a map between simply connected complexes inducing an isomorphism on rational cohomology, then it is a rational equivalence, and there is a map $g:Y \to Z$ and a $t \in \ZZ$ such that the rationalization of $f \circ g$ is $\rho_t$.
  \end{prop}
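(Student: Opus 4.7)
The proof has three natural stages. First, I would verify that $f$ is a rational equivalence: between simply connected complexes of finite type, a rational cohomology isomorphism lifts (via the minimal model functor) to a quasi-isomorphism $f^{*}: \mathcal M_Y \to \mathcal M_Z$, and between minimal DGAs this is automatically a DGA isomorphism up to homotopy. I would then fix a homotopy inverse $\psi: \mathcal M_Z \to \mathcal M_Y$ to serve as an algebraic stand-in for a rational map $Y \to Z$.

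Next I would set up the rational candidate. By contravariance $(f \circ g)^{*} = g^{*} \circ f^{*}$, so the requirement that $f \circ g$ rationalize to $\rho_t$ translates to $g^{*} \simeq \rho_t \circ \psi : \mathcal M_Z \to \mathcal M_Y$. The problem thus reduces to the following: for some positive integer $t$, realize the rational DGA morphism $\rho_t \circ \psi$ as the rationalization of a genuine continuous map $g: Y \to Z$.

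Third, and this is the heart of the argument, I would realize $\rho_t \circ \psi$ topologically by obstruction theory up the Postnikov tower of $Z$. Suppose inductively we have a genuine lift $g_{n-1}: Y \to Z_{n-1}$ whose rationalization agrees with $\rho_t \circ \psi$ through stage $n-1$. The obstruction to extending to $g_n : Y \to Z_n$ is classified by an element of $H^{n+1}(Y;\pi_n(Z))$ built from $g_{n-1}$ and the $k$-invariant of $Z$; rationally this obstruction vanishes by construction of $\psi$, so it must already lie in the finite torsion subgroup. Under the scaling $\rho_t$, each indecomposable of $\mathcal M_Y$ with positive weight $n_i$ is multiplied by $t^{n_i}$, and this multiplicativity propagates to the obstruction class, which is scaled by a strictly positive power of $t$. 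Only finitely many obstructions appear (as we traverse the finitely many Postnikov stages up to the dimension of $Y$), so choosing $t$ sufficiently divisible annihilates them all at once and yields the desired $g$.

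The main obstacle is this third stage, and specifically the claim that the obstructions transform covariantly --- and with \emph{strictly positive} exponent --- under the scaling $\rho_t$. This is where the positive-weight hypothesis is essential and cannot be replaced by mere flexibility: the weights must be compatible with the differential of $\mathcal M_Y$, so that the $k$-invariants, which are assembled from this differential, are homogeneous of some positive weight under $\rho_t$. Making this precise requires verifying that the weight decomposition of $\mathcal M_Z$ induced (via $\psi$) from that of $\mathcal M_Y$ is consistent with the Postnikov decomposition of $Z$. This parallels the realization argument for formal spaces in Theorem \ref{thm:PWSM-B}, with the positive-weight grading playing the role of the formality bigrading of Proposition \ref{prop:formal}\ref{bigrading}.
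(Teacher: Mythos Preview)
The paper does not give its own proof of this proposition: it is quoted verbatim from \cite[Theorem~B]{PWSM} and simply invoked as an external input to the proof of Theorem~\ref{lower:NF}. So there is no in-paper argument to compare against.

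Your sketch is the standard line of attack for results of this kind and is essentially what the cited proof in \cite{PWSM} does: pass to minimal models to get a DGA inverse $\psi$, then realize $\rho_t\circ\psi$ by obstruction theory up the Postnikov tower of $Z$, using the weight scaling to annihilate the finitely many torsion obstructions. One point deserves more care than you give it. You propose to pick a single $t$ ``sufficiently divisible'' at the end, but the obstruction class at stage $n$ depends on the partial lift $g_{n-1}$, which itself already depends on $t$; it is not a priori a fixed torsion element that gets multiplied by $t^{w}$. The clean way to organize this is inductively: having realized $\rho_{t_{n-1}}\circ\psi$ through stage $n-1$, one kills the stage-$n$ obstruction by precomposing with a \emph{genuine} self-map $Y\to Y$ whose rationalization is $\rho_s$ for a suitable $s$ (such self-maps exist by \cite[Theorem~A]{PWSM}, the companion to the result you are proving). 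This replaces $t_{n-1}$ by $t_n=st_{n-1}$ and multiplies the obstruction in $H^{n+1}(Y;\pi_n(Z))$ by a positive power of $s$, which is where positivity of the weights is used. The finite dimensionality of $Y$ then terminates the process.
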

  Now, by \cite[Theorem A]{PWSM}, there is a $p>1$ and a map $\arr_p:Z \to Z$ whose rationalization is $\rho_p$.  As in Lemma \ref{detailed}, we construct maps $\arr_{p^\ell}$ homotopic to the iterates $\arr_p^\ell$, bounding the Lipschitz constant by induction on both $\ell$ and the dimension.  We also construct controlled homotopies $H_\ell$ from $\arr_{p^{\ell-1}} \circ \arr_p$ to $\arr_{p^\ell}$.  There are two main points on which the proof differs from that of Lemma \ref{detailed}.
  
  First, as in Lemma \ref{detailed}, we assume that $\arr_p$ has a nice geometric form.  Specifically, we assume that for every $n$-cell $e_i$, $\overline{\arr_p^{-1}(e_i)}$ is a grid inside $e$ of homothetic preimages of $e$.  Rather than $p$ to a side, this grid has $p^{n_i/n}$ subcubes to a side, where $n_i$ is the ``weight'' of the homology class $[e_i]$.  For this to make sense, $p^{n_i/\dim z_i}$ must be an integer; we can make sure this is true for every $i$ by iterating $\arr_p$ at most $(\dim Z)!$ times.
  
  The other main difference is in the Lipschitz constant estimate.  As before, we set
  \begin{align*}
    L_1 &= 2\Lip(H_\ell|_{Z^{(n-1)}}) \\
    L_2 &= 2\Lip(\arr_p)\Lip(\arr_{p^{\ell-1}}|_{Z^{(n-1)}}) \\
    L_3 &= D^{-1}\Lip(\arr_{p^{\ell-1}}),
  \end{align*}
  where $D$ is the side length of a subcube.  Then the Lipschitz constant of $\arr_{p^\ell}$ on a cell $e_i$ is bounded by
  \[p^{n_i/n}DL_3+\left(\frac{1}{2}-p^{\alpha_n}D\right)L_2+\frac{1}{2}L_1.\]
  Now the proof splits into cases.  Suppose, by induction, that \begin{align*}
    \Lip(\arr_{p^\ell}|_{Z^{(n-1)}}) &\leq C(n-1)\ell^{d_{n-1}}p^{\alpha_{n-1}\ell} \\
    \Lip(H_\ell|_{Z^{(n-1)}}) &\leq C'(n-1)\ell^{d_{n-1}}p^{\alpha_{n-1}\ell}.
  \end{align*}
  If $\alpha_{n-1}=n_i/n$, then the proof is exactly as before and \begin{align*}
    \Lip(\arr_{p^\ell}|_e) &\leq C(n)\ell^{d_{n-1}+1}p^{\alpha_{n-1}\ell} \\
    \Lip(H_\ell|_e) &\leq C'(n)\ell^{d_{n-1}+1}p^{\alpha_{n-1}\ell}
  \end{align*}
  for sufficiently large $C(n)$ and $C'(n)$ depending on $Z$ and $\arr_p$.
  
  If $\alpha_{n-1}<n_i/n$, then the estimate for the Lipschitz constant is dominated by the $L_3$ term.  After substituting the expression for the bound on $\Lip(\arr_{p^{\ell-1}})$ and summing a geometric series, we see that
  \[\Lip(\arr_{p^\ell}|_e) \leq C(n)p^{(n_i/n)\ell}\]
  for sufficiently large $C(n)$.
  
  Finally, if $\alpha_{n-1}>n_i/n$, then the estimate for the Lipschitz constant is dominated by the $L_1$ and $L_2$ terms, and therefore, for sufficiently large $C(n)$,
  \[\Lip(\arr_{p^\ell}|_e) \leq C(n)\ell^{d_{n-1}}p^{\alpha_{n-1}\ell}.\]
  Similar estimates hold for the Lipschitz constant of $H_\ell$.
  
  This gives the estimate in the theorem: the polynomial power in the Lipschitz constant is governed by the largest possible value of $n_i/n$, and the power of the polylogarithm is governed by the number of $n$ for which that value is attained.
\end{proof}

\begin{rmk}
  The methods of this theorem do not extend to manifolds without positive weights because Proposition \ref{back-and-forth+} fails.  For example, suppose $M$ is rationally equivalent to $N=P \times Q$, where $P$ has positive weights and $Q$ does not.  Then if $f:P \to P$ is a map of degree $>1$, so is $f \times \id_Q:N \to N$, and Theorem \ref{lower:NF} lets us find efficient maps homotopic to $f^\ell$ for $\ell \geq 1$.  However, this does not automatically tell us whether $M$ has self-maps of positive degree or, if it does, anything about the Lipschitz constants of these maps.  It would be interesting to either show that these properties are rationally invariant or to find examples in which they are not.
\end{rmk}

\bibliographystyle{amsalpha}
\bibliography{liphom}

\end{document}